\newtheorem{theorem}             {Theorem}  [section]
\newtheorem{definition} [theorem] {Definition}
\newtheorem{lemma}      [theorem]{Lemma}
\newtheorem{corollary}  [theorem]{Corollary}
\newtheorem{proposition}[theorem]{Proposition}
\newtheorem{remark} [theorem] {Remark}
\numberwithin{equation}{section} \everymath{\displaystyle}
\newcommand{\Cont}{{\rm C}}
\newcommand{\Aut}{\mathcal{A}}
\newcommand{\Sch}{\mathcal{S}}
\newcommand{\intL}{{\rm L}}
\newcommand{\Ht}{{\rm Ht}}
\newcommand{\hol}{{\rm hol}}
\newcommand{\Nr}{{\rm Nr}}
\newcommand{\gp}[1]{\mathbf{#1}}
\newcommand{\GL}{{\rm GL}}
\newcommand{\PGL}{{\rm PGL}}
\newcommand{\SL}{{\rm SL}}
\newcommand{\diag}{{\rm diag}}
\newcommand{\ag}[1]{\mathbb{#1}}
\newcommand{\Mat}{{\rm M}}
\newcommand{\lcd}{{\rm lcd}}
\newcommand{\F}{\mathbf{F}}
\newcommand{\vo}{\mathfrak{o}}
\newcommand{\vp}{\mathfrak{p}}
\newcommand{\idlJ}{\mathfrak{J}}
\newcommand{\Dis}{{\rm D}}
\newcommand{\Dif}{\mathfrak{D}}
\newcommand{\gCl}{{\rm Cl}}
\newcommand{\ProjP}{{\rm P}}
\newcommand{\norm}[1][\cdot]{\lvert #1 \rvert}
\newcommand{\extnorm}[1]{\left\lvert #1 \right\rvert}
\newcommand{\Norm}[1][\cdot]{\lVert #1 \rVert}
\newcommand{\extNorm}[1]{\left\lVert #1 \right\rVert}
\newcommand{\Four}[2][]{\mathfrak{F}_{#1}(#2)}
\newcommand{\Mellin}[2][]{\mathfrak{M}_{#1}(#2)}
\newcommand{\rpR}{{\rm R}}
\newcommand{\Res}{{\rm Res}}
\newcommand{\Ind}{{\rm Ind}}
\newcommand{\Intw}{\mathcal{M}}
\newcommand{\cusp}{{\rm cusp}}
\newcommand{\Eis}{{\rm E}}
\newcommand{\Cond}{\mathbf{C}}
\newcommand{\cond}{\mathfrak{c}}
\newcommand{\fin}{{\rm fin}}
\newcommand{\eis}{{\rm E}}
\newcommand{\eisCst}{{\rm E}_{\gp{N}}}
\newcommand{\Reis}{\mathcal{E}}
\newcommand{\reg}{{\rm reg}}
\newcommand{\freg}{{\rm fr}}
\newcommand{\FundD}{\mathcal{D}}
\newcommand{\Ex}{\mathcal{E}{\rm x}}
\newcommand{\latL}{\mathcal{L}}
\newcommand{\latPlg}{\mathcal{P}}
\newcommand{\fsB}{{\rm B}}
\newcommand{\fsH}{{\rm H}}
\newcommand{\Vol}{{\rm Vol}}
\newcommand{\Rmnum}[1]{\expandafter\@slowromancap\romannumeral #1@}
\title{A Note on Spectral Analysis in automorphic representation theory for $\GL_2$: \Rmnum{2}}
\author{Han Wu}
\begin{document}

\footnote{Affiliation of Author: MTA R\'enyi Int\`ezet Lend\"ulet Automorphic Research Group}

	\begin{abstract}
		We generalize ZagierÕs work on regularized integral to the singular case in the adelic setting. We develop necessary tools of treating various singular cases of regularized triple product formulas, which appear naturally in the work of Michel and Venkatsh on the subconvexity problem for $\GL_2$.
	\end{abstract}
	
	\maketitle
	
	\tableofcontents

\section{Introduction}

	Trying to enlarge the applicability of the Rankin-Selberg method, Zagier \cite{Za82} invented the regularized integral in automorphic representation theory for $\PGL_2$, which deals with certain non convergent integrals naturally appearing in the theory. This idea was generalized and successfully applied to the problem of subconvexity for $\GL_2$ in analytic number theory by Michel and Venkatesh \cite[\S 4.3]{MV10}. Roughly speaking, three definitions of regularized integral are available:

\noindent (1) We subtract from $\varphi$ a non integral part $\Reis(\varphi)$, considered to have integral $0$ by abuse of orthogonality, and define
	$$ \int^{\reg} \varphi = \int \varphi - \Reis(\varphi). $$
	
\noindent (2) We introduced some suitable meromorphic function $E(s)$ with constant residue $1$ at $0$, such that
	$$ \int \varphi \cdot E(s) $$
is convergent for certain range of $s$ and has a meromorphic continuation. We then define
	$$ \int^{\reg} \varphi = \Res_{s=0} \int \varphi \cdot E(s). $$
	
\noindent (3) We find a/any measure-operator $\sigma_0$ such that the convolution $\sigma_0 * \varphi$ becomes integrable, and that the dual measure $\sigma_0^{\vee}$ with respect to change of variables satisfies $\sigma_0^{\vee} * 1 = R_0 \neq 0$. We then define
	$$ \int^{\reg} \varphi = R_0^{-1} \int \sigma_0 * \varphi. $$
	
\noindent The idea of (1) seems to come from physicists. (2) has a variation in a simpler case \cite[\S 4.3.2]{MV10}, and finds its incarnation in the automorphic representation theory as the Rankin-Selberg methods, which provides the concrete computability of the regularized integral. Zagier's orginal work provides the equivalence of (1) and (2). (3) is due to Michel and Venkatesh. It is an extension of the theory of regularized integral, not just another equivalent definition. (In fact, $\sigma_0 * \varphi$ is really a different way of making $\varphi$ integrable from the subtraction $\varphi - \Reis(\varphi)$, which in the application to the subconvexity problem \cite{MV10} is at least ``more suited'' \cite[\S 4.3.6]{MV10} from the strategical viewpoint.)

	However, neither \cite{Za82} nor \cite[\S 4.3]{MV10} was capable of explicitly treating an exceptional case (we shall call it the \emph{singular case}), in which $\varphi$ has the quasi-character $\norm_{\ag{A}}$ in its set of exponents \cite[\S 4.3.3]{MV10}. An example is given by the \emph{regularizing Eisenstein series} Definition \ref{RegEisDef}. It is easy to see that any measure $\sigma_0$ made from Hecke operators which makes the regularizing Eisenstein series integrable also makes $\sigma_0^{\vee}=\sigma_0$ annihilate $1$ (c.f. Remark \ref{KillRegEis}). Hence the definition (3) can not be extended to the singular case. In order to avoid the singular case, the authors of \cite{MV10} applied the technic of \emph{deformation} \cite[\S 5.2.6]{MV10}, which not only introduced complications in many auxiliary computations \cite[\S 3.1.11, 3.2.4, 3.2.8, 4.1.9 \& 4.4.3]{MV10}, but also reduced the precision of various estimations. Hence it is reasonable to look for a treatment of the singular case. (3) failing, we turn back to the original idea of Zagier, i.e., (1) and (2). For example, in the line after \cite[(18)]{Za82}, the author excluded the discussion of the possibility of ``$\alpha_i=1$'', which corresponds to the singular case, as well as the possibility of ``$\alpha_i=0$''. This is unnatural, since the case ``$\alpha_i=0$'' is in the range of integrability, being easily handled by subtracting a constant function to reduce to the case of non-existence of ``$\alpha_i=0$'' in (2) (c.f. Theorem \ref{AdelicRegThm} (4) for a rigorous treatment); while in the case of ``$\alpha_i=1$'' the formula in Theorem \ref{AdelicRegThm} (4) still makes sense, except that the pole of $R(s,\varphi)$ at $s=1/2$ may not be simple. It will be consistent with (1) if we allow the subtraction of the regularizing Eisenstein series (c.f. Definition \ref{RegEisDef}), as well as its derivatives. Moreover, in Remark \ref{KillRegEis} although $(T(\vp)-1)^2$ annihilates both $\eis^{\reg}(1/2,e_0)$ and $1$, it ``kills twice'' $1$ while $T(\vp)-1$ alone does not ``kill'' $\eis^{\reg}(1/2,e_0)$, which intuitively suggests \emph{defining}
	$$ \int_{[\PGL_2]}^{\reg} \eis^{\reg}(1/2,e_0) =0 $$
from the viewpoint of definition (3). However, such an extension of regularized integral to the singular case looses $\PGL_2(\ag{A})$-invariance (c.f. Remark \ref{RegIntGinvLoss}). All these suggest an extension to the singular case of the regularized integral, at least for the beauty of the theory itself\footnote{The analysis of this paper, together with some technics treating the finite places with ``more representation theoretic than analytic'' arguments, improves the $\epsilon$-power dependence at the finite places into logarithmic power dependence in \cite{MV10}. This will be explained in a future paper.}. The main purpose of this paper is to further develop this idea, in order to handle various regularized triple products of Eisenstein series in the singular case, which appear naturally in the work of \cite{MV10}.
	
	In Section 2, we give a treatment of the basic theory in the singular case. For completeness, we also include a full treatment in the ``regular case'' in the adelic setting, which was first developed in \cite[\S 4.3]{MV10}. But even in the regular case, we stick strictly to the original idea of Zagier, avoiding the definition (3). In particular, we establish the $\PGL_2(\ag{A})$-invariance of the regularized integral directly in Proposition \ref{RegGInv} (to be compared with \cite[\S 4.3.6]{MV10}). In Section 3, we treat various singular cases of products of Eisenstein series. The idea of ``deformation'', utilized in \cite{MV10}, turns out to be essential, since the computation directly from the definition fails to give explicit results. In Section 4, we develop more tools, which, together with the results in Section 2, allow one to treat any singular case of triple products of Eisenstein series. We also set an example by giving an explicit formula in one such case.
	
	As a fundamental preliminary, bounding the smooth Eisenstein series is necessary. This follows the same (classical) idea in the $\gp{K}$-finite case, with a little more complicated technics. We also give a detailed treatment in Appendix for completeness. We also note that the use of smooth Eisenstein series seems to be inevitable in \cite{MV10}.
	
	The regularized inner product formula, defined via the regularized integral \cite[\S 4.3.5 \& 4.3.8]{MV10}, is a natural extension of the Plancherel formula in the theory of automorphic representation for $\GL_2$. For this reason, we put this paper into the current series of work. Note that an extra ``(more) degenerate term'' need to be added into the formula \cite[(4.20)]{MV10} in the singular case, which can be determined by Theorem \ref{RIPEisSing}.

\section{Zagier's Regularized Integral}

	\subsection{Regularized Integral on $\ag{R}_+$}
	
\begin{definition}
	Let $a: \ag{R}_+ \to \ag{C}$ be a continuous function. It is \emph{regularizable} if
\begin{itemize}
	\item[(1)] for any $N \gg 1$ as $t \to \infty$
	$$ a(t) = f(t)+O(t^{-N}), \quad f(t)=\sum_{i=1}^l \frac{c_i}{n_i!} t^{\frac{1}{2}+\alpha_i} \log^{n_i}t, $$
where $c_i, \alpha_i \in \ag{C}, n_i \in \ag{N}$;
	\item[(2)] for some $\alpha \in \ag{R}$ as $t \to 0^+$
	$$ a(t) = O(t^{\alpha}). $$
 \end{itemize}
 	In this case, we write for $T > 0$
 	$$ h_T(s) = \sum_{i=1}^l \frac{c_i}{n_i!} \frac{\partial^{n_i}}{\partial s^{n_i}} \left( \frac{T^{s+\alpha_i}}{s+\alpha_i} \right) = \sum_{i=1}^l c_i \sum_{m=0}^{n_i} \frac{(-1)^{n_i-m}}{m!} \frac{T^{s+\alpha_i} \log^m T}{(s+\alpha_i)^{n_i-m+1}}. $$
 \label{RegFuncRDef}
\end{definition}
\begin{lemma}
	For $f(t)$ as above, if $\lim_{T \to \infty} \int_1^T f(t) \frac{dt}{t^2}$ exists, then $\Re \alpha_i < 1/2$ for all $1 \leq i \leq l$.
\label{IntExpBd}
\end{lemma}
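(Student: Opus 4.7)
The strategy is to produce an explicit antiderivative of $f(t)/t^2$ and then invoke the linear independence of monomial-logarithm functions. First, group the summands of $f$ by the distinct values of the exponents and write
$$ f(t) = \sum_{\alpha \in \Sigma} t^{\frac{1}{2}+\alpha}\, P_\alpha(\log t), $$
where $\Sigma \subset \ag{C}$ is the finite set of pairwise distinct values among the $\alpha_i$ and each $P_\alpha \in \ag{C}[u]$ is a polynomial. The conclusion $\Re\alpha_i < 1/2$ for all $i$ (with nonzero $c_i$) is then equivalent to $P_\alpha \equiv 0$ whenever $\Re\alpha \geq 1/2$.

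Second, I would integrate term by term. Repeated integration by parts gives an explicit antiderivative of $t^{-3/2+\alpha}P_\alpha(\log t)$: for $\alpha \neq 1/2$ it equals $t^{\alpha-1/2}\,\widetilde{P}_\alpha(\log t)$ with $\deg \widetilde{P}_\alpha = \deg P_\alpha$ and with leading coefficient $(\alpha-1/2)^{-1}$ times that of $P_\alpha$; while for $\alpha = 1/2$ it equals $\widetilde{P}_{1/2}(\log t)$ with $\deg \widetilde{P}_{1/2} = \deg P_{1/2}+1$. Summing over $\alpha$ and evaluating between $1$ and $T$ expresses $\int_1^T f(t)\,t^{-2}\,dt$ as a finite linear combination of functions $T^{\gamma}\log^k T$ (with $\gamma = \alpha-1/2$ for $\alpha \in \Sigma$, and the exceptional case $\gamma=0$, $k \geq 1$ arising only if $1/2 \in \Sigma$), plus an additive constant.

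Third, I would invoke the standard fact that the family $\{T^{\gamma}\log^k T\}_{\gamma \in \ag{C},\, k \in \ag{N}}$ is linearly independent on any half-line $(T_0,\infty)$. Hence if such a combination tends to a finite limit as $T\to\infty$, every coefficient attached to a function that itself diverges or fails to converge must vanish. This forces the top log coefficient of $\widetilde{P}_\alpha$ to be zero for every $\alpha \in \Sigma$ with $\Re\alpha \geq 1/2$ and $\alpha \neq 1/2$, and the degree/leading-coefficient identity from the antiderivative then propagates down to kill $\widetilde{P}_\alpha$ entirely, whence $P_\alpha \equiv 0$; similarly, if $1/2 \in \Sigma$, the full polynomial $\widetilde{P}_{1/2}$ must vanish, forcing $P_{1/2} \equiv 0$ since its degree is strictly larger than that of $P_{1/2}$.

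The only point that requires any care is the bookkeeping when several indices $i$ share a common value of $\alpha_i$; this is cleanly handled by the initial grouping step, after which the argument is driven purely by linear independence and the explicit shape of the antiderivative. I do not anticipate a serious obstacle beyond this.
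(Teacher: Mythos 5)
There is a genuine gap in the third step. You appeal to ``the standard fact that the family $\{T^{\gamma}\log^k T\}$ is linearly independent on any half-line'' and then conclude that, if the combination converges, ``every coefficient attached to a function that itself diverges or fails to converge must vanish.'' That inference is a non sequitur: linear independence asserts that no nontrivial combination is \emph{identically zero} on a half-line, which says nothing about the coefficients of a combination that merely \emph{converges} as $T\to\infty$. To get the conclusion you want you must argue by growth scales: isolate the dominant term $T^{\sigma}(\log T)^{k^*}$, divide through, and reduce to a statement of the form $\sum_j c_j T^{i\tau_j}\to 0$ for distinct real $\tau_j$, which you must then kill.

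That last step is precisely where the real content lies, and your proposal does not address it at all. When $\Re\alpha_i=1/2$ but $\alpha_i\neq 1/2$, the resulting exponent $\gamma=\alpha_i-1/2$ is purely imaginary and nonzero, so $T^{\gamma}$ is \emph{bounded} and merely oscillates; the needed fact is that a finite trigonometric polynomial $\sum_j c_j T^{i\tau_j}$ (distinct $\tau_j$) can have a limit as $T\to\infty$ only if it is constant, i.e., only if every $c_j$ with $\tau_j\neq 0$ vanishes. This is exactly what the paper establishes in Corollary~\ref{ErgodicLemma}, via unique ergodicity of the linear flow on $\ag{T}_{\vec\theta}$ (a Weyl-equidistribution argument), and it is invoked in both cases $\sigma=1/2$ and $\sigma>1/2$ after dividing by the dominant growth factor. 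Your term-by-term integration and grouping are fine and essentially match the paper's computation of $\int_1^T f(t)\,dt/t^2$, but without Corollary~\ref{ErgodicLemma} (or an equivalent argument, e.g.\ Ces\`aro averaging of $h(T)T^{-i\tau_{j_0}}$ against $dT/T$) the proof is incomplete.
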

\begin{proof}
	If not, the condition implies that
\begin{align*}
	\int_1^T f(t) \frac{dt}{t^2} &= \sum_{\alpha_j \neq \frac{1}{2}} c_j \sum_{m=0}^{n_j} \frac{(-1)^{n_j-m}}{(\alpha_j - \frac{1}{2})^{n_j-m+1}} T^{\alpha_j-\frac{1}{2}} (\log T)^m - \sum_{\alpha_j \neq \frac{1}{2}} c_j \frac{(-1)^{n_j}}{(\alpha_j-\frac{1}{2})^{n_j+1}} \\
	&\quad + \sum_{\alpha_j=\frac{1}{2}} \frac{c_j}{n_j!} \frac{(\log T)^{n_j+1}}{n_j+1}
\end{align*}
is bounded as $T \to +\infty$. Let $\sigma = \max_j \Re \alpha_j$. We distinguish two cases.

\noindent (1) $\sigma=1/2$. Let $l=\max \{ n_j: \Re \alpha_j = 1/2, \alpha_j \neq 1/2 \} \cup \{ n_j+1: \alpha_j=1/2 \}$. We divide both sides of the equation by $(\log T)^l$ and let $T \to +\infty$ to get
\begin{equation}
	\lim_{T \to \infty} \sum c_j T^{i\tau_j} = 0
\label{AbsEq}
\end{equation}
where $\Im \alpha_j = \tau_j$ for $j$ such that either $\Re \alpha_j = 1/2, \alpha_j \neq 1/2, n_j=l$ or $\alpha_j=1/2, n_j+1=l$. In particular $\tau_j$ are mutually distinct.

\noindent (2) $\sigma > 1/2$. Let $l=\max \{ n_j: \Re \alpha_j = \sigma \}$. We divide both sides of the equation by $T^{\sigma-1/2} (\log T)^l$ and let $T \to +\infty$ to get an equation of the same form as (\ref{AbsEq}).

\noindent We conclude because (\ref{AbsEq}) contradicts the following Corollary \ref{ErgodicLemma}.
\end{proof}
\noindent Write $\ag{T} = \ag{R} / \ag{Z}$. Let $\vec{\theta} = (\theta_1, \cdots, \theta_n) \in \ag{R}^n$. For any $\vec{x} \in \ag{R}^n$, we write by $[\vec{x}]$ its image in $\ag{T}^n$. Define
	$$ \ag{T}_{\vec{\theta}} = \overline{\lbrace [t\vec{\theta}] : t\in \ag{R} \rbrace}. $$
It is a closed subgroup of $\ag{T}^n$. Furthermore, the one parameter subgroup $U_{\vec{\theta}} = \lbrace t.\vec{\theta} : t \in \ag{R} \rbrace$ of $\ag{R}^n$ acts uniquely ergodically on $\ag{T}_{\vec{\theta}}$ w.r.t. the Haar measure of $\ag{T}_{\vec{\theta}}$. More precisely,
\begin{lemma}
	For any $f \in \Cont(\ag{T}_{\vec{\theta}})$, we have
	$$ \lim_{T \to \infty} \frac{1}{T}\int_0^T f([\vec{x}] + [t.\vec{\theta}]) dt = \int_{\ag{T}_{\vec{\theta}}} f dm_{\theta} $$
	for all $[\vec{x}] \in \ag{T}_{\vec{\theta}}$. Here $dm_{\theta}$ is the normalized Haar measure on $\ag{T}_{\vec{\theta}}$.
\end{lemma}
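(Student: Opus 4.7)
The strategy is standard for Weyl-type equidistribution: reduce to characters by Stone--Weierstrass, then compute a one-variable oscillatory integral explicitly. Since $\ag{T}_{\vec{\theta}}$ is compact and the statement must hold for every starting point $[\vec{x}]$, I would aim for the slightly stronger fact that convergence is uniform in $[\vec{x}]$, which is what makes the approximation step work.

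First, I would note that the restrictions to $\ag{T}_{\vec{\theta}}$ of the characters $\chi_{\vec{k}}([\vec{y}])=e^{2\pi i \vec{k}\cdot\vec{y}}$ with $\vec{k}\in\ag{Z}^n$ form a $\ag{C}$-subalgebra of $\Cont(\ag{T}_{\vec{\theta}})$ which contains $1$, is closed under complex conjugation, and separates points of $\ag{T}_{\vec{\theta}}$ (because the characters of $\ag{T}^n$ already separate its points, and $\ag{T}_{\vec{\theta}}\subset\ag{T}^n$). By Stone--Weierstrass this subalgebra is dense in sup-norm, so an $\varepsilon/3$ argument reduces the lemma to checking the limit for $f=\chi_{\vec{k}}|_{\ag{T}_{\vec{\theta}}}$, with uniform convergence in $[\vec{x}]$.

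Next, I would split into two cases. If $\vec{k}\cdot\vec{\theta}=0$, then $\chi_{\vec{k}}\equiv 1$ on the one-parameter subgroup $U_{\vec{\theta}}$, hence on its closure $\ag{T}_{\vec{\theta}}$ by continuity; both sides trivially equal $\chi_{\vec{k}}([\vec{x}])=1$. If $\vec{k}\cdot\vec{\theta}\neq 0$, a direct computation gives
$$ \frac{1}{T}\int_0^T \chi_{\vec{k}}([\vec{x}]+[t.\vec{\theta}])\, dt \;=\; \chi_{\vec{k}}([\vec{x}])\cdot\frac{e^{2\pi i T\vec{k}\cdot\vec{\theta}}-1}{2\pi i T\vec{k}\cdot\vec{\theta}}, $$
whose modulus is at most $(\pi T\norm{\vec{k}\cdot\vec{\theta}})^{-1}$ and thus tends to $0$ uniformly in $[\vec{x}]$. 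On the other side, $\chi_{\vec{k}}|_{\ag{T}_{\vec{\theta}}}$ is a \emph{non-trivial} character of the compact group $\ag{T}_{\vec{\theta}}$ (non-triviality follows from continuity together with density of $U_{\vec{\theta}}$: if the restriction were trivial on $\ag{T}_{\vec{\theta}}$, then in particular $t\mapsto e^{2\pi i t \vec{k}\cdot\vec{\theta}}\equiv 1$, forcing $\vec{k}\cdot\vec{\theta}=0$), so by orthogonality $\int_{\ag{T}_{\vec{\theta}}} \chi_{\vec{k}}\, dm_\theta=0$.

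Combining the two cases, the identity of the lemma holds uniformly in $[\vec{x}]\in\ag{T}_{\vec{\theta}}$ for every character $\chi_{\vec{k}}|_{\ag{T}_{\vec{\theta}}}$, hence for every finite trigonometric combination, hence by density for every $f\in\Cont(\ag{T}_{\vec{\theta}})$. There is no real obstacle; the only delicate point is the identification of which characters of $\ag{T}^n$ become trivial when restricted to $\ag{T}_{\vec{\theta}}$, handled above by the density of $U_{\vec{\theta}}$ in $\ag{T}_{\vec{\theta}}$. In particular the argument also verifies that $\ag{T}_{\vec{\theta}}$, being the annihilator in $\ag{T}^n$ of the orthogonal lattice $\{\vec{k}\in\ag{Z}^n:\vec{k}\cdot\vec{\theta}=0\}$, is indeed a closed subgroup and carries a well-defined normalized Haar measure $dm_\theta$, legitimizing the right-hand side of the statement.
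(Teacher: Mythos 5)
Your proof is correct and follows essentially the same route as the paper: reduce to characters of $\ag{T}^n$ restricted to $\ag{T}_{\vec{\theta}}$ via Stone--Weierstrass, then compute the one-variable time average for the case $\vec{k}\cdot\vec{\theta}\neq 0$. Your write-up is a bit more careful than the paper's in two minor respects --- you make the uniformity in $[\vec{x}]$ explicit (which is what the density argument silently needs) and you verify the right-hand side via orthogonality of the non-trivial restricted character, and you also get the $2\pi$ in the denominator correct where the paper's displayed estimate drops it --- but these are refinements of the same argument, not a different approach.
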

\begin{proof}
	Consider the group of characters ${\rm Ch}(\ag{T}^n)$ of $\ag{T}^n$ given by
	$$ e_{\vec{n}}(\vec{x}) = e(\vec{n} \cdot \vec{x}) = e(\sum_{i=1}^n n_i x_i), \vec{n} \in \ag{N}^n, \vec{x} \in \ag{R}^n $$
where $e(x) = e^{2\pi i x}$. By the duality theorem for locally compact abelian groups, the group of characters ${\rm Ch}(\ag{T}_{\vec{\theta}})$ is the quotient of ${\rm Ch}(\ag{T}^n)$ by the subgroup of $e_{\vec{n}}$'s which vanish on $\ag{T}_{\vec{\theta}}$. Obviously, we have
	$$ e_{\vec{n}}(\ag{T}_{\vec{\theta}})=1 \Leftrightarrow e(t\vec{n} \cdot \vec{\theta}) = 1 \text{ for } \forall t \in \ag{R} \Leftrightarrow \vec{n} \cdot \vec{\theta}=0. $$
	So ${\rm Ch}(\ag{T}_{\vec{\theta}})$ are $e_{\vec{n}}$'s modulo the subgroup of $e_{\vec{n}}$'s with $\vec{n} \cdot \vec{\theta}=0$. Let $[e_{\vec{n}}] \neq 0$ denote a non trivial equivalence class of $e_{\vec{n}}$ in the quotient group, we calculate
	$$ \extnorm{ \frac{1}{T} \int_0^T [e_{\vec{n}}]( [\vec{x}] + [t\vec{\theta}] ) dt } = \extnorm{ \frac{e(\vec{n} \cdot \vec{x})}{T} \cdot \frac{e(T\vec{n} \cdot \vec{\theta} ) - 1}{\vec{n} \cdot \vec{\theta}} } \leq \frac{2}{\norm[T] \cdot \norm[\vec{n} \cdot \vec{\theta}]} \to 0, T \to \infty. $$
	The lemma is thus proved for $f = [e_{\vec{n}}]$, hence the $\ag{C}$-vector space generated by ${\rm Ch}(\ag{T}_{\vec{\theta}})$, which is also a $*$-subalgebra of $\Cont(\ag{T}_{\vec{\theta}})$. The lemma then follows by a standard application of the complex version of the Stone-Weierstrass theorem.
\end{proof}
\begin{corollary}
	Consider the function $f(x) = \sum_{k=1}^n a_k x^{i \theta_k}, x \in \ag{R}_+$, where $a_k \in \ag{C}, \theta_k \in \ag{R}, 1\leq k \leq n$. If $\lim_{x \to +\infty} f(x)$ exists, then $f(x)$ is a constant function.
\label{ErgodicLemma}
\end{corollary}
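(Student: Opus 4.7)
The plan is to pass via $x = e^t$ from $f$ on $\ag{R}_+$ to the almost periodic function $F(t) := f(e^t) = \sum_{k=1}^n a_k e^{it\theta_k}$ on $\ag{R}$, and then apply the preceding unique ergodicity lemma to a well-chosen continuous test function on the compact group $\ag{T}_{\vec{\theta}}$. First I would set $\vec{\theta} := (2\pi)^{-1}(\theta_1, \ldots, \theta_n) \in \ag{R}^n$ and define $g \in \Cont(\ag{T}^n)$ by
\[
g([\vec{y}]) = \sum_{k=1}^n a_k e(y_k),
\]
so that its restriction to $\ag{T}_{\vec{\theta}}$ (still denoted $g$) satisfies $g([t\vec{\theta}]) = F(t)$ for every $t \in \ag{R}$.

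Let $L := \lim_{x \to +\infty} f(x) = \lim_{t \to +\infty} F(t)$. Since $F$ is continuous with a finite limit at $+\infty$, it is bounded on $[0, +\infty)$, and splitting the integral at a large threshold shows that
\[
\lim_{T \to \infty} \frac{1}{T} \int_0^T \extnorm{F(t) - L}^2 dt = 0.
\]
Next I would apply the preceding lemma to the continuous function $|g - L|^2 \in \Cont(\ag{T}_{\vec{\theta}})$ at the starting point $[\vec{x}] = 0 \in \ag{T}_{\vec{\theta}}$, which yields
\[
\lim_{T \to \infty} \frac{1}{T} \int_0^T \extnorm{g([t\vec{\theta}]) - L}^2 dt = \int_{\ag{T}_{\vec{\theta}}} \extnorm{g - L}^2 dm_{\theta}.
\]

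Combining the two identities forces $\int_{\ag{T}_{\vec{\theta}}} |g-L|^2 dm_{\theta} = 0$. Since $|g-L|^2$ is a non-negative continuous function on the compact group $\ag{T}_{\vec{\theta}}$ whose Haar measure has full support, it must vanish identically on $\ag{T}_{\vec{\theta}}$. In particular $g([t\vec{\theta}]) = L$ for every $t \in \ag{R}$, i.e.\ $f(e^t) = L$, and therefore $f$ is the constant function $L$ on $\ag{R}_+$.

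The only delicate step, minor as it is, lies in choosing the right test function: applying the lemma directly to $g$ would only recover the scalar identity $L = \int_{\ag{T}_{\vec{\theta}}} g \, dm_{\theta}$, which cannot be upgraded to pointwise information about $F$. Squaring the deviation converts the hypothesis ``$F(t) \to L$'' into an $L^2$-concentration statement on the full orbit closure, and continuity of $g$ then promotes this into the desired pointwise equality.
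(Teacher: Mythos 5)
Your proof is correct, but it takes a genuinely different route from the paper's. The paper argues by contradiction via topological recurrence: if $f$ is not constant, pick $t_1,t_2$ with $F(t_1)\neq F(t_2)$, separate $[t_1\vec\theta]$ and $[t_2\vec\theta]$ by open neighborhoods $U_1,U_2$ on which the values of $g$ stay apart, and then invoke unique ergodicity to conclude that the flow $t\mapsto[t\vec\theta]$ meets both $U_1$ and $U_2$ infinitely often, so $F(t)$ cannot converge. This requires the (standard but unstated) step that the Haar time-average convergence implies the orbit returns to any nonempty open subset of $\ag{T}_{\vec\theta}$ infinitely often — one has to sandwich the indicator of $U_i$ between continuous functions to deduce this from the lemma as stated. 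Your argument is direct rather than by contradiction: you convert the pointwise limit $F(t)\to L$ into Ces\`aro decay of $\int_0^T|F(t)-L|^2\,dt/T$, apply the ergodic lemma to the single continuous test function $|g-L|^2$, and use that the normalized Haar measure has full support on the compact group $\ag{T}_{\vec\theta}$ to upgrade $\int|g-L|^2\,dm_\theta=0$ to $g\equiv L$. This sidesteps the recurrence step entirely and is, if anything, a bit cleaner, since the only qualitative inputs are the lemma itself and full support of Haar measure. Both proofs are sound; yours trades the contradiction/recurrence argument for an $L^2$ concentration argument, and both exploit the lemma in essentially one application each.
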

\begin{proof}
	Note that $f(e^{2\pi t}) = \sum_{k=1}^n a_k e(t\theta_k), t \in \ag{R}$. If $f$ is not constant, we can find $t_1, t_2 \in \ag{R}$ s.t. $f(e^{2\pi t_1}) \neq f(e^{2\pi t_2})$. By continuity, we then find some neighborhood $U_1$ of $[t_1 \vec{\theta}]$, and $U_2$ of $[t_2 \vec{\theta}]$ such that
	$$ \sum_{k=1}^n a_k e(x_k) \neq \sum_{k=1}^n a_k e(y_k), \forall \vec{x} \in U_1, \vec{y} \in U_2. $$
	But the flow $\gamma(t) = [t\vec{\theta}]$ meets both $U_1$ and $U_2$ infinitely often by the lemma, hence $\lim_{t \to \infty} f(e^{2\pi t})$ can not exist, contradicting the hypothesis.
\end{proof}

	\subsection{Regularized Integral for $\PGL_2$}
	
\begin{definition}
	Let $\varphi: \GL_2(\F)\gp{Z}(\ag{A}) \backslash \GL_2(\ag{A}) \to \ag{C}$ be a continuous function. It is \emph{slowly increasing} if for some $c \in \ag{R}$ and $g$ lying in some Siegel domain we have
	$$ \norm[\varphi(g)] \ll \Ht(g)^c, \Ht(g) \to \infty. $$
\label{SlowIncDef}
\end{definition}
\begin{definition}
	Let $\varphi: \GL_2(\F)\gp{Z}(\ag{A}) \backslash \GL_2(\ag{A}) \to \ag{C}$ be a slowly increasing function. Its \emph{regularizing kernel} $a(t,\varphi)$ is
	$$ a(t,\varphi) = \int_{\F \backslash \ag{A} \times \F^{\times} \backslash \ag{A}^{(1)} \times \gp{K}} \varphi(n(x)a(t^+y)k) dx d^{\times}y dk, $$
	where $t^+ = s_{\F}(t)$ is the image of $t$ under the section of the adelic norm map $\ag{A}^{\times} \to \ag{R}_+$ recalled in the beginning of this paper.
\end{definition}
\begin{definition}
	We call a slowly increasing function $\varphi: \GL_2(\F)\gp{Z}(\ag{A}) \backslash \GL_2(\ag{A}) \to \ag{C}$ \emph{regularizable} if its regularizing kernel $a(t,\varphi)$ satisfies the condition (1) of Definition \ref{RegFuncRDef}. In this case, we define for $s \in \ag{C}, \Re s \gg 1$
	$$ R(s,\varphi) = \int_0^{\infty} (a(t,\varphi)-f(t)) t^{s-\frac{1}{2}} \frac{dt}{t}; \quad R^*(s,\varphi) = \Lambda_{\F}(1+2s) R(s,\varphi). $$
	The space of regularizable functions is denoted by $\Aut^{\reg}(\GL_2)$.
\label{RegFuncDef}
\end{definition}
\begin{remark}
	This is equivalent to saying $a(t,\varphi)$ regularizable, i.e., the condition (2) of Definition \ref{RegFuncRDef} is automatically satisfied due to the following Corollary \ref{SIncBdAt0}.
\end{remark}
\begin{lemma}
	If $\gamma \in \GL_2(\F) - \gp{B}(\F)$, then we have $ \Ht(\gamma g) \leq \Ht(g)^{-1}$.
\label{HtBd}
\end{lemma}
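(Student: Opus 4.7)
The plan is to reduce the inequality to two applications of the product formula on $\F^\times$: once to $\det\gamma$ and once to the non-trivial lower-left entry $c$ of $\gamma$. The key is to rewrite the height invariantly in terms of a norm on the adelic row space.

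Concretely, I would first choose the maximal compact $\gp{K}=\prod_v \gp{K}_v$ compatibly with a norm $\Norm[\cdot]_v$ on $\F_v^2$: the Euclidean norm at archimedean places and the max-norm at non-archimedean places, so that $\gp{K}_v$ preserves $\Norm[\cdot]_v$. Under the Iwasawa decomposition $g_v = n(x_v)a(y_v)k_v$ with $\Ht(g)=\prod_v \norm[y_v]_v$, this identifies
$$ \Ht_v(g_v) = \frac{\norm[\det g_v]_v}{\Norm[(0,1)g_v]_v^{2}}, $$
since $(0,1)n(x_v)a(y_v)k_v = (0,1)k_v$ has $\Norm[\cdot]_v$-norm $1$.

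Writing $\gamma=\begin{pmatrix} a & b \\ c & d \end{pmatrix}$, the hypothesis $\gamma\in \GL_2(\F)-\gp{B}(\F)$ forces $c\in\F^\times$. Next I would compute the new bottom row: $(0,1)\gamma g_v = (c,d)g_v = (cy_v,\,cx_v+d)k_v$. Taking the product over $v$ and invoking the product formula $\prod_v \norm[\det\gamma]_v=1$ gives
$$ \Ht(\gamma g) = \frac{\Ht(g)}{\prod_v \Norm[(cy_v,\,cx_v+d)]_v^{2}}. $$

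The final step is the elementary coordinate bound $\Norm[(cy_v,\,cx_v+d)]_v \geq \norm[cy_v]_v$, valid at every place for either choice of norm above. Multiplying over $v$ and using the product formula a second time, now on $c\in\F^\times$, yields
$$ \prod_v \Norm[(cy_v,\,cx_v+d)]_v \geq \prod_v \norm[c]_v \cdot \prod_v \norm[y_v]_v = \Ht(g), $$
and substituting back gives $\Ht(\gamma g) \leq \Ht(g)/\Ht(g)^2 = \Ht(g)^{-1}$. No step is really an obstacle; the only point requiring care is the initial choice of $\gp{K}$ and of the norms $\Norm[\cdot]_v$ so that the identity $\Ht_v(g_v)=\norm[\det g_v]_v/\Norm[(0,1)g_v]_v^2$ holds uniformly in $v$, which is why the $\gp{K}_v$-invariance of $\Norm[\cdot]_v$ has to be arranged at the outset.
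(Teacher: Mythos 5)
The paper does not actually prove this lemma — it simply cites \cite[Lemma 3.19]{Wu5} — so there is no in-paper argument to compare against. Your proof is the standard adelic version of the classical bound $\mathrm{Im}(\gamma z)=\mathrm{Im}(z)/|cz+d|^2\leq 1/(c^2\,\mathrm{Im}(z))$, and the structure (invariant formula for $\Ht_v$ via the bottom row, $c\neq0$ from $\gamma\notin\gp{B}(\F)$, drop the second coordinate, apply the product formula twice) is exactly right.

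One normalization point is off at complex places, and it affects both of your two displayed identities. With the normalized absolute value $\norm[z]_v=z\bar z$ at a complex $v$, the Euclidean norm $\extNorm{(a,b)}=(\norm[a]^2+\norm[b]^2)^{1/2}$ is homogeneous of degree $1/2$ in $\norm[\cdot]_v$, so the identity $\Ht_v(g_v)=\norm[\det g_v]_v/\extNorm{(0,1)g_v}_v^{2}$ fails (the exponent would need to be $4$), and likewise $\extNorm{(cy_v,cx_v+d)}_v\geq\norm[cy_v]_v^{1/2}$, not $\geq\norm[cy_v]_v$. The fix is to take at complex $v$ the Hermitian form itself, $\extNorm{(a,b)}_v:=\norm[a]^2+\norm[b]^2=\norm[a]_v+\norm[b]_v$, which is still $\gp{K}_v$-invariant, is homogeneous of degree $1$ in $\norm[\cdot]_v$, and makes both the height identity and the coordinate bound hold with the exponents you wrote. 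With that single adjustment the argument is complete; for totally real $\F$ there is nothing to fix.
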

\begin{proof}
	This is \cite[Lemma 3.19]{Wu5}.
\end{proof}
\begin{corollary}
	If $\varphi$ is slowly increasing as in Definition \ref{SlowIncDef}, then we have
	$$ \norm[\varphi(g)] \ll \Ht(g)^{\min(0,-c)}, \Ht(g) \to 0. $$
\label{SIncBdAt0}
\end{corollary}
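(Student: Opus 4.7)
The plan is to use the left $\GL_2(\F)$-invariance of $\varphi$ to reduce the small-height regime to the already-controlled large-height regime on a Siegel domain. Fix a Siegel domain $\mathcal{F}$ with respect to which the slowly increasing bound $\norm[\varphi(g)] \ll \Ht(g)^c$ holds. By reduction theory, for any $g \in \GL_2(\ag{A})$ there exists $\gamma \in \GL_2(\F)$ with $\gamma g \in \mathcal{F}$, and on such a Siegel domain $\Ht$ is bounded below by some $c_0 > 0$.

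The key observation is that $\gamma$ cannot lie in $\gp{B}(\F)$ when $\Ht(g)$ is small. Indeed, a direct computation with the Iwasawa decomposition, together with the product formula $\norm[t]_{\ag{A}} = 1$ for $t \in \F^{\times}$, shows that $\Ht$ is left-invariant under $\gp{B}(\F)$ modulo the center. Hence if $\gamma \in \gp{B}(\F)$ we would have $\Ht(\gamma g) = \Ht(g) < c_0$, contradicting $\gamma g \in \mathcal{F}$. Thus $\gamma \in \GL_2(\F) - \gp{B}(\F)$, and Lemma \ref{HtBd} gives
$$ \Ht(\gamma g) \leq \Ht(g)^{-1}. $$

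It remains to estimate $\norm[\varphi(\gamma g)]$. Continuity of $\varphi$ together with the slowly increasing bound on $\mathcal{F}$ yield $\norm[\varphi(h)] \ll 1 + \Ht(h)^c$ for all $h \in \mathcal{F}$, and since $\Ht$ is bounded below on $\mathcal{F}$ this simplifies to $\norm[\varphi(h)] \ll \Ht(h)^{\max(0,c)}$. Applying this with $h = \gamma g$ and using left-invariance $\varphi(g) = \varphi(\gamma g)$ yields
$$ \norm[\varphi(g)] \ll \Ht(\gamma g)^{\max(0,c)} \leq \Ht(g)^{-\max(0,c)} = \Ht(g)^{\min(0,-c)}, $$
which is the claimed estimate. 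The only nontrivial step is exhibiting the $\gamma \notin \gp{B}(\F)$, and this is forced once one knows the $\gp{B}(\F)$-invariance of $\Ht$; everything else is a direct combination of Lemma \ref{HtBd} with the defining Siegel-domain bound.
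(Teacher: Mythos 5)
Your proof is correct and follows essentially the same route as the paper's: reduce small-height $g$ to a Siegel domain via some $\gamma \in \GL_2(\F)$, observe that $\gamma$ must lie outside $\gp{B}(\F)$, and invoke Lemma \ref{HtBd}. The paper splits into the cases $c \leq 0$ (where $\varphi$ is simply bounded) and $c > 0$, and leaves the step ``$\gamma \notin \gp{B}(\F)$'' implicit; you handle both cases uniformly through the exponent $\max(0,c)$ and spell out why $\gamma \notin \gp{B}(\F)$ using the $\gp{B}(\F)$-invariance of $\Ht$. These are presentational refinements rather than a different argument.
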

\begin{proof}
	If $c \leq 0$, then it is easy to see that $\varphi$ is bounded, since elements of bounded height in a Siegel domain form a compact subset and $\varphi$ is continuous. The same argument shows that if $c<0$ we can assume $\norm[\varphi(g)] \ll \Ht(g)^c$ to hold in a whole Siegel domain $S$ containing a fundamental domain. If $\Ht(g)$ is small, we take $\gamma \in \GL_2(\F)$ such that $\gamma g \in S$, thus by the lemma we get
	$$ \norm[\varphi(g)] = \norm[\varphi(\gamma g)] \ll \Ht(\gamma g)^c \leq \Ht(g)^{-c}. $$
\end{proof}
\noindent The following function together with its Taylor expansion plays an important role:
\begin{equation}
	\lambda_{\F}(s) := \frac{\Lambda_{\F}(-2s)}{\Lambda_{\F}(2+2s)} = \frac{\lambda_{\F}^{-1}(0)}{s} + \sum_{n=0}^{\infty} \frac{s^n}{n!} \lambda_{\F}^{(n)}(0).
\label{lambdaFDef}
\end{equation}
Recall the truncation operator $\Lambda^c$ defined in \cite[(5.5)]{GJ79}. Let $f_0 \in \Ind_{\gp{B}(\ag{A}) \cap \gp{K}}^{\gp{K}} (1,1)$ be constant equal to $1$ on $\gp{K}$.
\begin{theorem}
	(Adelic version of regularization due to Zagier \cite{Za82})
\begin{itemize}
	\item[(1)] Let $\varphi: \GL_2(\F)\gp{Z}(\ag{A}) \backslash \GL_2(\ag{A}) \to \ag{C}$ be a slowly increasing function. For $s \in \ag{C}, \Re s \gg 1$ and any $T \gg 1$ we have
	$$ \int_{[\PGL_2]} \varphi(g) \Lambda^T \Eis(s,f_0)(g) dg = \int_0^T a(t,\varphi)t^{s-\frac{1}{2}} \frac{dt}{t} - \int_T^{\infty} a(t,\varphi) t^{-s-\frac{1}{2}} \frac{dt}{t} \cdot \lambda_{\F}(s-1/2). $$
	\item[(2)] If $\varphi$ is, in addition, regularizable, then we have for $T \gg 1$
\begin{align*}
	&\quad R^*(s,\varphi) + \Lambda_{\F}(1+2s) h_T(s) + \Lambda_{\F}(1-2s) h_T(-s) \\
	&= \int_{[\PGL_2]} \varphi(g) \Lambda^T \Eis^*(s,f_0)(g) dg + \\
	&\quad \Lambda_{\F}(1+2s) \int_T^{\infty} (a(t,\varphi)-f(t)) t^{s-\frac{1}{2}} \frac{dt}{t} + \Lambda_{\F}(1-2s) \int_T^{\infty} (a(t,\varphi)-f(t)) t^{-s-\frac{1}{2}} \frac{dt}{t}.
\end{align*}
	In particular, $R(s,\varphi)$ has a meromorphic continuation to $s \in \ag{C}$ with possible poles at $s=\pm 1/2, \pm \alpha_i, (\rho-1)/2$ for $\rho$ running over the non-trivial zeros of $\zeta_{\F}$, and satisfies the functional equation
	$$ R^*(s,\varphi) = R^*(-s,\varphi). $$
	\item[(3)] Under the condition of (2), if $\Re \alpha_i < 0$ for all $1 \leq i \leq l$, then we have
	$$ R(s,\varphi) = \int_{[\PGL_2]} \varphi(g) \Eis(s,f_0)(g) dg, \quad \max_{1 \leq i \leq l} \alpha_i < \Re s < -\max_{1 \leq i \leq l} \alpha_i. $$
	\item[(4)] Under the condition of (2), if $\varphi$ is integrable on $ [\PGL_2] := \GL_2(\F)\gp{Z}(\ag{A}) \backslash \GL_2(\ag{A}) $, then we have $\Re \alpha_i < 1/2$ for all $1 \leq i \leq l$, and
	$$ \int_{[\PGL_2]} \varphi(g) dg = \frac{1}{\lambda_{\F}^{(-1)}(0)} \left( \Res_{s=\frac{1}{2}} R(s,\varphi) + c_i \delta_{\substack{\alpha_i=-\frac{1}{2} \\ n_i=0}} \right), \quad \Vol([\PGL_2]) = \lambda_{\F}^{(-1)}(0). $$
\end{itemize}
\label{AdelicRegThm}
\end{theorem}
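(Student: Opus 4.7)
The plan is to reduce the entire theorem to a careful Rankin--Selberg style unfolding of the truncated pairing $\int_{[\PGL_2]} \varphi \cdot \Lambda^T \Eis(s, f_0) dg$, which for $\Re s$ large unfolds over $\gp{B}(\F)\gp{Z}(\ag{A})\backslash\GL_2(\ag{A})$ and, via the Iwasawa decomposition, becomes a one-variable integral of the regularizing kernel $a(t, \varphi)$ on $\ag{R}_+$.

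For part (1), I would write $\Eis(s,f_0)$ as the Poincar\'e sum over $\gp{B}(\F)\backslash\GL_2(\F)$, recall that Arthur's truncation subtracts the portion of its constant term with $\Ht > T$ (namely $\Ht^{1/2+s} + \lambda_\F(s-1/2) \Ht^{1/2-s}$ for spherical $f_0$, using $\lambda_\F(s-1/2)=\Lambda_\F(1-2s)/\Lambda_\F(1+2s)$), and unfold each piece against the $\GL_2(\F)$-invariant $\varphi$. The untruncated unfolding produces $\int_0^\infty a(t,\varphi) t^{s-1/2}dt/t$ directly from the definition of $a(t,\varphi)$, and the truncation correction contributes $-\int_T^\infty a(t,\varphi)(t^{s-1/2} + \lambda_\F(s-1/2) t^{-s-1/2}) dt/t$; collapsing the two $t^{s-1/2}$ pieces into $\int_0^T$ yields the identity. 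Part (2) is then purely algebraic: multiplying (1) by $\Lambda_\F(1+2s)$ converts $\Eis\to\Eis^*$ and turns the trailing $\lambda_\F(s-1/2)$ into $\Lambda_\F(1-2s)$; splitting $a = f + (a - f)$ in both integrals and using the antiderivative $\int t^{s+\alpha-1}\log^n t\, dt = \partial_\alpha^n(t^{s+\alpha}/(s+\alpha))$ produces $h_T(s)$ from $\int_0^T f$ and $-h_T(-s)$ from $\int_T^\infty f$, while the $(a-f)$-pieces combine to $R^*(s,\varphi)$ modulo the asserted tail corrections. The resulting identity displays the RHS as manifestly meromorphic on $\ag{C}$: $\int \varphi \Lambda^T \Eis^*$ is entire by the rapid decay of $\Lambda^T \Eis^*$ (Arthur's theorem) together with the functional equation $\Eis^*(s,f_0)=\Eis^*(-s,f_0)$, and the tail integrals are entire by the rapid decay of $a-f$. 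The listed poles of $R(s,\varphi)$ come from the explicit $\Lambda_\F$ and $h_T$ factors on the LHS, and $R^*(s,\varphi)=R^*(-s,\varphi)$ follows from the $s\leftrightarrow -s$ symmetry of the RHS.

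For (3), in the strip $\max_i\Re\alpha_i<\Re s<-\max_i\Re\alpha_i$ (non-empty by $\Re\alpha_i<0$), both $h_T(\pm s)\to 0$ and $\int_T^\infty(a-f)t^{\pm s-1/2}dt/t\to 0$ as $T\to\infty$ (the former because $T^{\pm s+\alpha_i}\to 0$, the latter by the rapid decay of $a-f$); a Siegel-domain estimate shows $\int\varphi\cdot\Eis(s,f_0)$ converges absolutely in the same strip, so dominated convergence applied to $\int\varphi\Lambda^T\Eis(s,f_0)\to\int\varphi\Eis(s,f_0)$, combined with dividing (2) by $\Lambda_\F(1+2s)$ and letting $T\to\infty$, gives $R(s,\varphi)=\int\varphi\Eis(s,f_0)$. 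For (4), the integrability of $\varphi$ renders $\int_1^T a(t,\varphi)dt/t^2$ convergent at infinity, so Lemma~\ref{IntExpBd} forces $\Re\alpha_i<1/2$. To extract $\int\varphi$, I take the residue at $s=1/2$ of the identity in (2): the key inputs are (i) the residue of $\Eis(s,f_0)$ at $s=1/2$, which equals the constant function $\lambda_\F^{(-1)}(0)$ (read off from the constant term via the simple pole of $\lambda_\F$ at $0$), and (ii) the simple poles of $\Lambda_\F(1\pm 2s)$ at $s=1/2$. Matching the Laurent coefficients of $(s-1/2)^{-1}$ on both sides, and observing that all explicit $T$-dependent terms (the tail integrals together with the Eisenstein truncation contribution) cancel identically since neither side depends on $T$, yields the formula $\int\varphi=(\Res_{s=1/2}R(s,\varphi)+c_i\delta_{\alpha_i=-1/2, n_i=0})/\lambda_\F^{(-1)}(0)$; the Kronecker-delta correction arises exactly in the exceptional case where the product $\Lambda_\F(1+2s)h_T(s)$ develops a double pole at $s=1/2$. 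Finally, the volume identity $\Vol([\PGL_2])=\lambda_\F^{(-1)}(0)$ is obtained by specializing to $\varphi\equiv 1$, for which $a(t,1)$ is constant, $R(s,1)\equiv 0$, and only the Kronecker-delta contribution survives.

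The main obstacle is the Laurent-expansion bookkeeping at $s=1/2$ in part (4), especially in the exceptional case $\alpha_i=-1/2, n_i=0$, where both $\Lambda_\F(1+2s)\cdot h_T(s)$ on the LHS and $\Lambda_\F(1+2s)\cdot\Eis(s,f_0)$ on the RHS acquire double poles. The clean way is to expand every factor in Laurent series about $s=1/2$, track which cross-terms feed into the $(s-1/2)^{-1}$ coefficient, and verify that the $T$-dependent contributions cancel identically; this same cancellation is what pins down the Kronecker-delta coefficient as $c_i$.
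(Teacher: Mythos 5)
Your outline follows the same overall route as the paper's proof for all four parts — unfold the truncated pairing to get (1), split $a=f+(a-f)$ and collect antiderivatives to get (2), shift to the convergence strip and let $T\to\infty$ for (3), take residues at $s=1/2$ for (4). However, part (4) contains a genuine reasoning gap and a factual error, and a cleanup is needed in part (1).

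In part (4), the assertion that the $T$-dependent terms ``cancel identically since neither side depends on $T$'' is not valid. After taking the residue at $s=1/2$, both sides of the resulting identity manifestly do depend on $T$: on the left you have $h_T(-\tfrac12)$ and $\Res_{s=1/2}h_T(s)$, and on the right the tail integral $\int_T^\infty(a-f)\,dt/t^2$ together with the truncated Eisenstein contribution split along $\FundD_T$. The paper's argument is not a cancellation but a limit: as $T\to\infty$ one checks that $h_T(-\tfrac12)\to 0$, $\int_T^\infty(a-f)\,dt/t^2\to 0$, $\int_{\FundD_T}\varphi\to\int_{[\PGL_2]}\varphi$, and that the contribution from $\FundD-\FundD_T$ is regular at $s=1/2$ (because $\Eis-\Eis_N$ is of uniformly rapid decay there, by Proposition~\ref{GlobRDEisWhi}), hence has zero residue. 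Only after this limit do the surviving $T$-independent quantities match. Your argument skips the justification of why each $T$-term disappears, which is the substantive content.

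Relatedly, the statement that the Kronecker delta arises ``where the product $\Lambda_\F(1+2s)h_T(s)$ develops a double pole at $s=1/2$'' is incorrect: $\Lambda_\F(1+2s)$ is regular at $s=1/2$ (its poles are at $s=0$ and $s=-1/2$). The delta comes entirely from $\Res_{s=1/2}h_T(s)$. Using the identity $\frac{d^n}{ds^n}\!\left(\frac{T^s}{s}\right)=\frac{(-1)^n n!}{s^{n+1}}+\int_0^{\log T}t^n e^{st}\,dt$, the second summand is holomorphic at $s=0$, so the residue of the $n$-th derivative is $\delta_{n=0}$; hence $\Res_{s=1/2}h_T(s)=\sum_{\alpha_i=-1/2,\,n_i=0}c_i$, a $T$-independent quantity. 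When $\alpha_i=-1/2$ with $n_i>0$, $h_T(s)$ has a pole of order $n_i+1$ but vanishing residue, so no contribution.

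Finally, a small warning on part (1): the split into ``untruncated unfolding plus truncation correction'' is only formal, since $\int_{[\PGL_2]}\varphi\cdot\Eis(s,f_0)$ and $\int_0^\infty a(t,\varphi)t^{s-1/2}dt/t$ separately diverge for $\Re s\gg 1$ when $\varphi$ is merely slowly increasing. The paper avoids this by writing $\Lambda^T\Eis(s,f_0)$ directly as a single Poincar\'e sum of cut-off terms, $\sum_\gamma f_{0,s}(\gamma g)1_{\Ht(\gamma g)\leq T}-\sum_\gamma \Intw f_{0,s}(\gamma g)1_{\Ht(\gamma g)>T}$, and unfolding that. Your version reaches the same formula, but the intermediate objects are not convergent integrals; the rigorous computation must keep the cut-off in place throughout.
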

\begin{proof}
	Since the proofs of (1) to (3) are quite similar to that in \cite{Za82}, we only mention some essential points of them. Only (4) needs more explanation.
	
\noindent (1) This is standard Rankin-Selberg unfolding together with
	$$ \Lambda^T \Eis(s,f_0)(g) = \sum_{\gamma \in \gp{B}(\F) \backslash \GL_2(\F)} f_{0,s}(\gamma g) 1_{\Ht(\gamma g) \leq T} - \sum_{\gamma \in \gp{B}(\F) \backslash \GL_2(\F)} \Intw f_{0,s}(\gamma g) 1_{\Ht(\gamma g) > T}. $$
	
\noindent (2) It follows from rewriting the two terms at the right hand side of (1). For the first term we have
\begin{align*}
	\int_0^T a(t,\varphi) t^{s-\frac{1}{2}} \frac{dt}{t} &= R(s,\varphi) - \int_T^{\infty} (a(t,\varphi)-f(t)) t^{s-\frac{1}{2}} \frac{dt}{t} + \sum_{i=1}^l \frac{c_i}{n_i!} \int_0^T t^{s+\alpha_i} \log^{n_i} t \frac{dt}{t} \\
	&= R(s,\varphi) - \int_T^{\infty} (a(t,\varphi)-f(t)) t^{s-\frac{1}{2}} \frac{dt}{t} + h_T(s).
\end{align*}
For the second term we have a similar equality.

\noindent (3) In the case $\Re \alpha_i < 0$ for all $i$, we let $T \to \infty$, taking into account Proposition \ref{GlobRDEisWhi} and $h_T(s) \to 0$, to get the asserted equation.

\noindent (4) The integrability of $\varphi$ implies $\Re \alpha_i < 1/2$ for all $i$ by Lemma \ref{IntExpBd}. We take residue at $s=1/2$ on both sides of the equation obtained in (2) and divide by $\Lambda_{\F}(1+2s)$ to see for $T \gg 1$
\begin{align*}
	&\quad \Res_{s=\frac{1}{2}} R(s,\varphi) + \Res_{s=\frac{1}{2}} h_T(s) + \lambda_{\F}^{(-1)}(0) h_T(-\frac{1}{2}) \\
	&= \lambda_{\F}^{(-1)}(0) \int_{\FundD_T} \varphi(g)dg + \Res_{s=\frac{1}{2}} \int_{\FundD-\FundD_T} \varphi(g) \left( \Eis(s,f_0)(g) - \Eis(s,f_0)_N(g) \right) dg \\
	&\quad + \lambda_{\F}^{(-1)}(0) \int_T^{\infty} (a(t,\varphi)-f(t)) \frac{dt}{t^2},
\end{align*}
	where $\FundD$ is the standard fundamental domain for $[\PGL_2]$ and $\FundD_T$ is the set of $g \in \FundD$ such that $\Ht(g) \leq T$. It is easy to see that as $T \to \infty$
	$$ h_T(-\frac{1}{2}) \to 0; \quad \int_T^{\infty} (a(t,\varphi)-f(t)) \frac{dt}{t^2} \to 0. $$
	By Proposition \ref{GlobRDEisWhi}, $\Eis(s,f_0)(g) - \Eis(s,f_0)_N(g)$ is of uniformly rapid decay with respect to $\Ht(g), g \in \FundD$ as $s$ remains in a compact neighborhood of $1/2$. Hence
	$$ \int_{\FundD-\FundD_T} \varphi(g) \left( \Eis(s,f_0)(g) - \Eis(s,f_0)_N(g) \right) dg $$
	is holomorphic at $s=1/2$. We compute $\Res_{s=\frac{1}{2}} h_T(s)$ by noting that the second summand of
	$$ \frac{d^n}{d s^n} \left( \frac{T^s}{s} \right) = \frac{(-1)^n n!}{s^{n+1}} + \int_0^{\log T} t^n e^{st} dt $$
is holomorphic at $s=0$, and get
	$$ \Res_{s=\frac{1}{2}} h_T(s) = c_i \delta_{\substack{\alpha_i=-\frac{1}{2} \\ n_i=0}}. $$
\end{proof}
\begin{definition}
	We define the \emph{regularized integral} of a regularizable function $\varphi: [\PGL_2] \to \ag{C}$ as
	$$ \int_{[\PGL_2]}^{\reg} \varphi(g) dg = \frac{1}{\lambda_{\F}^{(-1)}(0)} \left( \Res_{s=\frac{1}{2}} R(s,\varphi) + c_i \delta_{\substack{\alpha_i=-\frac{1}{2} \\ n_i=0}} \right), $$
	where $c_i, \alpha_i, n_i$ are associated with $a(t,\varphi)$ as in Definition \ref{RegFuncRDef}. We call the first term the \emph{principal part} of the regularized integral, the second the \emph{degenerate part} of the regularized integral. The regularized integral is linear and extends the integral on $\Aut^{\reg}(\GL_2) \cap \intL^1(\GL_2, 1)$.
\label{RegIntDef}
\end{definition}

	\subsection{Basic Properties}
	
\begin{definition}
	Let $\omega$ be a unitary character of $\F^{\times} \backslash \ag{A}^{\times}$. Let $\varphi$ be a smooth function on $\GL_2(\F) \backslash \GL_2(\ag{A})$ with central character $\omega$. We call $\varphi$ \emph{finitely regularizable} if there exist characters $\chi_i: \F^{\times} \backslash \ag{A}^{\times} \to \ag{C}^{(1)}$, $\alpha_i \in \ag{C}, n_i \in \ag{N}$ and smooth functions $f_i \in \Ind_{\gp{B}(\ag{A}) \cap \gp{K}}^{\gp{K}} (\chi_i, \omega \chi_i^{-1})$ for $1 \leq i \leq l$, such that for any $M \gg 1$
	$$ \varphi(n(x)a(y)k) = \varphi_{\gp{N}}^*(n(x)a(y)k) + O(\norm[y]_{\ag{A}}^{-M}), \text{ as } \norm[y]_{\ag{A}} \to \infty, $$
	where we have written the \emph{essential constant term}
	$$ \varphi_{\gp{N}}^*(n(x)a(y)k)=\varphi_{\gp{N}}^*(a(y)k)=\sum_{i=1}^l \chi_i(y) \norm[y]_{\ag{A}}^{\frac{1}{2}+\alpha_i} \log^{n_i} \norm[y]_{\ag{A}} f_i(k). $$
	In this case, we call $\Ex(\varphi)=\{ \chi_i \norm^{\frac{1}{2}+\alpha_i}: 1 \leq i \leq l \}$ the \emph{exponent set} of $\varphi$, and define
	$$ \Ex^+(\varphi) = \{ \chi_i \norm^{\frac{1}{2}+\alpha_i} \in \Ex(\varphi): \Re \alpha_i \geq 0 \}; \quad \Ex^-(\varphi) = \{ \chi_i \norm^{\frac{1}{2}+\alpha_i} \in \Ex(\varphi): \Re \alpha_i < 0 \}. $$
	The space of finitely regularizable functions with central character $\omega$ is denoted by $\Aut^{\freg}(\GL_2,\omega)$.
\label{FinRegFuncDef}
\end{definition}
\begin{remark}
	In the case $\omega=1$, a finitely regularizable is smooth and regularizable in the sense of Definition \ref{RegFuncDef}. But a smooth regularizable function doesn't need to be finitely regularizable.
\end{remark}
\begin{definition}
	In the case $\omega^{-1}\xi^2(t)=\norm[t]_{\ag{A}}^{i\mu}$ for some $\mu \in \ag{R}$, we introduce the \emph{regularizing Eisenstein series} for $f \in V_{\xi,\omega\xi^{-1}}^{\infty}$ and $s$ in a neighborhood of $(1-i\mu)/2$
	$$ \Eis^{\reg}(s,f)(g) = \Eis(s,f)(g) - \frac{\Lambda_{\F}(1-2s-i\mu)}{\Lambda_{\F}(1+2s+i\mu)} \int_{\gp{K}} f(\kappa) d\kappa \cdot \chi^{-1}(\det g) \norm[\det g]_{\ag{A}}^{\frac{i\mu}{2}}. $$
	It is holomorphic at $s=(1-i\mu)/2$.
\label{RegEisDef}
\end{definition}
\begin{remark}
	Let $e_0 \in \Res_{\gp{K}}^{\GL_2(\ag{A})} \pi(1,1)$ be the spherical function taking value $1$ on $\gp{K}$. Let $T(\vp)$ denote the order $1$ Hecke operator at a finite place $\vp$ with cardinality of the residue field $q$. For $s \neq 0$, we have
	$$ T(\vp). \eis(\frac{1}{2}+s, e_0) = \lambda_{\vp}(s) \eis(\frac{1}{2}+s, e_0), \quad \lambda_{\vp}(s) = \frac{q^{1/2+s} + q^{-(1/2+s)}}{q^{1/2}+q^{-1/2}}; $$
	$$ \text{or} \quad T(\vp). \eis^{\reg}(\frac{1}{2}+s, e_0) = \lambda_{\vp}(s) \eis^{\reg}(\frac{1}{2}+s, e_0) + (\lambda_{\vp}(s)-1) \cdot \lambda_{\F}(s). $$
	The pole of $\lambda_{\F}(s)$, defined in (\ref{lambdaFDef}), at $s=0$ is compensated by the zero of $\lambda_{\vp}(s)-1$, hence we get
	$$ T(\vp). \eis^{\reg}(\frac{1}{2}, e_0) = \eis^{\reg}(\frac{1}{2}, e_0) + \lambda_{\vp}^{(1)}(0) \cdot \lambda_{\F}^{(-1)}(0). $$
	In particular, $(T(\vp)-1)^2$ annihilates $\eis^{\reg}(1/2,e_0)$.
\label{KillRegEis}
\end{remark}
\begin{remark}{(Violation of Covariance)}
	Unlike the usual Eisenstein series, the map
	$$ \pi^{\infty}(\xi \norm^{(1-i\mu)/2}, \omega\xi^{-1} \norm^{-(1-i\mu)/2}) \to \Cont^{\infty}(\GL_2, \omega), \quad f_{(1-i\mu)/2} \mapsto \eis^{\reg}((1-i\mu)/2, f) $$
is \emph{NOT} $\GL_2(\ag{A})$-covariant. But it's still $\gp{K}$-covariant.
\end{remark}
\begin{remark}
	By Proposition \ref{GlobRDEisWhi}, \ref{GlobRDEisCst} and the definition of cusp forms, $\Aut^{\freg}(\GL_2,\omega)$ contains:
\begin{itemize}
	\item $\chi(\det g)$ for quasi-characters $\chi$ such that $\chi^2=\omega$;
	\item smooth cusp forms, i.e., $\Aut_{\cusp}^{\infty}(\GL_2,\omega)$;
	\item $\frac{\partial^n}{\partial s^n}\Eis(s,f)$ for some $n \in \ag{N}$ and smooth $f \in \Ind_{\gp{B}(\ag{A})}^{\GL_2(\ag{A})}(\xi, \omega\xi^{-1})$ with $\Re s \geq 0$ and $s \neq -i\mu/2, 1/2-i\mu/2$ for $\mu=\mu(\omega^{-1}\xi^2)$ if $\omega\xi^{-2}$ is trivial on $\ag{A}^{(1)}$;
	\item $\frac{\partial^n}{\partial s^n} \mid_{s=-i\frac{\mu}{2}} (s+i\frac{\mu}{2})\Eis^*(s,f)$ for some $n \in \ag{N}$ and smooth $f$, $\mu$ the same as above;
	\item $\frac{\partial^n}{\partial s^n}\Eis^{\reg}(\frac{1-i\mu}{2},f)$ for some $n \in \ag{N}$ and $f,\mu$ the same as above;
	\item $\varphi=\Pi_{j=1}^l \varphi_j$ where $\varphi_j \in \Aut^{\freg}(\GL_2,\omega_j)$ with $\omega = \Pi_j \omega_j$.
\end{itemize}
	In the last case, we have $ \varphi_{\gp{N}}^* = \Pi_j \varphi_{j,\gp{N}}^* $. Note that we have excluded $\Eis(s,f)$ for $\Re s < 0$. But they are actually present since they are related to the case $\Re s > 0$ by functional equation.
\end{remark}
\begin{definition}
	Let $\omega$ be a unitary character of $\F^{\times} \backslash \ag{A}^{\times}$. The $\intL^2$-\emph{residual space} of central character $\omega$, denoted by $\Reis(\GL_2,\omega)$, is the direct sum of the vector spaces $\Reis^+(\GL_2,\omega)$ resp. $\Reis^{\reg}(\GL_2,\omega)$ spanned by functions
	$$ \frac{\partial^n}{\partial s^n} \Eis(s,f), \text{if } s \neq \frac{1-i\mu}{2} \quad \text{resp.} \quad \frac{\partial^n}{\partial s^n} \Eis^{\reg}(\frac{1-i\mu}{2},f) $$
	where $s \in \ag{C}, \Re s > 0$ and for some unitary character $\xi$ of $\F^{\times} \backslash \ag{A}^{\times}$, $f \in V_{\xi,\omega\xi^{-1}}^{\infty}$ and $\mu$ as above.
\end{definition}
\noindent We have the following simple fact of which we omit the proof.
\begin{proposition}
	$\Aut^{\freg}(\GL_2,\omega)$ is stable under the right translation by $\GL_2(\ag{A})$. Moreover, for any $\varphi \in \Aut^{\freg}(\GL_2,\omega)$ and any $g \in \GL_2(\ag{A})$, their sets of exponents are the same:
	$$ \Ex(g.\varphi) = \Ex(\varphi). $$
\label{FregStab}
\end{proposition}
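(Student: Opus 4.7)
The plan is to combine the Iwasawa decomposition with the defining asymptotic expansion of $\varphi$. For any fixed $g \in \GL_2(\ag{A})$, write the $k$-dependent Iwasawa decomposition
$$ kg = n(x(k,g)) a(y(k,g)) k'(k,g), \quad k \in \gp{K}, $$
with $x(k,g) \in \ag{A}$, $y(k,g) \in \ag{A}^{\times}$, $k'(k,g) \in \gp{K}$ continuous in $k$, and with $\norm[y(k,g)]_{\ag{A}}$ bounded above and below by positive constants depending only on $g$ (since $\gp{K}$ is compact). Pushing $a(y)$ past $n(x(k,g))$ yields
$$ n(x) a(y) k g = n(x + y \cdot x(k,g)) a(y \cdot y(k,g)) k'(k,g), $$
whose height is comparable to $\norm[y]_{\ag{A}}$ uniformly in $k$, so the error $O(\norm[y]_{\ag{A}}^{-M})$ in the expansion of $\varphi$ pulls back to an error of the same order for $g.\varphi$.

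Next I would substitute this into $\varphi_{\gp{N}}^*$ and expand each factor using $\log\norm[y \cdot y(k,g)]_{\ag{A}} = \log\norm[y]_{\ag{A}} + \log\norm[y(k,g)]_{\ag{A}}$ via the binomial theorem, obtaining
$$ (g.\varphi)_{\gp{N}}^*(a(y)k) = \sum_{i=1}^l \chi_i(y) \norm[y]_{\ag{A}}^{\frac{1}{2}+\alpha_i} \sum_{j=0}^{n_i} \log^{j}\norm[y]_{\ag{A}} \cdot \tilde{f}_{i,j}(k), $$
with
$$ \tilde{f}_{i,j}(k) = \binom{n_i}{j} \chi_i(y(k,g)) \norm[y(k,g)]_{\ag{A}}^{\frac{1}{2}+\alpha_i} \log^{n_i-j}\norm[y(k,g)]_{\ag{A}} f_i(k'(k,g)). $$
A direct check of the transformation law under $\gp{B}(\ag{A}) \cap \gp{K}$, using the left-equivariance of the Iwasawa data $(x(k,g), y(k,g), k'(k,g))$ together with that of $f_i$, shows each $\tilde{f}_{i,j}$ still lies in $\Ind_{\gp{B}(\ag{A}) \cap \gp{K}}^{\gp{K}}(\chi_i, \omega\chi_i^{-1})$. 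This establishes $g.\varphi \in \Aut^{\freg}(\GL_2,\omega)$ together with the inclusion $\Ex(g.\varphi) \subseteq \Ex(\varphi)$.

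To get equality I would apply the same inclusion to the pair $(g^{-1}, g.\varphi)$, which yields $\Ex(\varphi) = \Ex(g^{-1}.(g.\varphi)) \subseteq \Ex(g.\varphi)$ and hence $\Ex(g.\varphi) = \Ex(\varphi)$. The main obstacle is the book-keeping: for a given exponent $\chi_i \norm^{\frac{1}{2}+\alpha_i}$ the right translation by $g$ redistributes logarithmic coefficients across all levels $0 \le j \le n_i$, and one must confirm these pieces reassemble into bona fide elements of the correct induced representation. Once this verification is done, both stability of $\Aut^{\freg}(\GL_2,\omega)$ and the invariance of $\Ex$ follow formally from the Iwasawa decomposition.
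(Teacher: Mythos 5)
Your proposal is correct and follows essentially the same route as the paper's proof: both rely on the Iwasawa decomposition of $kg$, the binomial expansion of $\log\norm[y\cdot y(k,g)]_{\ag{A}}$, and the observation that the resulting coefficient functions are well-defined smooth elements of the right induced representation on $\gp{K}$. The only cosmetic difference is that the paper phrases the computation in terms of derivatives of flat sections and directly reads off $\Ex(g.\varphi)=\Ex(\varphi)$, whereas you reorganize the same calculation around $\varphi_{\gp{N}}^*$ and obtain the equality by also applying the inclusion to $g^{-1}$ — a tidy way to avoid checking that the top logarithmic coefficient does not vanish.
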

\begin{proof}
	Take $f \in \Res_{\gp{K}}^{\GL_2(\ag{A})} \pi(\xi, \omega\xi^{-1})$ and the flat section $f_s \in \pi(\xi \norm^s, \omega\xi^{-1} \norm^{-s})$ associated to it. It suffices to show that for any $s_0 \in \ag{C}, n \in \ag{N}$ and fixed $g \in \GL_2(\ag{A})$, the right translate by $g$ of the partial derivative of this flat section, as a function on $\GL_2(\ag{A})$
	$$ x \mapsto g.f_{s_0}^{(n)}(x) := \frac{\partial^n}{\partial s^n} \mid_{s=s_0} f_s(xg) $$
is a linear combination of such functions. The above function is clearly left invariant by $\gp{N}(\ag{A})$, of central character $\omega$. Taking $x=a(y)\kappa$ for $y \in \ag{A}^{\times}, \kappa \in \gp{K}$ and writing
	$$ \kappa g = z' n' a(y') \kappa', \quad \text{for} \quad z' \in \gp{Z}(\ag{A}), n' \in \gp{N}(\ag{A}), y' \in \ag{A}^{\times}, \kappa' \in \gp{K}, $$
where $z',n',y',\kappa'$ are viewed as functions in $\kappa$, we obtain
	$$ \frac{\partial^n}{\partial s^n} \mid_{s=s_0} f_s(a(y)\kappa g) = \sum_{k=0}^n \binom{n}{k} \xi(y) \norm[y]_{\ag{A}}^{\frac{1}{2}+s_0} (\log \norm[y]_{\ag{A}})^k \cdot \norm[y']_{\ag{A}}^{\frac{1}{2}+s_0} (\log \norm[y']_{\ag{A}})^{n-k} \omega(z') \xi(y') f(\kappa'). $$
	Although $z',n',y',\kappa'$ are not uniquely determined by $\kappa$, both $\norm[y']_{\ag{A}}$ and $\omega(z') \xi(y') f(\kappa')$ are, and define smooth functions on $\gp{K}$. Moreover, the function
	$$ f_k(\kappa) := \norm[y']_{\ag{A}}^{\frac{1}{2}+s_0} (\log \norm[y']_{\ag{A}})^{n-k} \omega(z') \xi(y') f(\kappa') \in \Res_{\gp{K}}^{\GL_2(\ag{A})} \pi(\xi, \omega\xi^{-1}). $$
	Hence we get the relation
	$$ \frac{\partial^n}{\partial s^n} \mid_{s=s_0} f_s = \sum_{k=0}^n \binom{n}{k} \frac{\partial^k}{\partial s^k} \mid_{s=s_0} f_{k,s} $$
and conclude.
\end{proof}
\begin{proposition}
	The vector space $\Reis^+(\GL_2, \omega)$ is stable under the right translation by $\GL_2(\ag{A})$.
\label{Reis+Stab}
\end{proposition}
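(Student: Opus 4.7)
The plan is to reduce to the calculation already performed in Proposition \ref{FregStab} and show that the right translate of a spanning derivative is a linear combination of spanning derivatives with the \emph{same} inducing character $\xi$ and the \emph{same} evaluation point $s_0$. A typical element spanning $\Reis^+(\GL_2,\omega)$ is $\varphi = \frac{\partial^n}{\partial s^n}\mid_{s=s_0}\Eis(s,f_s)$ with $f \in V_{\xi,\omega\xi^{-1}}^{\infty}$, $f_s$ its flat section, $\Re s_0 > 0$, and $s_0 \neq (1-i\mu)/2$. Commuting right translation by $g \in \GL_2(\ag{A})$ with the $s$-derivative and the Eisenstein summation rewrites $g.\varphi = \frac{\partial^n}{\partial s^n}\mid_{s=s_0}\Eis(s,g.f_s)$, where $g.f_s \in \pi(\xi\norm^{1/2+s},\omega\xi^{-1}\norm^{-1/2-s})$ is not a flat section but is determined by its restriction to $\gp{K}$.

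The next step is a Taylor expansion of that restriction. The explicit formula derived in the proof of Proposition \ref{FregStab} (with $\kappa g = z'n'a(y')\kappa'$) factors as $g.f_s(\kappa) = g.f_{s_0}(\kappa)\cdot\exp\bigl((s-s_0)\log\norm[y'(\kappa)]_{\ag{A}}\bigr)$ for $\kappa \in \gp{K}$, yielding
$$g.f_s(\kappa) = \sum_{j \geq 0} (s-s_0)^j G_j(\kappa), \qquad G_j(\kappa) := \frac{(\log\norm[y'(\kappa)]_{\ag{A}})^j}{j!}\cdot g.f_{s_0}(\kappa).$$
Since $\norm[y_1]_{\ag{A}}=1$ for every $a(y_1) \in \gp{K}\cap\gp{B}$, the factor $\log\norm[y'(\kappa)]_{\ag{A}}$ is $\gp{K}\cap\gp{B}$-invariant, so each $G_j$ remains in $V_{\xi,\omega\xi^{-1}}^{\infty}$. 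Writing $G_{j,s}$ for its flat section and truncating at order $n$, one has $g.f_s = \sum_{j=0}^n (s-s_0)^j G_{j,s} + R_{n,s}$ in the induced representation, with $R_{n,s}$ holomorphic in $s$ and vanishing to order $n+1$ at $s_0$. Termwise Eisenstein summation, then meromorphic continuation to a punctured neighbourhood of $s_0$, combined with the Leibniz rule at $s=s_0$, delivers
$$g.\varphi = \sum_{j=0}^n \binom{n}{j}\, j!\cdot \frac{\partial^{n-j}}{\partial s^{n-j}}\mid_{s=s_0}\Eis(s,G_{j,s}),$$
the contribution of $R_{n,s}$ vanishing because $\Eis(s,R_{n,s})$ still vanishes to order $n+1$ at $s_0$. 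Since $\xi$, $\omega$ and $\mu$ are unchanged, each summand lies in $\Reis^+(\GL_2,\omega)$, hence so does $g.\varphi$.

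The only nontrivial step is the justification of termwise Eisenstein summation applied to the Taylor expansion and of its meromorphic continuation to $s_0$; this reduces to treating $s \mapsto G_{j,s}$ and $s \mapsto R_{n,s}$ as holomorphic families of smooth sections and invoking the standard meromorphic continuation of Eisenstein series, with the hypothesis $s_0 \neq (1-i\mu)/2$ exactly keeping $s_0$ in the holomorphic locus of each $\Eis(s,G_{j,s})$.
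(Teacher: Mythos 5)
Your proof is correct and reaches the same conclusion, but by a noticeably different route.  The paper expands $g.f_s$ in a $\gp{K}$\mbox{-}isotypic basis $e_{\vec{k},s}$ with rapidly-decaying coefficients $a_{\vec{k}}(s,g)$, checks compatibility with the intertwining operators $\Intw_s$ on each $\gp{K}$\mbox{-}type (using the bound $\extnorm{\mu_{\vec{k}}^{(n)}(s)}\ll\lambda_{\vec{k}}^A$), deduces the identity at the level of constant terms, and only then upgrades to the full Eisenstein series by orthogonality to cusp forms.  You instead Taylor-expand $g.f_s$ in $s$ around $s_0$ using the explicit cocycle from Proposition~\ref{FregStab}, truncate at order $n$, and apply $\Eis$ to the finitely many polynomial terms plus the remainder.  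The two expansions produce the same sections: your $G_j$ and the paper's $f_k$ satisfy $f_k=(n-k)!\,G_{n-k}$, exactly as anticipated by the paper's own remark that the $f_k$ in the proofs of Propositions~\ref{FregStab} and~\ref{Reis+Stab} coincide.  What your approach buys is conceptual transparency: one sees immediately that $g.\varphi$ is a weighted sum of lower-order derivatives of Eisenstein series with the same $\xi,\omega,s_0$.  What the paper's approach buys is that it never has to assert that $\Eis(s,\cdot)$ applied to a merely holomorphic (non-flat) family of smooth sections is meromorphic in $s$ with the right order of vanishing; it sidesteps this by matching constant terms, which are computable term-by-term in $\gp{K}$\mbox{-}type.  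Your step ``$\Eis(s,R_{n,s})$ still vanishes to order $n+1$'' is where this issue surfaces: you need to factor $R_{n,s}=(s-s_0)^{n+1}S_s$ with $S_s$ a holomorphic family of smooth sections and to know that $\Eis(s,S_s)$ is holomorphic at $s_0$, which in turn rests on a $\gp{K}$\mbox{-}isotypic expansion of $S_s$ together with rapid decay of the coefficients and polynomial growth of the individual $\gp{K}$\mbox{-}finite Eisenstein series --- i.e.\ exactly the estimates the paper's proof makes explicit.  So the two arguments are equivalent in substance; yours is cleaner but outsources the analytic content, the paper's is more manual but self-contained.  If you keep your write-up, it would be worth spelling out the factorization $R_{n,s}=(s-s_0)^{n+1}S_s$ and the holomorphy of $\Eis(s,S_s)$ at $s_0$ rather than compressing it into ``the standard meromorphic continuation.''
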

\begin{proof}
	Take a flat section $f_s$ as in the proof of Proposition \ref{FregStab}. Fix $s_0 \in \ag{C}, n \in \ag{N}, g \in \GL_2(\ag{A})$ with $\Re s_0 > 0$. Let $e_{\vec{k}}$ be an orthonormal $\gp{K}$-isotypic basis of $\Res_{\gp{K}}^{\GL_2(\ag{A})} \pi(\xi, \omega\xi^{-1})$. There is an orthonormal $\gp{K}$-isotypic basis $\tilde{e}_{\vec{k}}$ of $\Res_{\gp{K}}^{\GL_2(\ag{A})} \pi(\omega\xi^{-1}, \xi)$ and for any $n \in \ag{N}$ there is $A > 0$ such that (c.f. \cite[\S 3]{Wu5})
	$$ \Intw_s e_{\vec{k},s} = \mu_{\vec{k}}(s) \tilde{e}_{\vec{k},-s}, \quad \extnorm{\mu_{\vec{k}}^{(n)}(s)} \ll \lambda_{\vec{k}}^A $$
for some meromorphic function $\mu_{\vec{k}}(s)$, regular in $\Re s > 0, s \neq 1/2$. Here $\lambda_{\vec{k}}$ is the eigenvalue of the Laplacian on $\gp{K}_{\infty}$ for $e_{\vec{k}}$, and the bound is uniform for $s$ lying in any  compact subset of the regular region. We also have an expansion
	$$ g.f_s = \sideset{}{_{\vec{k}}} \sum a_{\vec{k}}(s,g) e_{\vec{k},s}, $$
for some functions $a_{\vec{k}}(s,g)$ holomorphic in $s$ and smooth in $g$, since
	$$ a_{\vec{k}}(s,g) = \int_{\gp{K}} f_s(\kappa g) \overline{e_{\vec{k}}(\kappa)} d\kappa. $$
	Moreover, we have $ a_{\vec{k}}(s,g) \ll_{g,f,N} \lambda_{\vec{k}}^{-N} $ for any $N \in \ag{N}$, uniformly for $s$ lying in any compact neighborhood of $s_0$. It follows that for any $l \in \ag{N}$
	$$ a_{\vec{k}}^{(l)}(s,g) := \frac{\partial^l}{\partial s^l} \mid_{s=s_0} a_{\vec{k}}(s,g) \ll_{g,f,N} \lambda_{\vec{k}}^{-N}. $$
	We thus get
	$$ g.f_{s_0}^{(n)} = \sideset{}{_{k=0}^n} \sum \sideset{}{_{\vec{k}}} \sum \binom{n}{k} a_{\vec{k}}^{(n-k)}(s_0,g) e_{\vec{k},s_0}^{(k)}. $$
	The inner sum coincides with the value at $s=s_0$ of a flat section $f_{k,s}^{(k)}$ defined by
	$$ f_k = \binom{n}{k} \sideset{}{_{\vec{k}}} \sum a_{\vec{k}}^{(n-k)}(s_0,g) e_{\vec{k}}, $$
which is smooth. We can also verify that
\begin{align*}
	\sideset{}{_{k=0}^n} \sum \frac{\partial^k}{\partial s^k} \mid_{s=s_0} \Intw_s f_{k,s} &= \sideset{}{_{k=0}^n} \sum \sideset{}{_{\vec{k}}} \sum \binom{n}{k} a_{\vec{k}}^{(n-k)}(s_0,g) \frac{\partial^k}{\partial s^k} \mid_{s=s_0} (\mu_{\vec{k}}(s) \tilde{e}_{\vec{k},-s}) \\
	&= \sideset{}{_{\vec{k}}} \sum \frac{\partial^n}{\partial s^n} \mid_{s=s_0} (a_{\vec{k}}(s,g) \mu_{\vec{k}}(s) \tilde{e}_{\vec{k},-s}) = \frac{\partial^n}{\partial s^n} \mid_{s=s_0} \Intw_s f_s,
\end{align*}
	hence we deduce that
	$$ g.\eisCst^{(n)}(s_0,f) = \sideset{}{_{k=0}^n} \sum \eisCst^{(k)}(s_0,f_k). $$
	We conclude by
	$$ g.\eis^{(n)}(s_0,f) = \sideset{}{_{k=0}^n} \sum \eis^{(k)}(s_0,f_k) $$
since both sides are orthogonal to the cusp forms and have the same constant term.
\end{proof}
\begin{remark}
	$f_k$ constructed in both proofs of Proposition \ref{FregStab} and \ref{Reis+Stab} coincide with each other.
\end{remark}
\begin{remark}
	The subspace $\Reis^{\reg}(\GL_2,\omega)$ is stable under right translation by $\gp{K}$ but not by $\GL_2(\ag{A})$. Take $\omega = 1$ for example. Choose a finite place $\vp_0$ with uniformizer $\varpi_0$. Let $e_0 \in \Res_{\gp{K}}^{\GL_2(\ag{A})} \pi(1,1)$ be the spherical function taking value $1$ on $\gp{K}$. Let $e_1$ be defined by $e_{1,v} = e_{0,v}$ for $v \neq \vp_0$; while $e_{1,\vp_0}$ is unitary, $\gp{K}_0[\vp_0]$-invariant and orthogonal to $e_{0,\vp_0}$. For example, if $q = \Nr(\vp_0)$, we can take
	$$ e_{1,\vp_0} = \sqrt{1+q^{-1}} \cdot \left\{ \sqrt{q+1} \cdot 1_{\gp{K}_0[\vp]} - \frac{1}{\sqrt{q+1}} \cdot 1_{\gp{K}_{\vp}} \right\}. $$
	It can be computed, writing $\tilde{\lambda}_{\F}(s) = \lambda_{\F}(s-1/2)$, that
	$$ \Intw_s e_{0,s} = \tilde{\lambda}_{\F}(s) e_{0,-s}, \quad \Intw_s e_{1,s} = \tilde{\lambda}_{\F}(s) \mu_1(s) e_{0,-s}, \quad \eisCst^{\reg}(1/2,e_0) = e_{0,1/2} - \lambda_{\F}^{(-1)}(0) e_{0,-1/2}^{(1)}, $$
	$$ a(\varpi_0^{-1}).e_{0,s} = c_1(s) e_{1,s} + c_0(s) e_{0,s}; \quad \text{where} $$
	$$ \mu_1(s) = q^{-2s} \frac{1-q^{-(1-2s)}}{1+q^{-(1+2s)}}, \quad c_1(s) = \frac{q^{s+1/2} - q^{-(s+1/2)}}{q^{1/2}+q^{-1/2}}, \quad c_0(s) = \frac{q^s+q^{-s}}{q^{1/2}+q^{-1/2}}. $$
	We deduce that
	$$ \varphi := a(\varpi_0^{-1}).\eis^{\reg}(1/2,e_0) - c_0(1/2) \eis^{\reg}(1/2,e_0) - c_1(1/2) \eis^{\reg}(1/2,e_1) $$
has constant term
	$$ \varphi_{\gp{N}} = \lambda_{\F}^{(-1)}(0) (c_1^{(1)}(-1/2) - c_1(1/2) \mu_1^{(1)}(1/2)) \cdot e_{1,-1/2} + \lambda_{\F}^{(-1)}(0) c_0^{(1)}(-1/2) e_{0,-1/2} \neq 0, $$
hence $0 \neq \tilde{\varphi} \notin \Reis(\GL_2,1)$. Consequently, $a(\varpi_{\vp}^{-1}).\eis^{\reg}(1/2,e_0) \notin \Reis(\GL_2,1)$. We also deduce that
	$$ \int_{[\PGL_2]}^{\reg} a(\varpi_0^{-1}).\eis^{\reg}(1/2,e_0) = \int_{[\PGL_2]}^{\reg} \varphi = \lambda_{\F}^{(-1)}(0) c_0^{(1)}(-1/2) \neq 0, $$
hence the regularized integral is in general not $\PGL_2(\ag{A})$-invariant (c.f. Proposition \ref{RegGInv} (1) below).
\label{RegIntGinvLoss}
\end{remark}
\begin{proposition}
	Let $\varphi \in \Aut^{\freg}(\GL_2,\omega)$.
\begin{itemize}
	\item[(1)] We can always find (not unique) $\Reis(\varphi) \in \Reis(\GL_2,\omega)$ such that $\varphi - \Reis(\varphi) \in \intL^1(\GL_2, \omega)$.
	\item[(2)] If for any $\chi \in \Ex(\varphi)$, we have $\Re \chi \neq 1/2$, then there is a unique function $\Reis(\varphi) \in \Reis(\GL_2,\omega)$ such that $\varphi - \Reis(\varphi) \in \intL^2(\GL_2, \omega)$. Moreover, for any $X$ in the universal enveloping algebra of $\GL_2(\ag{A}_{\infty})$, we have $X.\varphi - X.\Reis(\varphi) \in \intL^2(\GL_2, \omega)$, i.e., $\Reis(X.\varphi) = X.\Reis(\varphi)$.
\end{itemize}
\label{ResSp}
\end{proposition}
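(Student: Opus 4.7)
The plan is to construct $\Reis(\varphi)$ by matching, term by term, the essential constant term $\varphi_{\gp N}^* = \sum_i \chi_i(y)|y|_{\ag A}^{1/2+\alpha_i}(\log|y|_{\ag A})^{n_i} f_i(k)$ against suitable Eisenstein derivatives. For each $i$ for which one wishes to kill the $i$-th summand, extend $f_i$ to a flat section of $\pi(\chi_i, \omega\chi_i^{-1})$ and put
\[
h_i = \frac{\partial^{n_i}}{\partial s^{n_i}} \Eis(s, f_i)\Big|_{s=\alpha_i} \quad\text{or}\quad h_i = \frac{\partial^{n_i}}{\partial s^{n_i}} \Eis^{\reg}(s, f_i)\Big|_{s=\alpha_i},
\]
the latter being used exactly in the singular subcase $\alpha_i = (1-i\mu_i)/2$ with $\omega^{-1}\chi_i^2 = \|\cdot\|^{i\mu_i}$. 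From the standard Iwasawa constant-term formula (and its regularized analogue via Definition \ref{RegEisDef}), the essential constant term of $h_i$ reproduces the $i$-th summand of $\varphi_{\gp N}^*$, plus an intertwining contribution of exponent $(\omega\chi_i^{-1})\|\cdot\|^{1/2-\alpha_i}$ and, in the singular subcase, an additional $\tilde\chi$-term of real exponent $-1/2$; meanwhile Proposition \ref{GlobRDEisWhi} controls the Whittaker part by rapid decay, and Proposition \ref{FregStab} places each $h_i$ in $\Aut^{\freg}(\GL_2,\omega)$.

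For (1), set $\Reis(\varphi) = \sum_{\Re\alpha_i \geq 1/2} h_i \in \Reis(\GL_2,\omega)$. All intertwining and $\tilde\chi$ contributions introduced have real exponent part $\leq 1/2 - \Re\alpha_i \leq 0$, while the prescribed summands with $\Re\alpha_i \geq 1/2$ cancel exactly. Hence $(\varphi-\Reis(\varphi))_{\gp N}^*$ retains only exponents of real part strictly less than $1/2$; combining Lemma \ref{IntExpBd} (applied to the regularizing kernel), Corollary \ref{SIncBdAt0} at the origin, and rapid decay of the Whittaker part yields $\varphi - \Reis(\varphi) \in \intL^1([\PGL_2], \omega)$. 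Non-uniqueness is evident: one may further add any element of $\Reis$ whose leading growth is already below the $\intL^1$ threshold.

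For (2), form the same sum but over $\Re\alpha_i > 0$; the hypothesis $\Re\chi \neq 1/2$ forbids $\Re\alpha_i = 0$, so after subtraction only exponents of strictly negative real part survive, placing $\varphi - \Reis(\varphi)$ in $\intL^2$. For uniqueness, any $h = \Reis_1 - \Reis_2 \in \Reis \cap \intL^2$ has essential constant term a finite sum of leading monomials $\xi_j(y)|y|^{1/2+s_j}(\log|y|)^{m} g_j(k)$ with $\Re s_j > 0$ together with subordinate duals and possible $\tilde\chi$-contributions; the leading monomials are linearly independent over the $\gp K$-isotypic decomposition and each fails to be $\intL^2$, so the $\intL^2$ assumption forces every leading coefficient to vanish, whence every Eisenstein/regularized-Eisenstein contribution drops out and $h = 0$. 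For commutation with $X \in U(\lieg_\infty)$, Proposition \ref{Reis+Stab} yields $X.\Reis^+ \subseteq \Reis^+$; for $\Reis^{\reg}$, differentiate Definition \ref{RegEisDef}, use that $\tilde\chi$ is one-dimensional so $X.\tilde\chi = c_X \tilde\chi$ for a scalar $c_X$, and invoke the identity $\langle X.f, 1\rangle_{\gp K} = c_X \langle f, 1\rangle_{\gp K}$ (obtained by applying $X$ to the residue formula $\Res_{s=(1-i\mu)/2}\Eis(s,f) = \text{const}\cdot \langle f,1\rangle_{\gp K}\tilde\chi$ and using that $X$ commutes with taking residues) to deduce $X.\Eis^{\reg}(s,f) = \Eis^{\reg}(s, X.f)$; hence $X.\Reis(\varphi) \in \Reis$. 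Because $X.(\varphi - \Reis(\varphi))$ preserves the negative-real-part growth (differentiation adds at most log factors, still $\Re < 0$) and rapid Whittaker decay, it lies in $\intL^2$, and the uniqueness just established forces $\Reis(X.\varphi) = X.\Reis(\varphi)$.

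The main delicacy lies in the commutation statement: since Remark \ref{RegIntGinvLoss} shows $\Reis^{\reg}$ is \emph{not} stable under the full $\GL_2(\ag A)$-action, it is a priori unclear why the archimedean infinitesimal action should preserve it; the rigid identity $\langle X.f,1\rangle_{\gp K} = c_X\langle f,1\rangle_{\gp K}$ extracted from residue-commutation is precisely what rescues covariance in this restricted archimedean setting.
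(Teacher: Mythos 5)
Your proof is correct and follows essentially the same structure as the paper's: construct $\Reis(\varphi)$ by subtracting Eisenstein (or regularized Eisenstein) derivatives matching the leading exponents, show the remainder is $\intL^1$ or $\intL^2$ by inspecting the essential constant term and invoking the rapid decay of the Whittaker part (Proposition \ref{GlobRDEisWhi}), and deduce uniqueness from $\Reis(\GL_2,\omega)\cap\intL^2=\{0\}$. One minor cosmetic difference: for part (1) you truncate at $\Re\alpha_i\geq 1/2$, whereas the paper (equation (\ref{ReisDef})) uses the single choice $\Re\alpha_j>0$ for both (1) and (2); both cutoffs land inside $\Reis(\GL_2,\omega)$ and leave only sub-$\intL^1$ exponents, consistent with the stated non-uniqueness.

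The more substantive difference lies in the ``moreover'' clause. The paper's proof only records the fact that $X\in U(\lieg_\infty)$ does not raise the real part of exponents — which gives $X.(\varphi-\Reis(\varphi))\in\intL^2$ — and leaves entirely implicit that $X.\Reis(\varphi)\in\Reis(\GL_2,\omega)$, which is needed before invoking uniqueness to conclude $\Reis(X.\varphi)=X.\Reis(\varphi)$. You correctly identify this as the delicate point: since Remark \ref{RegIntGinvLoss} shows $\Reis^{\reg}$ is not stable under the full $\GL_2(\ag{A})$-action, the infinitesimal stability must be checked. Your residue argument establishing $\langle X.f,1\rangle_{\gp{K}}=c_X\langle f,1\rangle_{\gp{K}}$, and hence $X.\Eis^{\reg}(s_0,f)=\Eis^{\reg}(s_0,X.f)$, is a genuine and valid completion of the argument; an equivalent shortcut is to note that $X.\Eis^{\reg}(s,f)-\Eis^{\reg}(s,X.f)=c(s)\bigl[\langle X.f,1\rangle_{\gp{K}}-c_X\langle f,1\rangle_{\gp{K}}\bigr]e_{\chi^{-1}}$ has a holomorphic left side and a factor $c(s)$ with a pole at $s_0$, forcing the bracket to vanish. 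Either way, your proof fills a real gap that the paper glosses over.
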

\begin{proof}
	By Lemma \ref{IntExpBd} and Proposition \ref{GlobRDEisWhi}, it is not difficult to see that $\Reis(\GL_2,\omega) \cap \intL^2(\GL_2(\F) \backslash \GL_2(\ag{A}), \omega) = \{0\}$, which implies the uniqueness of $\Reis(\varphi)$ in (2). For the existence, we find $\chi_i, \alpha_i, n_i, f_i$ as in Definition \ref{FinRegFuncDef}. Then we take, writing $\mu_i = \mu(\omega^{-1}\chi_i^2)$,
\begin{equation}
	\Reis(\varphi) = \sum_{\substack{\Re \alpha_j > 0 \\ \alpha_j \neq \frac{1}{2}+i\mu_j}} \frac{\partial^{n_j}}{\partial s^{n_j}}\Eis(\alpha_j,\omega,\omega^{-1}\chi_j;f_j) + \sum_{\substack{\Re \alpha_j > 0 \\ \alpha_j = \frac{1}{2}+i\mu_j}} \frac{\partial^{n_j}}{\partial s^{n_j}}\Eis^{\reg}(\alpha_j,\omega,\omega^{-1}\chi_j;f_j).
\label{ReisDef}
\end{equation}
	Thus for any $\chi \in \Ex(\varphi-\Reis(\varphi))$, we have $\Re \chi \leq 1/2$ resp. $\Re \chi < 1/2$ under the condition in (2). Hence $\varphi-\Reis(\varphi) \in \intL^1(\GL_2, \omega)$ resp. $\intL^2(\GL_2, \omega)$. For the ``moreover'' part, it suffices to see that the differential operator $X$ does not increase the real part of elements in $\Ex(\varphi)$, which is essentially due to the following calculation:
	$$ y \frac{d}{dy} (y^{\sigma} \log^k y) = y^{\sigma} \log^k y + k y^{\sigma} \log^{k-1}y, \forall y>0, \sigma \in \ag{C}, k \in \ag{N}. $$
\end{proof}
\begin{definition}
	In the case (2), we call $\Reis(\varphi)$ the $\intL^2$-residue of $\varphi$. For definiteness, we shall write $\Reis(\varphi)$ to be the one given by (\ref{ReisDef}).
\end{definition}
\begin{proposition}
\begin{itemize}
	\item[(1)] For any $\Reis \in \Reis(\GL_2,1)$, we have
	$$ \int_{[\PGL_2]}^{\reg} \Reis(g) dg = 0. $$
	If moreover $\Reis \in \Reis^+(\GL_2,1)$, then for any $g_0 \in \GL_2(\ag{A})$, we have
	$$ \int_{[\PGL_2]}^{\reg} g_0.\Reis(g) dg = 0. $$
	\item[(2)] Let $\varphi \in \Aut^{\freg}(\GL_2,1)$. For any $\Reis \in \Reis(\GL_2,1)$ such that $\varphi - \Reis \in \intL^1([\PGL_2])$ we have
	$$ \int_{[\PGL_2]}^{\reg} \varphi(g) dg = \int_{[\PGL_2]} (\varphi-\Reis)(g) dg. $$
	In particular, $\int_{[\PGL_2]}^{\reg}$ is always $\gp{K}$-invariant. It is $\GL_2(\ag{A})$-invariant on the subspace of $\varphi \in \Aut^{\freg}(\GL_2,1)$ such that $\Ex(\varphi)$ does not contain $\norm_{\ag{A}}$.
\end{itemize}
\label{RegGInv}
\end{proposition}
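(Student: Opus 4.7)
The overall plan is to reduce both parts to the linearity of the regularized integral and to the fact that residual elements of $\Reis(\GL_2,1)$ have vanishing regularized integral.

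For part (1), I would first show that for any $\Reis \in \Reis(\GL_2,1)$ its regularizing kernel satisfies $a(t,\Reis) = f(t)$ identically. This is because for a (derivative or regularized) Eisenstein series, the non-constant part is rapidly decaying, and the $\F \backslash \ag{A}$-integral in the definition of $a(t,\cdot)$ picks out precisely the constant term, which is a finite linear combination of expressions $\chi(y)\norm[y]^{1/2+s_0}\log^k \norm[y]$. Hence $R(s,\Reis) \equiv 0$, killing the principal part. The exponents $\alpha_j$ associated to elements of $\Reis(\GL_2,1)$ all satisfy $\Re \alpha_j > 0$ by construction, in particular $\alpha_j \neq -1/2$, so the degenerate part also vanishes. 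For the second assertion, Proposition \ref{Reis+Stab} gives $g_0.\Reis \in \Reis^+(\GL_2,1) \subset \Reis(\GL_2,1)$, so the first argument applies verbatim.

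For the first identity in part (2), existence of $\Reis$ is given by Proposition \ref{ResSp} (1), and the formula is immediate from linearity of $\int^{\reg}$ (which is linear because $a(t,\cdot)$, the coefficients $c_i$ of $f$, and the degenerate correction are all linear in $\varphi$), the coincidence of $\int^{\reg}$ with $\int$ on $\Aut^{\freg} \cap \intL^1$ (Definition \ref{RegIntDef}), and part (1):
\[ \int^{\reg} \varphi = \int^{\reg}(\varphi-\Reis) + \int^{\reg}\Reis = \int_{[\PGL_2]}(\varphi-\Reis)(g)\,dg. \]
For $\gp{K}$-invariance I would just substitute $k_0.\varphi$ into the definition of the regularizing kernel and use right invariance of Haar measure on $\gp{K}$ to conclude $a(t,k_0.\varphi) = a(t,\varphi)$, so that every ingredient of $\int^{\reg}$ is unchanged.

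The genuine work is in establishing $\GL_2(\ag{A})$-invariance. The strategy is to choose $\Reis \in \Reis^+(\GL_2,1)$ (rather than in the full $\Reis$) with $\varphi - \Reis \in \intL^1$; then Proposition \ref{Reis+Stab} yields $g_0.\Reis \in \Reis^+$, and the moreover part of (1) gives $\int^{\reg}(g_0.\Reis) = 0$. Since $g_0.\varphi - g_0.\Reis = g_0.(\varphi-\Reis) \in \intL^1$ by invariance of Haar measure on $[\PGL_2]$, applying the first formula of (2) to $g_0.\varphi$ with residual $g_0.\Reis$ concludes. The main obstacle, and the precise role of the hypothesis $\norm_{\ag{A}} \notin \Ex(\varphi)$, is justifying that $\Reis$ can indeed be chosen in $\Reis^+$. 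In the decomposition (\ref{ReisDef}) an $\Eis^{\reg}$ component is required for each singular exponent $\chi_j \norm^{1/2+\alpha_j}$ with $\chi_j^2 = \norm^{i\mu_j}$ and $\alpha_j = 1/2 + i\mu_j$. However, when $\chi_j$ is non-trivial on $\ag{A}^{(1)}$, the violation-of-covariance discrepancy produced by $g_0$ is a multiple of $\chi_j^{-1}\circ\det$, whose regularizing kernel vanishes because the inner integral $\int_{\F^\times \backslash \ag{A}^{(1)}} \chi_j^{-1}(y)\,d^\times y$ is zero, so such $\Eis^{\reg}$ components are harmless. The unique singular exponent whose obstruction does not vanish in this way is the trivial one, corresponding to $\chi_j = 1, \mu_j = 0$, whose exponent is exactly $\norm_{\ag{A}}$, precisely the case excluded by hypothesis (as witnessed by Remark \ref{RegIntGinvLoss}).
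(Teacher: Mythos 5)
Your proof of part (1) contains a genuine error. You assert that ``the exponents $\alpha_j$ associated to elements of $\Reis(\GL_2,1)$ all satisfy $\Re\alpha_j > 0$ by construction, in particular $\alpha_j \neq -1/2$, so the degenerate part also vanishes.'' This is false. The constant term of $\Eis(s_0,f)$ (with $\Re s_0 > 0$, $f\in V_{\chi,\chi^{-1}}^\infty$) is $f_{s_0} + \Intw f_{s_0}$, whose $y$-dependence is $\chi(y)\norm[y]_{\ag{A}}^{1/2+s_0}$ \emph{and} $\chi^{-1}(y)\norm[y]_{\ag{A}}^{1/2-s_0}$. Thus $a(t,\Reis)$ contains terms $t^{1/2-s_0}\log^m t$, i.e., exponents $\alpha = -s_0$ with $\Re\alpha < 0$. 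For the $\Reis^+(\GL_2,1)$ components one still gets $\alpha\neq -1/2$ because $s_0\neq 1/2$, but the $\Reis^{\reg}(\GL_2,1)$ components are more serious: the paper's explicit computation shows that $a\bigl(t, \frac{\partial^n}{\partial s^n}\Eis^{\reg}(\frac12,f)\bigr)$ contains the terms $\log^{l+1}t$, $0\le l\le n$, which have $\alpha = -1/2$. So $\alpha = -1/2$ genuinely occurs; the degenerate part vanishes not because $\alpha_j\neq -1/2$ but because every such term carries $n_j = l+1 \geq 1$, and the degenerate correction only picks up terms with $n_j = 0$. Your argument does not see this, and since the second assertion of (1) and both halves of (2) are built on (1), the gap propagates.

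To repair (1) you must actually compute $a(t,\Reis)$ as the paper does: first observe that $a(t,\Reis) = 0$ unless $\chi$ is trivial on $\ag{A}^{(1)}$ (hence $\chi = 1$ given triviality on $s_\F(\ag{R}_+)$), then for the $\Reis^+$ generators with $s_0\neq 1/2$ note the exponents are $\pm s_0\neq \pm 1/2$, and for the $\Reis^{\reg}$ generators verify from the formula for $\eisCst^{\reg}$ that the only $\alpha = -1/2$ terms carry $\log^{\geq 1}t$. On the positive side, your treatment of the $\GL_2(\ag{A})$-invariance in (2) is more detailed than the paper's: the paper only invokes ``$\Reis\in\Reis^+$ or $g_0\in\gp{K}$'' while you correctly observe that an $\Eis^{\reg}$ component attached to a character $\chi_j$ that is nontrivial on $\ag{A}^{(1)}$ is harmless (its $g_0$-translate has identically vanishing regularizing kernel, for the same orthogonality reason), so the only truly obstructive singular exponent is $\norm_{\ag{A}}$ itself. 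That observation is worth keeping, but it still rests on a corrected proof of (1).
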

\begin{proof}
	For (1), the second assertion follows from the first by Proposition \ref{Reis+Stab}. We calculate $a(t,\Reis)$ for $\Reis(g) = \frac{\partial^n}{\partial s^n} \Eis(s,f)(gg_0)$ with $s \neq 1/2-i\mu(\chi)$ resp. $\frac{\partial^n}{\partial s^n} \Eis^{\reg}(\frac{1}{2}-i\mu(\chi),f)(g)$ for some $f \in V_{\chi,\chi^{-1}}^{\infty}$ with $\Re s > 0$, a unitary character $\chi$ of $\F^{\times} \backslash \ag{A}^{\times}$ if $\chi \mid_{\F^{\times} \backslash \ag{A}^{(1)}} = 1$. Due to the integral $\int_{\F^{\times} \backslash \ag{A}^{(1)}} d^{\times}y$, it is easy to see that $a(t,\varphi)$ is non-vanishing only if $\chi$ is trivial on $\F^{\times} \backslash \ag{A}^{(1)}$, in which case $\mu(\omega\chi^2)=2\mu(\chi)$. We also notice that we can interchange the order of $\Intw$ and $\int_{\gp{K}} dk$ since they commute with each other.
\begin{itemize}
	\item $\Reis(g) = \frac{\partial^n}{\partial s^n} \Eis(s,f)(g)$ with $s \neq 1/2-i\mu(\chi)$: We get
	$$ a(t,\Reis) = \zeta_{\F}^*(1) \int_{\gp{K}} f(k)dk \cdot \left\{ t^{\frac{1}{2}+s+i\mu(\chi)} \log^n t + \frac{\partial^n}{\partial s^n} \left( t^{\frac{1}{2}-s-i\mu(\chi)} \frac{\Lambda_{\F}(1-2s-2i\mu(\chi))}{\Lambda_{\F}(1+2s+2i\mu(\chi))} \right) \right\}, $$
	and conclude by the fact that $a(t,\Reis)$ has no constant term as a function of $t$.
	\item $\Reis(g)=\frac{\partial^n}{\partial s^n} \Eis^{\reg}(\frac{1}{2}-i\mu(\chi),f)(g)$: We get
\begin{align*}
	a(t,\Reis) &= \zeta_{\F}^*(1) \int_{\gp{K}} f(k)dk \cdot \\
	&\quad \left\{ t \log^n t + \frac{\partial^n}{\partial s^n} \mid_{s=\frac{1}{2}-i\mu(\chi)} \left( (t^{\frac{1}{2}-s-i\mu(\chi)}-1) \frac{\Lambda_{\F}(1-2s-2i\mu(\chi))}{\Lambda_{\F}(1+2s+2i\mu(\chi))} \right) \right\} \\
	&= \zeta_{\F}^*(1) \int_{\gp{K}} f(k)dk \cdot \\
	&\quad \left\{ t \log^n t + \sum_{l=0}^n \binom{n}{l} \frac{(-1)^l}{l+1} \cdot \log^{l+1}t \cdot \frac{d^{n-l}}{d s^{n-l}} \mid_{s=0} \left( \frac{s\Lambda_{\F}(2s)}{\Lambda_{\F}(2-2s)} \right) \right\},
\end{align*}
	and conclude the same way as in the previous case.
\end{itemize}
	For (2), the first part is trivial. For the second part, we note that $\varphi-\Reis(\varphi) \in \intL^1$ implies $g_0.\varphi-g_0.\Reis(\varphi) \in \intL^1$ for any $g_0 \in \GL_2(\ag{A})$, and $g_0.\Reis(\varphi)$ has regularized integral $0$ by (1) if either $\Reis \in \Reis^+(\GL_2,1)$ or $g_0 \in \gp{K}$.
\end{proof}
\begin{remark}
	The above proof of (2) is to be compared with \cite[\S 4.3.6]{MV10}, where another simpler but indirect proof was given for the ``regular case''.
\end{remark}

\section{Product of Two Eisensetein Series: Singular Cases}

	\subsection{Deformation Technics}
	
	Above all, we have the following result in the regular case (c.f. \cite[\S 3]{Za82}).
\begin{lemma}
	Let $\xi_j,\xi_j'$ be (unitary) Hecke characters with $\xi_1\xi_1' \xi_2 \xi_2' = 1$ and write $\pi_j = \pi(\xi_j,\xi_j')$, $j=1,2$. For $f_j \in \pi_j$, we shall write $\eis^{\sharp}$ for $\eis$ or $\eis^{\reg}$, whichever is regular at $s=1/2$. If $\pi_1 \not \simeq \widetilde{\pi_2}$, then for any $n_1, n_2 \in \ag{N}$, we have
	$$ \int_{[\PGL_2]}^{\reg} \eis^{(n_1)}(0,f_1) \eis^{(n_2)}(0,f_2) = 0, \quad \int_{[\PGL_2]}^{\reg} \eis^{\sharp,(n_1)}(1/2,f_1) \eis^{\sharp,(n_2)}(1/2,f_2) = 0. $$
\label{SimpleProd}
\end{lemma}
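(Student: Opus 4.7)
The plan is to use the deformation technique of this subsection. For flat sections $f_{j, s_j}$ in $\pi(\xi_j \norm^{s_j}, \xi_j' \norm^{-s_j})$ passing through $f_j$ at the target value $s_j^0 \in \{0, 1/2\}$, I will study the meromorphic function
\[ J(s_1, s_2) := \int_{[\PGL_2]}^{\reg} \eis^{\sharp}(s_1, f_{1, s_1}) \eis^{\sharp}(s_2, f_{2, s_2}) \]
in a neighborhood of $(s_1^0, s_2^0)$. The core task is to show $J \equiv 0$ in that neighborhood; granting this, the lemma follows from
\[ \partial_{s_1}^{n_1} \partial_{s_2}^{n_2} \Big|_{(s_1^0, s_2^0)} J(s_1, s_2) = \int_{[\PGL_2]}^{\reg} \eis^{\sharp,(n_1)}(s_1^0, f_1) \eis^{\sharp,(n_2)}(s_2^0, f_2), \]
where the interchange of $\partial_{s_j}^{n_j}$ with $\int^{\reg}$ is justified by the holomorphic dependence on $(s_1, s_2)$ of the regularizing kernel, its exponents, and its coefficients, as supplied by Theorem \ref{AdelicRegThm}(2).

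To prove $J \equiv 0$, I compute the essential constant term of $\varphi := \eis^{\sharp}(s_1, f_{1, s_1}) \eis^{\sharp}(s_2, f_{2, s_2})$ as the product of the individual essential constant terms. Each factor contributes two pieces ($f_{j, s_j} + \Intw f_{j, s_j}$) in the regular case, plus a correction piece proportional to $\Lambda_{\F}(1-2s_j-i\mu_j)/\Lambda_{\F}(1+2s_j+i\mu_j)$ when $\eis^{\sharp} = \eis^{\reg}$ (Definition \ref{RegEisDef}). Forming $a(t, \varphi)$ via the inner average over $\F^{\times} \backslash \ag{A}^{(1)}$ kills every summand whose $y$-character is not of the form $\norm_{\ag{A}}^{i\tau}$. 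The hypothesis $\pi_1 \not\simeq \widetilde{\pi_2}$ forces the ``main'' surviving exponents (coming from products of the first two pieces of each factor) to take the shape $1/2 + i\mu_{\epsilon_1 \epsilon_2} + \epsilon_1 s_1 + \epsilon_2 s_2$ with $\mu_{\epsilon_1 \epsilon_2} \in \ag{R} \setminus \{0\}$. Such exponents stay bounded away from $\pm 1/2$ throughout a small neighborhood of the target, so by Theorem \ref{AdelicRegThm}(2) they contribute neither to $\Res_{s=1/2} R(s, \varphi)$ nor to the degenerate term of Definition \ref{RegIntDef}.

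In the fully regular case these ``main'' exponents are the only ones, and $J \equiv 0$ then follows directly. The main obstacle is the \emph{singular case}, where the correction piece of $\eis^{\reg}$ couples with a main piece of the other factor to produce exponents of shape $\pm s_j + i\mu/2$ that \emph{do} reach $\pm 1/2$ at the target; these contributions are accompanied by the meromorphic prefactor $\Lambda_{\F}(1-2s_j-i\mu_j)/\Lambda_{\F}(1+2s_j+i\mu_j)$, whose Laurent expansion around the target intertwines with the poles of $R(s,\varphi)$ at $s=1/2$ and with the degenerate summand. The hard part is to verify that these pieces cancel identically. I would handle this either by a direct computation, matching the Laurent coefficients against $\lambda_{\F}(s)$ of (\ref{lambdaFDef}), or by introducing an auxiliary deformation parameter $\varepsilon$ shifting away from the singular locus, invoking the regular case to conclude $J \equiv 0$ there, and taking $\varepsilon \to 0$ using continuity of $J$ on its regular locus together with the holomorphicity of $\eis^{\reg}$ at the target.
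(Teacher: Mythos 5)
Your deformation framework mirrors (\ref{DeformTec}), but there is a genuine gap at the central step. You argue that because the surviving exponents $\alpha = i\mu_{\epsilon_1\epsilon_2} + \epsilon_1 s_1 + \epsilon_2 s_2$ (with $\mu_{\epsilon_1\epsilon_2}\neq 0$) stay away from $\pm 1/2$, Theorem \ref{AdelicRegThm}(2) forces $\Res_{s=1/2}R(s,\varphi)=0$ and hence $J\equiv 0$. That is not what the theorem says: it lists $s=\pm 1/2$ as a \emph{possible} pole of $R(s,\varphi)$ \emph{regardless} of where the $\alpha_i$ sit, and the residue there is precisely the principal part of the regularized integral (Theorem \ref{AdelicRegThm}(4), Definition \ref{RegIntDef}), which is generically nonzero. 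Your exponent analysis correctly kills the degenerate term, but it carries no information about $\Res_{s=1/2}R$; the vanishing you need is a genuine orthogonality statement, not a formal consequence of the exponent locations. Even your ``regular case'' assertion that ``$J\equiv 0$ then follows directly'' already presupposes this unproven step.

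The paper closes exactly this gap by explicit computation: Rankin--Selberg unfolding identifies $R(s,\eis(s_1,f_1)\eis(s_2,f_2))$, up to a nowhere-vanishing holomorphic local factor, with the product $\Lambda(\tfrac{1}{2}+s+s_1+s_2,\xi_1\xi_2)\,\Lambda(\tfrac{1}{2}+s+s_1-s_2,\xi_1\xi_2')\,\Lambda(\tfrac{1}{2}+s-s_1+s_2,\xi_1'\xi_2)\,\Lambda(\tfrac{1}{2}+s-s_1-s_2,\xi_1'\xi_2')$, and with a correspondingly shifted product for $\eis^{\sharp}(1/2,\cdot)$. The hypothesis $\pi_1\not\simeq\widetilde{\pi_2}$ forces all four of $\xi_1\xi_2,\ \xi_1\xi_2',\ \xi_1'\xi_2,\ \xi_1'\xi_2'$ to be non-trivial, so this product is regular at $s=1/2$, $s_1=s_2=0$ (the arguments being $1,1,1,1$, resp.\ $2,1,1,0$), and applying $\partial_1^{n_1}\partial_2^{n_2}$ does not create poles. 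This $L$-function input is what your proposal is missing, and it also absorbs the ``hard part'' you defer in the singular case, which as written is not carried out and is essentially all of the remaining content.
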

\begin{proof}
	$R(s, \eis^{(n_1)}(0,f_1) \eis^{(n_2)}(0,f_2))$ resp. $R(s, \eis^{\sharp,(n_1)}(1/2,f_1) \eis^{\sharp,(n_2)}(1/2,f_2))$ represents the value at $s_1=s_2=0$ of $\partial_1^{n_1} \partial_2^{n_2}$ of
	$$ \Lambda(1/2+s+s_1+s_2, \xi_1\xi_2) \Lambda(1/2+s+s_1-s_2, \xi_1\xi_2') \Lambda(1/2+s-s_1+s_2, \xi_1'\xi_2) \Lambda(1/2+s-s_1-s_2, \xi_1'\xi_2') \quad \text{resp.} $$
	$$ \Lambda(3/2+s+s_1+s_2, \xi_1\xi_2) \Lambda(1/2+s+s_1-s_2, \xi_1\xi_2') \Lambda(1/2+s-s_1+s_2, \xi_1'\xi_2) \Lambda(-1/2+s-s_1-s_2, \xi_1'\xi_2'), $$
which is regular at $s=1/2$ by assumption. The degenerate part is also easily seen to be $0$ by assumption. We conclude by Definition \ref{RegIntDef}.
\end{proof}

	In other cases beyond the above one, it seems to be difficult to obtain simple formulas by definition. However, the idea of deformation does provide simple and useful formulas. In general, if $\varphi \in \Aut^{\freg}(\PGL_2), \Reis \in \Reis(\PGL_2)$ are given, so that $\varphi - \Reis \in \intL^1([\PGL_2])$, and if we can find continuous families $\varphi_s \in \Aut^{\freg}(\PGL_2), \Reis_s \in \Reis(\PGL_2)$ which coincide with $\varphi, \Reis$ at $s=0$, then we have
\begin{equation}
	\int_{[\PGL_2]}^{\reg} \varphi = \int_{[\PGL_2]} \varphi - \Reis = \lim_{s \to 0} \int_{[\PGL_2]} \varphi_s - \Reis_s = \lim_{s \to 0} \left( \int_{[\PGL_2]}^{\reg} \varphi_s - \int_{[\PGL_2]}^{\reg} \Reis_s \right).
\label{DeformTec}
\end{equation}
	All the formulas we are going to obtain will follow this principle together with (suitable simple variants of Lemma \ref{SimpleProd}). Since the computation is long, we shall only give detail in the most complicated cases. The notations in Lemma \ref{SimpleProd} will be used unless otherwise explicitly reset.

	\subsection{Unitary Series}

\begin{definition}
	If $f \in \Res_{\gp{K}}^{\GL_2(\ag{A})} \pi(\xi_1,\xi_2)$, we define for any $s \in \ag{C}$ an operator $\Intw_s: \Res_{\gp{K}}^{\GL_2(\ag{A})} \pi(\xi_1,\xi_2) \to \Res_{\gp{K}}^{\GL_2(\ag{A})} \pi(\xi_2,\xi_1)$ (abuse of notations) by requiring
	$$ \Intw_s f_s(a(y)\kappa) = \xi_2(y)\norm[y]_{\ag{A}}^{\frac{1}{2}-s} \Intw_sf(\kappa), \quad \text{i.e.,} \quad \Intw_s f_s = (\Intw_s f)_{-s}; $$
	$$ \text{resp.} \quad \widetilde{\Intw}_s = \Intw_s \circ (I-\ProjP_{\gp{K}}e_{\xi}), \quad \text{if } \xi_1=\xi_2=\xi $$
	with the Taylor expansion at $s=0$ resp. $s=1/2$ when $\xi_1=\xi_2 = \xi$ (since $\Intw_s$ is ``diagonalizable'')
	$$ \Intw_sf = \sum_{n=0}^{\infty} \frac{s^n}{n!} \Intw_0^{(n)}f, \quad \text{resp.} \quad \widetilde{\Intw}_{1/2+s} f = \sum_{n=0}^{\infty} \frac{s^n}{n!} \widetilde{\Intw}_{1/2}^{(n)}f. $$
Here $\ProjP_{\gp{K}}$, with $d\kappa$ the probability Haar measure on $\gp{K}$, is defined to be the map
	$$ \pi(\xi,\xi) \to \ag{C}, f \mapsto \int_{\gp{K}} f(\kappa) \xi^{-1}(\kappa) d\kappa, $$
and $e_{\xi} = \xi \circ \det \in \Res_{\gp{K}}^{\GL_2(\ag{A})} \pi(\xi,\xi)$.
\end{definition}
\begin{lemma}
	Let $f_1,f_2 \in \Res_{\gp{K}}^{\GL_2(\ag{A})} \pi(1,1)$. For $0 \neq s \in \ag{C}$ small, we have
	$$ \int_{[\PGL_2]}^{\reg} \eis(s,f_1) \eis^{(1)}(0,f_2) = 0. $$
\label{SimpleProdU}
\end{lemma}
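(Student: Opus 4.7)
The plan is to follow the deformation principle of (\ref{DeformTec}): I would replace the derivative $\eis^{(1)}(0, f_2)$ by the holomorphic family $t \mapsto \eis(t, f_2)$ and recover the desired integral as the Taylor coefficient of $t$ at $0$. Concretely, set
$$ \Psi(t) := \int_{[\PGL_2]}^{\reg} \eis(s, f_1) \eis(t, f_2), \qquad t \in \ag{C} \text{ near } 0, $$
and aim to show $\Psi \equiv 0$ in a neighborhood of $0$; the lemma then falls out by reading off the coefficient of $t$.

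For the first step I would revisit the Rankin-Selberg computation used in the proof of Lemma \ref{SimpleProd}, now with $\xi_1 = \xi_1' = \xi_2 = \xi_2' = 1$ and with the pair of parameters $(s, t)$ in place of $(s_1, s_2) = (0, 0)$. This expresses $R(u, \eis(s, f_1) \eis(t, f_2))$ as
$$ \Lambda_{\F}(1/2 + u + s + t) \Lambda_{\F}(1/2 + u + s - t) \Lambda_{\F}(1/2 + u - s + t) \Lambda_{\F}(1/2 + u - s - t) $$
times a factor regular near $u = 1/2$. At $u = 1/2$ each factor becomes $\Lambda_{\F}(1 \pm s \pm t)$, whose small-parameter poles occur only along $t = \pm s$. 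For the fixed $s \neq 0$ and $t$ in the punctured disc $0 < \norm[t] < \norm[s]$, the whole expression is holomorphic at $u = 1/2$, so the principal part of $\int_{[\PGL_2]}^{\reg}$ vanishes. A direct check of the exponent set $\alpha_i \in \{\pm s \pm t\}$ shows that no exponent equals $-1/2$ for small $s, t$, so the degenerate term in Definition \ref{RegIntDef} also vanishes. Therefore $\Psi(t) = 0$ on this punctured disc.

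The second step is to extend the vanishing to $t = 0$ and then pass to the derivative. Since $f_2$ extends to a flat section $f_{2, t}$ holomorphic in $t$, the product $t \mapsto \eis(s, f_1) \eis(t, f_2)$ is a holomorphic family of finitely regularizable functions, and the explicit $L$-factor formula shows that $\Psi$ extends holomorphically to a full neighborhood of $0$ (the only nearby poles in $t$ being at $t = \pm s$). A holomorphic function vanishing on a punctured neighborhood of a regular point is identically zero, so $\Psi \equiv 0$. Expanding $\eis(t, f_2) = \eis(0, f_2) + t \eis^{(1)}(0, f_2) + O(t^2)$ and invoking linearity of $\int_{[\PGL_2]}^{\reg}$ via the same formula, the coefficient of $t$ in $\Psi(t)$ is exactly $\int_{[\PGL_2]}^{\reg} \eis(s, f_1) \eis^{(1)}(0, f_2)$, which is therefore zero.

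The main obstacle I anticipate is justifying that $R(u, \cdot)$---and hence $\int_{[\PGL_2]}^{\reg}$---depends holomorphically on the deformation parameter $t$ in a manner uniform near $u = 1/2$, which is what legitimizes interchanging $\partial_t \mid_{t = 0}$ with the regularized integral. Here this is mild and reads off the explicit $L$-factor expression, but matching a Taylor expansion in a deformation parameter against a product of completed $L$-factors is exactly the template that the more delicate singular-case computations later in Section 3 will need.
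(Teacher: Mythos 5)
Your argument is correct in substance, and the key technical input is exactly the one the paper uses --- the Rankin--Selberg $\Lambda$-factor identity underlying Lemma \ref{SimpleProd} --- but you package it in a more roundabout way than the paper intends. The paper's ``variant of Lemma \ref{SimpleProd}'' simply applies that computation directly to the integrand $\eis(s,f_1)\eis^{(1)}(0,f_2)$: as in the proof of Lemma \ref{SimpleProd}, $R(u,\eis(s,f_1)\eis^{(1)}(0,f_2))$ is represented by $\partial_t\mid_{t=0}$ of
$$ \Lambda_{\F}\bigl(\tfrac{1}{2}+u+s+t\bigr)\Lambda_{\F}\bigl(\tfrac{1}{2}+u+s-t\bigr)\Lambda_{\F}\bigl(\tfrac{1}{2}+u-s+t\bigr)\Lambda_{\F}\bigl(\tfrac{1}{2}+u-s-t\bigr) $$
times a factor regular near $u=\tfrac{1}{2}$, which is manifestly holomorphic at $u=\tfrac{1}{2}$ for $s\neq 0$ small, so the principal part of the regularized integral vanishes; and the degenerate term vanishes since none of the exponents equals $\norm_{\ag{A}}$. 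You instead introduce the auxiliary function $\Psi(t)$, prove $\Psi\equiv 0$, and recover the statement as the first Taylor coefficient. That approach is valid, but the extra step of justifying that $\partial_t\mid_{t=0}$ commutes with $\int_{[\PGL_2]}^{\reg}$ --- which you flag as the main obstacle --- has to be carried out by exactly the same $\Lambda$-factor analysis you would use in the direct proof, so the detour buys nothing here. (It is worth noting that the $\partial_t\mid_{t=0}$ of the $\Lambda$-factor product above is in fact identically zero by the $t\mapsto -t$ symmetry, so the only $t$-dependence surviving the derivative sits in the regular local factor; either way, holomorphy at $u=\tfrac{1}{2}$ persists.)

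One small slip to correct: in the notation of Definitions \ref{RegFuncRDef} and \ref{FinRegFuncDef} the essential constant term of $\eis(s,f_1)\eis(t,f_2)$ carries exponents $\norm[y]_{\ag{A}}^{1\pm s\pm t}$, so the associated $\alpha_i$ are $\tfrac{1}{2}\pm s\pm t$, not $\pm s\pm t$ as you wrote. The degenerate term requires some $\alpha_i=-\tfrac{1}{2}$, i.e.\ $\pm s\pm t=-1$, which indeed cannot happen for small $s,t$, so your conclusion that the degenerate part vanishes is still correct; only the stated exponent set was off by $\tfrac{1}{2}$.
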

\begin{proof}
	This is a variant of Lemma \ref{SimpleProd}.
\end{proof}

	We continue to use the notations in the previous lemma. We can write
	$$ \eisCst(s,f_1) \eisCst^{(1)}(0,f_2) = 2(f_1f_2)_{1/2+s}^{(1)} + 2(\Intw_s f_1f_2)_{1/2-s}^{(1)} + (f_1 \Intw_0^{(1)}f_2)_{1/2+s} + (\Intw_s f_1 \Intw_0^{(1)}f_2)_{1/2-s}. $$
	We tentatively define
\begin{align*}
	\Reis^{\reg}(s) &:= s^{-1} \left\{ 2 \eis^{\reg,(1)}(1/2+s, f_1f_2) + 2 \eis^{\reg,(1)}(1/2-s, \Intw_s f_1f_2) \right. \\
	 &\left. + \eis^{\reg}(1/2+s, f_1 \Intw_0^{(1)}f_2) + \eis^{\reg}(1/2-s, \Intw_s f_1 \Intw_0^{(1)}f_2) \right\}
\end{align*}
	Applying Lemma \ref{SimpleProdU}, \ref{SimpleProdSing} with $n=0, 1$ together with (\ref{DeformTec}), we get
\begin{align*}
	\int_{[\PGL_2]}^{\reg} \eis^{(1)}(0,f_1) \eis^{(1)}(0,f_2) &= \lim_{s \to 0} \int_{[\PGL_2]} s^{-1} \eis(s,f_1) \eis^{(1)}(0,f_2) - \Reis^{\reg}(s) \\
	&= \frac{1}{s} \cdot \left\{ 2 \frac{\lambda_{\F}^{(1)}(s)}{\lambda_{\F}^{(-1)}(0)} \ProjP_{\gp{K}}(f_1f_2) + 2  \frac{\lambda_{\F}^{(1)}(-s)}{\lambda_{\F}^{(-1)}(0)} \ProjP_{\gp{K}}(\Intw_s f_1f_2) \right. \\
	&\left. + \frac{\lambda_{\F}(s)}{\lambda_{\F}^{(-1)}(0)} \ProjP_{\gp{K}}(f_1 \Intw_0^{(1)}f_2) + \frac{\lambda_{\F}(-s)}{\lambda_{\F}^{(-1)}(0)} \ProjP_{\gp{K}}(\Intw_s f_1 \Intw_0^{(1)}f_2) \right\}.
\end{align*}
	Taking Laurent expansions, we verify that the function in $s$ in the range of the above limit is regular at $s=0$, unlike its appearance. The properties
	$$ \ProjP_{\gp{K}}(f_1 \Intw_0^{(k)}f_2) = \ProjP_{\gp{K}}(f_2 \Intw_0^{(k)}f_1), \forall k \in \ag{N}; \quad \Intw_0^{(2)} = \Intw_0^{(1)} \circ \Intw_0^{(1)} $$
	$$ \text{coming from} \quad \Intw_s \circ \Intw_{-s} = 1 $$
must be used. Taking limit as $s \to 0$, we obtain (2) of the following:
\begin{theorem}	
	If $\pi(\xi_1,\xi_2)$ is spherical, we also write 
	The regularized integral of the product of two unitary Eisenstein series is computed as:
\begin{itemize}
	\item[(1)] If $\pi_1 = \pi(\xi_1,\xi_2), \pi_2 = \pi(\xi_1^{-1}, \xi_2^{-1})$ resp. $\pi_2 = \pi(\xi_2^{-1}, \xi_1^{-1})$ and $\xi_1 \neq \xi_2$, then
	$$ \int_{[\PGL_2]}^{\reg} \eis(0,f_1) \eis(0,f_2) = \frac{2\lambda_{\F}^{(0)}(0)}{\lambda_{\F}^{(-1)}(0)} \ProjP_{\gp{K}}(f_1f_2) - \ProjP_{\gp{K}}(\Intw_0^{(1)}f_1 \cdot \Intw_0 f_2), \quad \text{resp.} $$
	$$ \frac{\lambda_{\F}^{(0)}(0)}{\lambda_{\F}^{(-1)}(0)} (\ProjP_{\gp{K}}(f_1 \Intw_0 f_2) + \ProjP_{\gp{K}}(f_2 \Intw_0 f_1)) - \ProjP_{\gp{K}}(\Intw_0^{(1)}f_1 \cdot f_2). $$
	\item[(2)] If $\pi_1 = \pi(\xi,\xi), \pi_2 = \pi(\xi^{-1},\xi^{-1})$, then
\begin{align*}
	&\quad \int_{[\PGL_2]}^{\reg} \eis^{(1)}(0,f_1) \eis^{(1)}(0,f_2) = \frac{4\lambda_{\F}^{(2)}(0)}{\lambda_{\F}^{-1}(0)} \ProjP_{\gp{K}}(f_1f_2) + \frac{4\lambda_{\F}^{(2)}(0)}{\lambda_{\F}^{-1}(0)} \ProjP_{\gp{K}}(f_1 \cdot \Intw_0^{(1)} f_2) \\
	&\quad + \frac{\lambda_{\F}^{(0)}(0)}{\lambda_{\F}^{-1}(0)} \ProjP_{\gp{K}}(\Intw_0^{(1)}f_1 \cdot \Intw_0^{(1)}f_2) - \frac{1}{3} \ProjP_{\gp{K}}(\Intw_0^{(3)}f_1 \cdot f_2) - \ProjP_{\gp{K}}(\Intw_0^{(2)}f_1 \cdot \Intw_0^{(1)}f_2).
\end{align*}
\end{itemize}
\label{RIPEisUnitary}
\end{theorem}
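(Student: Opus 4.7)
My strategy is the deformation principle encoded in~(\ref{DeformTec}). Part~(2) is in fact already derived in the paragraphs immediately preceding the statement: the family $s\mapsto s^{-1}\eis(s,f_1)\eis^{(1)}(0,f_2)$ together with the explicit $\Reis^{\reg}(s)$ reduces matters to Lemma~\ref{SimpleProdU} and its singular analogue, and the identities $\Intw_0^{(2)}=\Intw_0^{(1)}\circ\Intw_0^{(1)}$ and the symmetry $\ProjP_{\gp K}(f_1\cdot\Intw_0^{(k)}f_2)=\ProjP_{\gp K}(f_2\cdot\Intw_0^{(k)}f_1)$ force the apparent simple pole at $s=0$ to cancel, yielding the stated answer upon passing to the limit. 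My plan is therefore to imitate this scheme for Part~(1); I describe the sub-case $\pi_2=\pi(\xi_1^{-1},\xi_2^{-1})$ in detail, the other being parallel.

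First I would set $\varphi_s:=\eis(s,f_1)\eis(s,f_2)$, so that $\varphi_0=\eis(0,f_1)\eis(0,f_2)$. By Proposition~\ref{FregStab} and the product rule for essential constant terms, $\varphi_s\in\Aut^{\freg}(\GL_2,1)$ with a four-term essential constant term carrying exponents $\norm_{\ag A}^{1+2s}$, $\xi_1\xi_2^{-1}\norm_{\ag A}$, $\xi_2\xi_1^{-1}\norm_{\ag A}$, and $\norm_{\ag A}^{1-2s}$. For $s\neq 0$ small, none of these is the singular character $\norm_{\ag A}$ (the middle two are excluded by $\xi_1\neq\xi_2$, the outer two by $2s\neq 0$). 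I would then define the matching residual
\[
\Reis_s\;:=\;\eis^{\reg}(\tfrac12+2s,\,f_1f_2)+\eis^{\reg}(\tfrac12-2s,\,\Intw_sf_1\cdot\Intw_sf_2)+\eis(\tfrac12,\,f_1\cdot\Intw_sf_2)+\eis(\tfrac12,\,\Intw_sf_1\cdot f_2),
\]
with the two spherical pieces regularized as in Definition~\ref{RegEisDef}; the two non-spherical pieces are automatically regular at $s=0$ since $\xi_1\xi_2^{-1}\neq 1$. By construction $\varphi_s-\Reis_s\in\intL^1([\PGL_2])$ for small $s\neq 0$.

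Applying Proposition~\ref{RegGInv}(2) and recalling that each summand of $\Reis_0$ has regularized integral zero by Proposition~\ref{RegGInv}(1), equation~(\ref{DeformTec}) yields
\[
\int_{[\PGL_2]}^{\reg}\varphi\;=\;\lim_{s\to 0}\Bigl(\int_{[\PGL_2]}^{\reg}\varphi_s\;-\;\int_{[\PGL_2]}^{\reg}\Reis_s\Bigr).
\]
For $s\neq 0$ the integral $\int^{\reg}\varphi_s$ is read off from $R(s,\varphi_s)$, an explicit product of $\Lambda_{\F}$-values computed by Mellin unfolding exactly as in the proof of Lemma~\ref{SimpleProd}, while $\int^{\reg}\Reis_s$ is computed piece-by-piece via Definition~\ref{RegIntDef}, picking up the degenerate parts of the two spherical $\eis^{\reg}$-factors.

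The main obstacle will be the Laurent bookkeeping at $s=0$. Both $\int^{\reg}\varphi_s$ and $\int^{\reg}\Reis_s$ individually carry simple poles in $s$ arising from $\lambda_{\F}(\pm 2s)$ and from the degenerate contributions of $\eis^{\reg}$ at shifted arguments; these must cancel pairwise by virtue of the functional relation $\Intw_s\circ\Intw_{-s}=1$ together with the $f_1\leftrightarrow f_2$ symmetry of the $\ProjP_{\gp K}$-pairings, exactly as in Part~(2). Extracting the constant coefficient of the resulting Laurent expansion then produces the factor $\lambda_{\F}^{(0)}(0)/\lambda_{\F}^{(-1)}(0)$ that appears in the stated formula. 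The sub-case $\pi_2=\pi(\xi_2^{-1},\xi_1^{-1})$ runs identically but with the spherical and non-spherical pairs of sections interchanged; this explains why the two formulas in Part~(1) differ in which combination of $\ProjP_{\gp K}$-pairings receives the $\lambda_{\F}^{(0)}(0)/\lambda_{\F}^{(-1)}(0)$ factor.
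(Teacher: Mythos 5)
Your identification of the overall mechanism — the deformation principle~(\ref{DeformTec}) fed by Lemma~\ref{SimpleProdU}, Lemma~\ref{SimpleProdSing}, and the identities $\Intw_s\Intw_{-s}=1$, $\ProjP_{\gp K}(f_1\Intw_0^{(k)}f_2)=\ProjP_{\gp K}(f_2\Intw_0^{(k)}f_1)$ — is exactly the paper's, and you are right that the paper only writes out Part~(2) in the preceding paragraphs and leaves Part~(1) to ``suitable simple variants.'' Your treatment of the sub-case $\pi_2=\pi(\xi_1^{-1},\xi_2^{-1})$ is structurally sound: the product $\eis(s,f_1)\eis(s,f_2)$ has constant-term exponents $\norm_{\ag A}^{1\pm 2s}$ and $\xi_1^{\pm 1}\xi_2^{\mp 1}\norm_{\ag A}$, so for small $s\neq 0$ the spherical (and hence dangerous) exponents are pushed off $\norm_{\ag A}$, your $\Reis_s$ correctly pairs the $\eis^{\reg}$ with the spherical exponents and plain $\eis$ with the non-spherical ones, and the pole of $\lambda_{\F}(\pm 2s)$ in $\int^{\reg}\Reis_s$ must cancel against the one in $\int^{\reg}\varphi_s$ via $\Intw_s\Intw_{-s}=1$. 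That all matches the paper's template.

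The gap is in your last sentence. For $\pi_2=\pi(\xi_2^{-1},\xi_1^{-1})$ the constant-term exponents of $\varphi_s=\eis(s,f_1)\eis(s,f_2)$ are
\[
\xi_1\xi_2^{-1}\norm_{\ag A}^{1+2s},\qquad \norm_{\ag A},\qquad \norm_{\ag A},\qquad \xi_2\xi_1^{-1}\norm_{\ag A}^{1-2s}.
\]
The two middle exponents are $\norm_{\ag A}$ \emph{exactly}, independently of $s$: your deformation moves only the outer, non-spherical pair, which was never the problem, and leaves the singular exponent in place for every $s$. So $\Ex(\varphi_s)$ still contains $\norm_{\ag A}$, Proposition~\ref{RegGInv}(2) never becomes available, and the passage from $\int(\varphi_s-\Reis_s)$ to $\int^{\reg}\varphi_s-\int^{\reg}\Reis_s$ is not justified for $s\neq 0$ any more than it was at $s=0$. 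Saying the two sub-cases ``interchange the spherical and non-spherical pairs'' is accurate as a description of the exponents, but it is precisely that interchange which breaks your deformation; the argument does not run ``identically.'' A deformation that does the job here is one that separates the two flat parameters — for instance $\varphi_{s}:=\eis(s,f_1)\,\eis(0,f_2)$ (exponents $\xi_1\xi_2^{-1}\norm_{\ag A}^{1+s}$, $\norm_{\ag A}^{1+s}$, $\norm_{\ag A}^{1-s}$, $\xi_2\xi_1^{-1}\norm_{\ag A}^{1-s}$) or $\varphi_s:=\eis(s,f_1)\,\eis(-s,f_2)$ — with the corresponding $\Reis_s$ then carrying $\eis^{\reg}(\tfrac12\pm\tfrac{s}{2}(\cdots),\cdot)$ on the now-spherical middle pair and plain $\eis$ on the outer pair. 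This change is also what produces the structural difference in the stated answer (the $\lambda_{\F}^{(0)}(0)/\lambda_{\F}^{(-1)}(0)$ coefficient migrating to $\ProjP_{\gp K}(f_1\Intw_0 f_2)+\ProjP_{\gp K}(f_2\Intw_0 f_1)$), so the two formulas are not obtainable from one another by mechanical relabelling.

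One smaller caveat: writing ``$\int^{\reg}\varphi_s$ is read off from $R(s,\varphi_s)$'' overloads $s$. The regularized integral of $\varphi_s$ comes from $\Res_{w=1/2}R(w,\varphi_s)$ plus the degenerate term of Definition~\ref{RegIntDef}, in a second spectral variable $w$; keeping the two parameters distinct matters when you do the Laurent bookkeeping, because both the residue in $w$ and the limit in $s$ contribute poles that have to be matched against $\int^{\reg}\Reis_s$.
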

\begin{remark}
	It is possible to get formulas for all derivatives, exploiting more the relation $\Intw_s \circ \Intw_{-s} = 1$. Since we don't have applications of these formulas, we do not include them here.
\end{remark}

	\subsection{Singular Series}

\begin{lemma}
	Let $f, f_1, f_2 \in \Res_{\gp{K}}^{\GL_2(\ag{A})} \pi(1,1)$. For $0 \neq s \in \ag{C}$ small, we have for any $n, n_1, n_2 \in \ag{N}$
	$$ \int_{[\PGL_2]}^{\reg} \eis^{\reg, (n)}(\frac{1}{2}+s, f) = -\frac{\lambda_{\F}^{(n)}(s)}{\lambda_{\F}^{(-1)}(0)} \ProjP_{\gp{K}}(f); $$
	$$ \int_{[\PGL_2]}^{\reg} \eis^{\reg, (n_1)}(\frac{1}{2}+s, f_1) \eis^{\reg, (n_2)}(\frac{1}{2}, f_2) = 0. $$
\label{SimpleProdSing}
\end{lemma}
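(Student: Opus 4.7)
My plan is to reduce both identities to already-proven ingredients: Proposition~\ref{RegGInv}~(1) (vanishing of $\int_{[\PGL_2]}^{\reg}$ on $\Reis(\GL_2,1)$), the value $\int_{[\PGL_2]}^{\reg} 1 = 1/\lambda_{\F}^{(-1)}(0)$ coming from Definition~\ref{RegIntDef} applied to $\varphi=1$ (where $a(t,1)=1$ gives a unique term with $\alpha=-\tfrac{1}{2}$, $n=0$, $c=1$), and Lemma~\ref{SimpleProd}. The common input is the elementary identity
$$\eis^{\reg,(n)}(\tfrac{1}{2}+s,f) \;=\; \eis^{(n)}(\tfrac{1}{2}+s,f)\;-\;\lambda_{\F}^{(n)}(s)\,\ProjP_{\gp{K}}(f)\cdot 1\qquad (0\neq s\ \text{small}),$$
obtained by differentiating Definition~\ref{RegEisDef} $n$ times in the parameter and noting $\frac{\Lambda_{\F}(1-2u)}{\Lambda_{\F}(1+2u)}=\lambda_{\F}(s)$ at $u=\tfrac{1}{2}+s$.

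For the first identity, I apply $\int_{[\PGL_2]}^{\reg}$ to the above by linearity. Since $\Re(\tfrac{1}{2}+s)>0$ and $\tfrac{1}{2}+s\neq\tfrac{1}{2}$, the function $\eis^{(n)}(\tfrac{1}{2}+s,f)$ lies in $\Reis^+(\GL_2,1)$, so its regularized integral vanishes by Proposition~\ref{RegGInv}~(1). The remaining constant term contributes $-\lambda_{\F}^{(n)}(s)\,\ProjP_{\gp{K}}(f)/\lambda_{\F}^{(-1)}(0)$, which is the claimed value.

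For the second identity, I apply the same decomposition only to the first factor, getting
$$\int_{[\PGL_2]}^{\reg}\eis^{\reg,(n_1)}(\tfrac{1}{2}+s,f_1)\,\eis^{\reg,(n_2)}(\tfrac{1}{2},f_2) \;=\; I_1 \;-\; \lambda_{\F}^{(n_1)}(s)\,\ProjP_{\gp{K}}(f_1)\cdot I_2,$$
where $I_2=\int_{[\PGL_2]}^{\reg}\eis^{\reg,(n_2)}(\tfrac{1}{2},f_2)$ and $I_1=\int_{[\PGL_2]}^{\reg}\eis^{(n_1)}(\tfrac{1}{2}+s,f_1)\,\eis^{\reg,(n_2)}(\tfrac{1}{2},f_2)$. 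The term $I_2$ vanishes because $\eis^{\reg,(n_2)}(\tfrac{1}{2},f_2)\in\Reis^{\reg}(\GL_2,1)\subset\Reis(\GL_2,1)$, so Proposition~\ref{RegGInv}~(1) applies. For $I_1$ I invoke Lemma~\ref{SimpleProd} with the shifted pair $\pi_1=\pi(\norm^{s},\norm^{-s})$ and $\pi_2=\pi(1,1)$: since $s\neq 0$, $\pi_1\not\simeq\widetilde{\pi_2}$; the $\sharp$-normalizations at parameter $\tfrac{1}{2}$ are $\eis$ for $\pi_1$ (regular) and $\eis^{\reg}$ for $\pi_2$ (singular), matching $\eis^{(n_1)}(\tfrac{1}{2}+s,f_1)$ and $\eis^{\reg,(n_2)}(\tfrac{1}{2},f_2)$ respectively.

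The only subtlety I expect is that Lemma~\ref{SimpleProd} is stated for unitary inducing characters, whereas $\pi_1=\pi(\norm^{s},\norm^{-s})$ is non-unitary. However, its proof requires only two inputs that persist verbatim in the shifted setting for $s\neq 0$: regularity at $s'=\tfrac{1}{2}$ of the four-factor Rankin--Selberg product of $\Lambda_{\F}$'s, and absence of the exponent $\norm^{0}$ with $n=0$ in the essential constant term. In our case the four arguments evaluated at $s'=\tfrac{1}{2},\,s_1=s_2=0$ become $2+s,\,1+s,\,1-s,\,-s$, all avoiding $0$ and $1$ for small $s\neq 0$; and the exponents of $\eis^{(n_1)}(\tfrac{1}{2}+s,f_1)\,\eis^{\reg,(n_2)}(\tfrac{1}{2},f_2)$ form the set $\{\norm^{2+s},\norm^{1+s},\norm^{1-s},\norm^{-s}\}$ (possibly with logarithms), none being $\norm^0$. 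Hence both requirements hold, giving $I_1=0$ and completing the proof.
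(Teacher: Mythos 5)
Your proposal matches the paper's own (very terse) proof in spirit and in substance: the first identity rests on the decomposition $\eis^{\reg,(n)}(\tfrac{1}{2}+s,f)=\eis^{(n)}(\tfrac{1}{2}+s,f)-\lambda_{\F}^{(n)}(s)\ProjP_{\gp{K}}(f)\cdot 1$ (the ``definition'' the paper invokes), Proposition \ref{RegGInv}(1) to kill the $\Reis^+$-part, and the value of $\int^{\reg}_{[\PGL_2]}1$; the second identity is the decomposition again plus a shifted version of Lemma \ref{SimpleProd}, which is exactly the paper's ``variant.'' Your explicit check that the two needed inputs to Lemma \ref{SimpleProd} (regularity of the four-fold $\Lambda$-product at the evaluation point, and absence of the exponent $\norm^0$ with $n_i=0$) survive the non-unitary shift $\pi_1=\pi(\norm^s,\norm^{-s})$ is a useful elaboration of what the paper glosses over. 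The one point deserving more care is the assertion $a(t,1)=1$: from the definition of the regularizing kernel, $a(t,1)=\Vol(\F\backslash\ag{A})\Vol(\F^{\times}\backslash\ag{A}^{(1)})\Vol(\gp{K})$, which depends on measure conventions deferred to \cite{Wu5} (the proof of Proposition \ref{RegGInv}(1) in this paper in fact displays a prefactor $\zeta_{\F}^{*}(1)$). Your conclusion $\int^{\reg}_{[\PGL_2]}1=1/\lambda_{\F}^{(-1)}(0)$ is forced by the lemma's first formula and is the classically correct volume (e.g.\ $\pi/3=1/(3/\pi)$ for $\F=\ag{Q}$), but it sits in tension with the displayed ``$\Vol([\PGL_2])=\lambda_{\F}^{(-1)}(0)$'' in Theorem \ref{AdelicRegThm}(4), which appears to be a typo for its reciprocal; you should flag, rather than assume, the normalization you need.
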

\begin{proof}
	The first formula follows immediately from Proposition \ref{RegGInv} and definition. The second one is a variant of Lemma \ref{SimpleProd}.
\end{proof}

	We continue to use the notations in the previous section and lemma. Denote $e=e_1$. We can write
	$$ \eisCst^{\reg}(s,f) = f_s + (\widetilde{\Intw}_s f)_{-s} + \lambda_{\F}(s-\frac{1}{2}) \ProjP_{\gp{K}}(f) \left( e_{-s} - e_{-1/2} \right); $$
\begin{align*}
	\eisCst^{\reg,(n)}(\frac{1}{2},f) &= f_{1/2}^{(n)} + \sum_{k=0}^n \binom{n}{k} (-1)^k (\widetilde{\Intw}_{1/2}^{(n-k)} f)_{-1/2}^{(k)} \\
	&+ \ProjP_{\gp{K}}(f) \cdot \left\{ \frac{(-1)^{n+1} \lambda_{\F}^{-1)}(0)}{n+1} e_{-1/2}^{(n+1)} + \sum_{k=1}^n \binom{n}{k} (-1)^k \lambda_{\F}^{(n-k)}(0) e_{-1/2}^k \right\},
\end{align*}
from which one easily deduce $\eisCst^{\reg}(1/2+s,f_1) \eisCst^{\reg,(n_2)}(1/2,f_2)$. We tentatively define
\begin{align*}
	\Reis^{\reg}(s) &:= \eis^{(n_2)}(\frac{3}{2}+s, f_1f_2) + \sum_{k=0}^{n_2} \binom{n_2}{k} (-1)^k \eis^{\reg, (k)}(\frac{1}{2}+s, f_1 \widetilde{\Intw}_{1/2}^{(n_2-k)} f_2) \\
	&\quad + \ProjP_{\gp{K}}(f_2) \cdot \left\{ \frac{(-1)^{n_2+1} \lambda_{\F}^{(-1)}(0)}{n_2+1} \eis^{\reg,(n_2+1)}(\frac{1}{2}+s, f_1) \right. \\
	&\quad \left. + \sum_{k=1}^{n_2} \binom{n_2}{k} (-1)^k \lambda_{\F}^{(n_2-k)}(0) \eis^{\reg,(k)}(\frac{1}{2}+s, f_1) \right\} \\
	&\quad + \eis^{\reg,(n_2)}(\frac{1}{2}-s, f_2 \widetilde{\Intw}_{1/2+s}f_1) \\
	&\quad + \lambda_{\F}(s) \ProjP_{\gp{K}}(f_1) \cdot \left\{ \eis^{\reg,(n_2)}(\frac{1}{2}-s, f_2) - \eis^{\reg,(n_2)}(\frac{1}{2}, f_2) \right\}.
\end{align*}
	Applying Lemma \ref{SimpleProdSing} with $n_1=0$ together with (\ref{DeformTec}), we get
\begin{align*}
	&\quad \int_{[\PGL_2]}^{\reg} \eis^{\reg}(\frac{1}{2},f_1) \eis^{\reg,(n_2)}(\frac{1}{2},f_2) = \lim_{s \to 0} \int_{[\PGL_2]} \eis^{\reg}(\frac{1}{2}+s,f_1) \eis^{\reg,(n_2)}(\frac{1}{2},f_2) - \Reis^{\reg}(s) \\
	&= \lim_{s \to 0} \sum_{k=0}^{n_2} \binom{n_2}{k} \frac{(-1)^k}{\lambda_{\F}^{(-1)}(0)} \lambda_{\F}^{(k)}(s) \ProjP_{\gp{K}}(f_1 \widetilde{\Intw}_{1/2}^{(n_2-k)} f_2) + \frac{\lambda_{\F}^{(n_2)}(-s)}{\lambda_{\F}^{(-1)}(0)} \ProjP_{\gp{K}}(f_2 \widetilde{\Intw}_{1/2+s}f_1) \\
	&\quad + \ProjP_{\gp{K}}(f_1) \ProjP_{\gp{K}}(f_2) \cdot \left\{ \frac{(-1)^{n_2+1} \lambda_{\F}^{(n_2+1)}(s)}{n_2+1} + \sum_{k=1}^{n_2} \binom{n_2}{k} (-1)^k \frac{\lambda_{\F}^{(n_2-k)}(0) \lambda_{\F}^{(k)}(s)}{\lambda_{\F}^{(-1)}(0)} + \lambda_{\F}(s) \lambda_{\F}^{(n_2)}(-s) \right\}.
\end{align*}
	Taking Laurent expansions, we verify that the function in $s$ in the range of the above limit is regular at $s=0$, unlike its appearance. The symmetry
	$$ \ProjP_{\gp{K}}(f_1 \widetilde{\Intw}_{1/2}^{(k)}f_2) = \ProjP_{\gp{K}}(f_2 \widetilde{\Intw}_{1/2}^{(k)}f_1), \forall k \in \ag{N} $$
must be used. Moreover, it can be differentiated $n_1$ times to deduce (3) of the following:

\begin{theorem}
\begin{itemize}
	\item[(1)] If $\pi_1 \not\simeq \widetilde{\pi}_2$ and $\xi_1 = \xi_1', \xi_2 \neq \xi_2'$ resp. $\xi_1 \neq \xi_1', \xi_2 = \xi_2'$ resp. $\xi_1=\xi_1', \xi_2 = \xi_2'$, $\xi_1\xi_2 \neq 1$ and $\xi_1^2\xi_2^2 = 1$, then for any $n_1,n_2 \in \ag{N}$
	$$ \int_{[\PGL_2]}^{\reg} \eis^{\reg,(n_1)}(\frac{1}{2},f_1) \cdot \eis^{(n_2)}(\frac{1}{2},f_2) = 0 \quad \text{resp.} \quad \int_{[\PGL_2]}^{\reg} \eis^{(n_1)}(\frac{1}{2},f_1) \cdot \eis^{\reg,(n_2)}(\frac{1}{2},f_2) = 0 $$
	$$ \text{resp.} \quad  \int_{[\PGL_2]}^{\reg} \eis^{\reg,(n_1)}(\frac{1}{2},f_1) \cdot \eis^{\reg,(n_2)}(\frac{1}{2},f_2) = 0. $$
	\item[(2)] If $\pi_1 = \pi(\xi_1,\xi_2), \pi_2 = \pi(\xi_1^{-1}, \xi_2^{-1})$ resp. $\pi_2 = \pi(\xi_2^{-1}, \xi_1^{-1})$ with $\xi_1 \neq \xi_2$, then for any $n_1,n_2 \in \ag{N}$
	$$ \int_{[\PGL_2]}^{\reg} \eis^{(n_1)}(\frac{1}{2},f_1) \cdot \eis^{(n_2)}(\frac{1}{2},f_2) = 0, \quad \text{resp.} $$
is a linear combination with coefficients depending only on $n_1,n_2$ and $\lambda_{\F}(s)$ of
	$$ \ProjP_{\gp{K}}(\Intw_{1/2}^{(n_1+n_2+1)}f_1 \cdot f_2); \quad \ProjP_{\gp{K}}(\Intw_{1/2}^{(l)}f_1 \cdot f_2) = \ProjP_{\gp{K}}(f_1 \cdot \Intw_{1/2}^{(l)} f_2), 0 \leq l \leq \max(n_1,n_2). $$
	\item[(3)] If $\pi_1 = \pi(\xi,\xi), \pi_2 = \pi(\xi^{-1},\xi^{-1})$, then for any $n_1,n_2 \in \ag{N}$
	$$ \int_{[\PGL_2]}^{\reg} \eis^{\reg,(n_1)}(\frac{1}{2},f_1) \cdot \eis^{\reg,(n_2)}(\frac{1}{2},f_2) $$
is a linear combination with coefficients depending only on $n_1,n_2$ and $\lambda_{\F}(s)$ of
	$$ \ProjP_{\gp{K}}(\widetilde{\Intw}_{1/2}^{(l)}f_1 \cdot f_2) = \ProjP_{\gp{K}}(f_1 \cdot \widetilde{\Intw}_{1/2}^{(l)} f_2), 0 \leq l \leq \max(n_1,n_2); $$
	$$ \ProjP_{\gp{K}}(\widetilde{\Intw}_{1/2}^{(n_1+n_2+1)}f_1 \cdot f_2); \quad \ProjP_{\gp{K}}(f_1) \ProjP_{\gp{K}}(f_2). $$
\end{itemize}
\label{RIPEisSing}
\end{theorem}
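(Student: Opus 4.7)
The plan is to use the deformation identity (\ref{DeformTec}) throughout, generalizing the explicit $n_1 = 0$ computation for part (3) that is displayed immediately before the theorem statement. For each subcase, the main steps are: expand the essential constant term of the product of two Eisenstein series into flat sections indexed by exponents, replace each exponent by an Eisenstein or regularizing Eisenstein series carrying the same exponent to obtain a tentative residual $\Reis^{\reg}(s) \in \Reis(\PGL_2)$, apply (\ref{DeformTec}) together with Lemma \ref{SimpleProdU} or Lemma \ref{SimpleProdSing}, verify that the apparent singularity at $s=0$ in the resulting expression is removable, and finally extract the $n_1$-fold derivative in the deformation parameter.

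For part (1) and the $\pi_2 = \pi(\xi_1^{-1},\xi_2^{-1})$ subcase of part (2), I would proceed directly: the essential constant term of the product carries no exponent equal to $\norm_{\ag{A}}$, hence $R(s,\varphi)$ is regular at $s = 1/2$ and the degenerate part vanishes, reducing the claim to an inspection of the four-fold product of completed Hecke $L$-functions already appearing in the proof of Lemma \ref{SimpleProd}. Vanishing then follows from the fact that none of the relevant Hecke $L$-functions acquires a pole at the point of evaluation, exactly as in Lemma \ref{SimpleProd}.

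For the second subcase of (2) and for part (3), I would deform $\eis^{\sharp,(n_1)}(1/2, f_1)$ to $\eis^{\sharp}(1/2 + s, f_1)$, expand the product of essential constant terms to exhibit an explicit $\Reis^{\reg}(s)$, and apply Lemma \ref{SimpleProdU} or \ref{SimpleProdSing} term by term. Following exactly the template displayed for part (3) at $n_1 = 0$, the $s \to 0$ limit yields a linear combination of $\ProjP_{\gp{K}}$-pairings of $f_1$ and $f_2$ twisted by $\Intw_{1/2}^{(l)}$ resp. $\widetilde{\Intw}_{1/2}^{(l)}$, with coefficients that are polynomial expressions in the Laurent coefficients of $\lambda_{\F}(s)$. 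To promote this to arbitrary $n_1$, I would then differentiate the identity $n_1$ times in the deformation parameter before passing to the limit; this is legitimate because each side is a meromorphic function of that parameter and $\frac{\partial^{n_1}}{\partial s^{n_1}}|_{s=0} \eis^{\sharp}(1/2 + s, f_1) = \eis^{\sharp,(n_1)}(1/2, f_1)$.

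The main obstacle is the removable-singularity verification: the pieces $\lambda_{\F}(\pm s)$ and $\eis^{\reg,(k)}(1/2+s,\cdot)$ all have simple or higher-order poles at $s = 0$, and the cancellation is forced by the two algebraic identities $\Intw_s \circ \Intw_{-s} = 1$ (with derivative consequences such as $\Intw_0^{(2)} = \Intw_0^{(1)} \circ \Intw_0^{(1)}$) together with the $\ProjP_{\gp{K}}$-self-adjointness $\ProjP_{\gp{K}}(f_1 \widetilde{\Intw}_{1/2}^{(l)} f_2) = \ProjP_{\gp{K}}(f_2 \widetilde{\Intw}_{1/2}^{(l)} f_1)$ already highlighted in the author's $n_1 = 0$ computation. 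Tracking these cancellations through the Laurent expansion in $s$ and identifying precisely which terms survive to produce the claimed linear combinations is the computational heart of the argument.
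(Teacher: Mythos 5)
Your overall strategy — deformation via \eqref{DeformTec} plus the simple‑product lemmas, deducing the $n_1$-fold derivative by differentiating the identity in the deformation parameter — matches the paper, which states this is the method for all of Theorem \ref{RIPEisSing} but only details the $n_1=0$ computation for part (3). Your treatment of part (3) and the second subcase of (2) is faithful to that template.

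The gap is in your treatment of the first subcase of part (2), where $\pi_2 = \pi(\xi_1^{-1},\xi_2^{-1})$ and hence $\pi_1 \simeq \widetilde{\pi}_2$. You argue that since $\norm_{\ag{A}}$ is absent from $\Ex(\varphi)$ (true: $\xi_1\xi_2^{-1}\neq 1$), the function $R(s,\varphi)$ is regular at $s=1/2$ and the degenerate part vanishes, so the vanishing follows termwise as in Lemma \ref{SimpleProd}. Both claims fail here. First, because $\pi_1 \simeq \widetilde{\pi}_2$, the hypothesis of Lemma \ref{SimpleProd} does not hold: in the $\Lambda$-product of its proof one gets the factor $\Lambda(-1/2+s-s_1-s_2,\,\xi_1'\xi_2')$ with $\xi_1'\xi_2' = \xi_2\xi_2^{-1} = 1$, which develops a pole at $s=1/2$ (at $s_1=s_2=0$), so $R(s,\varphi)$ is \emph{not} regular at $s=1/2$. (Absence of $\norm_{\ag{A}}$ from $\Ex(\varphi)$ controls $\GL_2(\ag{A})$-invariance via Proposition \ref{RegGInv}, not regularity of $R$ at $1/2$; by Theorem \ref{AdelicRegThm} (2), $\pm 1/2$ are always among the possible poles of $R$.) Second, the essential constant term of $\varphi$ contains a $\norm_{\ag{A}}^{0}\log^{0}\norm_{\ag{A}}$ piece, namely the term $\ProjP_{\gp{K}}(\Intw_{1/2}^{(n_1)}f_1\cdot\Intw_{1/2}^{(n_2)}f_2)$ coming from multiplying the $\norm^{1/2-s}$ pieces of the two factors, so the degenerate contribution $c_i\delta_{\alpha_i=-1/2,n_i=0}$ is generically nonzero. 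The vanishing asserted in part (2) is therefore a \emph{cancellation} between the nonzero residue of $R$ at $1/2$ and the nonzero degenerate part, not a termwise vanishing, and establishing it requires the same deformation argument \eqref{DeformTec} you use in the other subcases (deform one factor to $\eis(1/2+s,f_1)$, subtract the explicit $\Reis(s)$, apply Lemma \ref{SimpleProdSing}, and verify that the apparent singularity at $s=0$ is removable and that the limit is $0$). Your direct argument, as written, would only yield the claim if one silently assumed Lemma \ref{SimpleProd}'s hypothesis $\pi_1\not\simeq\widetilde{\pi}_2$, which is exactly what this subcase lacks.
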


\section{Towards Singular Triple Product of Eisenstein Series}

	\subsection{Some Complement of Regularized Integral}
	
	Let $\xi_1,\xi_2, \omega$ be Hecke characters with $\xi_1 \xi_2 \omega =1$. Let $f \in \pi(\xi_1,\xi_2)$ and $\varphi \in \Cont^{\infty}(\GL_2, \omega)$, i.e., a smooth function on $\GL_2(\F) \backslash \GL_2(\ag{A})$ with central character $\omega$. Suppose $\varphi$ is \emph{finitely regularizable} defined in Definition \ref{FinRegFuncDef}.
\begin{proposition}
	For $\Re s \gg 1$ sufficiently large,
	$$ R(s,\varphi; f) := \int_{\F^{\times} \backslash \ag{A}^{\times}} \int_{\gp{K}} (\varphi_{\gp{N}} - \varphi_{\gp{N}}^*)(a(y)\kappa) f(\kappa) \xi_1(y) \norm[y]_{\ag{A}}^{s-\frac{1}{2}} d\kappa d^{\times}y $$
is absolutely convergent. It has a meromorphic continuation to $s \in \ag{C}$. If in addition
	$$ \Theta := \max_j \{ \Re \alpha_j \} < 0, $$
then we have, with the right hand side absolutely converging
	$$ R(s,\varphi; f) = \int_{[\PGL_2]} \varphi \cdot \eis(s,f), \quad \Theta < \Re s < -\Theta.  $$
	In the above region, the possible poles of $R(s, \varphi; f)$ are
\begin{itemize}
	\item $1/2 + i\mu(\xi_1\xi_2^{-1})$ if $\xi_1 \xi_2^{-1}$ is trivial on $\ag{A}^{(1)}$;
	\item $(\rho - 1)/2$ where $\rho$ runs over the non-trivial zeros of $L(s,\xi_1\xi_2^{-1})$.
\end{itemize}
	In particular $R(s,\varphi; f)$ is holomorphic for $0 \leq \Re s < \min(-\Theta,1/2)$.
\label{VRI}
\end{proposition}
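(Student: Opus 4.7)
The plan is to first establish absolute convergence for $\Re s \gg 1$, then obtain both the meromorphic continuation and the Rankin--Selberg identity in the strip simultaneously by unfolding, and finally read off the poles from those of $\eis(s, f)$. For the initial convergence: Definition \ref{FinRegFuncDef} gives $(\varphi_{\gp{N}} - \varphi_{\gp{N}}^*)(a(y)\kappa) = O(\norm[y]_{\ag{A}}^{-M})$ for any $M$ as $\norm[y]_{\ag{A}} \to \infty$, while Corollary \ref{SIncBdAt0} bounds $\varphi_{\gp{N}}$ polynomially as $\norm[y]_{\ag{A}} \to 0$ and $\varphi_{\gp{N}}^*$ is an explicit finite sum of power-log terms, so $(\varphi_{\gp{N}} - \varphi_{\gp{N}}^*)(a(y)\kappa) \ll \norm[y]_{\ag{A}}^{-c}$ for some $c$. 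Factoring $\F^{\times} \backslash \ag{A}^{\times} = (\F^{\times} \backslash \ag{A}^{(1)}) \times \ag{R}_+$ via the section $s_{\F}$, absolute convergence of $R(s, \varphi; f)$ for $\Re s > 1/2 + c$ follows.

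Next, under the hypothesis $\Theta < 0$, I would prove the Rankin--Selberg identity by the standard truncation-then-unfolding argument, parallel to Theorem \ref{AdelicRegThm}(1), using Arthur's $\Lambda^T$ and passing $T \to \infty$. Whenever both sides are absolutely convergent (a nonempty subregion of $\Theta < \Re s < -\Theta$), this yields
\begin{equation*}
\int_{[\PGL_2]} \varphi(g) \eis(s, f)(g) dg = \int_{\F^{\times} \backslash \ag{A}^{\times}} \int_{\gp{K}} \varphi_{\gp{N}}(a(y)\kappa) f(\kappa) \xi_1(y) \norm[y]_{\ag{A}}^{s-1/2} d\kappa d^{\times}y.
\end{equation*}
Splitting $\varphi_{\gp{N}} = (\varphi_{\gp{N}} - \varphi_{\gp{N}}^*) + \varphi_{\gp{N}}^*$, the first piece is $R(s, \varphi; f)$, and the second piece becomes a finite sum of terms
\begin{equation*}
\langle f_i, f \rangle_{\gp{K}} \int_{\F^{\times} \backslash \ag{A}^{\times}} (\chi_i \xi_1)(y) \norm[y]_{\ag{A}}^{s + \alpha_i} \log^{n_i} \norm[y]_{\ag{A}} d^{\times}y.
\end{equation*}
The radial integral vanishes when $\chi_i \xi_1$ is non-trivial on $\ag{A}^{(1)}$ by character orthogonality, and otherwise its meromorphic continuation (a Mellin transform of a pure power-log) is regular in the open strip $\Theta < \Re s < -\Theta$, since its only singularity sits at $\Re s = -\Re \alpha_i \ge -\Theta$, i.e.\ on or outside the right boundary. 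Hence $R(s, \varphi; f) = \int_{[\PGL_2]} \varphi \cdot \eis(s, f)$ throughout the strip by analytic continuation; this simultaneously yields the meromorphic continuation of $R(s, \varphi; f)$ to $\ag{C}$ and transfers the pole structure of $\eis(s, f)$ to it.

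The poles of $\eis(s, f)$ in the strip are dictated by its constant term $f_s + \Intw_s f_s$: the normalized intertwiner has factor $\Lambda_{\F}(1 - 2s - i\mu)/\Lambda_{\F}(1 + 2s + i\mu)$ with $\mu = \mu(\xi_1\xi_2^{-1})$, present only when $\xi_1\xi_2^{-1}$ is trivial on $\ag{A}^{(1)}$; its numerator contributes the pole at $s = 1/2 + i\mu$, while the denominator zeros contribute poles at $s = (\rho - 1)/2$ for $\rho$ ranging over the non-trivial zeros of $L(\cdot, \xi_1\xi_2^{-1})$. Since non-trivial zeros satisfy $0 < \Re \rho < 1$, we have $-1/2 < \Re((\rho - 1)/2) < 0$; combined with the other possible pole at $\Re s = 1/2$, the region $0 \le \Re s < \min(-\Theta, 1/2)$ is pole-free, which is the final assertion. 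The main technical obstacle is justifying the unfolding identity for a slowly increasing (non-cuspidal) $\varphi$: the naive unfolded double integral does not converge absolutely anywhere in general without the subtraction of $\varphi_{\gp{N}}^*$, so the Arthur-truncation-and-limit argument of Theorem \ref{AdelicRegThm}(1)-(2) must be adapted to the present setting with $\varphi$ in place of a constant and $\eis(s, f)$ in place of $\eis(s, f_0)$.
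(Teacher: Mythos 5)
Your high-level strategy matches the paper's: truncate-and-unfold following Theorem \ref{AdelicRegThm}, let $T\to\infty$ inside the strip $\Theta<\Re s<-\Theta$, and read off the poles from $\Intw f_s$; and your identification of the two pole types, together with the resulting holomorphy on $0\le\Re s<\min(-\Theta,1/2)$, is right. But there is a genuine gap in the middle that you only half-address in your final paragraph. The untruncated unfolded identity
$$ \int_{[\PGL_2]} \varphi \cdot \eis(s,f) = \int_{\F^{\times}\backslash\ag{A}^{\times}}\int_{\gp{K}} \varphi_{\gp{N}}(a(y)\kappa) f(\kappa)\xi_1(y)\norm[y]_{\ag{A}}^{s-\frac{1}{2}} d\kappa\, d^{\times}y $$
has \emph{no} region of absolute convergence once $\varphi_{\gp{N}}^*\ne 0$: the $\varphi_{\gp{N}}^*$ contribution is a pure power-log Mellin transform over $\ag{R}_+$ whose tails at $0$ and $\infty$ impose contradictory half-planes on $\Re s$. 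So there is no ``nonempty subregion'' as you claim, the split of $\varphi_{\gp{N}}$ into two separately ``meromorphically continued'' integrals is not legitimate as stated (the $\varphi_{\gp{N}}^*$ integral literally defines nothing), and even accepting the identity in the strip you would only continue $R$ into that strip, not to all of $\ag{C}$.

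The paper handles both points simultaneously by never letting the truncation go before the continuation is in hand. For $\Re s\gg 1$ and $T>1$, unfolding $\int_{[\PGL_2]}\varphi\cdot\Lambda^T\eis(s,f)$ gives an exact identity expressing it as $R(s,\varphi;f)$, minus two tail integrals over $\norm[y]_{\ag{A}}>T$ of $\varphi_{\gp{N}}-\varphi_{\gp{N}}^*$ against $f$ and against $\Intw f_s$, plus explicit finite-$T$ $h_T$-type terms coming from $\varphi_{\gp{N}}^*$ (of the form $\frac{\partial^{n_j}}{\partial s^{n_j}}\bigl(T^{\pm s+\alpha_j+i\mu}/(\pm s+\alpha_j+i\mu)\bigr)$). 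Because $\varphi_{\gp{N}}-\varphi_{\gp{N}}^*$ is rapidly decreasing, the tail integrals converge for \emph{all} $s$ and are meromorphic via $\Intw f_s$; the $h_T$-terms are elementary; and the left side is meromorphic via $\Lambda^T\eis$. This single identity therefore furnishes the global meromorphic continuation of $R$ and locates its poles, in the strip, among those of $\Intw f_s$. Shifting $s$ into $\Theta<\Re s<-\Theta$ and then letting $T\to\infty$ kills the tail integrals and the $h_T$-terms (precisely in this strip) and replaces $\Lambda^T\eis$ by $\eis$. That is exactly the ``adaptation of Theorem \ref{AdelicRegThm}(1)--(2)'' you mention at the close; carrying it out, rather than appealing to absolute convergence of the untruncated unfolding, is the substance of the proof.
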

\begin{proof}
	The proof is quite similar to that of Theorem \ref{AdelicRegThm} (3), except that $\Intw f_s$ is no longer explicitly computable. In fact, we have for $T > 1, \Re s \gg 1$, using the standard Rankin-Selberg unfolding
\begin{align*}
	\int_{[\PGL_2]} \varphi \cdot \Lambda^T \eis(s,f) &= R(s,\varphi; f) \\
	&- \int_{\F^{\times} \backslash \ag{A}^{\times}} \left( \int_{\gp{K}} (\varphi_{\gp{N}} - \varphi_{\gp{N}}^*)(a(y)\kappa) f(\kappa) d\kappa \right) \xi_1(y) \norm[y]_{\ag{A}}^{s-1/2} 1_{\norm[y]_{\ag{A}} > T} d^{\times} y \\
	&- \int_{\F^{\times} \backslash \ag{A}^{\times}} \left( \int_{\gp{K}} (\varphi_{\gp{N}} - \varphi_{\gp{N}}^*)(a(y)\kappa) \Intw f_s(\kappa) d\kappa \right) \xi_2(y) \norm[y]_{\ag{A}}^{-s-1/2} 1_{\norm[y]_{\ag{A}} > T} d^{\times} y \\
	&+ \Vol(\F^{\times} \backslash \ag{A}^{(1)}) \left( \sum_{j=1}^l \int_{\gp{K}} f_j(\kappa) f(\kappa) d\kappa \cdot 1_{\chi_j \xi_1(\ag{A}^{(1)}) = 1} \cdot \frac{1}{n_j !} \frac{\partial^{n_j}}{\partial s^{n_j}} \left( \frac{T^{s+\alpha_j + i\mu_j}}{s+\alpha_j + i\mu_j} \right) \right. \\
	&- \left. \sum_{j=1}^l \int_{\gp{K}} f_j(\kappa) \Intw f_s(\kappa) d\kappa \cdot 1_{\chi_j \xi_2(\ag{A}^{(1)}) = 1} \cdot \frac{(-1)^{n_j}}{n_j !} \frac{\partial^{n_j}}{\partial s^{n_j}} \left( \frac{T^{-s+\alpha_j + i\mu_j'}}{-s+\alpha_j + i\mu_j'} \right) \right),
\end{align*}
where $\mu_j$ resp. $\mu_j'$ is such that
	$$ \chi_j \xi_1(y) = \norm[y]_{\ag{A}}^{i\mu_j}, \quad \text{resp.} \quad \chi_j \xi_2(y) = \norm[y]_{\ag{A}}^{i\mu_j'}. $$
	We conclude by first shifting $s$ to the desired region, then letting $T \to \infty$. The possible poles are encoded in the possible poles of $\Intw f_s$, which are included in those of $L(1+2s, \xi_1\xi_2^{-1})^{-1}$ in the above region (c.f. for example \cite[Corollary 3.7, 3.10 \& Lemma 3.18]{Wu5}).
\end{proof}
\begin{proposition}
	Let notations be as in the previous proposition with $\Theta \leq -1/2$. Recall
	$$ \varphi_{\gp{N}}^*(n(x)a(y)k)=\varphi_{\gp{N}}^*(a(y)k)=\sum_{j=1}^l \chi_j(y) \norm[y]_{\ag{A}}^{\frac{1}{2}+\alpha_j} \log^{n_i} \norm[y]_{\ag{A}} f_j(k). $$
\begin{itemize}
	\item[(1)] If $\xi_1 \neq \xi_2$, then
	$$ \left( \frac{\partial^n R}{\partial s^n} \right)^{\hol}(\frac{1}{2},\varphi;f) = \int_{[\PGL_2]}^{\reg} \varphi \cdot \eis^{(n)}(\frac{1}{2},f) - \sideset{}{_j'}\sum \frac{\lambda_{\F}^{(n+n_j)}(0)}{\lambda_{\F}^{(-1)}(0)} \ProjP_{\gp{K}}(f_j f), $$
	where the summation is over $j$ such that $\xi_1 \chi_j (\ag{A}^{(1)}) =1, \alpha_j + i \mu(\xi_1\chi_j) = -1/2$.
	\item[(2)] If $\xi_1 = \xi_2 = \xi$, then
\begin{align*}
	\left( \frac{\partial^n R}{\partial s^n} \right)^{\hol}(\frac{1}{2},\varphi;f) &= \int_{[\PGL_2]}^{\reg} \varphi \cdot \eis^{\reg,(n)}(\frac{1}{2},f) - \sideset{}{_j'}\sum \frac{\lambda_{\F}^{(n+n_j)}(0)}{\lambda_{\F}^{(-1)}(0)}  \ProjP_{\gp{K}}(f_j f) \\
	&+ \lambda_{\F}^{(n)}(0) \cdot \ProjP_{\gp{K}}(f \cdot (\xi^{-1} \circ \det)) \cdot \int_{[\PGL_2]} \varphi \cdot (\xi \circ \det),
\end{align*}
	where the summation over $j$ is as in the previous case.
\end{itemize}
\label{TripleToDouble}
\end{proposition}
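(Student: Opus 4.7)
The proof proceeds by deformation in the Eisenstein parameter $s$ combined with the Rankin-Selberg identity from Proposition \ref{VRI}. For $\Re s \in [0, \frac{1}{2})$, the hypothesis $\Theta \leq -\frac{1}{2}$ ensures $\varphi \cdot \eis(s, f) \in \intL^1([\PGL_2])$; by Proposition \ref{VRI},
$$ R(s, \varphi; f) = \int_{[\PGL_2]} \varphi \cdot \eis(s, f) = \int_{[\PGL_2]}^{\reg} \varphi \cdot \eis(s, f). $$
Both sides admit meromorphic continuation in $s$, and the plan is to compute and match their Laurent expansions at $s = \frac{1}{2}$, then extract $n$-th derivatives.

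To analyze the right-hand side near $s = \frac{1}{2}$, I would apply Theorem \ref{AdelicRegThm}(4) to $\psi_s := \varphi \cdot \eis(s, f)$. Since cross-Whittaker terms in the Fourier expansions along $\gp{N}$ decay rapidly, the essential constant term factorizes as
$$ \psi_{s, \gp{N}}^*(a(y)k) = \varphi_{\gp{N}}^*(a(y)k) \cdot \bigl( \xi_1(y) \norm[y]_{\ag{A}}^{\frac{1}{2}+s} f(k) + \xi_2(y) \norm[y]_{\ag{A}}^{\frac{1}{2}-s} \Intw_s f(k) \bigr). $$
After averaging over $\F^{\times} \backslash \ag{A}^{(1)} \times \gp{K}$, the ``$+s$'' piece yields $t$-exponents $t^{1 + \alpha_j + s + i\mu(\chi_j \xi_1)}$ for $j$ with $\chi_j \xi_1 |_{\ag{A}^{(1)}} = 1$. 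At $s = \frac{1}{2}$, these become $t^{\frac{3}{2} + \alpha_j + i\mu(\chi_j \xi_1)}$, and for exactly the $j$ in the summation of the statement (those with $\alpha_j + i\mu(\chi_j \xi_1) = -\frac{1}{2}$), the exponent equals $t^1$ --- the singular exponent $\norm$. The singular-case machinery of Section~2 then produces the $\lambda_{\F}$-dependent corrections through the Taylor expansion of $\lambda_{\F}(s-\frac{1}{2})$, combined with the $\log^{n_j}$ factors from $\varphi_{\gp{N}}^*$ and the operator $\partial_s^n$.

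Concretely, I would establish an identity of the form
$$ \int_{[\PGL_2]}^{\reg} \psi_s = R(s, \varphi; f) + \sideset{}{_j'}\sum \bigl( \text{regularized zeta integral} \bigr) \cdot \ProjP_{\gp{K}}(f_j f), $$
where the zeta integral is the regularized $\int_{\ag{R}_+} t^{\alpha_j + s + i\mu(\chi_j \xi_1)} \log^{n_j} t \cdot dt/t$, whose Laurent expansion at $s = \frac{1}{2}$ is governed by $\lambda_{\F}(s - \frac{1}{2})$ via the Rankin-Selberg recipe of Theorem \ref{AdelicRegThm}. Differentiating $n$ times in $s$ and extracting the holomorphic part at $s = \frac{1}{2}$ picks up the coefficient $\lambda_{\F}^{(n + n_j)}(0) / \lambda_{\F}^{(-1)}(0)$, where $(n + n_j)$ accounts for the $n$ derivatives in $s$ combined with the $n_j$ derivatives implicit in the $\log^{n_j}$ factor.

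For case (2), when $\xi_1 = \xi_2 = \xi$, the series $\eis(s, f)$ itself has a simple pole at $s = \frac{1}{2}$, so I replace $\eis$ by $\eis^{\reg}$ from Definition \ref{RegEisDef}; the subtracted polar piece, proportional to $\lambda_{\F}(s - \frac{1}{2}) \cdot \ProjP_{\gp{K}}(f) \cdot (\xi^{-1} \circ \det)$, pairs with $\varphi$ via $\int_{[\PGL_2]} \varphi \cdot (\xi \circ \det)$ (the twist shifts all exponents of $\varphi$ away from the integrability boundary, so this is an ordinary convergent integral). Taylor-expanding $\lambda_{\F}(s - \frac{1}{2})$ at $s = \frac{1}{2}$ and differentiating $n$ times produces precisely the extra term $\lambda_{\F}^{(n)}(0) \cdot \ProjP_{\gp{K}}(f \cdot (\xi^{-1} \circ \det)) \cdot \int_{[\PGL_2]} \varphi \cdot (\xi \circ \det)$. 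The main technical obstacle is the bookkeeping of Laurent expansions near $s = \frac{1}{2}$: tracking the interplay of the singular-exponent contributions, the poles of $\lambda_{\F}(s - \frac{1}{2})$, the $\log^{n_j}$ factors, and $\partial_s^n$, with several cancellations between terms that would otherwise appear polar.
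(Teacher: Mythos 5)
Your overall strategy — deform the Eisenstein parameter, identify $R(s,\varphi;f)$ with an honest integral in the region $\Re s < 1/2$ via Proposition~\ref{VRI}, and then compare Laurent expansions at $s = 1/2$ — is the right skeleton, and you correctly identify where the singular exponents and the $\lambda_{\F}(s-\tfrac12)$ correction come from, as well as the extra term in case (2) coming from the pole of $\eis$. But you have left a real gap at the heart of the argument, precisely where the coefficient $\lambda_{\F}^{(n+n_j)}(0)/\lambda_{\F}^{(-1)}(0)$ must be extracted. Your proposed route is to read everything off the regularizing kernel $a(t,\psi_s)$ and the Mellin transform, invoking Theorem~\ref{AdelicRegThm}(4). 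However, the degenerate part in Definition~\ref{RegIntDef} only activates under the condition $n_i = 0$, whereas your formula carries contributions for every $n_j$; the missing contributions come from the fact that $R(s,\psi_{1/2})$ has a pole of order $n_j+1$, not a simple pole, so the principal part $\Res_{s=1/2} R(s,\cdot)$ in the regularized integral has to be analyzed at a higher-order pole, and you do not explain how to do this. Nor do you explain why $\int^{\reg}\psi_s$ (an honest integral for generic $s$) and $\int^{\reg}\psi_{1/2}$ (a genuine regularized integral with degenerate part) should differ exactly by your ``regularized zeta integral'' term; indeed your displayed identity reduces to a tautology for generic $s$ and carries content only at $s=1/2$, where it is precisely the unproven assertion.

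The paper's proof fills this gap differently and more cleanly: after twisting to $\xi=1$ and working with $\Re s$ slightly negative, it peels off the polar part $\lambda_{\F}(s)\ProjP_{\gp{K}}(f)\int\varphi$, and then decomposes the remaining integral by comparing two choices of $\intL^2$-residue, $\Reis(s)$ (built from $\eis$) and $\Reis^{\reg}(s)$ (built from $\eis^{\reg}$ on the primed indices). Their difference is the \emph{constant} function $\sum'_j \lambda_{\F}^{(n_j)}(s)\ProjP_{\gp{K}}(f_jf)$, whose $n$-th derivative at $s=0$, after taking the holomorphic part, immediately yields $\lambda_{\F}^{(n+n_j)}(0)$. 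This comparison of $\Reis(s)$ against $\Reis^{\reg}(s)$ is the organizing device your proposal lacks; it is what isolates the correction term cleanly without confronting higher-order poles of $R(s,\varphi)$ directly. If you want to salvage your Mellin-transform route, you would need to supply the analysis of the higher-order pole of $R(s,\psi_{1/2})$ at $s=1/2$ and show it reproduces the same coefficients — at which point you would essentially be re-deriving the $\Reis$/$\Reis^{\reg}$ bookkeeping by hand.
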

\begin{proof}
	The case (1) being simpler, we only give details for (2). By twisting, we may assume $\xi=1$. Let $s$ be small with $\Re s < 0$. The $\intL^2$-residue of $\varphi \cdot \eis(1/2+s,f)$ is given by
	$$ \Reis(s) := \sideset{}{_j} \sum \eis^{(n_j)}(s+1+\alpha_j, f_j f), $$
	where the summation is over $j$ such that $\Re \alpha_j > -1$. Define
	$$ \Reis^{\reg}(s) := \sideset{}{_j'} \sum \eis^{\reg,(n_j)}(s+1+\alpha_j, f_j f) + \sideset{}{_j^*} \sum \eis^{(n_j)}(s+1+\alpha_j, f_j f) $$
where $\sideset{}{_j'} \sum$ is the summation as in the statement and $\sideset{}{_j^*} \sum$ is the rest. By the previous proposition, we have
\begin{align*}
	R(\frac{1}{2}+s,\varphi;f) &= \int_{[\PGL_2]} \varphi \cdot \eis(\frac{1}{2}+s,f) = \int_{[\PGL_2]} \varphi \cdot \eis^{\reg}(\frac{1}{2}+s,f) + \lambda_{\F}(s) \ProjP_{\gp{K}}(f) \cdot \int_{[\PGL_2]} \varphi \\
	&= \int_{[\PGL_2]} \left( \varphi \cdot \eis^{\reg}(\frac{1}{2}+s,f) - \Reis^{\reg}(s) \right) - \int_{[\PGL_2]} \left( \Reis(s) - \Reis^{\reg}(s) \right) \\
	&\quad + \lambda_{\F}(s) \ProjP_{\gp{K}}(f) \cdot \int_{[\PGL_2]} \varphi.
\end{align*}
	Since $\Reis^{\reg,(n)}(s)$ is the $\intL^2$-residue of $\varphi \cdot \eis^{\reg,(n)}(\frac{1}{2}+s,f)$, we can compare the holomorphic part of both sides and conclude by
	$$ \Reis(s) - \Reis^{\reg}(s) = \sideset{}{_j'} \sum \lambda_{\F}^{(n_j)}(s) \ProjP_{\gp{K}}(f_jf). $$
\end{proof}

	\subsection{A Triple Product Formula}
	
\begin{theorem}
	Let $f_j \in \pi(1,1), j=1,2,3$. Then for any $n \in \ag{N}$
	$$ \int_{[\PGL_2]}^{\reg} \eis^*(0,f_1) \cdot \eis^*(0,f_2) \cdot \eis^{\reg,(n)}(\frac{1}{2},f_3) $$
is the sum of
	$$ \left( \frac{\partial^n R}{\partial s^n} \right)^{\hol}(\frac{1}{2}, \eis^*(0,f_1) \cdot \eis^*(0,f_2); f_3) $$
and a weighted sum with coefficients depending only on $\lambda_{\F}(s)$ of
	$$ \ProjP_{\gp{K}}(\Intw_0^{(l)}f_1 \cdot f_2) \ProjP_{\gp{K}}(f_3), \quad 0 \leq l \leq 3; $$
	$$ \ProjP_{\gp{K}}(f_1\cdot f_2 \cdot \widetilde{\Intw}_{1/2}^{(l)} f_3), \quad 0 \leq l \leq \max(2,n) \quad \& \quad l = n+3; $$
	$$ \ProjP_{\gp{K}}((f_1 \Intw_0 f_2 + f_2 \Intw_0 f_1) \cdot \widetilde{\Intw}_{1/2}^{(l)} f_3), \quad 0 \leq l \leq \max(1,n) \quad \& \quad l = n+2; $$
	$$ \ProjP_{\gp{K}}(\Intw_0 f_1\cdot \Intw_0 f_2 \cdot \widetilde{\Intw}_{1/2}^{(l)} f_3), \quad 0 \leq l \leq n \quad \& \quad l = n+1. $$
\end{theorem}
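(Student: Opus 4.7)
The plan is to combine Proposition \ref{TripleToDouble} (which relates a regularized triple product to a regularized double integral $R$) with Theorem \ref{RIPEisSing} (which evaluates regularized products of two singular Eisenstein series), following the deformation principle (\ref{DeformTec}).

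Set $\varphi := \eis^*(0,f_1) \cdot \eis^*(0,f_2)$, interpreted as the finite part at $s_1 = s_2 = 0$ of the doubly meromorphic family $\eis^*(s_1,f_1) \cdot \eis^*(s_2,f_2)$. Computing the essential constant term $\varphi_{\gp{N}}^*$ via the Leibniz rule from $\eis^*(s,f)_{\gp{N}}(a(y)\kappa) = \Lambda_{\F}(1+2s)[\norm[y]_{\ag{A}}^{1/2+s} f(\kappa) + \norm[y]_{\ag{A}}^{1/2-s} \Intw_s f(\kappa)]$, we see that $\varphi \in \Aut^{\freg}(\GL_2,1)$ has all exponents equal to $\norm_{\ag{A}}^1$ (with logarithmic factors up to $\log^2$), and the associated $\gp{K}$-functions are quadratic combinations of $f_1, f_2, \Intw_0^{(l)} f_j$ with $l \leq 3$. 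In particular $\Theta = 1/2$, so Proposition \ref{TripleToDouble} does not apply to $\varphi$ directly.

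To circumvent this, deform the singular third factor: consider $\Psi(s) := \varphi \cdot \eis^{\reg}(1/2+s,f_3)$ for small $s \neq 0$. Apply Proposition \ref{VRI} to write $\int \varphi \cdot \eis(1/2+s,f_3) = R(1/2+s,\varphi;f_3)$ as a meromorphic function of $s$; after subtracting the $\intL^2$-residue $\Reis^{\reg}(s) \in \Reis(\GL_2,1)$ of $\Psi(s)$ (whose constituents $\eis^{\reg,(k)}(1/2+s,g)$ and $\eis^{\reg,(k)}(1/2,g)$ run over the quadratic pieces $g$ of $\varphi_{\gp{N}}^*$), the resulting integrand becomes absolutely convergent. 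Its holomorphic part at $s = 0$, differentiated $n$ times, yields the principal term $(\partial^n R/\partial s^n)^{\hol}(1/2,\varphi;f_3)$. For each summand of $\Reis^{\reg}(s)$, apply Theorem \ref{RIPEisSing} (3) to express $\int_{[\PGL_2]}^{\reg} \eis^{\reg,(k)}(1/2+s,g) \cdot \eis^{\reg,(n)}(1/2,f_3)$ as a $\lambda_{\F}$-weighted sum of $\ProjP_{\gp{K}}(\widetilde{\Intw}_{1/2}^{(l)} g \cdot f_3)$ together with $\ProjP_{\gp{K}}(g) \ProjP_{\gp{K}}(f_3)$. Take the limit $s \to 0$, using the symmetry $\ProjP_{\gp{K}}(\widetilde{\Intw}_{1/2}^{(l)} g \cdot f_3) = \ProjP_{\gp{K}}(g \cdot \widetilde{\Intw}_{1/2}^{(l)} f_3)$ and the relation $\widetilde{\Intw}_s \circ \widetilde{\Intw}_{-s} = 1$ (as in the proofs of Theorems \ref{RIPEisUnitary} and \ref{RIPEisSing}) to verify that the apparent singularities cancel. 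Assembling yields the asserted expansion.

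The main obstacle is the index bookkeeping. The double pole of $\Lambda_{\F}(1+2s_1) \Lambda_{\F}(1+2s_2)$ at the origin produces $\log^2$-terms in $\varphi_{\gp{N}}^*$, so the pieces $g$ range over $f_1 f_2$, $f_1 \Intw_0 f_2 + f_2 \Intw_0 f_1$, $\Intw_0 f_1 \cdot \Intw_0 f_2$ and their derivatives up to $\Intw_0^{(l)} f_j$ with $l \leq 3$ (the extra derivative coming from the deformation parameter $s$). Combining with the $n$-th derivative in $\eis^{\reg,(n)}(1/2,f_3)$ and the additional Leibniz derivatives introduced by $\Reis^{\reg}(s)$ produces exactly the index bounds $0 \leq l \leq \max(2,n)$ and $l = n+1, n+2, n+3$ on $\widetilde{\Intw}_{1/2}^{(l)} f_3$; similarly, the splitting between the $\ProjP_{\gp{K}}(\Intw_0^{(l)} f_1 \cdot f_2) \ProjP_{\gp{K}}(f_3)$ terms and the mixed terms $\ProjP_{\gp{K}}(\cdots \widetilde{\Intw}_{1/2}^{(l)} f_3)$ must be tracked carefully, since these contributions originate from different pieces (respectively, the $\int^{\reg} \eis^{\reg,(k)}(1/2,g)$ part via Proposition \ref{RegGInv} and the cross-terms via Theorem \ref{RIPEisSing} (3)).
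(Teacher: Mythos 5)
Your plan goes astray at the central step. You keep $\varphi := \eis^*(0,f_1)\cdot\eis^*(0,f_2)$ intact and then invoke Proposition \ref{VRI} to claim $\int_{[\PGL_2]}\varphi\cdot\eis(1/2+s,f_3)=R(1/2+s,\varphi;f_3)$. But you already observed that $\varphi$ has $\Theta=1/2>0$; the integral on the left is divergent for every $s$, and Proposition \ref{VRI} only produces that identity in the region $\Theta<\Re s<-\Theta$, which is empty here. Subtracting the $\intL^2$-residue of the whole product $\Psi(s)=\varphi\cdot\eis^{\reg}(1/2+s,f_3)$ afterwards does not save this: what you subtract is a sum of Eisenstein series, not a product, so the pieces you then feed into Theorem \ref{RIPEisSing} (3) (which requires a product of two $\eis^{\reg}$'s) are not the ones you actually removed, and the link between $\int(\Psi(s)-\Reis^{\reg}(s))$ and $(\partial^n R/\partial s^n)^{\hol}(1/2,\varphi;f_3)$ is never established. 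You also drop the correction terms present in Proposition \ref{TripleToDouble} (2), namely the $\sum_j'\frac{\lambda_{\F}^{(n+n_j)}(0)}{\lambda_{\F}^{(-1)}(0)}\ProjP_{\gp{K}}(f_jf)$ and the $\lambda_{\F}^{(n)}(0)\ProjP_{\gp{K}}(f)\cdot\int\varphi$ terms, which are precisely how the $\ProjP_{\gp{K}}(\Intw_0^{(l)}f_1\cdot f_2)\ProjP_{\gp{K}}(f_3)$ block enters.

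The missing idea is to split the two-Eisenstein factor \emph{first}, before touching $f_3$. Write $\eis^*(0,f_1)\eis^*(0,f_2)=\varphi+\Reis(f_1,f_2)$ where $\Reis(f_1,f_2)$ is its $\intL^2$-residue (an explicit combination of $\eis^{\reg,(k)}(1/2,\,\cdot\,)$ with $k\le 2$ applied to $f_1f_2$, $f_1\Intw_0^{(1)}f_2+\Intw_0^{(1)}f_1\cdot f_2$, $\Intw_0^{(1)}f_1\cdot\Intw_0^{(1)}f_2$), so that now $\varphi$ has essential constant term zero and $\Theta\le -1/2$. The regularized integral then decomposes as $\int^{\reg}\varphi\cdot\eis^{\reg,(n)}(1/2,f_3)+\int^{\reg}\Reis(f_1,f_2)\cdot\eis^{\reg,(n)}(1/2,f_3)$. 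Proposition \ref{TripleToDouble} (2) applies legitimately to the first summand and delivers the principal $R$-term (one checks $R(s,\varphi;f_3)=R(s,\eis^*(0,f_1)\eis^*(0,f_2);f_3)$ since only $\varphi_{\gp{N}}-\varphi_{\gp{N}}^*$ enters) together with the degenerate correction involving $\int_{[\PGL_2]}\varphi=\int^{\reg}\eis^*(0,f_1)\eis^*(0,f_2)$, which is evaluated by Theorem \ref{RIPEisUnitary} (2) and gives the $\ProjP_{\gp{K}}(\Intw_0^{(l)}f_1\cdot f_2)\ProjP_{\gp{K}}(f_3)$ terms, $0\le l\le 3$. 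The second summand is a finite sum of singular double products, each treated by Theorem \ref{RIPEisSing} (3), which yields the remaining $\ProjP_{\gp{K}}(\cdots\widetilde{\Intw}_{1/2}^{(l)}f_3)$ terms with the stated index ranges. Your bookkeeping of the indices is essentially correct once the structure is fixed, but the route through a direct deformation in the $f_3$-variable without first regularizing $\eis^*(0,f_1)\eis^*(0,f_2)$ does not close.
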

\begin{proof}
	We shall only point out how the computation is effectuated, since the formulas are quite long.
\begin{align*}
	\Reis(f_1,f_2) &:= (\Lambda_{\F}^*)^2 \cdot \left\{ \eis^{\reg,(2)}(\frac{1}{2},f_1f_2) + \frac{1}{2} \eis^{\reg,(1)}(\frac{1}{2}, f_1 \cdot \Intw_0^{(1)}f_2 + \Intw_0^{(1)}f_1 \cdot f_2) \right. \\
	&\quad\left. + \frac{1}{4} \eis^{\reg}(\frac{1}{2}, \Intw_0^{(1)}f_1 \cdot \Intw_0^{(1)}f_2) \right\}
\end{align*}
is the $\intL^2$-residue of $\eis^*(0,f_1) \cdot \eis^*(0,f_2)$. Let $\varphi := \eis^*(0,f_1) \cdot \eis^*(0,f_2) - \Reis(f_1,f_2)$, then we need to compute
	$$ \int_{[\PGL_2]}^{\reg} \varphi \cdot \eis^{\reg,(n)}(\frac{1}{2},f_3) + \int_{[\PGL_2]}^{\reg} \Reis(f_1,f_2) \cdot \eis^{\reg,(n)}(\frac{1}{2},f_3). $$
	The first term is computed by Proposition \ref{TripleToDouble} (2), involving
	$$ \int_{[\PGL_2]} \varphi = \int_{[\PGL_2]}^{\reg} \eis^*(0,f_1) \cdot \eis^*(0,f_2) = \frac{(\Lambda_{\F}^*)^2}{4} \int_{[\PGL_2]}^{\reg} \eis^{(1)}(0,f_1) \cdot \eis^{(1)}(0,f_2), $$
which is treated in Theorem \ref{RIPEisUnitary} (2). The second term is treated in Theorem \ref{RIPEisSing} (3).
\end{proof}

\section{Appendix: Bounds of Smooth Eisenstein Series}

	\subsection{General Remarks}
	
	We take the notations and assumptions in \cite{Wu5}. Namely we fix a section $s_{\F}: \ag{R}_+ \to \F^{\times} \backslash \ag{A}^{\times}$ and assume the Hecke characters $\omega, \xi$ to be trivial on the image of $s_{\F}$. We then have the definition of the Eisenstein series $\Eis(s,\xi,\omega\xi^{-1};f)$ for $f \in V_{\xi,\omega\xi^{-1}}^{\infty}$.
\begin{remark}
	We will sometimes omit $\xi,\omega\xi^{-1}$ and write $\Eis(s,f)$ when it is clear from the context.
\end{remark}
\noindent In \cite{Wu5}, we studied the size of $\Eis(s,\xi,\omega\xi^{-1};f)$. For the purpose of the present paper, we need something finer. Precisely, we shall decompose it as
	$$ \Eis(s,\xi,\omega\xi^{-1};f) = \eisCst(s,\xi,\omega\xi^{-1};f) + \left( \Eis(s,\xi,\omega\xi^{-1};f) - \eisCst(s,\xi,\omega\xi^{-1};f) \right) \text{ with} $$
	$$ \eisCst(s,\xi,\omega\xi^{-1};f)(g) := \int_{\F \backslash \ag{A}} \Eis(s,\xi,\omega\xi^{-1};f)(n(x)g) dx, n(x) = \begin{pmatrix} 1 & x \\ & 1 \end{pmatrix} $$
and study the growth in $g$ of $\eisCst(s,\xi,\omega\xi^{-1};f)$ and $\Eis(s,\xi,\omega\xi^{-1};f) - \eisCst(s,\xi,\omega\xi^{-1};f)$ separately, as well as all their derivatives with respect to $s$.

	The study of the constant term is reduced to the study of the intertwining operator, which is already done in \cite{Wu5}. We focus on
	$$ \Eis(s,\xi,\omega\xi^{-1};f)(g) - \eisCst(s,\xi,\omega\xi^{-1};f)(g) = \sum_{\alpha \in \F^{\times}} W(s,\xi,\omega\xi^{-1};f)(a(\alpha)g) \text{ with } a(\alpha)=\begin{pmatrix} \alpha & \\ & 1 \end{pmatrix} $$
	$$ \text{and} \quad W(s,\xi,\omega\xi^{-1};f)(g) = \int_{\F \backslash \ag{A}} \psi(-x) \Eis(s,\xi,\omega\xi^{-1};f)(n(x)g) dx, $$
where $\psi$ is the standard additive character of $\F \backslash \ag{A}$. We are thus reduced to the study of the Whittaker functions $W(s,\xi,\omega\xi^{-1};f)$. If we were only interested in $W(s,\xi,\omega\xi^{-1};f)$ itself, then its behavior is already completely clear by \cite{J04} or more generally with the ``singular'' cases by \cite[Proposition 2.2]{JS90}. However, we need a bit more for our purpose in this paper. Namely, we also need to estimate $\frac{\partial^n}{\partial s^n} W(s,\xi,\omega\xi^{-1};f)(g)$. Then not the results of \textit{loc.cit.} but the method serves, i.e., the method of integral representation of Whittaker functions.

	If $\Phi \in \Sch(\ag{A}^2)$ is a Schwartz function, we can define the following (non flat) section in $V_{s,\xi,\omega\xi^{-1}}^{\infty}$
	$$ f_{\Phi}(s,\xi,\omega\xi^{-1};g) = \xi(\det g)\norm[\det g]_{\ag{A}}^{\frac{1}{2}+s} \int_{\ag{A}^{\times}} \Phi((0,t)g) \omega^{-1}\xi^2(t) \norm[t]_{\ag{A}}^{1+2s} d^{\times}t $$
first defined for $\Re s > 0$ then meromorphically continued to $s \in \ag{C}$. Given $f \in V_{0,\xi,\omega\xi^{-1}}^{\infty}$, we want to give an explicit $\Phi$ associated with $f$. For simplicity of notations, we may assume $f$ to be a pure tensor. We then construct $\Phi = \otimes_v' \Phi_v$ place by place:

\noindent (1) At $\F_v = \ag{C}$ resp. $\F_v = \ag{R}$ and for $f_v$ spherical resp. not spherical, we choose $\Phi_v$ using the construction in \cite[Lemma 3.5 (1)]{Wu5} resp. \cite[Lemma 3.8 (1)]{Wu5} for spherical resp. smooth functions.

\noindent (2) At $v < \infty$ and for $f_v$ not spherical, we choose $\Phi_v$ by
	$$ \Phi_v((0,1) \kappa) = \Cond(\psi_v)\xi_v(\det \kappa)^{-1}f_v(\kappa), \kappa \in \gp{K}_v \text{ i.e.} $$
	$$ \Phi_v(ux,u) = \Cond(\psi_v)\omega_v \xi_v^2(u) f_v \begin{pmatrix} 1 & \\ x & 1 \end{pmatrix}, \forall u \in \vo_v^{\times}, x \in \vo_v; $$
	$$ \Phi_v(u,uy) = \Cond(\psi_v)\omega_v \xi_v^2(u) f_v \begin{pmatrix} & -1 \\ 1 & y \end{pmatrix}, \forall u \in \vo_v^{\times}, y \in \varpi_v \vo_v; $$
	and $\Phi_v(x,y)=0$ for $\max(\norm[x]_v,\norm[y]_v) \neq 1$.
	
\noindent (3) At $v < \infty$ and for $f_v$ spherical, we choose $\Phi_v$ by
	$$ \Phi_v = \Cond(\psi_v) \cdot f_v(1) \cdot 1_{\vo_v^2}. $$

Let $S=S(f)$ be the set of places $v$ such that $f_v$ is not spherical. Then we get
\begin{equation}
	f_{\Phi}(s,\xi,\omega\xi^{-1};g) = \Dis(\F)^{\frac{1}{2}}\Lambda^S(1+2s,\omega^{-1}\xi^2) \prod_{\substack{v \in S \\ v \mid \infty}} K_{v,a_v}(s,\omega_v^{-1}\xi_v^2) \cdot f_s(g).
\label{RelSec}
\end{equation}
\noindent We can thus deduce the bounds of $W(s,\xi,\omega\xi^{-1};f)$ from those of
\begin{equation}
	W_{\Phi}(s,\xi,\omega\xi^{-1}; g) = \xi(\det g)\norm[\det g]_{\ag{A}}^{\frac{1}{2}+s} \int_{\ag{A}^{\times}} \Four[2]{\rpR(g).\Phi}(t,\frac{1}{t}) \omega^{-1}\xi^2(t)\norm[t]_{\ag{A}}^{2s} d^{\times}t,
\label{SchWhi}
\end{equation}
where the partial Fourier transforms are defined as in \cite[(3.3)]{Wu5}.

	In Section 4.2, we will bound (\ref{SchWhi}) locally place by place. We then use the obtained bound to get a bound for the sum $\Sigma_{\alpha \in \F^{\times}} \norm[W_{\Phi}(s,\xi,\omega\xi^{-1}; a(\alpha)g)]$, using a convergence lemma treated in Section 4.4. We will treat all bounds with uniformity for $s$ with real part lying in any compact interval, so that the bounds for the derivatives in $s$ follow automatically by Cauchy's integral formulae.
	
	In Section 4.3, we will determine the behavior of the constant term based on \cite{Wu5}.

	\subsection{Bounds of Non Constant Terms}
	
		\subsubsection{Archimedean Places}
		
	We omit the subscript $v$ since we work locally. The local integral representation has the form
	$$ W_{\Phi}(s,\xi,\omega\xi^{-1}; a(y)\kappa) = \omega\xi^{-1}(y)\norm[y]_{\F}^{\frac{1}{2}-s} \int_{\F^{\times}} \Four[2]{\rpR(\kappa).\Phi}(t,\frac{y}{t}) \omega^{-1}\xi^2(t)\norm[t]_{\F}^{2s} d^{\times}t, \text{ for } y \in \F^{\times}, \kappa \in \gp{K}. $$
We are thus reduced to studying the integral at the right hand side. By \cite[Proposition 4.1]{J04} as well as its counterpart in the singular cases, it is easy to see the rapid decay at $\infty$ of
	$$ \norm[W_{\Phi}(s,\xi,\omega\xi^{-1}; a(y)\kappa)] \ll \norm[y]_{\F}^{-N}, \forall N \in \ag{N}, $$
and the polynomial increase at $0$ of
	$$ \norm[W_{\Phi}(s,\xi,\omega\xi^{-1}; a(y)\kappa)] \ll_{\epsilon} \norm[y]_{\F}^{\frac{1}{2} - \norm[\Re s] - \epsilon}, \forall \epsilon > 0. $$
As for the implied constants in the above estimations, one naturally guess it is related to the Schwartz norms of $\Four[2]{\rpR(\kappa).\Phi}$. Then we need to related these norms to the Schwartz norms of $\Phi$ itself. According to this strategy, we state the following two lemmas and the desired proposition.
\begin{lemma}
	For any Schwartz norm $\Sch^*$ there is a Schwartz norm $\Sch^{**}$ such that
	$$ \sup_{\kappa \in \gp{K}} \Sch^*(\Four[2]{\rpR(\kappa).\Phi}) \ll \Sch^{**}(\Phi). $$
\label{SchNormEquivA}
\end{lemma}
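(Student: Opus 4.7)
The plan is to split the desired bound into two independent estimates and then chain them. First, I will use that the partial Fourier transform $\Four[2]{\cdot}$ is a topological automorphism of the Schwartz space $\Sch(\F_v^2)$. Concretely, if $\Sch^*$ is a typical Schwartz seminorm of the form $\Sch^*(\Psi) = \sup_{(x,\eta)} \norm[x]^a \norm[\eta]^b \norm[\partial_x^{\alpha} \partial_{\eta}^{\beta} \Psi(x,\eta)]$, then partial Fourier transform in the second variable exchanges multiplication by $\eta$ with (up to constants) $\partial_y$ and vice versa, while leaving the first variable untouched; combined with an $\intL^1$-absorption argument turning an $\intL^{\infty}$ bound on the transform into an $\intL^1$-norm of the original (with two extra derivatives in $y$), this yields a Schwartz seminorm $\Sch^0$, depending only on $\Sch^*$, such that $\Sch^*(\Four[2]{\Psi}) \ll \Sch^0(\Psi)$ for every $\Psi \in \Sch(\F_v^2)$.

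Second, I will show that right translation by $\kappa \in \gp{K}$ acts equicontinuously on $\Sch(\F_v^2)$. Writing $(x,y)\kappa = (x',y')$ and applying the chain rule, $\partial_x^{\alpha} \partial_y^{\beta}[\Phi((x,y)\kappa)]$ is a finite linear combination of $(\partial^{\gamma}\Phi)((x,y)\kappa)$ with coefficients that are monomials of bounded degree in the matrix entries of $\kappa$; since $\gp{K}$ is compact these coefficients are bounded uniformly in $\kappa$. Likewise, since $\kappa^{-1}$ remains bounded as $\kappa$ ranges over $\gp{K}$, the weight factor $\norm[x]^a \norm[y]^b$ is majorized by a fixed polynomial in $\norm[x']$ and $\norm[y']$ with constants independent of $\kappa$. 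After changing variables $(x,y) \mapsto (x',y')$ inside the supremum, one obtains a Schwartz seminorm $\Sch^{**}$, depending only on $\Sch^0$, such that $\sup_{\kappa \in \gp{K}} \Sch^0(\rpR(\kappa).\Phi) \ll \Sch^{**}(\Phi)$.

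Chaining the two estimates via $\Psi = \rpR(\kappa).\Phi$ then produces the asserted inequality, with $\Sch^{**}$ depending only on the original $\Sch^*$. The only obstacle is notational bookkeeping: tracking the combinatorics of the multi-indices through the chain rule for right translation, and verifying that the Fourier transform estimate in step one comes out uniformly in the multi-indices that arise. No genuinely analytic difficulty appears, because both operations in question are continuous on Schwartz space and the translation action is parametrised by a compact group, so the required uniformity is automatic.
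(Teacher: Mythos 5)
Your proposal is correct and follows essentially the same route as the paper: split the composite $\Four[2]{\cdot}\circ \rpR(\kappa)$ into (i) continuity of the partial Fourier transform on Schwartz space and (ii) equicontinuity of the $\gp{K}$-translation action, and chain. The paper carries out the same two-step reduction by invoking the equivalence of the $\Sch_l^*$ semi-norm systems, using $\Sch_2^*$ for the Fourier step and $\Sch_{\infty}^*$ for the translation step; your $\intL^1$-to-$\intL^{\infty}$ absorption argument for the Fourier step is a routine variant of the same device.
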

\begin{lemma}
	For the real part of $s$ lying in a fixed compact interval, any Schwartz function $\Phi \in \Sch(\F^2)$ and any integer $N \in \ag{N}$, there is a Schwartz norm $\Sch^*$ such that as $\norm[y]_{\F} \to \infty$
	$$ \extnorm{  \int_{\F^{\times}} \Phi(t,\frac{y}{t}) \omega^{-1}\xi^2(t)\norm[t]_{\F}^{2s} d^{\times}t } \ll \Sch^*(\Phi) \norm[y]_{\F}^{-N}; $$
while for any $\epsilon > 0$ there is a Schwartz norm $\Sch^{**}$ such that as $\norm[y]_{\F} \to 0$
	$$ \extnorm{  \int_{\F^{\times}} \Phi(t,\frac{y}{t}) \omega^{-1}\xi^2(t)\norm[t]_{\F}^{2s} d^{\times}t } \ll_{\epsilon} \Sch^{**}(\Phi) \max( \norm[y]_{\F}^{-\epsilon}, \norm[y]_{\F}^{-2 \Re s - \epsilon}). $$
\label{IntBdA}
\end{lemma}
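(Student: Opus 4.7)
My plan is to prove both bounds by a direct splitting of the integration in $t$ into regions where one of $\norm[t]_{\F}$ and $\norm[y/t]_{\F}$ is forced to be large, then invoking Schwartz decay of $\Phi$ in the corresponding variable. The key tool throughout is the elementary Schwartz bound
\[ \norm[\Phi(t,y/t)] \leq \Sch_{M,N}(\Phi) (1+\norm[t]_{\F})^{-M}(1+\norm[y/t]_{\F})^{-N}, \]
valid for any $M,N\geq 0$, where $\Sch_{M,N}$ is a suitable Schwartz seminorm. The substitution $t\mapsto y/t$ exchanges the two arguments of $\Phi$ and transforms $\omega^{-1}\xi^{2}(t)\norm[t]_{\F}^{2s}d^{\times}t$ into $\omega\xi^{-2}(y^{-1})\norm[y]_{\F}^{2s}\norm[t]_{\F}^{-2s}d^{\times}t$; I will use this involution freely to reduce one region to the mirror image of another.

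For the rapid decay as $\norm[y]_{\F}\to\infty$, I would split at $\norm[t]_{\F}=\norm[y]_{\F}^{1/2}$. On the inner region $\norm[t]_{\F}\leq\norm[y]_{\F}^{1/2}$ one has $\norm[y/t]_{\F}\geq\norm[y]_{\F}^{1/2}$, so using the displayed inequality with $N$ large and $M$ chosen depending only on $N$ and the compact range of $\Re s$ bounds the integrand by $\Sch_{M,N}(\Phi)\norm[y]_{\F}^{-N/2}(1+\norm[t]_{\F})^{-M}\norm[t]_{\F}^{N}$, which integrates against $\norm[t]_{\F}^{2\Re s-1}dt$ to give $O_{N}(\norm[y]_{\F}^{-N/2})$ uniformly. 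The outer region $\norm[t]_{\F}\geq\norm[y]_{\F}^{1/2}$ is handled symmetrically by swapping the roles of the two arguments (or by applying $t\mapsto y/t$ to reduce to the first case). Since $N$ is arbitrary, combining the two contributions proves rapid decay.

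For the polynomial increase as $\norm[y]_{\F}\to 0$, I would split $\F^{\times}$ into three pieces: $\norm[t]_{\F}\geq 1$; $\norm[y]_{\F}\leq\norm[t]_{\F}<1$; and $\norm[t]_{\F}\leq\norm[y]_{\F}$. On the first, pure Schwartz decay of $\Phi$ in its first argument provides an $s$-integrable majorant independent of $y$, yielding $O(1)$. On the middle region both arguments of $\Phi$ are bounded so $\norm[\Phi(t,y/t)]\leq\Norm[\Phi]_{\infty}$, and the integral reduces to $\int_{\norm[y]_{\F}}^{1}\norm[t]_{\F}^{2\Re s-1}dt$, which is $O(1)$ for $\Re s>0$, $O(\log(1/\norm[y]_{\F}))$ for $\Re s=0$, and $O(\norm[y]_{\F}^{2\Re s})$ for $\Re s<0$. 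On the innermost region one has $\norm[y/t]_{\F}\geq 1$, and Schwartz decay in the second variable with $N$ slightly larger than $-2\Re s$ yields $\norm[\Phi(t,y/t)]\leq\Sch(\Phi)(\norm[y]_{\F}/\norm[t]_{\F})^{-N}$; the $t$-integral then becomes $\norm[y]_{\F}^{-N}\int_{0}^{\norm[y]_{\F}}\norm[t]_{\F}^{N+2\Re s-1}dt=O(\norm[y]_{\F}^{2\Re s})$. Summing the three contributions and absorbing the logarithm into an arbitrary $\norm[y]_{\F}^{-\epsilon}$ gives the stated maximum bound.

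The main technical obstacle is keeping every implicit constant an honest Schwartz seminorm of $\Phi$, depending only on the fixed compact range of $\Re s$; this forces a single choice of $M,N$ governing every Schwartz decay estimate invoked above, independently of $y$ and $s$. Once uniformity is secured, the case $\F=\ag{C}$ proceeds verbatim after passing to polar coordinates and replacing the archimedean absolute value by the module character $\norm[\cdot]_{\ag{C}}=\norm[\cdot]^{2}$, and the corresponding bounds for $\partial_{s}^{n}$ of the integral drop out of Cauchy's integral formula applied on a small disc inside the uniform strip, as announced in Section 4.1.
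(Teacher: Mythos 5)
Your proposal is correct, but it takes a genuinely different route from the paper. The paper proves Lemma \ref{IntBdA} by a rather heavy Mellin--transform apparatus: it introduces the function spaces $\fsB_c(\ag{R}_+)$ and $\fsH_c(\ag{C})$ with their seminorm topologies, proves that the Mellin transform and its inverse are continuous for these topologies, decomposes the $t$-integral $f(y)$ into Fourier components along $\F^1$ with controlled decay in the index, identifies the Mellin transform of each component as a product of one-dimensional Mellin transforms of $\Phi_{\chi_1,\chi_2}$, bounds the latter in terms of Schwartz seminorms of $\Phi$, and recovers the pointwise bound by Mellin inversion. You instead estimate the integral directly, splitting the domain of $t$ so that one of the two arguments of $\Phi$ is forced to be large and invoking the basic Schwartz bound $\norm[\Phi(t,y/t)]\ll(1+\norm[t])^{-M}(1+\norm[y/t])^{-N}$. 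Both arguments work; yours is more elementary and self-contained, whereas the paper's framework is set up once and reused (e.g.\ verbatim in the non-archimedean case and for the derivative bounds via Cauchy's formula), which is why the author opted for it. The ``technical obstacle'' you mention --- keeping all constants honest Schwartz seminorms --- is exactly the bookkeeping that the $\fsB_c$/$\fsH_c$ machinery streamlines; in your approach it is also unproblematic, since every estimate you invoke is the single displayed Schwartz inequality with $M,N$ depending only on the compact range of $\Re s$ and the target $\epsilon$.

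Two small points. First, in the $\norm[y]_{\F}\to\infty$ case the displayed intermediate bound in your write-up (extracting $\norm[y]_{\F}^{-N/2}$ while also keeping $\norm[t]_{\F}^{N}$) is slightly garbled; the clean way is $(1+\norm[y/t]_{\F})^{-N}\leq\norm[y]_{\F}^{-N}\norm[t]_{\F}^{N}$, which both saves the $d^{\times}t$-integral near $t=0$ (needed for $\Re s\leq 0$) and in fact yields the stronger rate $O(\norm[y]_{\F}^{-N})$. Second, the exponent you obtain as $\norm[y]_{\F}\to 0$ is $\norm[y]_{\F}^{\min(0,2\Re s)-\epsilon}=\max(\norm[y]_{\F}^{-\epsilon},\norm[y]_{\F}^{2\Re s-\epsilon})$, which is the correct bound and the one needed for Proposition \ref{LocWhiBdA}; the sign $-2\Re s-\epsilon$ appearing in the statement of the lemma is a typo, and you should not say your computation ``gives the stated maximum bound'' without flagging this --- for $\Re s<0$ the printed bound $\max(\norm[y]_{\F}^{-\epsilon},\norm[y]_{\F}^{-2\Re s-\epsilon})=\norm[y]_{\F}^{-\epsilon}$ is actually \emph{weaker} than what the integral achieves and hence false as an upper bound.
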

\begin{proposition}
	Let the real part of $s$ vary in a fixed compact interval. For any integer $N \in \ag{N}$, as $\norm[y] \to \infty$ and uniformly in $\kappa$, there is a Schwartz norm $\Sch^*$ such that
	$$ \norm[W_{\Phi}(s,\xi,\omega\xi^{-1}; a(y)\kappa)] \ll \Sch^*(\Phi) \norm[y]_{\F}^{-N}; $$
while for any $\epsilon > 0$, as $\norm[y] \to 0$ and uniformly in $\kappa$, there is a Schwartz norm $\Sch^{**}$ such that
	$$ \norm[W_{\Phi}(s,\xi,\omega\xi^{-1}; a(y)\kappa)] \ll_{\epsilon} \Sch^{**}(\Phi) \norm[y]_{\F}^{\frac{1}{2} - \norm[\Re s] - \epsilon}. $$
\label{LocWhiBdA}
\end{proposition}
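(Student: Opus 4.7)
The plan is to combine the two preceding lemmas with the integral representation, so that the argument reduces essentially to bookkeeping. Starting from
$$ W_{\Phi}(s,\xi,\omega\xi^{-1}; a(y)\kappa) = \omega\xi^{-1}(y)\norm[y]_{\F}^{\frac{1}{2}-s} \int_{\F^{\times}} \Four[2]{\rpR(\kappa).\Phi}(t,\tfrac{y}{t}) \omega^{-1}\xi^2(t)\norm[t]_{\F}^{2s} d^{\times}t, $$
I would take absolute values and pull out $\norm[y]_{\F}^{1/2 - \Re s}$, since the factors $\omega\xi^{-1}(y)$ and $\omega^{-1}\xi^2(t)$ have modulus one. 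What remains is precisely the type of integral controlled by Lemma \ref{IntBdA}, applied with the input Schwartz function $\Four[2]{\rpR(\kappa).\Phi}$. The resulting bound involves a Schwartz seminorm of this partial Fourier transform; to extract uniformity in $\kappa \in \gp{K}$, I would invoke Lemma \ref{SchNormEquivA} to dominate that seminorm by a (larger) Schwartz seminorm of $\Phi$ itself.

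For $\norm[y]_{\F} \to \infty$: Lemma \ref{IntBdA} provides, for each $N' \in \ag{N}$, a Schwartz seminorm $\Sch^*$ such that the $t$-integral is $\ll \Sch^*(\Four[2]{\rpR(\kappa).\Phi}) \norm[y]_{\F}^{-N'}$. Multiplying by $\norm[y]_{\F}^{1/2 - \Re s}$ and choosing $N'$ large enough (which is harmless since $\Re s$ ranges in a fixed compact interval) yields the asserted $\norm[y]_{\F}^{-N}$ decay, and Lemma \ref{SchNormEquivA} then replaces $\Sch^*(\Four[2]{\rpR(\kappa).\Phi})$ by $\Sch^{**}(\Phi)$ uniformly in $\kappa$. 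For $\norm[y]_{\F} \to 0$: Lemma \ref{IntBdA} bounds the $t$-integral by a Schwartz seminorm times $\max(\norm[y]_{\F}^{-\epsilon}, \norm[y]_{\F}^{-2\Re s - \epsilon})$; after multiplying by $\norm[y]_{\F}^{1/2 - \Re s}$ and distinguishing the sign of $\Re s$, the two terms collapse to a single $\norm[y]_{\F}^{1/2 - \norm[\Re s] - \epsilon}$, and Lemma \ref{SchNormEquivA} again transfers the seminorm back to $\Phi$.

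The main work lies upstream, inside the two lemmas. Lemma \ref{IntBdA} demands splitting the $t$-integral into the regimes $\norm[t]_{\F}$ large, comparable to $\norm[y]_{\F}$, and small (where $\norm[y/t]_{\F}$ is large), then harvesting Schwartz decay of $\Phi$ in the appropriate variable on each piece while tracking a single Schwartz seminorm. Lemma \ref{SchNormEquivA} requires that the continuity of the partial Fourier transform on $\Sch(\F^2)$ and the compactness of $\gp{K}$ jointly allow one to replace $\Sch^*(\Four[2]{\rpR(\kappa).\Phi})$ by a $\kappa$-independent seminorm of $\Phi$. Granting these two lemmas, the proposition follows by the combination above. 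The uniformity in $\Re s$ on compact intervals is inherited from both lemmas, which will subsequently allow Cauchy's integral formula to upgrade these estimates to bounds on $\frac{\partial^n}{\partial s^n} W_{\Phi}$ as announced in Section 4.1.
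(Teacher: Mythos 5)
Your argument is exactly the paper's: the author explicitly states that Proposition \ref{LocWhiBdA} is ``a direct consequence'' of Lemma \ref{SchNormEquivA} and Lemma \ref{IntBdA}, and you supply the same bookkeeping (pull out the unitary characters and $\norm[y]_{\F}^{1/2-\Re s}$, bound the remaining $t$-integral by Lemma \ref{IntBdA} applied to $\Four[2]{\rpR(\kappa).\Phi}$, then uniformize in $\kappa$ via Lemma \ref{SchNormEquivA}). One small remark: for the small-$\norm[y]$ regime to collapse to the stated exponent $\frac{1}{2}-\norm[\Re s]-\epsilon$ after multiplying by the prefactor, the second argument of the $\max$ in Lemma \ref{IntBdA} should read $\norm[y]_{\F}^{2\Re s-\epsilon}$ (as its proof actually yields), not $\norm[y]_{\F}^{-2\Re s-\epsilon}$; your ``distinguishing the sign of $\Re s$'' step implicitly uses the corrected form.
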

\noindent We recall the definition of Schwartz norms on $\ag{R}^d$ for positive integers $d$.
\begin{definition}
	For $l \in [1,\infty]$, $\vec{p}, \vec{m} \in \ag{N}^d$, we put the semi-norm $\Sch_l^{\vec{p},\vec{m}}$ on $\Sch(\ag{R}^d)$ by
	$$ \Sch_l^{\vec{p},\vec{m}}(\Phi) = \extNorm{ \vec{x}^{\vec{p}} \cdot \partial^{\vec{m}} \Phi }_l. $$
Here we have written:
\begin{itemize}
	\item $\Norm_l$ is the $\intL^l$-norm on $\ag{R}^d$.
	\item For $\vec{x}=(x_i)_{1 \leq i \leq d} \in \ag{R}^d, \vec{p}=(p_i)_{1 \leq i \leq d} \in \ag{N}^d$, $\vec{x}^{\vec{p}}=\Pi_{i=1}^d x_i^{p_i}$.
	\item For $\vec{n}=(n_i)_{1 \leq i \leq d} \in \ag{N}^d, \vec{x}=(x_i)_{1 \leq i \leq d} \in \ag{R}^d$,
	$$ \partial^{\vec{n}} = \prod_{i=1}^d \frac{\partial^{n_i}}{\partial x_i^{n_i}}. $$
\end{itemize}
\end{definition}
\begin{remark}
	Since $\ag{C} \simeq \ag{R}^2$, we put the semi-norms for $\Sch(\ag{R}^2)$ on $\Sch(\ag{C})$.
\end{remark}
\begin{remark}
	If we do not specify the parameters of a Schwartz norm $\Sch^*$, we mean the max of a finite collection of Schwartz norms. This applies to Lemma \ref{SchNormEquivA}, \ref{IntBdA} and Proposition \ref{LocWhiBdA}.
\end{remark}

	We first treat Lemma \ref{SchNormEquivA}.
\begin{proposition}
	The topology on $\Sch(\ag{R}^d)$ defined by the system of semi-norms $\Sch_l^*$ does not depend on $l \in [1,\infty]$.
\end{proposition}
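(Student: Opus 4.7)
The plan is to show that the topology defined by the family $\{\Sch_l^{\vec{p},\vec{m}}\}_{\vec{p},\vec{m}}$ on $\Sch(\ag{R}^d)$ does not depend on $l \in [1,\infty]$. By transitivity it suffices to fix $l \in [1,\infty)$ and show that this family is comparable to the family $\{\Sch_\infty^{\vec{p},\vec{m}}\}_{\vec{p},\vec{m}}$, in the sense that any seminorm in one family is bounded by a finite maximum of seminorms from the other.

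\emph{Direction $\Sch_l \lesssim \Sch_\infty$.} I would factor
$\vec{x}^{\vec{p}} \partial^{\vec{m}} \Phi(\vec{x}) = (1+\norm[\vec{x}])^{-(d+1)} \cdot (1+\norm[\vec{x}])^{d+1} \vec{x}^{\vec{p}} \partial^{\vec{m}} \Phi(\vec{x})$
and take the $\intL^l$-norm, using that $(1+\norm[\vec{x}])^{-(d+1)}$ lies in $\intL^l(\ag{R}^d)$ for every $l \in [1,\infty]$. Expanding the binomial $(1+\norm[\vec{x}])^{d+1}$ in the remaining factor and bounding $\norm[\vec{x}]^j$ by a constant times $\max_i \norm[x_i]^j$, this controls $\Sch_l^{\vec{p},\vec{m}}(\Phi)$ by a finite maximum of $\Sch_\infty^{\vec{p}',\vec{m}}(\Phi)$ with $|\vec{p}'| \leq |\vec{p}| + d+1$.

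\emph{Direction $\Sch_\infty \lesssim \Sch_l$.} I would first treat $l=1$ by iterating the one-dimensional fundamental theorem of calculus: for any $g \in \Sch(\ag{R}^d)$, since $g$ vanishes at $-\infty$ in every coordinate,
$\extnorm{g(\vec{x})} = \extnorm{\int_{-\infty}^{x_1}\cdots\int_{-\infty}^{x_d} \partial_1\cdots\partial_d g(\vec{y})\, d\vec{y}} \leq \Norm[\partial_1\cdots\partial_d g]_1.$
Applying this with $g = \vec{x}^{\vec{p}} \partial^{\vec{m}} \Phi$ and expanding the mixed partial by Leibniz expresses $\Sch_\infty^{\vec{p},\vec{m}}(\Phi)$ as a finite sum of $\Sch_1^{\vec{p}',\vec{m}'}(\Phi)$ with $\vec{p}' \leq \vec{p}$ and $\vec{m}' \leq \vec{m} + (1,\ldots,1)$. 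For intermediate $l \in (1,\infty)$, once the equivalence $\Sch_1 \sim \Sch_\infty$ is in hand, I would use on the one hand the interpolation $\Norm[g]_l \leq \Norm[g]_\infty^{1-1/l} \Norm[g]_1^{1/l}$ to get $\Sch_l \lesssim \max(\Sch_1,\Sch_\infty)$, and on the other hand a weighted Hölder inequality $\Norm[g]_1 \leq \Norm[(1+\norm[\vec{x}])^k g]_l \cdot \Norm[(1+\norm[\vec{x}])^{-k}]_{l/(l-1)}$ for any integer $k$ with $k(l-1)>d$ to bound $\Sch_1$ by finitely many $\Sch_l$ seminorms after expansion of the polynomial weight.

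The main obstacle is this reverse direction for general $l \in (1,\infty)$: no single inequality applies uniformly in $l$, so the argument must concatenate the three steps $\Sch_\infty \lesssim \Sch_1$ (iterated FTC), $\Sch_1 \lesssim \Sch_l$ (weighted Hölder with an $l$-dependent weight), and $\Sch_l \lesssim \Sch_\infty$ (weighted Hölder with a fixed weight). The only substantive check is that in each weighted Hölder step the polynomial weight $(1+\norm[\vec{x}])^k$ can, after multinomial expansion, be absorbed into $\vec{x}^{\vec{p}'}$ factors with uniformly bounded $|\vec{p}'|$, which is routine. As a shortcut one could instead invoke the Sobolev embedding $W^{k,l}(\ag{R}^d) \hookrightarrow \intL^\infty(\ag{R}^d)$ for $kl>d$ to collapse the chain into a single inequality; I prefer the elementary route above, since it needs nothing beyond the FTC and Hölder.
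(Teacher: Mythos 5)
Your proposal is correct, and the forward direction $\Sch_l \lesssim \Sch_\infty$ is essentially identical to the paper's: you multiply and divide by an integrable weight (you use $(1+\norm[\vec{x}])^{-(d+1)}$, the paper splits into $\norm[x]\le 1$ and $\norm[x]>1$, which amounts to the weight $\min(1,\norm[x]^{-2})$); both cost a fixed number of extra powers of $\vec{x}$.

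The reverse direction $\Sch_\infty \lesssim \Sch_l$ is where you genuinely diverge. The paper proves a one-shot Sobolev inequality $\Norm[\Phi]_\infty \ll_l \Norm[\Phi]_l + \Norm[\Phi']_l$ directly for every $l \in [1,\infty)$: it writes $\Phi(y) - \Phi(x) = \int_x^y \Phi'$, applies H\"older to the right side, raises to the $l$-th power, and integrates against the auxiliary weight $\min(1,\norm[x-y]^{-l-1})\,dx$ to produce both $\Norm[\Phi]_l$ and $\Norm[\Phi']_l$ on the right. Your argument instead chains $\Sch_\infty \lesssim \Sch_1$ (by iterated fundamental theorem of calculus, which is arguably even more elementary, and incidentally gives a constant independent of $l$ since $l$ has not yet entered) and then $\Sch_1 \lesssim \Sch_l$ (weighted H\"older). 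Both are valid and elementary; the paper's route is a single inequality uniform in $l$, while yours decomposes cleanly into an $l$-independent FTC step and an $l$-dependent weight step, at the cost of a more circuitous chain and an $l$-dependent weight that must itself be expanded into $\vec{x}^{\vec{p}'}$. One small slip: your stated condition $k(l-1)>d$ on the weight exponent should read $kl/(l-1)>d$, i.e.\ $k > d(l-1)/l$, which for large $l$ is more demanding than $k > d/(l-1)$; this is immaterial since $k$ may be taken as large as needed, but as written the condition does not guarantee $\extNorm{(1+\norm[\vec{x}])^{-k}}_{l/(l-1)} < \infty$ for all $l$. The interpolation step $\Norm[g]_l \le \Norm[g]_\infty^{1-1/l}\Norm[g]_1^{1/l}$ you mention is redundant given that you have already established $\Sch_l \lesssim \Sch_\infty$ directly.
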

\begin{proof}
	In the case $d=1$, we have for $l \in [1,\infty)$ and any $\Phi \in \Sch(\ag{R})$
	$$ \int_{\ag{R}} \norm[\Phi(x)]^l dx \leq \Sch_{\infty}^{0,0}(\Phi)^l \int_{-1}^1 dx + \Sch_{\infty}^{2,0}(\Phi)^l \int_{\norm[x]>1} \norm[x]^{-2l} dx, $$
from which we deduce by replacing $\Phi(x)$ with $x^p \Phi^{(m)}(x)$ that
	$$ \Sch_l^{p,m}(\Phi) \ll_l \Sch_{\infty}^{p,m}(\Phi) + \Sch_{\infty}^{p+2,m}(\Phi). $$
	In the opposite direction, from H\"older inequality
	$$ \norm[\Phi(y) - \Phi(x)] = \extnorm{ \int_x^y \Phi'(t) dt} \leq \Norm[\Phi']_l \cdot \left( \int_x^y dt \right)^{\frac{1-l}{l}} $$
and $(a+b)^l \leq 2^{l-1}(a^l + b^l)$ we deduce
	$$ \norm[\Phi(y)]^l \leq 2^{l-1} \left( \norm[\Phi(x)]^l + \Norm[\Phi']_l^l \cdot \norm[y-x]^{l-1} \right). $$
	Integrating both sides against $\min(1,\norm[x-y]^{-l-1})dx \leq dx$ gives
	$$ (2+\frac{2}{l}) \norm[\Phi(y)]^l \leq 2^{l-1} \left( \Norm[\Phi]_l^l + \Norm[\Phi']_l^l \cdot \int_{\ag{R}} \min(\norm[x]^{l-1}, \norm[x]^{-2}) dx \right). $$
	Hence we get (a Sobolev inequality) and conclude the case $d=1$ by
	$$ \Norm[\Phi]_{\infty} \ll_l \Norm[\Phi]_l + \Norm[\Phi']_l. $$
	
	For general $d$, one deduces easily by induction
	$$ \Sch_l^{\vec{p},\vec{m}}(\Phi)^l \ll_{l,d} \sum_{\vec{\epsilon} \in \{ 0,2 \}^d} \Sch_{\infty}^{\vec{p}+\vec{\epsilon},\vec{m}}(\Phi)^l, $$
	$$ \Norm[\Phi]_{\infty}^l \ll_{l,d} \sum_{\vec{\epsilon} \in \{ 0,1 \}^d} \Sch_l^{0,\vec{\epsilon}}(\Phi)^l. $$
\end{proof}
\begin{proof}{(of Lemma \ref{SchNormEquivA})}
	By the above proposition, the problem is reduced to the uniform continuity of
	$$ \Four[2]{\cdot} \circ \rpR(\kappa): \Sch(\F^2) \to \Sch(\F^2) $$
with respect to $\kappa \in \gp{K}$. The continuity of $\Four[2]{\cdot}$ follows by considering the $\Sch_2^*$ semi-norms. The uniform continuity of $\rpR(\kappa)$ follows by considering the $\Sch_{\infty}^*$ semi-norms.
\end{proof}

	We then turn to Lemma \ref{IntBdA}. Actually, we are going to reduce to the situation of Mellin transform on $\ag{R}_+$, which we shall study at the first place. For any $c \in \ag{R}$, define
	$$ \fsB_c(\ag{R}_+) = \left\{ f: \in \Cont^{\infty}(\ag{R}_+): \sup_{y > 0} \norm[f^{(k)}(y) y^{\sigma+k}] < \infty, \forall \sigma > c \right\}. $$
	$$ \fsH_c(\ag{C}) = \left\{ M \text{ holomorphic in } \Re s > c: \sup_{\Re s = \sigma} \norm[s(s+1)\cdots (s+k-1)M(s)] < \infty, \forall \sigma > c \right\}. $$
\begin{definition}
	For any fixed $l \in [0,\infty]$, we put a system of semi-norms $B_l^{k,\sigma}$ with $k \in \ag{N}, \sigma \in (c,\infty)$ on $\fsB_c(\ag{R}_+)$ by
	$$ B_l^{k,\sigma}(f) = \left( \int_0^{\infty} \norm[f^{(k)}(y) y^{\sigma+k}]^l \frac{dy}{y} \right)^{\frac{1}{l}}, l \neq \infty; B_{\infty}^{k,\sigma}(f) = \sup_{y > 0} \norm[f^{(k)}(y) y^{\sigma+k}]. $$
\end{definition}
\begin{definition}
	For any fixed $l \in [0,\infty]$, we put a system of semi-norms $H_l^{k,\sigma}$ with $k \in \ag{N}, \sigma \in (c,\infty)$ on $\fsH_c(\ag{C})$ by
	$$ H_l^{k,\sigma}(M) = \left( \int_{\Re s = \sigma} \norm[s(s+1)\cdots (s+k-1)M(s)]^l \frac{ds}{2\pi i} \right)^{\frac{1}{l}}, l \neq \infty; $$
	$$ H_{\infty}^{k,\sigma}(M) = \sup_{\Re s = \sigma} \norm[s(s+1)\cdots (s+k-1)M(s)]. $$
\end{definition}
\begin{proposition}
	The topology on $\fsB_c(\ag{R}_+)$ defined by $B_l^{k,\sigma}$ does not depend on $l$. More precisely, for any $f \in \fsB_c(\ag{R}_+)$ we have for $1 \leq l < \infty$ and $\epsilon > 0$ small with $\sigma - \epsilon > c$
	$$ B_l^{k,\sigma}(f) \ll_{\epsilon, l} B_{\infty}^{k,\sigma + \epsilon}(f) + B_{\infty}^{k,\sigma - \epsilon}(f); $$
	$$ B_{\infty}^{k,\sigma}(f) \ll_{\epsilon, l, \sigma + k} B_l^{k,\sigma }(f) + B_l^{k+1,\sigma}(f). $$
\end{proposition}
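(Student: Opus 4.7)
The plan is to reduce both inequalities to elementary one-variable estimates by exploiting that the natural ``variable'' here is $\log y$, since the measure $dy/y$ is the translation-invariant (Haar) measure on $\ag{R}_+$ and the weight $y^{\sigma+k}$ becomes $e^{(\sigma+k) t}$ under the substitution $y = e^t$. The first inequality is a direct splitting of the integral, and the second is a classical Sobolev-type inequality on $\ag{R}$, which in fact has already been established inside the paper in the proof of the preceding proposition about $\Sch(\ag{R}^d)$.

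For the first inequality, write
\[
B_l^{k,\sigma}(f)^l = \int_0^{1} \extnorm{f^{(k)}(y) y^{\sigma+k}}^l \frac{dy}{y} + \int_1^{\infty} \extnorm{f^{(k)}(y) y^{\sigma+k}}^l \frac{dy}{y},
\]
and on $(0,1]$ use $\norm[f^{(k)}(y) y^{\sigma+k}] \leq B_{\infty}^{k,\sigma-\epsilon}(f) \cdot y^{\epsilon}$, while on $[1,\infty)$ use $\norm[f^{(k)}(y) y^{\sigma+k}] \leq B_{\infty}^{k,\sigma+\epsilon}(f) \cdot y^{-\epsilon}$. The two tail integrals $\int_0^1 y^{\epsilon l - 1} dy$ and $\int_1^{\infty} y^{-\epsilon l - 1} dy$ both equal $(\epsilon l)^{-1}$, giving the required bound up to a factor depending only on $\epsilon$ and $l$.

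For the second inequality, substitute $y = e^t$ and set $G(t) := f^{(k)}(e^t) e^{(\sigma+k) t}$. The change of variables turns $d y/y$ into $dt$, so that $\Norm[G]_{\intL^l(\ag{R})} = B_l^{k,\sigma}(f)$ and $\Norm[G]_{\intL^\infty(\ag{R})} = B_{\infty}^{k,\sigma}(f)$. A direct computation gives
\[
G'(t) = f^{(k+1)}(e^t) e^{(\sigma+k+1) t} + (\sigma+k) f^{(k)}(e^t) e^{(\sigma+k) t},
\]
so $\Norm[G']_{\intL^l(\ag{R})} \leq B_l^{k+1,\sigma}(f) + (\sigma+k) B_l^{k,\sigma}(f)$. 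Now apply the one-dimensional Sobolev inequality $\Norm[G]_\infty \ll_l \Norm[G]_l + \Norm[G']_l$, which was proved in the first half of the previous proposition (by the standard argument: bound $\norm[G(t)-G(s)]$ via H\"older's inequality applied to $G'$, raise to the $l$-th power, and integrate against $\min(1, \norm[t-s]^{-l-1}) ds$).

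I expect no serious obstacle; the only mild subtlety is making sure the dependence on $\sigma + k$ in the constant is tracked correctly, which comes exclusively from the linear term $(\sigma + k) G(t)$ in $G'(t)$, and the fact that we must have $\sigma > c$ (and $\sigma - \epsilon > c$ only for the first inequality) so that all semi-norms appearing are finite by the very definition of $\fsB_c(\ag{R}_+)$. The equivalence of the topologies then follows because any $B_l^{k,\sigma}$ is bounded by finitely many $B_\infty^{k,\sigma \pm \epsilon}$, and conversely any $B_\infty^{k,\sigma}$ is bounded by $B_l^{k,\sigma}$ and $B_l^{k+1,\sigma}$.
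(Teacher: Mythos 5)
Your proof is correct and takes essentially the same approach as the paper: the first inequality is the same split at $y=1$, and the second is the same H\"older-then-integrate argument, merely phrased via the substitution $y=e^t$ rather than worked out directly against the measure $dy/y$. One small bonus of your packaging worth noting: by invoking the clean one-variable Sobolev bound $\Norm[G]_\infty \ll_l \Norm[G]_l + \Norm[G']_l$ (which, as you observe, needs only $G,G'\in \intL^l$ and not the Schwartz property), you see that the $\epsilon$ dependence claimed for the constant in the second inequality is in fact superfluous; the paper's own proof introduces it only as an artifact of its choice of weight $\min((x/y)^{\epsilon l},(x/y)^{-\epsilon l})$, whereas the translated weight $\min(1,|\log(x/y)|^{-l-1})$ gives an $\epsilon$-free constant.
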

\begin{proof}
	The first inequality follows from
	$$ \int_0^{\infty} \norm[f^{(k)}(y) y^{\sigma+k}]^l \frac{dy}{y} \leq B_{\infty}^{k,\sigma + \epsilon}(f)^l \int_1^{\infty} y^{-\epsilon l} \frac{dy}{y} + B_{\infty}^{k,\sigma - \epsilon}(f)^l \int_0^1 y^{\epsilon l} \frac{dy}{y}. $$
	For the second inequality, we first note that for any $x,y > 0$
	$$ f^{(k)}(y)y^{\sigma+k} - f^{(k)}(x)x^{\sigma+k} = \int_y^x f^{(k+1)}(u) u^{\sigma+k+1} \frac{du}{u} + (\sigma + k) \int_y^x f^{(k)}(u) u^{\sigma+k} \frac{du}{u}. $$
	We can bound the integrals using H\"older inequality as
	$$ \extnorm{\int_y^x f^{(k)}(u) u^{\sigma+k} \frac{du}{u}} \leq B_l^{k,\sigma}(f) \cdot \norm[\log (y/x)]^{\frac{l-1}{l}}, $$
from which we deduce
	$$ \norm[f^{(k)}(y)y^{\sigma+k}] \leq \norm[f^{(k)}(x)x^{\sigma+k}] + \left[ B_l^{k+1,\sigma}(f) + \norm[\sigma+k] \cdot B_l^{k,\sigma}(f) \right] \cdot \norm[\log (y/x)]^{\frac{l-1}{l}}. $$
	Raising to the power $l \geq 1$ and use $(a+b)^l \leq 2^{l-1}(a^l + b^l)$ gives
	$$ \norm[f^{(k)}(y)y^{\sigma+k}]^l \leq 2^{l-1} \left\{ \norm[f^{(k)}(x)x^{\sigma+k}]^l + \left[ B_l^{k+1,\sigma}(f) + \norm[\sigma+k] \cdot B_l^{k,\sigma}(f) \right]^l \cdot \norm[\log (y/x)]^{l-1} \right\}. $$
	Integrating both sides against $\min((x/y)^{\epsilon l}, (x/y)^{-\epsilon l}) dx/x \leq dx/x$ gives
	$$ \frac{2}{\epsilon l} \cdot \norm[f^{(k)}(y)y^{\sigma+k}]^l \leq 2^{l-1} \left\{ B_l^{k,\sigma}(f)^l + \left[ B_l^{k+1,\sigma}(f) + \norm[\sigma+k] \cdot B_l^{k,\sigma}(f) \right]^l \cdot \int_0^{\infty} \min(x^{\epsilon},x^{-\epsilon}) \norm[\log x]^{l-1} \frac{dx}{x} \right\}. $$
	We conclude since $\int_0^{\infty} \min(x^{\epsilon},x^{-\epsilon}) \norm[\log x]^{l-1} \frac{dx}{x} < \infty$.
\end{proof}
\begin{proposition}
	The topology on $\fsH_c(\ag{C})$ defined by $H_l^{k,\sigma}$ does not depend on $l$. More precisely, for any $M \in \fsH_c(\ag{C})$ we have for $1 \leq l < \infty$ and $\epsilon > 0$ small with $\sigma - \epsilon > c$
	$$ H_l^{k,\sigma}(M) \ll_{k, l} H_{\infty}^{k,\sigma}(M) + H_{\infty}^{k+2,\sigma}(M); $$
	$$ H_{\infty}^{k,\sigma}(M) \ll_{\epsilon, l, \sigma + k} H_l^{k,\sigma+\epsilon}(M) + H_l^{k+1,\sigma+\epsilon}(M) + H_l^{k,\sigma-\epsilon}(M) + H_l^{k+1,\sigma-\epsilon}(M). $$
\end{proposition}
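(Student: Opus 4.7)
The plan is to mirror the structure of the preceding proposition for $\fsB_c(\ag{R}_+)$: the first inequality by a direct pointwise splitting of the defining integral according to $|\Im s|$, and the second inequality by a holomorphic analogue of the Sobolev-type argument used there, with the fundamental theorem of calculus replaced by Cauchy's integral formula on a rectangular contour.

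For the first inequality, set $G(s) = s(s+1)\cdots(s+k-1) M(s)$ and $s = \sigma + it$. On $\Re s = \sigma$ we have pointwise both $|G(s)| \leq H_\infty^{k,\sigma}(M)$ and $|G(s)| \leq H_\infty^{k+2,\sigma}(M)/|(s+k)(s+k+1)|$. Splitting $H_l^{k,\sigma}(M)^l = \frac{1}{2\pi}\int_{\ag{R}}|G|^l\, dt$ at some threshold $|t|=T$ and using the first bound on $|t|\leq T$ and the second on $|t|>T$ yields
\[
H_l^{k,\sigma}(M)^l \ll T\, H_\infty^{k,\sigma}(M)^l + H_\infty^{k+2,\sigma}(M)^l \int_{|t|>T}\frac{dt}{(t^2+(\sigma+k)^2)^l},
\]
where the last integral is convergent since $2l \geq 2 > 1$. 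Taking $l$-th roots and optimizing in $T$ delivers the first inequality.

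For the second inequality, fix $s_0 = \sigma + it_0$ and choose $\epsilon>0$ small enough that $\sigma-\epsilon > c$. Let $\Gamma_T$ be the rectangular contour with vertices $\sigma \pm \epsilon \pm iT$ for $T > |t_0|$. Cauchy's formula gives $G(s_0) = (2\pi i)^{-1}\oint_{\Gamma_T} G(s)/(s-s_0)\, ds$. Letting $T \to \infty$, the two horizontal arcs vanish (see the obstacle below), and we obtain
\[
G(s_0) = \frac{1}{2\pi}\int_{\ag{R}} \frac{G(\sigma+\epsilon+it)}{\epsilon+i(t-t_0)}\, dt - \frac{1}{2\pi}\int_{\ag{R}}\frac{G(\sigma-\epsilon+it)}{-\epsilon+i(t-t_0)}\, dt.
\]
Each right-hand side integral is a convolution (in the imaginary variable) of $G$ restricted to a vertical line with the kernel $1/(\pm\epsilon+iu)$, which lies in $L^{l'}(\ag{R})$ for every conjugate exponent $l' = l/(l-1) \in (1,\infty]$, i.e., for every $l \in [1,\infty)$, with $L^{l'}$-norm depending only on $\epsilon$ and $l$ and independent of $t_0$ by translation invariance. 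Hölder's inequality then yields the (even stronger) bound
\[
|G(s_0)| \ll_{\epsilon,l} H_l^{k,\sigma+\epsilon}(M) + H_l^{k,\sigma-\epsilon}(M),
\]
from which the stated inequality follows \emph{a fortiori} (the $H_l^{k+1,\sigma\pm\epsilon}$ summands are harmless non-negative additions).

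The main obstacle is the justification of the contour shift, i.e., that the horizontal arcs of $\Gamma_T$ contribute negligibly. The definition of $\fsH_c(\ag{C})$ only provides super-polynomial decay on each vertical line $\Re s = \sigma'$ individually, and lifting this to uniform decay throughout the closed strip $\sigma-\epsilon \leq \Re s \leq \sigma + \epsilon$ requires a Phragmén-Lindelöf argument applied to each entire function $G_{k'}(s) = s(s+1)\cdots(s+k'-1) M(s)$, which is holomorphic in $\Re s > c$ and bounded on both boundary vertical lines by $H_\infty^{k',\sigma\pm\epsilon}(M) < \infty$; one needs an a priori growth hypothesis to run this, which in the applications of the paper follows from the provenance of $M$ as a Mellin transform of a function of class $\fsB_c$ and hence of finite order. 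The rest of the argument is standard Hölder–Young analysis in the imaginary variable.
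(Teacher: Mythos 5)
Your proof of the first inequality is essentially the paper's: the paper also splits the $l$-th-power integral at $|\Im s| = 1$ and controls the tail using $H_\infty^{k+2,\sigma}(M)$ together with $|(s+k)(s+k+1)|^{-l}$; your optimization over a general threshold $T$ is an inessential refinement. For the second inequality, both you and the paper begin from the same two-vertical-line Cauchy representation of $G_k(s_0) = s_0(s_0+1)\cdots(s_0+k-1)M(s_0)$, but the estimates diverge. The paper bounds $|s - s_0|^{-1} \leq \epsilon^{-1}$ throughout, then factors out $(s+k)$ to create decay on $|\Im s| > 1$, which is why the $H_l^{k+1,\sigma\pm\epsilon}$ terms appear in its statement. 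Your observation that the kernel $u \mapsto (\pm\epsilon + i u)^{-1}$ already lies in $L^{l'}(\ag{R})$ for all $l' \in (1,\infty]$, with norm depending only on $\epsilon$ and $l$ and invariant under translation in $t_0$, is cleaner: a single application of H\"older gives the strictly sharper bound $H_\infty^{k,\sigma}(M) \ll_{\epsilon,l} H_l^{k,\sigma+\epsilon}(M) + H_l^{k,\sigma-\epsilon}(M)$, with no $(k+1)$-terms and no dependence on $\sigma + k$, from which the stated inequality follows trivially. As for the contour-shift obstacle you raise: you are right that the definition of $\fsH_c(\ag{C})$ only constrains $M$ line by line and does not by itself license closing the rectangular contour at $\pm i\infty$, so that a Phragm\'en--Lindel\"of input is logically needed; the paper asserts the same two-line identity without comment, so this is a shared and minor gap, harmless in the paper's applications where $M$ arises as a Mellin transform of an $\fsB_c(\ag{R}_+)$ function and the required a priori growth is automatic.
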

\begin{proof}
	The first inequality follows from
\begin{align*}
	\int_{\Re s = \sigma} \extnorm{s(s+1)\cdots (s+k-1)M(s)}^l \frac{ds}{2\pi i} &\leq H_{\infty}^{k,\sigma}(M)^l \int_{\substack{\Re s = \sigma \\ \norm[\Im s] \leq 1}} \frac{ds}{2\pi i} \\
	&\quad + H_{\infty}^{k+2,\sigma}(M)^l \int_{\substack{\Re s = \sigma \\ \norm[\Im s] > 1}} \frac{1}{\norm[(s+k)(s+k+1)]^l} \frac{ds}{2\pi i}.
\end{align*}
	For the second inequality, we first note that for any $s_0$ with $\Re s_0 = \sigma$
\begin{align*}
	s_0(s_0+1)\cdots (s_0+k-1)M(s_0) &= \int_{\Re s = \sigma + \epsilon} \frac{s(s+1)\cdots (s+k-1)M(s)}{s-s_0} \frac{ds}{2\pi i} \\
	&\quad - \int_{\Re s = \sigma - \epsilon} \frac{s(s+1)\cdots (s+k-1)M(s)}{s-s_0} \frac{ds}{2\pi i}.
\end{align*}
	To bound the integrals, we apply H\"older inequality to get
\begin{align*}
	\int_{\Re s = \sigma + \epsilon} \extnorm{\frac{s(s+1)\cdots (s+k-1)M(s)}{s-s_0}} \frac{ds}{2\pi i} &\leq \frac{H_l^{k+1,\sigma+\epsilon}(M)}{\epsilon} \cdot \left( \int_{{\substack{\Re s = \sigma+\epsilon \\ \norm[\Im s] \geq 1}}} \frac{1}{\norm[s+k]^{\frac{l}{l-1}}} \frac{ds}{2\pi i} \right)^{\frac{l-1}{l}} \\
	&\quad + \frac{H_l^{k,\sigma+\epsilon}(M)}{\epsilon} \cdot \left( \int_{{\substack{\Re s = \sigma+\epsilon \\ \norm[\Im s] \leq 1}}} \frac{ds}{2\pi i} \right)^{\frac{l-1}{l}},
\end{align*}
	and conclude by the similar bound on $\Re s = \sigma - \epsilon$.
\end{proof}
\begin{proposition}
	The two maps
	$$ \fsB_c(\ag{R}_+) \to \fsH_c(\ag{C}), f \mapsto \Mellin{f}(s):= \int_0^{\infty} f(y) y^s \frac{dy}{y}, \text{ for } \Re s > c; $$
	$$ \fsH_c(\ag{C}) \to \fsB_c(\ag{R}_+), M \mapsto f_M(y) := \int_{\Re s = \sigma} M(s)y^{-s} \frac{ds}{2\pi i}, \forall \sigma > c $$
	are continuous with respect to the above topologies defined by semi-norms.
\end{proposition}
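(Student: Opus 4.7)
The plan is to exploit the intertwining relation that the Mellin transform carries the derivation operator $y \frac{d}{dy}$ on $\fsB_c(\ag{R}_+)$ to multiplication by $-s$ on $\fsH_c(\ag{C})$, so that the two families of semi-norms $B_l^{k,\sigma}$ and $H_l^{k,\sigma}$ become directly comparable via a single integration-by-parts identity. In view of the two preceding propositions, which show $l$-independence of the respective topologies, it suffices to bound $H_\infty^{k,\sigma}(\Mellin{f})$ by some $B_1^{k',\sigma'}(f)$ and $B_\infty^{k,\sigma}(f_M)$ by some $H_1^{k',\sigma'}(M)$.

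For the forward map, I would first derive, by $k$ iterations of integration by parts, the identity
$$ s(s+1)\cdots(s+k-1)\, \Mellin{f}(s) = (-1)^k \Mellin{f^{(k)}}(s+k), \quad \Re s > c. $$
Taking absolute values on the line $\Re s = \sigma$ and pulling the modulus inside the Mellin integral on the right-hand side then gives
$$ H_\infty^{k,\sigma}(\Mellin{f}) \leq \int_0^{\infty} \extnorm{f^{(k)}(y)}\, y^{\sigma+k-1}\, dy = B_1^{k,\sigma}(f), $$
which, combined with the equivalence of $B_1$ and $B_\infty$ semi-norms, is the required continuity.

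For the inverse map, I would differentiate $f_M(y)$ under the integral sign $k$ times to obtain
$$ f_M^{(k)}(y)\, y^{\sigma+k} = (-1)^k \int_{\Re s = \sigma} s(s+1)\cdots(s+k-1) M(s)\, y^{\sigma-s}\, \frac{ds}{2\pi i}. $$
Since $\extnorm{y^{\sigma-s}} = 1$ on $\Re s = \sigma$, this directly yields $B_\infty^{k,\sigma}(f_M) \leq H_1^{k,\sigma}(M)$, and the equivalence of $H_1$ and $H_\infty$ semi-norms converts this to a bound in terms of the canonical supremum semi-norms defining $\fsH_c(\ag{C})$.

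The only non-routine point is justifying the integration by parts and the differentiation under the integral sign. For the former I need $f^{(j)}(y)\, y^{s+k-1-j} \to 0$ at both endpoints for $0 \leq j \leq k-1$: near $\infty$ this is immediate from the super-polynomial decay implied by $f \in \fsB_c(\ag{R}_+)$, while near $0$ the bound $\extnorm{f^{(j)}(y)} \ll y^{-\sigma'-j}$ valid for any $\sigma' > c$ gives vanishing once $\sigma' < \sigma$ is chosen close enough to $c$. For the latter, the integrand on $\Re s = \sigma$ is dominated by $\extnorm{s(s+1)\cdots(s+k-1)M(s)}$, whose decay of arbitrary polynomial order in $\Im s$ is built into the very definition of $\fsH_c(\ag{C})$, ensuring absolute convergence throughout.
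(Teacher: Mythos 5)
Your proof is correct and follows essentially the same route as the paper: the same integration-by-parts identity yields $H_\infty^{k,\sigma}(\Mellin{f}) \leq B_1^{k,\sigma}(f)$, and differentiation under the integral yields $B_\infty^{k,\sigma}(f_M) \leq H_1^{k,\sigma}(M)$, after which the preceding $l$-equivalence propositions finish the argument; the extra justification you supply for the boundary terms and for dominated convergence is a welcome addition that the paper leaves implicit. One small slip: the boundary term at step $j$ of the integration by parts is $f^{(j)}(y)\,y^{s+j}$, not $f^{(j)}(y)\,y^{s+k-1-j}$, though your decay analysis (super-polynomial decay at $\infty$, and choosing $\sigma'\in(c,\sigma)$ near $0$) applies verbatim to the corrected exponent.
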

\begin{proof}
	By integration by parts we get
	$$ \Mellin{f}(s) = \frac{(-1)^k}{s(s+1) \cdots (s+k-1)} \int_0^{\infty} f^{(k)}(y) y^{s+k} \frac{dy}{y}, $$
from which it follows readily that $H_{\infty}^{k,\sigma}(\Mellin{f}) \leq B_1^{k,\sigma}(f)$. Similarly we pass the derivatives under the integral to get
	$$ f_M^{(k)}(y) = (-1)^k \int_{\Re s = \sigma} s(s+1) \cdots (s+k-1) M(s)y^{-s-k} \frac{ds}{2\pi i}, $$
from which it follows readily that $B_{\infty}^{k,\sigma}(f_M) \leq H_1^{k,\sigma}(M)$.
\end{proof}
\begin{definition}
	We write the multiplicative group $\F^1 = \{ x \in \F: \norm[x]_{\F} = 1 \}$. For any function $f$ on $\F^{\times}$ and any character $\xi \in \widehat{\F^1}$ we define a function on $\ag{R}_+$
	$$ f_{\xi}(t) = f(t;\xi) = \int_{\F^1} f(tu) \xi(u) du, t > 0 $$
where $du$ is the probability Haar measure on $\F^1$. Concretely:
\begin{itemize}
	\item[(1)] If $\F=\ag{R}$ then $\F^1=\{ \pm 1\}, \widehat{\F^1}=\{ \xi_+, \xi_- \}$ with $\xi_+ \equiv 1$ and $\xi_-(-1)=-1$. We then define
	$$ f(t;+) = f_+(t) = \frac{1}{2} \left( f(t) + f(-t) \right), f(t;-) = f_-(t) = \frac{1}{2} \left( f(t) - f(-t) \right). $$
	\item[(2)] If $\F=\ag{C}$ then $\F^1=\{ e^{i \theta}: \theta \in \ag{R}/2\pi \ag{Z} \}, \widehat{\F^1}=\{ \xi_n: n \in \ag{Z} \}$ with $\xi_n(e^{i\theta}) = e^{i n \theta}$. We then define
	$$ f(t;n) = f_n(t) = \int_{\ag{R} / 2\pi \ag{Z}} f(te^{i\theta}) e^{in\theta} \frac{d\theta}{2\pi}. $$
\end{itemize}
\label{FourF^1}
\end{definition}
\begin{proposition}
	$f_{\xi} \in \fsB_0(\ag{R}_+)$ for any $f \in \Sch(\F)$ and $\xi \in \widehat{\F^1}$. The map
	$$ \Sch(\F) \to \fsB_0(\ag{R}_+), f \mapsto f_{\xi} $$
is continuous. Moreover, in the case $\F=\ag{C}$, for any $k,l \in \ag{N}, \sigma > 0$ there is a finite collection of norms $\Sch_{\infty}^*$ independent of $n$ ($n \neq 0$ if $l \neq 0$) such that
	$$ B_{\infty}^{k,\sigma}(f_n) \ll_{k,\sigma} \norm[n]^{-l} \Sch_{\infty}^*(f). $$
\end{proposition}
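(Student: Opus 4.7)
The plan is to reduce everything to pointwise estimates on $f$ and its partial derivatives, using polar coordinates in the complex case. Differentiation under the integral is licit since $\F^1$ is compact and $f$ is Schwartz, giving
$$ f_\xi^{(k)}(t) = \int_{\F^1} \frac{d^k}{dt^k}[f(tu)]\,\xi(u)\,du. $$
In the case $\F = \ag{R}$, this yields $f_\pm^{(k)}(t) = \tfrac{1}{2}(f^{(k)}(t) \pm (-1)^k f^{(k)}(-t))$, and the bound on $|t^{\sigma+k} f_\pm^{(k)}(t)|$ splits into $t \le 1$ (controlled by $\Sch_\infty^{0,k}(f)$) and $t \ge 1$ (controlled by $\Sch_\infty^{N,k}(f)$ with integer $N \ge \sigma+k$), yielding both $f_\pm \in \fsB_0(\ag{R}_+)$ and continuity of $f \mapsto f_\pm$.

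For $\F = \ag{C}$, write $(x,y) = (t\cos\theta, t\sin\theta)$. The chain rule $\partial_t[f(te^{i\theta})] = \cos\theta\,\partial_x f + \sin\theta\,\partial_y f$ carries no factor of $t$; iterating one obtains
$$ \partial_t^k[f(te^{i\theta})] = \sum_{|\vec m| = k} \binom{k}{m_1} (\cos\theta)^{m_1} (\sin\theta)^{m_2} \partial^{\vec m} f(te^{i\theta}), $$
so
$$ t^{\sigma+k}\,|f_n^{(k)}(t)| \le \sum_{|\vec m|=k} \binom{k}{m_1} \sup_{|z|=t} |z|^{\sigma+k} |\partial^{\vec m} f(z)|, $$
which is again controlled by a finite collection of Schwartz norms of $f$ by splitting at $t = 1$. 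This establishes the first two assertions in the complex case.

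For the decay in $n$, I would integrate by parts $l$ times in $\theta$ against $e^{in\theta}$, using $2\pi$-periodicity of $f(te^{i\theta})$:
$$ f_n^{(k)}(t) = \frac{(-1)^l}{(in)^l} \int_0^{2\pi} e^{in\theta}\, \partial_\theta^l\partial_t^k [f(te^{i\theta})]\, \frac{d\theta}{2\pi}. $$
A short induction on $k+l$ then shows that every term in $\partial_\theta^l \partial_t^k [f(te^{i\theta})]$ has the form $t^j Q(\cos\theta,\sin\theta)\,\partial^{\vec m} f(te^{i\theta})$ with $|\vec m| \le k+l$ and the crucial identity $j = |\vec m| - k$. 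This is because $\partial_t$ either decreases $j$ by $1$ (when hitting the coefficient $t^j$) or increases $|\vec m|$ by $1$ (when hitting $f$), while $\partial_\theta$ either preserves $(j,|\vec m|)$ (when hitting a trigonometric factor) or raises both simultaneously by $1$ (when hitting $f$, since $\partial_\theta[\partial^{\vec m} f(te^{i\theta})] = -t\sin\theta\,\partial^{\vec m+\vec e_1}f + t\cos\theta\,\partial^{\vec m+\vec e_2}f$); thus the quantity $|\vec m| - k - j$ is conserved from its initial value $0$. Consequently
$$ t^{\sigma+k} \cdot t^j\, |\partial^{\vec m} f(te^{i\theta})| = |te^{i\theta}|^{\sigma+|\vec m|}\, |\partial^{\vec m} f(te^{i\theta})|, $$
which is uniformly bounded in $\theta$ by a Schwartz norm $\Sch_\infty^*(f)$ independent of $n$; the $|n|^{-l}$ prefactor then gives the claimed estimate.

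The main technical point I expect is the combinatorial bookkeeping identity $j = |\vec m| - k$ in the chain-rule expansion, which is what makes the rescaling $t^{\sigma+k}\cdot t^j = |te^{i\theta}|^{\sigma+|\vec m|}$ work exactly; once this is established, the rest amounts to direct Schwartz-norm bounds and uniform estimates of trigonometric polynomials.
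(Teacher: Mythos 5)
Your proposal is correct, and in the complex case it takes a genuinely different route from the paper. The paper works in Wirtinger coordinates: it introduces $z\partial_z$ and $\bar z\partial_{\bar z}$, observes that $\partial_\theta = i(z\partial_z - \bar z\partial_{\bar z})$ and $t\partial_t = z\partial_z + \bar z\partial_{\bar z}$, and then uses the algebraic identity $t^k\partial_t^k = P_k(z\partial_z + \bar z\partial_{\bar z})$ for a polynomial $P_k \in \ag{Z}[X]$ (proved by a one-line induction). After the same $l$-fold integration by parts in $\theta$, everything that remains under the integral is a word in the two scale-invariant vector fields applied to $f$, so no power of $t$ ever appears except the $(z\bar z)^{\lfloor\sigma/2\rfloor}$ and $(z\bar z)^{\lceil\sigma/2\rceil}$ weights. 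You instead work with raw Cartesian $\partial_x,\partial_y$ and track the powers of $t$ generated by the chain rule by hand, via the conserved quantity $j = |\vec m| - k$. I checked the conservation law under both $\partial_t$ (hitting $t^j$ drops $j$ by one and raises $k$ by one; hitting $f$ raises $|\vec m|$ and $k$ by one) and $\partial_\theta$ (hitting the trigonometric factor is neutral; hitting $f$ raises $j$ and $|\vec m|$ by one with $k$ fixed), and it holds; the rescaling $t^{\sigma+k}\cdot t^j = |te^{i\theta}|^{\sigma+|\vec m|}$ then converts each term into a Schwartz-norm bound uniform in $\theta$ and independent of $n$. The trade-off: the paper's Euler-operator identity is slicker and makes the homogeneity invisible (both $z\partial_z$ and $\bar z\partial_{\bar z}$ commute with dilations by construction), whereas your bookkeeping is more elementary and makes explicit exactly why the powers of $t$ balance. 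Both correctly reduce the estimate to finitely many $\Sch_\infty^*$ norms with constants depending only on $k,l,\sigma$. The real case and the reduction to the first two assertions are handled essentially as in the paper.
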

\begin{proof}
	In the case $\F=\ag{R}$, we have
	$$ t^k\frac{d^k}{dt^k} f(t;+) = \frac{1}{2} \left( t^kf^{(k)}(t) + (-1)^k t^k f^{(k)}(-t) \right), $$
	$$ t^k\frac{d^k}{dt^k} f(t;-) = \frac{1}{2} \left( t^kf^{(k)}(t) + (-1)^{k+1} t^k f^{(k)}(-t) \right), $$
from which it is easy to see
	$$ B_{\infty}^{k,\sigma}(f_{\pm}) \leq \Sch_{\infty}^{\lfloor k+\sigma \rfloor, k}(f) + \Sch_{\infty}^{\lceil k+\sigma \rceil, k}(f), \forall k \in \ag{N}, \sigma > 0. $$
	In the case $\F=\ag{C}$, with the Cartesian \& Polar coordinates
	$$ (x,y) = (t \cos \theta, t \sin \theta), (z,\bar{z})=(x+iy,x-iy) $$
	we have
	$$ \frac{\partial}{\partial \theta} = i \left( z \frac{\partial}{\partial z} - \bar{z} \frac{\partial}{\partial \bar{z}} \right); t\frac{\partial}{\partial t} = z \frac{\partial}{\partial z} + \bar{z} \frac{\partial}{\partial \bar{z}}. $$
	By induction on $k \in \ag{N}$, it is easy to see
	$$ t^k \frac{\partial^k}{\partial t^k} = P_k(z \frac{\partial}{\partial z} + \bar{z} \frac{\partial}{\partial \bar{z}}) $$
for some polynomial $P_k \in \ag{Z}[X]$ and any $k \in \ag{N}$. It follows that
\begin{align*}
	t^k f_n^{(k)}(t) &= \int_{\ag{R} / 2\pi \ag{Z}} (P_k(z \frac{\partial}{\partial z} + \bar{z} \frac{\partial}{\partial \bar{z}})f)(te^{i\theta}) e^{in\theta} \frac{d\theta}{2\pi} \\
	&= \frac{(-1)^l}{n^l} \int_{\ag{R} / 2\pi \ag{Z}} (\left( z \frac{\partial}{\partial z} - \bar{z} \frac{\partial}{\partial \bar{z}} \right)^l P_k(z \frac{\partial}{\partial z} + \bar{z} \frac{\partial}{\partial \bar{z}})f)(te^{i\theta}) e^{in\theta} \frac{d\theta}{2\pi}.
\end{align*}
	Hence, we deduce that
\begin{align*}
	B_{\infty}^{k,\sigma}(f_n) &\leq \norm[n]^{-l} \left\{ \extNorm{(z\bar{z})^{\lfloor \frac{\sigma}{2} \rfloor}\left( z \frac{\partial}{\partial z} - \bar{z} \frac{\partial}{\partial \bar{z}} \right)^l P_k(z \frac{\partial}{\partial z} + \bar{z} \frac{\partial}{\partial \bar{z}})f}_{\infty} \right. \\
	&\quad \left. + \extNorm{(z\bar{z})^{\lceil \frac{\sigma}{2} \rceil}\left( z \frac{\partial}{\partial z} - \bar{z} \frac{\partial}{\partial \bar{z}} \right)^l P_k(z \frac{\partial}{\partial z} + \bar{z} \frac{\partial}{\partial \bar{z}})f}_{\infty} \right\}.
\end{align*}
	The right hand side is obviously bounded by some Schwartz norm of $f$.
\end{proof}
\begin{proof}{(of Lemma \ref{IntBdA})}
	We only treat the case $\F=\ag{C}$, the real case being similar and simpler. Writing
	$$ f(y) = \int_{\ag{C}^{\times}} \Phi(t,\frac{y}{t}) \omega^{-1}\xi^2(t)\norm[t]_{\ag{C}}^{2s} d^{\times}t, $$
we can take its Fourier expansion on $\ag{C}^1$
	$$ f(te^{i\theta}) = \sum_{n \in \ag{Z}} f_n(t) e^{-in\theta}, t \in \ag{R}_+. $$
	Extending each $\xi_n \in \widehat{\ag{C}^1}$ to $\ag{C}^{\times}$ by triviality on $\ag{R}_+$ we have the Mellin transform
\begin{align*}
	\Mellin{f_n}(s_1) &= \int_{\ag{C}^{\times}} f(y) \xi_n(y) \norm[y]_{\ag{C}}^{\frac{s_1}{2}} d^{\times}y \\
	&= \int_{\ag{C}^{\times} \times \ag{C}^{\times}} \Phi(t,y) \omega^{-1}\xi^2\xi_n(t) \norm[t]_{\ag{C}}^{\frac{s_1}{2}+2s} \xi_n(y) \norm[y]_{\ag{C}}^{\frac{s_1}{2}} d^{\times}t d^{\times}y \\
	&= \int_{\ag{R}_+ \times \ag{R}_+} \Phi_{\omega^{-1}\xi^2\xi_n, \xi_n}(t_1,t_2) \norm[t_1]_{\ag{C}}^{\frac{s_1}{2}+2s+i\mu(\omega^{-1}\xi^2)} \norm[t_2]_{\ag{C}}^{\frac{s_1}{2}} d^{\times}t_1 d^{\times}t_2.
\end{align*}
	Considering the $H_{\infty}^*$ semi-norms it is easy to see $\Mellin{f_n} \in \fsH_{\max(0,-4\Re s)}(\ag{C})$. We can also bound
	$$ H_{\infty}^{k,\sigma}(\Mellin{f_n}) \ll_{k,\sigma} \min(1, \norm[n]^{-2}) \Sch_1^*(\Phi), \forall \sigma > \max(0,-4\Re s). $$
	As $\Re s$ lies in a compact interval, the orders of $\Sch_1^*$ can be made uniform (but depends on $\sigma$). Hence $f_n \in \fsB_{\max(0,-4\Re s)}(\ag{R}_+)$ and for any $\sigma > \max(0,-4\Re s)$ we get
	$$ \norm[t^{\sigma}f(te^{i\theta})] \leq \sum_n B_{\infty}^{0,\sigma}(f_n) \ll \Sch_1^*(\Phi) \sum_n \min(1, \norm[n]^{-2}). $$
	We conclude by noting $t^{\sigma} = \norm[te^{i\theta}]_{\ag{C}}^{\sigma/2}$.
\end{proof}

	Obviously, Proposition \ref{LocWhiBdA} is a direct consequence of Lemma \ref{SchNormEquivA} and \ref{IntBdA}.

		\subsubsection{Non Archimedean Places}
		
	We continue to omit the subscript $v$ for simplicity of notations.
\begin{definition}
	Let $d \geq 1$ be an integer. For any $\Phi \in \Sch(\F^d)$ we define its \emph{support index} $D(\Phi) \in \ag{Z}$, \emph{additive invariance index} $\delta(\Phi) \in \ag{Z}$ and \emph{multiplicative invariance index} $m(\Phi) \in \ag{N}$ as follows.
\begin{itemize}
	\item[(1)] $D(\Phi)$ is the largest integer $D$ such that
	$$ \Phi(\vec{x}) \neq 0 \Rightarrow \vec{x} \in \vp^D \times \cdots \times \vp^D. $$
	\item[(2)] $\delta(\Phi)$ is the smallest integer $\delta$ such that
	$$ \Phi(\vec{x}+\vec{t}) = \Phi(\vec{x}), \forall \vec{x} \in \F^d, \vec{t} \in \vp^{\delta} \times \cdots \times \vp^{\delta}. $$
	\item[(3)] $m(\Phi)$ is the smallest integer $m \geq 0$ such that for any $\kappa \in \GL_d(\vo)$ with $\kappa-1 \in \Mat_d(\vp^m)$
	$$ \rpR(\kappa).\Phi(\vec{x}) = \Phi(\vec{x}.\kappa) = \Phi(\vec{x}), \forall \vec{x} \in \F^d. $$
\end{itemize}
\end{definition}
\begin{proposition}
	The three indices satisfy the following relations.
\begin{itemize}
	\item[(0)] $m(\Phi) \leq \delta(\Phi)-D(\Phi)$.
	\item[(1)] For any $\kappa \in \GL_d(\vo)$, we have $D(\rpR(\kappa).\Phi)=D(\Phi)$, $\delta(\rpR(\kappa).\Phi)=\delta(\Phi)$ and $m(\rpR(\kappa).\Phi)=m(\Phi)$.
	\item[(2)] Let $\Four{\cdot}$ denote the Fourier transform
	$$ \Four{\Phi}(\vec{x}) = \int_{\F^d} \Phi(\vec{y}) \psi(-\vec{y}.\vec{x}) d\vec{y}. $$
	Then we have
	$$ D(\Phi)+\delta(\Four{\Phi}) = \delta(\Phi) + D(\Four{\Phi}) = -\cond(\psi). $$
	\item[(3)] More generally, let $I = \{ i_1, \cdots, i_j \} \subset \{ 1, \dots, d \}$. We define the partial Fourier transform $\Four[I]{\cdot} = \Four[i_1]{\Four[i_2]{ \cdots \Four[i_j]{\cdot}}}$. Then we have
	$$ \delta(\Four[I]{\Phi})) \leq \max(\delta(\Phi), -\cond(\psi)-D(\Phi)); $$
	$$ D(\Four[I]{\Phi}) \geq \min(D(\Phi), -\cond(\psi)-\delta(\Phi)). $$
\end{itemize}
\label{DdmRel}
\end{proposition}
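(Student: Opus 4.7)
The plan is to argue by direct unwinding of the definitions, keeping careful track of how support/invariance data behaves coordinate by coordinate under the right regular action and the (partial) Fourier transform. Part (0) is fastest. Suppose $\kappa\in\GL_d(\vo)$ with $\kappa-1\in\Mat_d(\vp^{\delta-D})$, where $D=D(\Phi),\delta=\delta(\Phi)$. For $\vec{x}\in(\vp^D)^d$ the $j$-th coordinate of $\vec{x}.(\kappa-1)=\sum_i x_i(\kappa-1)_{ij}$ lies in $\vp^{D+(\delta-D)}=\vp^{\delta}$, so by additive invariance $\Phi(\vec{x}.\kappa)=\Phi(\vec{x})$; for $\vec{x}\notin(\vp^D)^d$ both sides vanish since $\GL_d(\vo)$ preserves $(\vp^D)^d$. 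Hence $m(\Phi)\leq\delta-D$.

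For part (1), the support index and additive invariance index transport under $\rpR(\kappa)$ because $\kappa\in\GL_d(\vo)$ is a bijection of $(\vp^r)^d$ onto itself for any $r\in\ag{Z}$. For the multiplicative invariance index, observe that $\rpR(\kappa').\rpR(\kappa).\Phi=\rpR(\kappa)\rpR(\kappa^{-1}\kappa'\kappa).\Phi$, and conjugation by $\kappa\in\GL_d(\vo)$ stabilizes each principal congruence subgroup $1+\Mat_d(\vp^m)$; this gives $m(\rpR(\kappa).\Phi)=m(\Phi)$.

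Part (2) is the heart of the matter. The inequality $\delta(\Four{\Phi})\leq-\cond(\psi)-D(\Phi)$ follows by computing
\[ \Four{\Phi}(\vec{x}+\vec{t})-\Four{\Phi}(\vec{x})=\int_{(\vp^{D})^d}\Phi(\vec{y})\psi(-\vec{y}.\vec{x})\left(\psi(-\vec{y}.\vec{t})-1\right)d\vec{y}, \]
which vanishes once $\vec{y}.\vec{t}\in\vp^{-\cond(\psi)}$ for all $\vec{y}\in(\vp^{D})^d$, i.e.\ once $\vec{t}\in(\vp^{-\cond(\psi)-D})^d$. The inequality $D(\Four{\Phi})\geq-\cond(\psi)-\delta(\Phi)$ is dual: if $\vec{x}\notin(\vp^{-\cond(\psi)-\delta(\Phi)})^d$, choose $\vec{t}\in(\vp^{\delta(\Phi)})^d$ with $\psi(-\vec{t}.\vec{x})\neq 1$; splitting the integral over cosets of $(\vp^{\delta(\Phi)})^d$ and using additive invariance of $\Phi$ shows that $\Four{\Phi}(\vec{x})=0$. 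The reverse inequalities come from applying the same argument to $\Four{\Phi}$ and invoking Fourier inversion $\Four{\Four{\Phi}}(\vec{x})=\Phi(-\vec{x})$ (which clearly preserves both indices).

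Part (3) is then obtained by iterating part (2) one variable at a time, but read \emph{coordinatewise}. The key bookkeeping: after $\Four[i]$ the support (resp.\ invariance) in the $i$-th coordinate is controlled by $-\cond(\psi)-\delta_i$ (resp.\ $-\cond(\psi)-D_i$), where $\delta_i,D_i$ denote the invariance/support data in the $i$-th coordinate before the transform, while the data in the other coordinates is unchanged. Starting from uniform data $(D,\delta)$ in every coordinate and Fourier-transforming the variables indexed by $I$ one by one, one finds that $\Four[I]{\Phi}$ is supported on $(\vp^{-\cond(\psi)-\delta})^I\times(\vp^{D})^{I^c}$ and invariant by $(\vp^{-\cond(\psi)-D})^I\times(\vp^{\delta})^{I^c}$; taking the worst coordinate yields the two stated inequalities. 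The main conceptual step is part (2); (3) is just bookkeeping, and the only thing to be careful about is that partial Fourier transforms in different variables commute, so the order in which the variables of $I$ are processed is immaterial.
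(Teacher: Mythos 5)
Your proof is correct and follows the same route as the paper: for part (2) you use the support/invariance duality under the Fourier transform (restrict the integral to the support to get the $\delta$-bound, use translation invariance to kill the integral outside the predicted support to get the $D$-bound) and Fourier inversion for the reverse inequalities. You additionally spell out the details for parts (0), (1), and the coordinate-by-coordinate bookkeeping for (3), which the paper simply declares ``obvious from definition'' or ``follows easily from (2).''
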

\begin{proof}
	(0) and (1) are obvious from definition. (3) follows easily from (2). We prove (2) as follows. From
	$$ \Four{\Phi}(\vec{x}+\vec{t}) = \int_{\vp^{D(\Phi)} \times \cdots \times \vp^{D(\Phi)}} \Phi(\vec{y}) \psi(-\vec{y}.\vec{x}) \psi(-\vec{y}.\vec{t}) d\vec{y}, $$
we see that for $\vec{t} \in \vp^{-\cond(\psi)-D(\Phi)}$, $\vec{y}.\vec{t} \in \vp^{-\cond(\psi)}$ hence $\psi(-\vec{y}.\vec{t})=1$ and $\Four{\Phi}(\vec{x}+\vec{t}) = \Four{\Phi}(\vec{x})$. Thus
	$$ \delta(\Four{\Phi}) \leq -\cond(\psi)-D(\Phi). $$
	On the other hand, if $\vec{x} \notin \vp^{-\cond(\psi)-\delta(\Phi)} \times \cdots \times \vp^{-\cond(\psi)-\delta(\Phi)}$ then at least for one component, say $x_1 \notin \vp^{-\cond(\psi)-\delta(\Phi)}$, i.e., $v(x_1) < -\cond(\psi)-\delta(\Phi)$. As $t_1$ runs under the condition $v(t_1)=-\cond(\psi)-1-v(x_1) \geq \delta(\Phi)$, $x_1t_1$ runs under the condition $v(x_1t_1)=-\cond(\psi)-1$. Hence at least for one $t_1$, $\psi(x_1t_1) \neq 1$. Writing $\vec{t}=(t_1,0,\dots,0)$, we get
	$$ \Four{\Phi}(\vec{x}) = \int_{\F^d} \Phi(\vec{y}+\vec{t}) \psi(-\vec{y}.\vec{x}) d\vec{y} = \int_{\F^d} \Phi(\vec{y}) \psi(-\vec{y}.\vec{x}) d\vec{y} \cdot \psi(\vec{t}.\vec{x}) = \psi(t_1x_1) \Four{\Phi}(\vec{x}). $$
	Hence $\Four{\Phi}(\vec{x}) = 0$ and
	$$ D(\Four{\Phi}) \geq -\cond(\psi)-\delta(\Phi). $$
	Replacing $\Phi$ with $\Four{\Phi}$ in the above argument gives the inequalities in the opposite direction.
\end{proof}
\begin{definition}
	For $l \in [1,\infty]$, $\vec{\sigma} \in \ag{R}_{\geq 0}^d$, we put the semi-norm $\Sch_l^{\vec{\sigma}}$ on $\Sch(\F^d)$ by
	$$ \Sch_l^{\vec{\sigma}}(\Phi) = \extNorm{ \norm[\vec{x}^{\vec{\sigma}}]_{\F} \cdot \Phi }_l. $$
Here we have written:
\begin{itemize}
	\item $\Norm_l$ is the $\intL^l$-norm on $\F^d$.
	\item For $\vec{x}=(x_i)_{1 \leq i \leq d} \in \F^d, \vec{\sigma}=(\sigma_i)_{1 \leq i \leq d} \in \ag{R}_{\geq 0}^d$, $\norm[\vec{x}^{\vec{\sigma}}]_{\F}=\Pi_{i=1}^d \norm[x_i]_{\F}^{\sigma_i}$.
\end{itemize}
	We shall also write $\norm[\vec{\sigma}] = \sum_i \sigma_i$.
\end{definition}
\begin{proposition}
	We have the following relations of norms for any $\Phi \in \Sch(\F^d)$.
\begin{itemize}
	\item[(1)] $\Norm[\Phi]_{\infty} \leq q^{\frac{d\delta(\Phi)}{l}} \Norm[\Phi]_l$ and $\Norm[\Phi]_l \leq q^{-\frac{dD(\Phi)}{l}} \Norm[\Phi]_{\infty}$.
	\item[(2)] $\Sch_l^{\vec{\sigma}}(\Phi) \leq q^{-\norm[\vec{\sigma}] D(\Phi)} \Norm[\Phi]_l$.
\end{itemize}
\label{SchEquiv}
\end{proposition}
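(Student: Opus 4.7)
The plan is to read off all three inequalities directly from the definitions of $D(\Phi)$ and $\delta(\Phi)$. Recall that a non-archimedean Schwartz function is a finite linear combination of characteristic functions of open compact sets, and the two indices encode exactly that $\Phi$ is supported on the open compact set $(\vp^{D(\Phi)})^d$ of volume $q^{-dD(\Phi)}$ and is constant on each coset of the open compact subgroup $(\vp^{\delta(\Phi)})^d$ of volume $q^{-d\delta(\Phi)}$, under the normalization $\Vol(\vo)=1$.

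First I would prove the second inequality of (1). Since $\norm[\Phi(\vec{x})] \leq \Norm[\Phi]_{\infty}$ pointwise and the integrand vanishes outside $(\vp^{D(\Phi)})^d$, one has
$$ \Norm[\Phi]_l^l = \int_{(\vp^{D(\Phi)})^d} \norm[\Phi(\vec{x})]^l \, d\vec{x} \leq \Norm[\Phi]_{\infty}^l \cdot q^{-dD(\Phi)}, $$
and taking $l$-th roots yields the claim. For the first inequality I would pick a point $\vec{x}_0$ where $\norm[\Phi]$ attains its supremum $\Norm[\Phi]_{\infty}$, which exists because $\Phi$ is locally constant and compactly supported. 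By definition of $\delta(\Phi)$, $\Phi$ is constant on the entire coset $\vec{x}_0 + (\vp^{\delta(\Phi)})^d$, so
$$ \Norm[\Phi]_l^l \geq \int_{\vec{x}_0 + (\vp^{\delta(\Phi)})^d} \norm[\Phi(\vec{x})]^l \, d\vec{x} = \Norm[\Phi]_{\infty}^l \cdot q^{-d\delta(\Phi)}, $$
which rearranges to $\Norm[\Phi]_{\infty} \leq q^{d\delta(\Phi)/l} \Norm[\Phi]_l$.

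For (2), on the support of $\Phi$ each coordinate satisfies $x_i \in \vp^{D(\Phi)}$, hence $\norm[x_i]_{\F} \leq q^{-D(\Phi)}$ and therefore $\norm[\vec{x}^{\vec{\sigma}}]_{\F} \leq q^{-\norm[\vec{\sigma}] D(\Phi)}$ pointwise on the support of $\Phi$. Pulling this uniform constant out of the $\intL^l$-integral immediately gives $\Sch_l^{\vec{\sigma}}(\Phi) \leq q^{-\norm[\vec{\sigma}] D(\Phi)} \Norm[\Phi]_l$.

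No serious obstacle is expected; the entire argument is a volume calculation on an ultrametric space. The only point of care is the measure normalization, for which the stated powers of $q$ correspond to $\Vol(\vo)=1$ so that $\Vol((\vp^a)^d) = q^{-da}$ for any $a \in \ag{Z}$; a different choice (e.g.\ the self-dual measure with respect to $\psi$) would introduce harmless factors of $q^{\pm d\cond(\psi)/(2l)}$ that must be tracked separately.
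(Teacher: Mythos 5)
Your proof is correct and follows essentially the same argument as the paper: integrate over the support cube for the second part of (1) and for (2), and integrate over a $\delta$-coset containing a point where the sup is attained for the first part of (1). The only cosmetic difference is that the paper says ``it suffices to prove $d=1$'' and works in one variable, whereas you run the identical computation directly in $d$ variables, which is arguably cleaner since the reduction is not a literal invocation of the $d=1$ case.
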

\begin{proof}
	It suffices to prove the case $d=1$. Let $x_0 \in \F$ be such that $\norm[\Phi(x_0)] = \Norm[\Phi]_{\infty}$, then
	$$ \Norm[\Phi]_{\infty}^l = \Vol(\vp^{\delta(\Phi)})^{-1} \int_{x_0 + \vp^{\delta(\Phi)}} \norm[\Phi(x)]^l dx \leq q^{\delta(\Phi)} \Norm[\Phi]_l^l $$
and we get the first inequality. The second follows from
	$$ \Norm[\Phi]_l^l \leq \Norm[\Phi]_{\infty}^l \int_{{\rm supp}(\Phi)} dx \leq \Norm[\Phi]_{\infty}^l \cdot \Vol(\vp^{D(\Phi)}) = q^{-D(\Phi)} \Norm[\Phi]_{\infty}^l. $$
	For the last, we deduce it from
	$$ \Sch_l^{\sigma}(\Phi)^l = \int_{\vp^{D(\Phi)}} \norm[x]_{\F}^{\sigma l} \norm[\Phi(x)]^l dx \leq \sup_{x \in \vp^{D(\Phi)}} \norm[x]_{\F}^{\sigma l} \cdot \Norm[\Phi]_l^l = q^{-\sigma l D(\Phi)} \Norm[\Phi]_l^l. $$
\end{proof}
\begin{definition}
	For any $c \in \ag{R}$, we define $\fsB_c(\ag{Z};\varpi)$ to be the space of functions $f: \varpi^{\ag{Z}} \to \ag{C}$ satisfying
\begin{itemize}
	\item[(1)] $\lim_{n \to -\infty} f(\varpi^n) q^{-n\sigma} = 0$ for any $\sigma > 0$.
	\item[(2)] $\lim_{n \to +\infty} f(\varpi^n) q^{-n \sigma} = 0$ for any $\sigma > c$.
\end{itemize}
	The subspace $\fsB_c^0(\ag{Z};\varpi) \subset \fsB_c(\ag{Z};\varpi)$ is defined by replacing (1) with
\begin{itemize}
	\item[(1')] $f(\varpi^n)=0$ for $n \ll -1$.
\end{itemize}
\end{definition}
\begin{definition}
	For any $c \in \ag{R}$ we define $\fsH_c(\ag{C};q)$ to be the space of meromorphic functions $M: \ag{C} \to \ag{C}$ satisfying
\begin{itemize}
	\item[(1)] $M(s+i\frac{2\pi}{\log q}) = M(s)$ for all $s \in \ag{C}$.
	\item[(2)] $M(s)$ is holomorphic for $\Re s > c$. 
\end{itemize}
\end{definition}
\begin{definition}
	For any $l \in [1,\infty]$ we put a system of semi-norms $B_l^{\sigma}$ for $\sigma > c$ on $\fsB_c(\ag{Z};\varpi)$ by
	$$ B_l^{\sigma}(f) = \left( \sum_{n \in \ag{Z}} q^{-n \sigma l} \norm[f(\varpi^n)]^l \right)^{\frac{1}{l}}, l \neq \infty; B_{\infty}^{\sigma}(f) = \sup_{n \in \ag{Z}} q^{-n\sigma} \norm[f(\varpi^n)]. $$
\end{definition}
\begin{definition}
	For any $l \in [1,\infty]$ we put a system of semi-norms $H_l^{\sigma}$ for $\sigma > c$ on $\fsH_c(\ag{C};q)$ by
	$$ H_l^{\sigma}(M) = \left( \int_0^{\frac{2\pi}{\log q}} \norm[M(\sigma+i\tau)]^l \frac{\log q d\tau}{2\pi} \right)^{\frac{1}{l}}, l \neq \infty; H_{\infty}^{\sigma}(M) = \max_{\Re s = \sigma} \norm[M(s)]. $$
\end{definition}
\begin{proposition}
	The topology on $\fsB_c(\ag{Z};\varpi)$ defined by $B_l^*$ does not depend on $l$. More precisely, for any $f \in \fsB_c(\ag{Z};\varpi)$ we have for $1 \leq l < \infty$ and $\epsilon > 0$ small with $\sigma - \epsilon > c$
	$$ B_l^{\sigma}(f) \ll_{\epsilon,l} B_{\infty}^{\sigma-\epsilon}(f) + B_{\infty}^{\sigma+\epsilon}(f); $$
	$$ B_{\infty}^{\sigma}(f) \leq B_l^{\sigma}(f). $$
\label{BEquiv}
\end{proposition}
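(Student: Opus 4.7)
The plan is to prove both inequalities by a direct elementary computation, mirroring the pattern of the archimedean analogue proved earlier in the paper but using geometric series in place of integrals, since we are working on the discrete multiplicative lattice $\varpi^{\ag{Z}}$.

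For the trivial inequality $B_{\infty}^{\sigma}(f) \leq B_l^{\sigma}(f)$, I would just observe that for each $n \in \ag{Z}$, the single term $q^{-n\sigma l}\norm[f(\varpi^n)]^l$ is dominated by the entire sum $\sum_{m \in \ag{Z}} q^{-m\sigma l}\norm[f(\varpi^m)]^l = B_l^{\sigma}(f)^l$, whence $q^{-n\sigma}\norm[f(\varpi^n)] \leq B_l^{\sigma}(f)$ and taking the supremum in $n$ yields the claim.

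For the less trivial inequality $B_l^{\sigma}(f) \ll_{\epsilon,l} B_{\infty}^{\sigma-\epsilon}(f) + B_{\infty}^{\sigma+\epsilon}(f)$, the idea is to split the sum defining $B_l^\sigma(f)^l$ at $n=0$ and apply two different pointwise bounds. For $n \geq 0$, factor
\[
q^{-n\sigma}\norm[f(\varpi^n)] = q^{-n\epsilon} \cdot q^{-n(\sigma-\epsilon)}\norm[f(\varpi^n)] \leq q^{-n\epsilon} B_{\infty}^{\sigma-\epsilon}(f),
\]
which is legitimate since $\sigma - \epsilon > c$ by hypothesis, so the supremum $B_{\infty}^{\sigma-\epsilon}(f)$ is finite. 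For $n < 0$, factor instead
\[
q^{-n\sigma}\norm[f(\varpi^n)] = q^{n\epsilon} \cdot q^{-n(\sigma+\epsilon)}\norm[f(\varpi^n)] \leq q^{n\epsilon} B_{\infty}^{\sigma+\epsilon}(f),
\]
which makes sense because $\sigma + \epsilon > \sigma > c$. Raising both to the $l$-th power and summing yields two convergent geometric series:
\[
B_l^{\sigma}(f)^l \leq B_{\infty}^{\sigma-\epsilon}(f)^l \sum_{n \geq 0} q^{-n\epsilon l} + B_{\infty}^{\sigma+\epsilon}(f)^l \sum_{n < 0} q^{n\epsilon l} = \frac{B_{\infty}^{\sigma-\epsilon}(f)^l + q^{-\epsilon l} B_{\infty}^{\sigma+\epsilon}(f)^l}{1 - q^{-\epsilon l}}.
\]
Taking $l$-th roots and using $(a^l + b^l)^{1/l} \leq a + b$ produces the desired inequality, with the implied constant $(1 - q^{-\epsilon l})^{-1/l}$ depending only on $\epsilon, l$ (and $q$).

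There is no real obstacle here; the whole statement is essentially a discrete Sobolev-type equivalence between a sup-type and an $\intL^l$-type norm on the one-dimensional lattice $\varpi^{\ag{Z}}$. The only point requiring a tiny bit of attention is to confirm that both $B_\infty^{\sigma\pm\epsilon}(f)$ are finite whenever $\sigma - \epsilon > c$; this is immediate from the defining conditions (1) and (2) of $\fsB_c(\ag{Z};\varpi)$, which guarantee that $q^{-n\sigma'}f(\varpi^n) \to 0$ as $n \to \pm\infty$ for any $\sigma' > c$.
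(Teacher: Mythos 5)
Your proof is correct and follows essentially the same route as the paper's: split the sum in $B_l^\sigma(f)^l$ at $n=0$, bound the $n\geq 0$ part by $B_\infty^{\sigma-\epsilon}(f)^l$ times the geometric series $\sum_{n\geq0}q^{-n\epsilon l}$ and the $n<0$ part by $B_\infty^{\sigma+\epsilon}(f)^l$ times $\sum_{n<0}q^{n\epsilon l}$, then observe the second inequality is immediate by positivity. (In fact you supply the $l$-th-power exponents that are dropped in the paper's displayed bound, which reads as a minor typo there.)
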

\begin{proof}
	The first follows from
	$$ B_l^{\sigma}(f)^l \leq B_{\infty}^{\sigma-\epsilon}(f) \sum_{n \geq 0} q^{-n\epsilon} + B_{\infty}^{\sigma+\epsilon}(f) \sum_{n < 0} q^{n\epsilon}. $$
	The second is obvious by positivity.
\end{proof}
\begin{proposition}
	The topology on $\fsH_c(\ag{C};q)$ defined by $H_l^*$ does not depend on $l$. More precisely, for any $M \in \fsH_c(\ag{C};q)$ we have for $1 \leq l < \infty$ and $\epsilon > 0$ small with $\sigma - \epsilon > c$
	$$ H_l^{\sigma}(M) \leq H_{\infty}^{\sigma}(M); $$
	$$ H_{\infty}^{\sigma}(M) \ll_{\epsilon} H_l^{\sigma-\epsilon}(M) + H_l^{\sigma+\epsilon}(M). $$
\label{HEquiv}
\end{proposition}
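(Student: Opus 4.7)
The plan is to translate the problem to complex analysis on an annulus via the substitution $z = q^{-s}$. Under this substitution, an $M \in \fsH_c(\ag{C};q)$ corresponds (via $\tilde M(z) := M(-\log_q z)$) to a holomorphic function on the punctured disk $\{0 < |z| < q^{-c}\}$: the periodicity of $M$ with period $i\cdot 2\pi/\log q$ makes $\tilde M$ well defined, while the holomorphy of $M$ in $\Re s > c$ gives holomorphy of $\tilde M$ on the punctured disk. Writing $s = \sigma + i\tau$ so that $z = q^{-\sigma}e^{-i\tau \log q}$, one checks that the measure $\log q\,d\tau/(2\pi)$ on $[0,2\pi/\log q)$ pushes forward to the normalized arc-length probability measure on the circle $|z| = q^{-\sigma}$. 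Thus $H_l^\sigma(M)$ and $H_\infty^\sigma(M)$ become the $L^l$ and $L^\infty$ norms of $\tilde M$ on this circle with respect to the probability arc-length measure.

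With this dictionary, the first inequality $H_l^\sigma(M) \leq H_\infty^\sigma(M)$ is immediate: $L^l \leq L^\infty$ for any probability measure.

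For the second inequality, the plan is to imitate the Archimedean proof of Proposition~\ref{HEquiv}'s continuous analogue, replacing the vertical strip by an annulus. Fix $\epsilon > 0$ small with $\sigma - \epsilon > c$. Then the annulus $q^{-\sigma-\epsilon} < |z| < q^{-\sigma+\epsilon}$ lies in the region of holomorphy of $\tilde M$. For any $z_0$ with $|z_0| = q^{-\sigma}$, Cauchy's integral formula on this annulus gives
\begin{equation*}
	\tilde M(z_0) = \frac{1}{2\pi i}\oint_{|z|=q^{-\sigma+\epsilon}} \frac{\tilde M(z)}{z-z_0}\,dz \; - \; \frac{1}{2\pi i}\oint_{|z|=q^{-\sigma-\epsilon}} \frac{\tilde M(z)}{z-z_0}\,dz.
\end{equation*}
On each circle one has the elementary bound $|z - z_0| \geq q^{-\sigma}|q^{\pm\epsilon} - 1|$; applying H\"older's inequality with conjugate exponents $l,l'$ (where the factor $1/|z-z_0|^{l'}$ is bounded by a pure function of $\epsilon$ and $q$) yields
\begin{equation*}
	|M(s_0)| \leq \frac{q^{\epsilon}}{q^{\epsilon}-1}\, H_l^{\sigma-\epsilon}(M) + \frac{q^{-\epsilon}}{1-q^{-\epsilon}}\, H_l^{\sigma+\epsilon}(M),
\end{equation*}
uniformly in $s_0$ with $\Re s_0 = \sigma$. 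Taking the supremum gives the claim.

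There is no real obstacle: the argument is a direct non-Archimedean transcription of the Archimedean proof, with the periodicity of $M$ being absorbed once and for all by the substitution $z = q^{-s}$ that converts the ``periodic strip $\Re s > c$'' to the ``punctured disk $|z| < q^{-c}$'' and replaces the vertical Cauchy contour by a pair of concentric circles. The only bookkeeping to be careful about is the proper orientation of the annulus boundary in Cauchy's formula and the correct normalization of the circular measure to match the definition of $H_l^\sigma$.
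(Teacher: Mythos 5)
Your proof is correct, and it reaches the same conclusion via a genuinely different but closely related route. The paper argues directly in the strip: it forms the rectangular contour with vertices $\sigma\pm\epsilon$, $\sigma\pm\epsilon+i\,2\pi/\log q$, observes that the top and bottom edges cancel by the $i\,2\pi/\log q$-periodicity of $M$, and applies Cauchy's integral formula plus H\"older on the two remaining vertical segments, with the elementary bound $|s-s_0|\geq\epsilon$. You instead make the substitution $z=q^{-s}$, turning the periodic strip into a punctured disk and the two vertical lines into concentric circles, then run the same Cauchy-plus-H\"older argument on the annulus. The two proofs are equivalent in substance — the change of variable is exactly what makes the top/bottom cancellation "automatic" — but your version makes the role of periodicity conceptually transparent, at the cost of carrying the Jacobian $dz = -i(\log q)\,z\,d\tau$, which is why the factor $|z|/|z-z_0|$ (rather than $1/|s-s_0|$) appears and produces the constants $q^{\epsilon}/(q^{\epsilon}-1)$ and $q^{-\epsilon}/(1-q^{-\epsilon})$; note these are asymptotically $\sim 1/(\epsilon\log q)$, matching the paper's explicit constant $1/(\epsilon\log q)$. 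You were also right to track the measure normalization: the pushforward of $\log q\,d\tau/(2\pi)$ on $[0,2\pi/\log q)$ is indeed the normalized arc-length probability measure on each circle, which is what lets H\"older land exactly on $H_l^{\sigma\pm\epsilon}(M)$ without stray constants.
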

\begin{proof}
	The first is obvious. For the second, let $s_0$ be any complex number with $\Re s_0 = \sigma$ and $0 < \Im s_0 < 2\pi / \log q$. Selecting the contour joining $\sigma+\epsilon$, $\sigma+\epsilon + i 2\pi / \log q$, $\sigma - \epsilon + i 2\pi / \log q$ and $\sigma-\epsilon$, we see
	$$ \log q \cdot M(s_0) = \int_0^{\frac{2\pi}{\log q}} \frac{M(\sigma+\epsilon+i\tau)}{\sigma+\epsilon+i\tau - s_0} \frac{\log q d\tau}{2\pi} - \int_0^{\frac{2\pi}{\log q}} \frac{M(\sigma-\epsilon+i\tau)}{\sigma-\epsilon+i\tau - s_0} \frac{\log q d\tau}{2\pi}. $$
	Using H\"older inequality we deduce
\begin{align*}
	\log q \cdot \norm[M(s_0)] &\leq \frac{1}{\epsilon} H_l^{\sigma+\epsilon}(M) \cdot \left( \int_0^{\frac{2\pi}{\log q}} \frac{\log q d\tau}{2\pi} \right)^{\frac{l-1}{l}} + \frac{1}{\epsilon} H_l^{\sigma-\epsilon}(M) \cdot \left( \int_0^{\frac{2\pi}{\log q}} \frac{\log q d\tau}{2\pi} \right)^{\frac{l-1}{l}} \\
	&= \frac{1}{\epsilon} H_l^{\sigma+\epsilon}(M) + \frac{1}{\epsilon} H_l^{\sigma-\epsilon}(M).
\end{align*}
	We conclude by taking $\sup$ with respect to $\Re s_0 = \sigma$.
\end{proof}
\begin{proposition}
	The two maps
	$$ \fsB_c(\ag{Z};\varpi) \to \fsH_c(\ag{C};q), f \mapsto \Mellin{f}(s) = \sum_{n \in \ag{Z}} f(\varpi^n) q^{-ns}, \text{ for } \Re s > c; $$
	$$ \fsH_c(\ag{C};q) \to \fsB_c(\ag{Z};\varpi), M \mapsto f_M(\varpi^n) = \int_0^{\frac{2\pi}{\log q}} M(\sigma+i\tau) q^{n(\sigma + i \tau)} \frac{\log q d\tau}{2\pi}, \forall \sigma > c $$
are continuous with respect to the above topologies defined by semi-norms.
\label{BHEquiv}
\end{proposition}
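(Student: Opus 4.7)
The plan is to reduce continuity to two one-line bounds using the extreme values of $l$, and then invoke the already-established $l$-independence (Propositions \ref{BEquiv} and \ref{HEquiv}) to conclude continuity with respect to any of the semi-norms. Specifically, I would prove
\[ H_{\infty}^{\sigma}(\Mellin{f}) \leq B_1^{\sigma}(f), \qquad B_{\infty}^{\sigma}(f_M) \leq H_1^{\sigma}(M), \]
which are exactly the bounds needed to assert continuity of $f \mapsto \Mellin{f}$ and $M \mapsto f_M$, since by \ref{BEquiv}--\ref{HEquiv} the topology on either space is generated by any single family $B_l^{\sigma}$ (resp.\ $H_l^{\sigma}$) as $\sigma$ ranges over $(c,\infty)$.

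The first bound is immediate from the triangle inequality: on the line $\Re s = \sigma$ we have
\[ \extnorm{\Mellin{f}(s)} \leq \sum_{n \in \ag{Z}} \norm[f(\varpi^n)] q^{-n\sigma} = B_1^{\sigma}(f), \]
then take supremum over $s$. The second is equally direct:
\[ \norm[f_M(\varpi^n)] q^{-n\sigma} \leq \int_0^{2\pi/\log q} \norm[M(\sigma + i \tau)] \frac{\log q \, d\tau}{2\pi} = H_1^{\sigma}(M), \]
and take supremum over $n \in \ag{Z}$. Alongside this I would verify that the Dirichlet series $\Mellin{f}(s) = \sum_n f(\varpi^n) q^{-ns}$ converges absolutely and locally uniformly in $\Re s > c$, giving holomorphy there, and its manifest invariance under $s \mapsto s + i 2\pi/\log q$ gives the required periodicity, so $\Mellin{f} \in \fsH_c(\ag{C};q)$. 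Dually, Cauchy's theorem combined with the periodicity of $M$ shows $f_M(\varpi^n)$ is independent of the choice of $\sigma > c$ used in its definition.

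The only place where a little care is needed (and the closest the argument comes to having an obstacle, though still routine) is verifying that $f_M$ lies in $\fsB_c(\ag{Z};\varpi)$, i.e.\ satisfies both decay conditions. For condition (2), given $\sigma_0 > c$ I would pick $\sigma \in (c, \sigma_0)$ and apply $\norm[f_M(\varpi^n)] \leq H_1^{\sigma}(M) q^{n\sigma}$ to get $\norm[f_M(\varpi^n)] q^{-n\sigma_0} \leq H_1^{\sigma}(M) q^{n(\sigma-\sigma_0)} \to 0$ as $n \to +\infty$. For condition (1), given $\sigma_0 > 0$ I would pick $\sigma > \max(c, \sigma_0)$ and conclude analogously that $\norm[f_M(\varpi^n)] q^{-n\sigma_0} \leq H_1^{\sigma}(M) q^{n(\sigma - \sigma_0)} \to 0$ as $n \to -\infty$. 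The fact that the two maps are mutually inverse (i.e.\ Laurent series inversion in the variable $z = q^{-s}$) is the classical content of Fourier inversion on the circle of radius $q^{-\sigma}$ and does not need to be re-proved.
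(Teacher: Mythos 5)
Your proof is correct and follows essentially the same route as the paper: the two one-line bounds $H_{\infty}^{\sigma}(\Mellin{f}) \leq B_1^{\sigma}(f)$ and $B_{\infty}^{\sigma}(f_M) \leq H_1^{\sigma}(M)$ are precisely what the paper writes down, with the $l$-independence propositions implicitly doing the rest. The extra checks you include (well-definedness of $\Mellin{f}$ and $f_M$ as members of the target spaces) are sound and go slightly beyond what the paper bothers to spell out, but do not constitute a different argument.
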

\begin{proof}
	The continuity follows from
	$$ H_{\infty}^{\sigma}(\Mellin{f}) \leq \sum_{n \in \ag{Z}} \norm[f(\varpi^n)] q^{-n\sigma} = B_1^{\sigma}(f); $$
	$$ B_{\infty}^{\sigma}(f_M) \leq \sup_{n \in \ag{Z}} \int_0^{\frac{2\pi}{\log q}} \norm[M(\sigma+i\tau) q^{n i \tau}] \frac{\log q d\tau}{2\pi} = H_1^{\sigma}(M). $$
\end{proof}
	Note that the abstract part of Definition \ref{FourF^1} still makes sense in the current case, i.e., for any function $f: \F \to \ag{C}$ and $\xi \in \widehat{\F^1}$ we can define
	$$ f_{\xi}(\varpi^n) = \int_{\F^1} f(\varpi^n u) \xi(u) du, n \in \ag{Z}. $$
\begin{proposition}
	For any $f \in \Sch(\F)$, $f_{\xi} \neq 0$ only for $\xi$ satisfying $\cond(\xi) \leq m(f)$, hence for only finitely many $\xi$. We have $f_{\xi} \in B_0^0(\ag{Z};\varpi)$ and
	$$ B_{\infty}^{\sigma}(f_{\xi}) \leq \Sch_{\infty}^{\sigma}(f), \forall \sigma > 0. $$
\label{SchToB}
\end{proposition}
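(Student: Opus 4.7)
The plan is to dispatch the three assertions in turn, all directly from the definitions; no real obstacle is expected, but care must be taken in unwinding $m(f)$ for $d=1$, which reads: $f(x u_0) = f(x)$ for all $x \in \F$ and all $u_0 \in 1 + \vp^{m(f)}$.

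For the vanishing claim, suppose $\cond(\xi) > m(f)$. Then $\xi$ is nontrivial on $1+\vp^{m(f)}$, so pick $u_0 \in 1+\vp^{m(f)}$ with $\xi(u_0) \neq 1$. Change variables $u \mapsto u_0 u$ in the defining integral for $f_\xi(\varpi^n)$; since Haar measure on $\F^1 = \vo^\times$ is invariant and $f(\varpi^n u_0 u) = f(\varpi^n u)$ by the multiplicative invariance of $f$, this yields $f_\xi(\varpi^n) = \xi(u_0) f_\xi(\varpi^n)$, forcing $f_\xi(\varpi^n) = 0$. Finiteness of the surviving $\xi$'s is then automatic because $\widehat{\vo^\times}$ contains only finitely many characters of bounded conductor.

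For membership in $\fsB_0^0(\ag{Z};\varpi)$, the vanishing condition (1') follows from $\mathrm{supp}(f) \subset \vp^{D(f)} \times \cdots$: if $|\varpi^n u|_\F = q^{-n} > q^{-D(f)}$, i.e.\ $n < D(f)$, then $f(\varpi^n u) = 0$ for every $u \in \F^1$ and hence $f_\xi(\varpi^n) = 0$. The decay condition (2) together with the stated norm inequality will both be derived from the same pointwise bound: from the definition $\Sch_\infty^\sigma(f) = \sup_x |x|_\F^\sigma |f(x)|$ one has $|f(\varpi^n u)| \leq q^{n\sigma} \Sch_\infty^\sigma(f)$ for every $u \in \F^1$, whence, integrating against the probability measure on $\F^1$ (and using $|\xi(u)| = 1$),
$$ |f_\xi(\varpi^n)| \leq q^{n\sigma} \Sch_\infty^\sigma(f). $$
Multiplying by $q^{-n\sigma}$ and taking the supremum in $n$ gives $B_\infty^\sigma(f_\xi) \leq \Sch_\infty^\sigma(f)$, which is the final inequality. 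Applying the same estimate with $\sigma = 0$ yields $|f_\xi(\varpi^n)| \leq \Norm[f]_\infty$, so $q^{-n\sigma'}|f_\xi(\varpi^n)| \to 0$ as $n \to +\infty$ for every $\sigma' > 0$, finishing condition (2) of $\fsB_0(\ag{Z};\varpi)$.

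The only thing resembling an obstacle is bookkeeping the sign conventions in the definitions of $B_\infty^\sigma$ and $\Sch_\infty^\sigma$ (since $\sigma$ enters as $-n\sigma$ on the $\fsB$-side and as $|x|^{+\sigma}$ on the Schwartz side, while $|\varpi^n|_\F = q^{-n}$), but these cancel perfectly in the displayed inequality above.
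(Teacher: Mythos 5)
Your proof is correct and is exactly the routine unwinding of definitions that the paper has in mind — the paper's own proof is literally the single word ``Obvious.'' The only point worth flagging is that for $m(f)=0$ the invariance group is $(1+\vp^0)\cap\vo^\times=\vo^\times$ rather than $1+\vp^{m(f)}$ read literally, but your argument (pick $u_0$ in the invariance group with $\xi(u_0)\neq 1$ and change variables) goes through unchanged in that case.
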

\begin{proof}
	Obvious.
\end{proof}
\begin{lemma}
	For any $\Phi \in \Sch(\F^2)$, $s \in \ag{C}$, $\sigma > \max(0,-2\Re s)$ and $\epsilon > 0$ with $\sigma-\epsilon, \sigma+2\Re s - \epsilon >0$, there is $N=N(\epsilon, \sigma, \sigma+2\Re s) > 0$ such that with implied constant depending only on $\epsilon$
	$$ \extnorm{  \int_{\F^{\times}} \Phi(t,\frac{y}{t}) \omega^{-1}\xi^2(t)\norm[t]_{\F}^{2s} d^{\times}t } \ll_{\epsilon} q^{-ND(\Phi)+m(\Phi)} \Norm[\Phi]_{\infty} \cdot \norm[y]_{\F}^{-\sigma} 1_{y \in \vp^{2D(\Phi)}}. $$
\end{lemma}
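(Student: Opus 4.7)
The plan is to analyze the support of the integrand in $t$, reduce the integral to a geometric sum in the valuation of $t$, and then trade the polynomial number of summands for an $\epsilon$-exponential factor so as to avoid the singularity of the closed-form geometric series near $\Re s = 0$.

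First, $\Phi(t,y/t) \neq 0$ forces $t \in \vp^{D(\Phi)}$ and $y/t \in \vp^{D(\Phi)}$, i.e.\ $D(\Phi) \leq v(t) \leq v(y) - D(\Phi)$; this forces $v(y) \geq 2D(\Phi)$, explaining the indicator $1_{y \in \vp^{2D(\Phi)}}$. Writing $t = \varpi^n u$ with $u \in \vo^{\times}$ yields
\[
\int_{\F^{\times}} \Phi(t,y/t)\omega^{-1}\xi^2(t)\norm[t]_{\F}^{2s} d^{\times}t = \sum_{n=D(\Phi)}^{v(y)-D(\Phi)} \omega^{-1}\xi^2(\varpi^n) q^{-2n\Re s} A_n(y),
\]
where $A_n(y) := \int_{\vo^{\times}} \Phi(\varpi^n u, y\varpi^{-n} u^{-1}) \omega^{-1}\xi^2(u) du$. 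The multiplicative invariance of $\Phi$ applied to $\diag(v,v^{-1})$ for $v \in 1+\vp^{m(\Phi)}$ (observing $v^{-1}-1 \in \vp^{m(\Phi)}$) shows that the $u$-integrand transforms by $\omega^{-1}\xi^2(v)$ under $u \mapsto uv$; hence $A_n(y) = 0$ whenever $\cond(\omega^{-1}\xi^2) > m(\Phi)$, and in all cases the trivial bound $|A_n(y)| \leq \Vol(\vo^{\times}) \Norm[\Phi]_{\infty}$ holds.

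It remains to bound $S := \sum_{n=D(\Phi)}^{v(y)-D(\Phi)} q^{-2n\Re s}$. Setting $k := v(y) - 2D(\Phi) \geq 0$, I would estimate each term by the endpoint maximum and absorb the count $k+1$ via the elementary inequality $k+1 \ll_{\epsilon} q^{k\epsilon}$:
\[
S \ll_{\epsilon} q^{k\epsilon} \cdot q^{\max(-2D(\Phi)\Re s,\, -2(v(y)-D(\Phi))\Re s)}.
\]
With $N := \sigma + (\sigma + 2\Re s) > 0$, a case split on the sign of $\Re s$ reduces the desired inequality $S \leq C_{\epsilon} q^{-ND(\Phi) + v(y)\sigma}$ to a linear condition of the form $(\sigma - \epsilon)(2D(\Phi)-v(y)) \leq 0$ for $\Re s \geq 0$, respectively $(\sigma+2\Re s - \epsilon)(2D(\Phi)-v(y)) \leq 0$ for $\Re s < 0$; both hold by $v(y) \geq 2D(\Phi)$ together with the positivity hypotheses $\sigma - \epsilon > 0$ and $\sigma + 2\Re s - \epsilon > 0$. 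Multiplying by the $A_n$-bound and using $\norm[y]_{\F}^{-\sigma} = q^{v(y)\sigma}$ yields the claim, the factor $q^{m(\Phi)} \geq 1$ being a harmless weakening.

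The main delicate point is uniformity as $\Re s \to 0$: the naive geometric-series bound has a denominator $1 - q^{\pm 2\Re s}$ which blows up, so the $\epsilon$-loss is essential in order to keep the implied constant dependent only on $\epsilon$ and (through $N$) on $\sigma$ and $\sigma + 2\Re s$.
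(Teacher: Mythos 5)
Your proof is correct, and it takes a genuinely different route from the paper's. The paper channels the computation through the Mellin-transform formalism it has set up for the non-archimedean case (the $\fsB_c(\ag{Z};\varpi)$ / $\fsH_c(\ag{C};q)$ spaces, the norm equivalences $B_l^\sigma \leftrightarrow H_l^\sigma$, and the character sum over $\xi_1 \in \widehat{\F^1}$), mirroring its archimedean argument in Lemma \ref{IntBdA}; the factor $q^{m(\Phi)}$ there arises precisely from counting characters with $\cond \leq m(\Phi)$. You instead read off the support constraint $D(\Phi) \leq v(t) \leq v(y) - D(\Phi)$ directly, unfold the integral into a finite geometric sum in $v(t)$, and absorb the count $k+1$ of terms via $k+1 \ll_\epsilon q^{k\epsilon}$, which neatly sidesteps the blow-up of the closed-form geometric series near $\Re s = 0$ and makes the $\epsilon$-loss transparent. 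Your final arithmetic with $N = 2\sigma + 2\Re s$ checks out in both sign cases for $\Re s$, and your argument never needs the $q^{m(\Phi)}$ factor (it appears only because you discard the vanishing observation and prove the unrestricted bound), so you actually establish a slightly sharper estimate. Two small remarks: the intermediate identity should have $q^{-2ns}$ rather than $q^{-2n\Re s}$ before passing to absolute values (inconsequential since you only use $|\cdot|$), and the final constant absorbs $\Vol(\vo^\times) \leq 1$, which is fine under the usual normalization. Your approach is more elementary and self-contained; the paper's has the virtue of reusing the uniform Mellin machinery it develops for both archimedean and non-archimedean places.
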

\begin{proof}
	Writing
	$$ f(y) = \int_{\F^{\times}} \Phi(t,\frac{y}{t}) \omega^{-1}\xi^2(t)\norm[t]_{\F}^{2s} d^{\times}t, $$
we have for any $\xi_1 \in \widehat{\F^1}$ and $s_1 \in \ag{C}$ with $\Re s_1 > \max(0,-2\Re s)$
\begin{align*}
	\Mellin{f_{\xi_1}}(s_1) &= \int_{\F^{\times} \times \F^{\times}} \Phi(t,y) \omega^{-1}\xi^2\xi_1(t) \norm[t]_{\F}^{s_1+2s} \xi_1(y) \norm[y]_{\F}^{s_1} d^{\times}t d^{\times}y \\
	&= \Mellin{\Phi_{\omega^{-1}\xi^2\xi_1, \xi_1}}(s_1+2s+i\mu(\omega^{-1}\xi^2), s_1),
\end{align*}
where the second Mellin transform is the natural two dimensional one. By Proposition \ref{SchToB}, $\Phi_{\omega^{-1}\xi^2\xi_1, \xi_1} \neq 0$ only if $\cond(\omega^{-1}\xi^2\xi_1), \cond(\xi_1) \leq m(\Phi)$. In particular, the number of such $\xi_1$ is bounded by $q^{m(\Phi)}$. From the Mellin inversion for $\sigma > \max(0,-2\Re s)$
	$$ f(\varpi^n u) = \sum_{\xi_1} f_{\xi_1}(\varpi^n) \xi_1(u)^{-1} = \sum_{\xi_1} \xi_1(u)^{-1} \int_0^{\frac{2\pi}{\log q}} \Mellin{f_{\xi_1}}(\sigma+i\tau) q^{n(\sigma+i\tau)} \frac{\log q d\tau}{2\pi} $$
we can successively apply Proposition \ref{HEquiv}, \ref{BHEquiv}, \ref{BEquiv}, \ref{SchToB} and \ref{SchEquiv} to get
\begin{align*}
	\norm[y]_{\F}^{\sigma} \norm[f(y)] &\leq \sum_{\xi_1} H_1^{\sigma}(\Mellin{f_{\xi_1}}) \leq \sum_{\xi_1} H_{\infty}^{\sigma}(\Mellin{f_{\xi_1}}) \\
	&\leq \sum_{\xi_1} H_{\infty}^{\sigma+2\Re s, \sigma}(\Mellin{\Phi_{\omega^{-1}\xi^2\xi_1, \xi_1}}) \leq \sum_{\xi_1}  B_1^{\sigma+2\Re s, \sigma}(\Phi_{\omega^{-1}\xi^2\xi_1, \xi_1})  \\
	&\ll_{\epsilon} \sum_{\xi_1}  \left( B_{\infty}^{\sigma+2\Re s+\epsilon, \sigma+\epsilon} +B_{\infty}^{\sigma+2\Re s+\epsilon, \sigma-\epsilon} + B_{\infty}^{\sigma+2\Re s-\epsilon, \sigma+\epsilon} +B_{\infty}^{\sigma+2\Re s-\epsilon, \sigma-\epsilon} \right)(\Phi_{\omega^{-1}\xi^2\xi_1, \xi_1}) \\
	&\leq \left( \Sch_{\infty}^{\sigma+2\Re s+\epsilon, \sigma+\epsilon} + \Sch_{\infty}^{\sigma+2\Re s+\epsilon, \sigma-\epsilon} + \Sch_{\infty}^{\sigma+2\Re s-\epsilon, \sigma+\epsilon} + \Sch_{\infty}^{\sigma+2\Re s-\epsilon, \sigma-\epsilon} \right)(\Phi) \sum_{\xi_1}1 \\
	&\leq q^{-ND(\Phi)+m(\Phi)} \Norm[\Phi]_{\infty}.
\end{align*}
	Finally, it is obvious that $f(y) \neq 0$ implies the existence of some $t \in \F^{\times}$ such that $(t,y/t)$ lies in the support of $\Phi$ hence in $\vp^{D(\Phi)} \times \vp^{D(\Phi)}$, thus $y \in \vp^{2D(\Phi)}$.
\end{proof}
\begin{proposition}
	For any $\Phi \in \Sch(\F^2)$, let
	$$ D = \min(D(\Phi), -\cond(\psi)-\delta(\Phi)); \delta = \max(\delta(\Phi), \cond(\psi)-D(\Phi)). $$
	Then for any $\sigma > \norm[\Re s]$ and $\epsilon > 0$ with $\sigma - \epsilon > \norm[\Re s]$, there is $N=N(\epsilon, \sigma+\Re s, \sigma - \Re s)>0$ continuous in $\epsilon, \sigma\pm \Re s$ such that
	$$ \norm[W_{\Phi}(s,\xi,\omega\xi^{-1}; a(y)\kappa)] \ll_{\epsilon} q^{-(N+1)D+2\delta} \Norm[\Phi]_2 \cdot \norm[y]_{\F}^{\frac{1}{2}-\sigma} 1_{y \in \vp^{2D}}. $$
	Moreover, at an unramified place with $\cond(\psi)=\cond(\xi)=\cond(\omega\xi^{-1})=0$ and $\Phi=1_{\vo \times \vo}$ we have for any $\epsilon > 0$
	$$ \norm[W_{\Phi}(s,\xi,\omega\xi^{-1}; a(y)\kappa)] \leq \frac{2}{\epsilon \log q} \cdot \left( \sup_{x>0} \frac{x}{e^x} \right) \cdot \norm[y]_{\F}^{\frac{1}{2} - \norm[\Re s] - \epsilon} 1_{y \in \vp} + 1_{y \in \vo^{\times}}. $$
\label{LocWhiBdNA}
\end{proposition}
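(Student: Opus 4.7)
The plan is to apply the preceding lemma to the partial Fourier transform $\Psi := \Four[2]{\rpR(\kappa).\Phi}$ appearing in the integral representation (\ref{SchWhi}):
$$ W_{\Phi}(s,\xi,\omega\xi^{-1}; a(y)\kappa) = \omega\xi^{-1}(y)\norm[y]_{\F}^{\frac{1}{2}-s} \int_{\F^{\times}} \Psi(t, y/t) \omega^{-1}\xi^2(t)\norm[t]_{\F}^{2s} d^{\times}t. $$
First I control the three indices of $\Psi$: Proposition \ref{DdmRel} (1) shows that right translation by $\kappa \in \gp{K}$ preserves each of $D, \delta, m$; then part (3) with $I=\{2\}$ gives $D(\Psi) \geq D$ and $\delta(\Psi) \leq \delta$; and finally part (0) yields $m(\Psi) \leq \delta(\Psi) - D(\Psi) \leq \delta - D$. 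Next I pass from $\Norm[\Psi]_{\infty}$ (which the lemma produces) to $\Norm[\Phi]_2$ (which appears in the target bound): Plancherel for $\Four[2]{\cdot}$ together with $\GL_2(\vo)$-invariance of the Haar measure on $\F^2$ give $\Norm[\Psi]_2 = \Norm[\Phi]_2$, and Proposition \ref{SchEquiv} (1) with $d=2, l=2$ yields $\Norm[\Psi]_{\infty} \leq q^{\delta(\Psi)} \Norm[\Psi]_2 \leq q^{\delta} \Norm[\Phi]_2$.

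I then apply the preceding lemma to $\Psi$ with its free parameter set to $\sigma - \Re s$, so that its conditions $\sigma-\Re s > 0$ and $\sigma+\Re s > 0$ collapse to $\sigma > \norm[\Re s]$ (and similarly for the tolerance), and its $N(\epsilon, \sigma-\Re s, \sigma+\Re s)$ matches the $N$ of the proposition. Multiplying by the prefactor $\norm[y]_{\F}^{1/2 - \Re s}$ and noting $\vp^{2D(\Psi)} \subseteq \vp^{2D}$, I collect the exponents of $q$ as $-N D(\Psi) + m(\Psi) + \delta(\Psi) \leq -ND + (\delta - D) + \delta = -(N+1)D + 2\delta$, yielding the first asserted inequality.

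For the unramified case I compute $W_{\Phi}$ explicitly. With $\cond(\psi)=0$ and $\Phi = 1_{\vo \times \vo}$, Fourier self-duality of $1_{\vo}$ gives $\Four[2]{\Phi} = 1_{\vo \times \vo}$, so the inner integral vanishes outside $y \in \vo$ and otherwise equals the geometric sum $\sum_{n=0}^{v(y)} r^n$ with $r := \omega^{-1}\xi^2(\varpi) q^{-2s}$ and $\norm[r] = q^{-2\Re s}$. For $y \in \vo^{\times}$ this sum is $1$, providing the indicator term. For $y \in \vp$, writing $m := v(y) \geq 1$, the crude bound $\norm[\sum_{n=0}^m r^n] \leq (m+1) \max(1, \norm[r]^m)$ combines with the identity $\max(1,\norm[r]^m) \cdot \norm[y]_{\F}^{-\Re s} = \norm[y]_{\F}^{-\norm[\Re s]}$ to give $\norm[W_{\Phi}] \leq (m+1) \norm[y]_{\F}^{1/2 - \norm[\Re s]}$. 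Rewriting this as $\norm[y]_{\F}^{1/2 - \norm[\Re s] - \epsilon} \cdot (m+1) q^{-m\epsilon}$, using $m+1 \leq 2m$, and substituting $x = m \epsilon \log q$ converts the scalar prefactor into $\frac{2}{\epsilon \log q} \cdot x e^{-x}$, whence the claimed constant on taking the supremum over $x > 0$.

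The main obstacle is the bookkeeping: verifying that the three index inequalities from Proposition \ref{DdmRel} compose in the correct direction (here $N > 0$ combined with $D(\Psi) \geq D$ gives $-N D(\Psi) \leq -ND$), and carefully transferring the $(\sigma, \epsilon)$-ranges from the lemma to the form of the proposition. The unramified computation itself is routine once the geometric sum is isolated, with the constant $\frac{2}{\epsilon \log q} \sup_{x>0} x/e^x$ arising from the standard optimization of $(m+1) q^{-m\epsilon}$.
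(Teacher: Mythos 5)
Your proof is correct and follows essentially the same route as the paper: apply the preceding lemma to $\Four[2]{\rpR(\kappa).\Phi}$, control its three indices via Proposition \ref{DdmRel}, convert $\Norm[\cdot]_\infty$ to $\Norm[\Phi]_2$ via Proposition \ref{SchEquiv} (1) and Plancherel, and shift the free parameter to $\sigma - \Re s$. For the unramified case the paper quotes the explicit formula $W_{\Phi}(a(\varpi^n)) = q^{-n/2}\frac{\alpha^{n+1}-\beta^{n+1}}{\alpha-\beta}$ (citing Bump) and then performs the identical optimization $(n+1)q^{-n\epsilon} \leq \frac{2}{\epsilon\log q}\sup_{x>0} x e^{-x}$; your direct geometric-sum derivation of that formula is equivalent.
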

\begin{proof}
	The first part is a direct consequence of the previous lemma and the following inequalities deduced from Proposition \ref{DdmRel} and \ref{SchEquiv}:
	$$ D(\Four[2]{\rpR(\kappa).\Phi}) \geq \min(D(\Phi), -\cond(\psi)-\delta(\Phi)) = D; $$
	$$ m(\Four[2]{\rpR(\kappa).\Phi}) \leq \delta(\Four[2]{\rpR(\kappa).\Phi}) - D(\Four[2]{\rpR(\kappa).\Phi}) \leq \delta - D; $$
	$$ \extNorm{\Four[2]{\rpR(\kappa).\Phi}}_{\infty} \leq q^{\delta(\Four[2]{\rpR(\kappa).\Phi})} \extNorm{\Four[2]{\rpR(\kappa).\Phi}}_2 \leq q^{\delta} \Norm[\Phi]_2. $$
	The ``moreover'' part follows from a direct computation (or \cite[Theorem 4.6.5]{Bu98})
	$$ W_{\Phi}(s,\xi,\omega\xi^{-1};a(\varpi^n)) = q^{-\frac{n}{2}} \frac{\alpha^{n+1}-\beta^{n+1}}{\alpha - \beta} 1_{n \geq 0}, \text{ with } \alpha = \xi(\varpi)q^{-s}, \beta=\omega\xi^{-1}(\varpi) q^s, $$
which implies for $n \geq 1$
	$$ \norm[W_{\Phi}(s,\xi,\omega\xi^{-1};a(\varpi^n))] \leq (n+1) \norm[\varpi^n]_{\F}^{\frac{1}{2}-\norm[\Re s]} = \frac{n+1}{\epsilon n \log q} \cdot \frac{\epsilon n \log q}{q^{\epsilon n}} \cdot \norm[\varpi^n]_{\F}^{\frac{1}{2}-\norm[\Re s]-\epsilon} $$
is bounded as in the statement.
\end{proof}

		\subsubsection{Global Bound}
		
	Writing $D_v=\min(D(\Phi_v), -\cond(\psi_v)-\delta(\Phi_v))$ for $v<\infty$, it follows from Proposition \ref{LocWhiBdA} and \ref{LocWhiBdNA} that for any $\epsilon > 0$ and $N \gg 1$
	$$ \extnorm{W_{\Phi}(s,\xi,\omega\xi^{-1};na(y)\kappa)} \ll_{\epsilon,N,\Phi} \prod_{v \mid \infty} \min(\norm[y_v]_v^{\frac{1}{2}-\norm[\Re s]-\epsilon}, \norm[y_v]_v^{-N}) \prod_{v < \infty} \norm[y_v]_v^{\frac{1}{2}-\norm[\Re s]-\epsilon} 1_{y_v \in \vp_v^{2D_v}}. $$
	The bound is uniform when $\Re s$ lies in a fixed compact interval. Together with Lemma \ref{GTechEst}, we obtain
\begin{proposition}
	For any $\Phi \in \Sch(\ag{A}^2)$, $m \in \ag{N}$ and any $\sigma > \norm[\Re s]$, we have
	$$ \extnorm{\frac{\partial^m}{\partial s^m} \left( \Eis(s,\xi,\omega\xi^{-1};\Phi)(g) - \eisCst(s,\xi,\omega\xi^{-1};\Phi)(g) \right)} \leq \sum_{\alpha \in \F^{\times}} \extnorm{\frac{\partial^m}{\partial s^m} W_{\Phi}(s,\xi,\omega\xi^{-1};a(\alpha)g)} \ll_{\sigma,\Phi} \Ht(g)^{-\frac{1}{2}-\sigma}, $$
which is of rapid decay with respect to $\Ht(g)$.
\label{GlobRDEisWhi}
\end{proposition}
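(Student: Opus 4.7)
The plan is to combine the local Whittaker bounds from Propositions \ref{LocWhiBdA} and \ref{LocWhiBdNA} with the standard Fourier expansion of the Eisenstein series at the cusp of $\gp{B}$, namely
$$ \Eis(s,\xi,\omega\xi^{-1};\Phi)(g) - \eisCst(s,\xi,\omega\xi^{-1};\Phi)(g) = \sum_{\alpha \in \F^{\times}} W_{\Phi}(s,\xi,\omega\xi^{-1}; a(\alpha)g). $$
Taking absolute values gives the first inequality for free, and the identity can be differentiated $m$ times in $s$ termwise once absolute convergence of the right-hand side (with derivatives) is established.

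To reduce the $s$-derivatives to estimates on $W_\Phi$ itself, I would apply Cauchy's integral formula along a small circle of fixed radius centered at $s$. The crucial point is that the local estimates in Propositions \ref{LocWhiBdA} and \ref{LocWhiBdNA} are \emph{uniform} when $\Re s$ varies in a compact interval, with implied constants depending only on the real part and on finitely many Schwartz/norm data of $\Phi$. Therefore Cauchy's formula yields bounds of the same shape for $\partial_s^m W_\Phi$, at the cost of enlarging $|\Re s|$ and $\sigma$ by an arbitrarily small amount. This reduces the problem to the case $m=0$ with $\sigma > |\Re s|$ replaced by a value slightly larger, which is harmless.

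Next, writing $g = n a(y_0) \kappa$ in Iwasawa form and using the left $\gp{N}(\ag{A})$-quasi-invariance of $W_\Phi$, one has $\extnorm{W_\Phi(s;a(\alpha)g)} = \extnorm{W_\Phi(s;a(\alpha y_0)\kappa)}$, so by the displayed global bound preceding the proposition we can estimate
$$ \extnorm{W_\Phi(s; a(\alpha)g)} \ll_{\epsilon, N, \Phi} \prod_{v \mid \infty} \min\bigl(\norm[\alpha y_{0,v}]_v^{\frac{1}{2}-\norm[\Re s]-\epsilon}, \norm[\alpha y_{0,v}]_v^{-N}\bigr) \prod_{v < \infty} \norm[\alpha y_{0,v}]_v^{\frac{1}{2}-\norm[\Re s]-\epsilon} 1_{\alpha y_{0,v} \in \vp_v^{2D_v}}. $$
The finite-place indicators restrict $\alpha$ to lie in a fixed fractional $\vo$-ideal determined by $\Phi$, while at archimedean places the minimum produces rapid decay when $\norm[\alpha y_0]_{\ag{A}}$ is large and controlled polynomial growth when it is small. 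Since $\sigma > |\Re s|$ and $N$ can be taken arbitrarily large, both regimes are under control.

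The main obstacle — and the reason for the appeal to the technical Lemma \ref{GTechEst} — is to convert this pointwise local bound into the clean dependence $\Ht(g)^{-\frac{1}{2}-\sigma}$ after summation over $\alpha \in \F^\times$. The point is that $\Ht(g) \asymp \norm[y_0]_{\ag{A}}$ and the sum is essentially of the form $\sum_\alpha \prod_v \min(\norm[\alpha y_{0,v}]_v^{A}, \norm[\alpha y_{0,v}]_v^{-N})$ over a sublattice of $\F$. Lemma \ref{GTechEst} is designed exactly to bound such sums by an appropriate negative power of $\norm[y_0]_{\ag{A}}$; once invoked, the extra factor $\norm[\alpha y_0]_{\ag{A}}^{\frac{1}{2}-\norm[\Re s]-\epsilon}$ is absorbed to give the asserted power of $\Ht(g)$, and the freedom in $N$ yields rapid decay. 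Pinning down the bookkeeping — in particular, tracking how the Schwartz data of $\Phi$ at each place enter the implied constant, and checking that the polynomial growth from small $\alpha$ is dominated by the assumption $\sigma > |\Re s|$ — is the only delicate part of the argument.
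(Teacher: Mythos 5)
Your proposal is correct and follows essentially the same route as the paper: the statement in the paper is deduced, without a separate proof environment, from the local Whittaker estimates of Propositions \ref{LocWhiBdA} and \ref{LocWhiBdNA} (combined into the displayed global bound just before the proposition), the uniformity in $\Re s$ on compacta which lets Cauchy's integral formula handle the $s$-derivatives, and Lemma \ref{GTechEst} for the sum over $\alpha \in \F^{\times}$. All three ingredients and the ordering appear exactly as in your write-up; the only content left implicit in both is the final arithmetic with $c_1 = \norm[\Re s] + \epsilon - \tfrac12$, $c_2 = N$ to read off the exponent $-\tfrac12 - \sigma$, and you flag that bookkeeping correctly as the remaining step.
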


	\subsection{Behavior of Constant Term}
	
	Consider $f \in V_{\xi,\omega\xi^{-1}}^{\infty}$ and take its Fourier expansion into $\gp{K}$-isotypic types
	$$ f = \sum_{\vec{n}} \hat{f}_{\vec{n}} \cdot e_{\vec{n}}(\xi,\omega\xi^{-1}) $$
for some $\hat{f}_{\vec{n}} \in \ag{C}$, where $e_{\vec{n}}(\xi,\omega\xi^{-1})$ is unitary with $\gp{K}$-type parametrized by $\vec{n}$ as in \cite[Section 3.5]{Wu5}. For $\Re s \gg 1$, we have
	$$ \eisCst(s,\xi,\omega\xi^{-1};f) = f(s,\xi,\omega\xi^{-1}) + \sum_{\vec{n}} \hat{f}_{\vec{n}} \cdot \mu(s,\xi,\omega\xi^{-1};\vec{n}) e_{\vec{n}}(-s,\omega\xi^{-1},\xi), $$
where $\mu(s,\xi,\omega\xi^{-1};\vec{n})$ are the explicit coefficients of the intertwining operator on the $\gp{K}$-type $\vec{n}$ part as in \textit{loc.cit.}
\begin{proposition}
	The possible poles of the collection of meromorphic functions $\{ \mu(s,\xi,\omega\xi^{-1};\vec{n}) : \vec{n} \}$ are
\begin{itemize}
	\item The pole of $\Lambda(1-2s,\omega\xi^{-2})$ at $s=(1+i\mu(\omega\xi^{-2}))/2$ with order at most $1$, when $\omega\xi^{-2}$ is trivial on $\ag{A}^{(1)}$ and $\vec{n}=\vec{0}$. We call this pole the \emph{spherical pole}.
	\item The (both trivial and non-trivial) zeros of $L(1+2s,\omega^{-1}\xi^2)$ with order at most that of the zero.
\end{itemize}
\end{proposition}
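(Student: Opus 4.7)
My plan is to factor the global intertwining coefficient as an Euler product of local intertwining coefficients computed explicitly in \cite[Section 3.5]{Wu5} and to read off the poles from that factorisation. At each unramified spherical place one has the Gindikin--Karpelevich identity
\[ \mu_v(s,\xi_v,\omega_v\xi_v^{-1};0) = \frac{L_v(-2s,\omega_v\xi_v^{-2})}{L_v(1+2s,\omega_v^{-1}\xi_v^2)}, \]
so collecting these factors over all places produces a decomposition
\[ \mu(s,\xi,\omega\xi^{-1};\vec{n}) = \frac{\Lambda(-2s,\omega\xi^{-2})}{\Lambda(1+2s,\omega^{-1}\xi^2)} \cdot \prod_{v \in \Place(\vec{n})} \rho_v(s,n_v), \]
where $\Place(\vec{n})$ is the finite set of places at which either the local data is ramified, $v$ is archimedean, or $n_v \neq 0$, and $\rho_v(s,n_v)$ is the explicit local correction factor built from the formulas of \textit{loc.cit.}

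Next I would catalogue the candidate poles of the right-hand side. Three sources can produce poles: the zeros of $\Lambda(1+2s,\omega^{-1}\xi^2)$, which contribute poles of order at most the multiplicity of the zero; the unique simple pole of $\Lambda(-2s,\omega\xi^{-2})$ at $s=(1+i\mu(\omega\xi^{-2}))/2$, which exists precisely when $\omega\xi^{-2}$ is trivial on $\ag{A}^{(1)}$; and the poles of the correction factors $\rho_v(s,n_v)$. The explicit formulas in \cite{Wu5} show that the poles of each $\rho_v$ are themselves contained in the zero set of $L_v(1+2s,\omega_v^{-1}\xi_v^2)$, so they merge into the first source. This already yields the second bullet of the statement together with the bound on pole orders.

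The delicate remaining step is to show that the numerator pole survives only for $\vec{n}=\vec{0}$: if some $n_v \neq 0$, the corresponding factor $\rho_v(s,n_v)$ vanishes at the candidate point $s=(1+i\mu(\omega\xi^{-2}))/2$ and thus cancels the simple pole contributed by $\Lambda(-2s,\omega\xi^{-2})$. Conceptually this reflects the fact that at this point the normalised local intertwining operator factors through the one-dimensional spherical line in the dual induced representation, forcing every non-spherical matrix coefficient to vanish. I would verify the cancellation by direct inspection of the formulas in \cite[Lemma 3.5 and Lemma 3.8]{Wu5} at archimedean places and of the local Shahidi/Macdonald type formula at ramified non-archimedean places. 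The main obstacle is precisely assembling these case-by-case cancellations into a uniform statement across all places and all $\gp{K}$-types; everything else is routine bookkeeping of standard analytic properties of Hecke $L$-functions.
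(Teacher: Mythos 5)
Your proposal is essentially the first of the two alternative arguments the paper gives: read the pole structure from the explicit local intertwining coefficients tabulated in \cite[Section 3.5]{Wu5}, factored as an Euler product against the global $\Lambda$-quotient. The paper also records a second, more uniform route that you may find preferable, because it sidesteps the case-by-case archimedean/ramified cancellation you yourself flag as the delicate step: by (\ref{RelSec}) the flat section $f_s$ differs from the Schwartz section $f_{\Phi}(s,\xi,\omega\xi^{-1})$ by the explicit factor $\Dis(\F)^{1/2}\Lambda^S(1+2s,\omega^{-1}\xi^2)\prod_{v\in S,\, v\mid\infty} K_{v,a_v}(s,\omega_v^{-1}\xi_v^2)$, and one has the clean identity $\Intw f_{\Phi}(s,\xi,\omega\xi^{-1})=f_{\widehat{\Phi}}(-s,\omega\xi^{-1},\xi)$. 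Tate's theory gives the meromorphy of the right-hand side in one stroke --- a single simple pole surviving at $s=(1+i\mu(\omega\xi^{-2}))/2$, the other Tate pole being cancelled against that of $f_{\Phi}$ --- so every pole of $\Intw f_s$, simultaneously for all $\gp{K}$-types $\vec{n}$, is either that spherical pole or a zero of the normalizing factor, hence a zero of $L(1+2s,\omega^{-1}\xi^2)$, with orders bounded accordingly. One small slip to fix in your sketch: the unramified Gindikin--Karpelevich local coefficient should read $L_v(1-2s,\omega_v\xi_v^{-2})/L_v(1+2s,\omega_v^{-1}\xi_v^2)$, not $L_v(-2s,\cdot)/L_v(1+2s,\cdot)$; with the corrected shift the numerator's two poles sit at $s=(1+i\mu(\omega\xi^{-2}))/2$ and $s=i\mu(\omega\xi^{-2})/2$, the second being cancelled by the pole of the denominator at that same point, consistent with the spherical coefficient $\lambda_{\F}(s-1/2)=\Lambda_{\F}(1-2s)/\Lambda_{\F}(1+2s)$ used elsewhere in the paper.
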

\begin{proof}
	This can be seen either from our explicit computation of $\mu(s,\xi,\omega\xi^{-1};\vec{n})$ in \cite{Wu5}, or from (\ref{RelSec}) and
	$$ \Intw f_{\Phi}(s,\xi,\omega\xi^{-1}) = f_{\widehat{\Phi}}(-s,\omega\xi^{-1},\xi), $$
which is meromorphic with a simple pole at $s=(1+i\mu(\omega\xi^{-2}))/2$ (the other pole is cancelled by that of $f_{\Phi}(s,\xi,\omega\xi^{-1})$) by Tate's theory.
\end{proof}
\begin{lemma}
	Assume $\mu(s,\xi,\omega\xi^{-1};\vec{n}_0)$ has a pole at $s_0$ with order $n$ ($n=0$ if it is holomorphic at $s_0$) for some $\vec{n}_0$. Define
	$$ \Norm[\vec{n}] = \prod_v (\norm[n_v]+1) \text{ for } \vec{n} = (n_v)_v. $$
	Then as $s$ lying in any small compact neighborhood $K$ of $s_0$ where no other pole occur, we have
	$$ \extnorm{(s-s_0)^n \mu(s,\xi,\omega\xi^{-1};\vec{n})} \ll \Norm[\vec{n}]^N $$
for some $N$ and the implied constant depending only on $K$, $\xi$ and $\omega\xi^{-1}$ (i.e., independent of $\vec{n}$).
\end{lemma}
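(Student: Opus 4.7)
The plan is to use the product decomposition of the intertwining coefficient over places
\[
\mu(s,\xi,\omega\xi^{-1};\vec{n}) = \prod_v \mu_v(s,\xi_v,\omega_v\xi_v^{-1};n_v),
\]
which follows from the explicit computations carried out in \cite{Wu5}, and to bound each local factor by a polynomial in $\norm[n_v]+1$. Multiplying these local bounds and noting that only finitely many places contribute more than a trivial factor will yield the global polynomial bound in $\Norm[\vec{n}]$.

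First, I would isolate the source of the pole. By the preceding proposition, the poles come from the spherical pole of $\Lambda(1-2s,\omega\xi^{-2})$ (which only affects $\vec{n}=\vec{0}$) and from zeros of $L(1+2s,\omega^{-1}\xi^2)$ in the denominators. Both are finite-place or archimedean $L$-factors independent of $\vec{n}$, so multiplying by $(s-s_0)^n$ cancels the pole uniformly and reduces the problem to proving an $\vec{n}$-polynomial bound on the remaining regular function on the compact set $K$.

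Next, I would treat the places one by one. At unramified non-archimedean $v$, the only relevant $K$-type is $n_v=0$ and the local coefficient is an explicit ratio of local Euler factors, which is uniformly bounded on $K$ after removing the possible zero of $L(1+2s,\omega^{-1}\xi^2)$. At the finitely many ramified non-archimedean places, the admissibility of $\pi(\xi_v,\omega_v\xi_v^{-1})$ restricted to $\gp{K}_v$ forces only finitely many $n_v$ to appear with nonzero contribution (those with $K$-level bounded by the conductors), so $\mu_v$ stays bounded by a constant depending only on $\xi,\omega\xi^{-1}$ and $K$, and one absorbs a factor $(\norm[n_v]+1)^N$ harmlessly since $n_v$ itself is bounded. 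At archimedean places, one uses the explicit gamma-function expressions from \cite[Section 3]{Wu5}: the coefficients $\mu_v(s,\xi_v,\omega_v\xi_v^{-1};n_v)$ are ratios of shifted gamma factors whose arguments depend linearly on $n_v$, and Stirling's approximation on the compact $s$-region $K$ immediately yields a bound $\ll (\norm[n_v]+1)^{N_v}$ for some $N_v$ depending on $K$ alone.

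The main obstacle I anticipate is the archimedean bookkeeping: keeping track of uniformity in $s\in K$ while extracting the polynomial dependence on $n_v$ from the gamma quotients, and ensuring that the exponent $N_v$ does not depend on $\vec{n}$. Once Stirling is applied away from the poles (which by choice of $K$ lie outside the shifted arguments), the bound is routine. Combining the local bounds and setting $N = \sum_v N_v$ over the finitely many places where $N_v>0$ gives the desired inequality $\extnorm{(s-s_0)^n \mu(s,\xi,\omega\xi^{-1};\vec{n})} \ll \Norm[\vec{n}]^N$ uniformly for $s\in K$.
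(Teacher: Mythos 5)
Your proposal is correct and follows essentially the same route as the paper: exploit the factorization of $\mu$ into local coefficients from \cite{Wu5}, observe that only finitely many finite places and $K$-types contribute nontrivially, and bound the archimedean local factors polynomially in $n_v$. The only cosmetic difference is that where you invoke Stirling for the gamma-quotient bound, the paper instead uses the elementary telescoping estimate $\prod_{k=1}^n \left| \frac{k-s}{k+s} \right| \leq \exp\left\{ \sum_k \frac{2\norm[s]}{\norm[k+s]} \right\} \ll (n+1)^N$, which gives the same polynomial growth.
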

\begin{proof}
	This follows from the explicit computation of $\mu(s,\xi,\omega\xi^{-1};\vec{n})$ in \cite{Wu5} together with the following obvious bound
	$$ \prod_{k=1}^n \extnorm{\frac{k-s}{k+s}} \leq \prod_k \left( 1+\frac{2\norm[s]}{\norm[k+s]} \right) \leq \exp \left\{ \sum_k \frac{2\norm[s]}{\norm[k+s]} \right\} \ll (n+1)^N. $$
\end{proof}
\begin{proposition}
	Under the condition of the lemma we have
	$$ (s-s_0)^n \eisCst(s,\xi,\omega\xi^{-1};f) = (s-s_0)^n f(s,\xi,\omega\xi^{-1}) + \sum_{\vec{n}} \hat{f}_{\vec{n}} \cdot (s-s_0)^n \mu(s,\xi,\omega\xi^{-1};\vec{n}) e_{\vec{n}}(-s,\omega\xi^{-1},\xi). $$
	Consequently, for any $m \in \ag{N}$, $y \in \ag{A}^{\times}$ and $\kappa \in \gp{K}$ we can write
\begin{align*}
	&\quad \frac{d^m}{d s^m} \mid_{s=s_0} (s-s_0)^n \eisCst(s,\xi,\omega\xi^{-1};f)(a(y)\kappa) \\
	&= \frac{m!}{(m-n)!} \norm[y]_{\ag{A}}^{\frac{1}{2}+s_0} \xi(y) (\log \norm[y]_{\ag{A}})^{m-n} f(\kappa) + \sum_{k=0}^m \norm[y]_{\ag{A}}^{\frac{1}{2}-s_0} \omega\xi^{-1}(y) (\log \norm[y]_{\ag{A}})^k f_k(\kappa),
\end{align*}
where $(\log \norm[y]_{\ag{A}})^{m-n}$ is understood as $0$ if $m<n$ and where the maps
	$$ V_{\xi,\omega\xi^{-1}}^{\infty} \to V_{\omega\xi^{-1},\xi}^{\infty}, f \to f_k \text{ for } 0 \leq k \leq m $$
are $\gp{K}$-maps and continuous with respect to the Sobolev norms on $\gp{K}_{\infty}$.
\label{GlobRDEisCst}
\end{proposition}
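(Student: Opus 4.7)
The plan is to establish the basic identity as an equality of meromorphic functions of $s$ in a small compact neighborhood $K$ of $s_0$, then extract the $m$-th derivative via the Leibniz rule. First I would verify the identity: the Fourier expansion
\begin{equation*}
\eisCst(s,\xi,\omega\xi^{-1};f)=f(s,\xi,\omega\xi^{-1})+\sum_{\vec{n}}\hat{f}_{\vec{n}}\,\mu(s,\xi,\omega\xi^{-1};\vec{n})\,e_{\vec{n}}(-s,\omega\xi^{-1},\xi)
\end{equation*}
holds for $\Re s\gg1$ by term-by-term action of the intertwining operator on $\gp{K}$-isotypic vectors. Multiplying by $(s-s_0)^n$ and extending to $K$ reduces to uniform convergence of the $\vec{n}$-series. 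The preceding lemma furnishes $\extnorm{(s-s_0)^n\mu(s,\xi,\omega\xi^{-1};\vec{n})}\ll\Norm[\vec{n}]^N$ on $K$; since $f\in V_{\xi,\omega\xi^{-1}}^{\infty}$, standard Sobolev theory on $\gp{K}_\infty$ gives super-polynomial decay $\extnorm{\hat{f}_{\vec{n}}}\ll_M\Norm[\vec{n}]^{-M}$ for every $M$. Together with the unitary bound $\extnorm{e_{\vec{n}}(-s,\omega\xi^{-1},\xi)(a(y)\kappa)}\leq\norm[y]_{\ag{A}}^{1/2-\Re s}$, the series converges absolutely and uniformly in $(s,g)$ on compact subsets of $K\times\GL_2(\ag{A})$, together with all $s$-derivatives (via Cauchy's integral formula on a circle inside $K$). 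This proves the identity and legitimizes term-by-term differentiation.

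Next I would compute the $m$-th $s$-derivative at $s_0$ via Leibniz. For the flat-section term $(s-s_0)^n\xi(y)\norm[y]_{\ag{A}}^{1/2+s}f(\kappa)$, observe that $\partial_s^j\mid_{s=s_0}(s-s_0)^n$ vanishes for every $j\neq n$ (including $j>n$) and equals $n!$ when $j=n$. Hence only the $j=n$ term survives (vacuously if $m<n$), contributing $\binom{m}{n}n!\,\xi(y)\norm[y]_{\ag{A}}^{1/2+s_0}(\log\norm[y]_{\ag{A}})^{m-n}f(\kappa)=\frac{m!}{(m-n)!}\xi(y)\norm[y]_{\ag{A}}^{1/2+s_0}(\log\norm[y]_{\ag{A}})^{m-n}f(\kappa)$, exactly the first displayed term.

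For the intertwining sum, Leibniz separates the $s$-derivative of $\omega\xi^{-1}(y)\norm[y]_{\ag{A}}^{1/2-s}$ (producing factors $(-\log\norm[y]_{\ag{A}})^k$) from that of $(s-s_0)^n\mu(s,\xi,\omega\xi^{-1};\vec{n})$, yielding
\begin{equation*}
\sum_{k=0}^m(-1)^k\binom{m}{k}(\log\norm[y]_{\ag{A}})^k\omega\xi^{-1}(y)\norm[y]_{\ag{A}}^{1/2-s_0}\sum_{\vec{n}}\hat{f}_{\vec{n}}\,\partial_s^{m-k}\bigl[(s-s_0)^n\mu(s,\xi,\omega\xi^{-1};\vec{n})\bigr]_{s=s_0}e_{\vec{n}}(\omega\xi^{-1},\xi;\kappa).
\end{equation*}
Setting $f_k:=(-1)^k\binom{m}{k}\sum_{\vec{n}}\hat{f}_{\vec{n}}\,\partial_s^{m-k}\mid_{s_0}[(s-s_0)^n\mu(s,\xi,\omega\xi^{-1};\vec{n})]\,e_{\vec{n}}(\omega\xi^{-1},\xi)$ matches the asserted form. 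Each map $f\mapsto f_k$ is manifestly $\gp{K}$-equivariant: its coefficient attached to $e_{\vec{n}}$ depends only on the $\gp{K}$-type $\vec{n}$, and the basis $\{e_{\vec{n}}\}$ is $\gp{K}$-isotypic. Continuity with respect to Sobolev norms on $\gp{K}_\infty$ follows because the factor $\Norm[\vec{n}]^N$ (from Cauchy's formula applied to the bound of the preceding lemma on a circle around $s_0$) is dominated by any sufficiently high Sobolev norm of $f$.

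The main obstacle is genuinely only the interchange of summation and differentiation underlying the first step; it is resolved once the polynomial bound of the preceding lemma is married to the Sobolev-theoretic rapid decay of $\hat{f}_{\vec{n}}$, enabling a Weierstrass $M$-test argument uniformly on $K$ together with all derivatives.
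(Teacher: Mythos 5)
Your argument reproduces the paper's proof in essentially the same form: the polynomial bound $\extnorm{(s-s_0)^n\mu(s,\xi,\omega\xi^{-1};\vec{n})}\ll\Norm[\vec{n}]^N$ from the preceding lemma, combined with the super-polynomial decay of $\hat{f}_{\vec{n}}$ coming from smoothness of $f$, gives absolute and uniform convergence on $K$, whence the first identity by analytic continuation, and Cauchy's integral formula upgrades the bound to all $s$-derivatives. Your Leibniz computation of the $m$-th derivative and the resulting $f_k$ (together with the $\gp{K}$-equivariance and Sobolev-continuity check) only make explicit what the paper summarizes; the one small slip is that the pointwise bound $\extnorm{e_{\vec{n}}(-s,\omega\xi^{-1},\xi)(a(y)\kappa)}\leq\norm[y]_{\ag{A}}^{1/2-\Re s}$ should carry an extra polynomial factor $\Norm[\vec{n}]^{N'}$ (the $e_{\vec{n}}$ are $\intL^2$-unitary, not sup-normed), but this is absorbed by the rapid decay of $\hat{f}_{\vec{n}}$ and does not affect the conclusion.
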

\begin{proof}
	The lemma shows that $(s-s_0)^n \mu(s,\xi,\omega\xi^{-1};\vec{n})$ is polynomially increasing in $\vec{n}$. But $\hat{f}_{\vec{n}}$ is rapidly decreasing in $\vec{n}$ by smoothness of $f$. The sum over $\vec{n}$ is thus absolutely and uniformly convergent for $s$ lying in $K$, hence defines an analytic function in $s$. By uniqueness of analytic continuation, we get the first equation. The rest follows from the polynomial increase of $(s-s_0)^n \mu(s,\xi,\omega\xi^{-1};\vec{n})$ and Cauchy's integral formula for derivatives.
\end{proof}
\begin{remark}
	If $s_0$ is the spherical pole, then $n=1$ and for $m=0$ we have $f_k=0$ unless $k=0$. Then
	$$ f_0(\kappa) = \lim_{s \to s_0} \frac{(s-s_0)\Lambda_{\F}(2(s_0-s)}{\Lambda_{\F}(2-2(s_0-s)))} \int_{\gp{K}} f(\kappa) d\kappa = - \frac{\Lambda_{\F}^*(0)}{\Lambda_{\F}(2)} \int_{\gp{K}} f(\kappa) d\kappa. $$
\end{remark}

	\subsection{A Convergence Lemma}
	
	Let $r,N \in \ag{N}, r,N \geq 2$. It is well-known that there is an exact sequence
	$$ 1 \to \Gamma(N) \to \SL_r(\ag{Z}) \to \SL_r(\ag{Z}/N\ag{Z}) \to 1, $$
	where $\Gamma(N)$ is called the principal congruence subgroup of $\SL_r(\ag{Z})$ modulo $N$. The pre-image of the upper resp. lower triangular subgroup of $\SL_r(\ag{Z}/N\ag{Z})$ in $\SL_r(\ag{Z})$ is denoted by $\Gamma_0(N)$ resp. $\Gamma_0^-(N)$, which includes $\Gamma(N)$ as a normal subgroup. For $\vec{\alpha} \in \ag{Z}^{r-1}$ realized as a column vector, we define matrices, with $I_k$ denoting the identity matrix of rank $k$
	$$ n_r^+(\vec{\alpha}) = \begin{pmatrix}
	1 & \vec{\alpha}^T \\ \vec{0} & I_{r-1}
	\end{pmatrix}, n_r^-(\vec{\alpha}) = \begin{pmatrix}
	1 & \vec{0}^T \\ \vec{\alpha} & I_{r-1}
	\end{pmatrix}. $$
\begin{lemma}
	Any coset of $\Gamma_0(N) \backslash \SL_r(\ag{Z})$ resp. $\Gamma_0^-(N) \backslash \SL_r(\ag{Z})$ has a representative of the form $N_- N_+$ resp. $N_+N_-$, where $N_-$ resp. $N_+$ is lower resp. upper unipotent with off-diagonal entries lying in $[-\frac{N}{2}, \frac{N}{2}] \cap \ag{Z}$.
\label{CongGpLemma}
\end{lemma}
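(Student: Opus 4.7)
The plan is to reduce the statement to a purely algebraic decomposition modulo $N$, and then establish that decomposition by induction on $r$. Reduction modulo $N$ identifies $\Gamma_0(N)\backslash\SL_r(\ag{Z})$ with $B(\ag{Z}/N\ag{Z})\backslash\SL_r(\ag{Z}/N\ag{Z})$, where $B$ denotes the upper triangular Borel, so the assertion is equivalent to
\[
\SL_r(\ag{Z}/N\ag{Z}) = B(\ag{Z}/N\ag{Z})\cdot N^-(\ag{Z}/N\ag{Z})\cdot N^+(\ag{Z}/N\ag{Z}),
\]
with $N^{\pm}$ the lower/upper unipotent subgroups. Granting this, I would lift the off-diagonal entries of a mod-$N$ decomposition to their unique integer representatives in $[-N/2, N/2]$; the resulting integral unipotent matrices $N_-, N_+$ automatically satisfy $\overline{N_- N_+ \gamma^{-1}} = \bar B^{-1} \in B(\ag{Z}/N\ag{Z})$, so $N_- N_+ \gamma^{-1} \in \Gamma_0(N)$ and $N_- N_+$ realises the desired representative of $\Gamma_0(N)\gamma$. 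By the Chinese Remainder Theorem it then suffices to prove the decomposition when $R := \ag{Z}/p^a\ag{Z}$ is a local ring.

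Over such $R$ I would actually prove the slightly stronger assertion $\GL_r(R) = B(R)N^-(R)N^+(R)$ by induction on $r$; the $\SL_r$ version is immediate since in any such decomposition $\det B = \det \gamma$. Write $\gamma = \begin{pmatrix} a & \vec{\rho}^T \\ \vec{\kappa} & \Gamma \end{pmatrix}$. When $a = \gamma_{1,1}$ is a unit in $R$, the block-LU / Schur-complement identity gives $\gamma = L\cdot M$ with $L = \begin{pmatrix} 1 & 0 \\ a^{-1}\vec{\kappa} & I \end{pmatrix} \in N^-$ and $M = \begin{pmatrix} a & \vec{\rho}^T \\ 0 & \tilde\gamma \end{pmatrix}$, where $\tilde\gamma = \Gamma - a^{-1}\vec{\kappa}\vec{\rho}^T \in \GL_{r-1}(R)$. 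I would decompose $\tilde\gamma = \tilde B \tilde N^- \tilde N^+$ by the inductive hypothesis and verify
\[
M = \begin{pmatrix} a & 0 \\ 0 & \tilde B \end{pmatrix} \begin{pmatrix} 1 & 0 \\ 0 & \tilde N^- \end{pmatrix} \begin{pmatrix} 1 & a^{-1}\vec{\rho}^T \\ 0 & \tilde N^+ \end{pmatrix} =: B_0\, N^-_0\, N^+_0,
\]
then pull $L$ past $B_0$ via the identity $L B_0 = B_0 L'$ with $L' = \begin{pmatrix} 1 & 0 \\ \tilde B^{-1}\vec{\kappa} & I \end{pmatrix} \in N^-$; collecting the two lower-unipotent factors into $L'N^-_0 \in N^-$ yields $\gamma = B_0 \cdot (L'N^-_0) \cdot N^+_0 \in B\cdot N^-\cdot N^+$.

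When $a$ is not a unit, primitivity of the first column of $\gamma$ (forced by $\det\gamma \in R^\times$) combined with $R$ being local ensures that some $\gamma_{i_0,1}$ with $i_0 \geq 2$ is a unit. Left multiplication by the element $B_1 \in B(R)$ whose first row is $e_1^T + (1-a)\gamma_{i_0,1}^{-1}e_{i_0}^T$ and whose remaining rows form the identity makes the new $(1,1)$-entry equal to $1$, reducing to the previous case. The second statement of the lemma (decomposition $N_+N_-$ for cosets of $\Gamma_0^-(N)$) follows at once by conjugating the first case by the long Weyl element $w_0$: its entries lie in $\{0,\pm 1\}$, so it preserves the bound $[-N/2,N/2]$ while sending $\Gamma_0(N)\leftrightarrow\Gamma_0^-(N)$, $B\leftrightarrow B^-$, and swapping $N^-\leftrightarrow N^+$.

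The main obstacle I anticipate is the commutation step $LB_0 = B_0 L'$: it depends on $\tilde B$ being invertible over $R$, which in turn uses that $R$ is local (so that $\det\tilde B = \det\tilde\gamma \in R^\times$ implies each diagonal entry of the upper triangular $\tilde B$ is itself a unit)---this is precisely why the CRT reduction to prime-power modulus is a genuine ingredient, not just a cosmetic step. The remaining bookkeeping---checking the block identities, the determinant constraints ($\det B_0 = a\cdot\det\tilde B = \det\gamma$), and that all factors lie in their respective subgroups---is routine but has to be carried out with some care.
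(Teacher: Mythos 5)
Your proof is correct, and it takes a genuinely different route from the paper's. The paper works directly inside $\SL_r(\ag{Z})$: using primitivity of the first column and B\'ezout, it invokes Dirichlet's theorem on primes in arithmetic progressions to adjust the B\'ezout coefficients so that $u_1'$ is coprime to $N$, then explicitly constructs a matrix $B\in\Gamma_0(N)$ clearing the first row, peels off an $n_r^+(\vec{\beta})$ factor, and iterates; only at the very end does it reduce the accumulated unipotent factors modulo $N$, quoting normality of $\Gamma(N)$. You instead reduce modulo $N$ at the outset, identify the problem as a purely algebraic cell decomposition $\SL_r(\ag{Z}/N\ag{Z}) = B\cdot N^-\cdot N^+$, localize by CRT, and establish $\GL_r(R) = B(R)N^-(R)N^+(R)$ over a local ring $R$ by a block-LU / Schur-complement induction on $r$; the key commutation $LB_0 = B_0L'$ works precisely because your $B_0$ is block-diagonal (not merely upper triangular), so conjugation preserves the block pattern. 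You also dispose of the $\Gamma_0^-(N)$ case cleanly by conjugating with the long Weyl element, which the paper leaves implicit. Your approach avoids Dirichlet entirely (a heavy hammer for this lemma) and isolates the essential algebra; one small inaccuracy in your commentary: the invertibility of $\tilde B$ does not in fact require $R$ to be local, since $\tilde B$ is a product of $\tilde\gamma\in\GL_{r-1}(R)$ with inverses of unipotents — locality is needed, rather, in your case~2, to find a unit entry in the first column when $\gamma_{1,1}$ is not a unit, which is indeed where the CRT reduction earns its keep (the matrix $\begin{pmatrix}3&2\\2&3\end{pmatrix}$ over $\ag{Z}/6\ag{Z}$ shows case~2 genuinely fails over a non-local $\ag{Z}/N\ag{Z}$).
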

\begin{proof}
	We treat the case for $\Gamma_0(N)$. Take any $A \in \SL_r(\ag{Z})$. Let the first column of $A$ be $(a_1,\cdots,a_r)^T \in \ag{Z}^r$. Then we have $\lcd(a_1,\cdots,a_r)=1$, which implies the existence of $u_i \in \ag{Z}$ such that $\Sigma_{i=1}^r u_i a_i = 1$. In particular, we have $\lcd(u_1,a_2,\cdots,a_r)=1$. For any $k_j \in \ag{Z}, 2 \leq j \leq r$, the substitution
	$$ u_1' = u_1+\sum_{j=2}^r k_j a_j; u_j' = u_j-k_j, 2 \leq j \leq r $$
	still gives $\Sigma_{i=1}^r u_i' a_i = 1$. By an iterative application of Dirichlet's theorem on primes in arithmetic progression, we can choose $k_j$'s such that $u_1'$ is a prime number as large as we want. In particular, we can make $\lcd(u_1',N)=1$. Since $\lcd(u_1',\cdots,u_r')=1$, at least one of $u_j', 2 \leq j \leq r$, say $u_2'$, is coprime with $u_1'$. Hence $\lcd(u_1',Nu_2')=1$ and we can find $v_1,v_2 \in \ag{Z}$ such that $u_1'v_2-Nu_2'v_1=1$. The matrix
	$$ B = \begin{pmatrix} u_1' & u_2' & u_3' \cdots \\ Nv_1 & v_2 & 0 \cdots \\ \vec{0} & \vec{0} & I_{r-2} \end{pmatrix} \in \Gamma_0(N) $$
	and gives, for some $A'=\tilde{A}-\vec{\alpha} \vec{\beta_1}^T \in \SL_{r-1}(\ag{Z})$, that
	$$ BA = \begin{pmatrix} 1 & \vec{\beta_1}^T \\ \vec{\alpha} & \tilde{A} \end{pmatrix} \Rightarrow A \in \Gamma_0(N) \begin{pmatrix} 1 &  \\ \vec{\alpha} & A' \end{pmatrix} n_r^+(\vec{\beta_1}). $$
	Repeating the process on $A'$ or making an induction on $r$ we then find successively $\vec{\beta_k} \in \ag{Z}^{r-k}$ such that for some lower unipotent $N_-^1$
	$$ A \in \Gamma_0(N) N_-^1 N_+^1, N_+^1=\prod_{k=0}^{r-2} \begin{pmatrix} I_k & 0 \\ 0 & n_{r-k}^+(\vec{\beta_{k+1}}) \end{pmatrix}. $$
	Taking $N_-$ resp. $N_+$ with off-diagonal entries lying in $[-\frac{N}{2}, \frac{N}{2}] \cap \ag{Z}$ such that $N_-^1 \equiv N_- \pmod{N}$ resp. $N_+^1 \equiv N_+ \pmod{N}$, we then find $N_-^1 N_-^{-1}, N_+^1 N_+^{-1} \in \Gamma(N)$ and conclude by its normality.
\end{proof}
	Let $[\F:\ag{Q}]=r=r_1+2r_2$ where $r_1$ resp. $2r_2$ is the number of embeddings of $\F$ into real resp. complex numbers. Recall that we have a canonical map by choosing one complex embedding $\sigma_{r_1+j}, 1 \leq j \leq r_2$ in a pair conjugate to each other by complex conjugation
	$$ \sigma: \F \to \ag{R}^{r_1} \times \ag{C}^{r_2} \simeq \ag{R}^r, $$
	$$ \sigma(x) = (\sigma_1(x), \cdots, \sigma_{r_1+r_2}(x)) = (\sigma_1(x), \cdots, \sigma_{r_1}(x), \Re \sigma_{r_1+1}(x), \Im \sigma_{r_1+1}(x), \cdots, \Re \sigma_{r_1+r_2}(x), \Im \sigma_{r_1+r_2}(x)). $$
	For every fractional ideal $\idlJ$, $\sigma(\idlJ)$ is then a $\ag{Z}$-lattice of $\ag{R}^r$. For $c \gg 1$, we define a functions $f_c$ on $\ag{R}^{r_1} \times \ag{C}^{r_2}$ by
	$$ f_c(\vec{x}) = \prod_{i=1}^{r_1} \min(1, \norm[x_i]^{-c}) \prod_{j=1}^{r_2} \min(1, \norm[x_{r_1+j}]^{-2c}), \vec{x}=(x_i)_{1\leq i \leq r_1+r_2}. $$
\begin{lemma}
	Let $\idlJ \subset \vo$ be an integral ideal. We have the following two estimations for $t>0$.
\begin{itemize}
	\item[(1)] $ \sum_{\alpha \in \idlJ^{-1} - \{0\}} f_c(t\sigma(\alpha)) \ll_{\F,c} \norm[\vo/\idlJ]^{3c} t^{-c} $ if $c>r$.
	\item[(2)] $ \sum_{\alpha \in \idlJ^{-1}} f_c(t\sigma(\alpha)) \ll_{\F,c} t^{-r} \norm[\vo/\idlJ]^{-1} \left( 1+t\frac{\norm[\vo/\idlJ]^2}{\sqrt{r}} \right)^{rc} $ if $c>1$.
\end{itemize}
\label{GTechEstClassical}
\end{lemma}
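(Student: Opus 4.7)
The plan is to apply classical lattice-point counting on the $\ag{Z}$-lattice $\Lambda := \sigma(\idlJ^{-1}) \subset \ag{R}^{r_1} \times \ag{C}^{r_2} \simeq \ag{R}^{r}$, which is full-rank with covolume a fixed constant times $\Dis(\F)^{1/2} \norm[\vo/\idlJ]^{-1}$. The essential algebraic input is the norm lower bound $\norm[\Nr_{\F/\ag{Q}}(\alpha)] \geq \norm[\vo/\idlJ]^{-1}$ for any nonzero $\alpha \in \idlJ^{-1}$ (since $\alpha\idlJ$ is an integral ideal of positive norm), which by the AM--GM inequality yields $\Norm[\sigma(\alpha)]_{\infty} \gg \norm[\vo/\idlJ]^{-1/r}$, a quantitative lower bound on the sup-norm of nonzero lattice points.

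For part (1), I would use the pointwise majorant $f_c(\vec{x}) \leq \min(1, \Norm[\vec{x}]_{\infty}^{-c})$ (dominating by the single factor whose coordinate attains the sup-norm) and decompose the sum dyadically in $\Norm[\sigma(\alpha)]_{\infty}$. In each dyadic shell $[2^k, 2^{k+1})$ the number of lattice points is bounded by $\ll 1 + 2^{kr}\norm[\vo/\idlJ]$ via standard geometry-of-numbers counting, while $f_c(t\sigma(\alpha)) \leq \min(1, (t \cdot 2^k)^{-c})$. Summing the resulting geometric series --- convergent precisely because $c > r$ --- starting from the threshold $k_0 \asymp -r^{-1}\log_2 \norm[\vo/\idlJ]$ dictated by the norm lower bound, one obtains a total of order $t^{-c}$ times a polynomial in $\norm[\vo/\idlJ]$, which is comfortably absorbed by the crude majorant $\norm[\vo/\idlJ]^{3c}$.

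For part (2), I would include the origin term $f_c(0) = 1$ and split the nonzero sum into ``inner'' points with $t\Norm[\sigma(\alpha)]_{\infty} \leq 1$ (each contributing $f_c \leq 1$, their count controlled via a Minkowski-type bound involving the first minimum of $\Lambda$) and ``outer'' points, where $f_c$ decays and the contribution is bounded by a convergent integral-comparison estimate. The polynomial envelope $(1 + t\norm[\vo/\idlJ]^2/\sqrt{r})^{rc}$ then interpolates between the small-$t$ regime --- dominated by the ``box count'' of inner lattice points --- and the large-$t$ regime, where essentially only the origin survives, with the crossover occurring near $t \sim \norm[\vo/\idlJ]^{-2}$.

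The principal obstacle is controlling the boundary terms in the lattice-point counts when the successive minima of $\Lambda$ are of very different scales --- a ``geometrically skew'' situation in which the naive volume-to-count comparison is loose in thin shells and on the surface of balls. I would address this by fixing once and for all a Minkowski-reduced basis of $\sigma(\vo)$ (which does not depend on $\idlJ$) and transferring counting estimates along the inclusion $\sigma(\vo) \subset \sigma(\idlJ^{-1})$ of index $\norm[\vo/\idlJ]$, at the price of various polynomial-in-$\norm[\vo/\idlJ]$ losses that the generous exponents in the stated bounds are designed to absorb.
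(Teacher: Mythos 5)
Your approach is genuinely different from the paper's. The paper proves both estimates by a change-of-basis argument: it applies the elementary divisor theorem to write $A_{\idlJ^{-1}}^{-1} = \gamma\,\diag(d_1,\dots,d_r)\,\gamma_{\idlJ}A_{\vo}^{-1}$ with $\gamma,\gamma_{\idlJ}\in\SL_r(\ag{Z})$, and then invokes the preceding Lemma \ref{CongGpLemma} to choose $\gamma$ so that $\diag(d_i)^{-1}\gamma\diag(d_i)\gamma_{\idlJ}$ is a product $N_+N_-$ of unipotents with entries bounded by $d_r/(2d_1)$. This yields an explicit bound $\Norm[A_{\idlJ^{-1}}^{-1}]_2 \ll |d_r|\left(r+\frac{r(r-1)d_r^2}{8d_1^2}\right)\Norm[A_{\vo}^{-1}]_2$, which turns the skew lattice into $\ag{Z}^r$ at a controlled cost; part (1) then follows from convergence of $\sum_{\vec n\neq 0}\Norm[\vec n]_2^{-c}$ for $c>r$, and part (2) from a bound on the fundamental-parallelogram diameter $d(\sigma(\idlJ^{-1}))$ plus a direct sum-vs-integral comparison. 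Your plan instead decomposes $\idlJ^{-1}=\bigsqcup_{i}(\beta_i+\vo)$ into $\norm[\vo/\idlJ]$ cosets of the \emph{fixed} lattice $\sigma(\vo)$, whose successive minima are $\F$-dependent constants; that sidesteps the skewness of $\sigma(\idlJ^{-1})$ and makes Lemma \ref{CongGpLemma} unnecessary. For part (1) your plan is sound: the coset decomposition gives the shell count $\ll_\F \norm[\vo/\idlJ](1+R^r)$ uniformly, the norm lower bound $|\Nr_{\F/\ag{Q}}(\alpha)|\geq\norm[\vo/\idlJ]^{-1}$ gives the starting threshold (note: $\Norm[\sigma(\alpha)]_\infty\geq |\Nr(\alpha)|^{1/r}$ is immediate from $|\Nr(\alpha)|=\prod_v|\sigma_v(\alpha)|^{e_v}$, no AM--GM needed), and the resulting polynomial losses in $\norm[\vo/\idlJ]$ are absorbed by the generous exponent $3c$. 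For part (2) your sketch is in the same spirit as the paper's (integral comparison), but you do not actually derive the specific envelope $(1+t\norm[\vo/\idlJ]^2/\sqrt{r})^{rc}$; the paper gets the exponent $2$ on $\norm[\vo/\idlJ]$ from the factor $|d_1|^{-1}\cdot(d_r/d_1)^2\leq\norm[\vo/\idlJ]\cdot\norm[\vo/\idlJ]^2$ in its diameter bound, and your proposal would need a comparable quantitative estimate on a fundamental cell to finish. Net assessment: your route is more elementary and avoids a nontrivial auxiliary lemma; the paper's route gives a cleaner final computation once the change of basis is in hand.

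A separate observation relevant to whether your argument can even match the stated bound: the displayed factor $\norm[\vo/\idlJ]^{-1}$ in part (2) appears to be a sign error. Since $\vo\subset\idlJ^{-1}$ has index $\norm[\vo/\idlJ]$, the covolume of $\sigma(\idlJ^{-1})$ is $\text{covol}(\sigma(\vo))/\norm[\vo/\idlJ]$, so $\Vol(\ag{R}^r/t\sigma(\idlJ^{-1}))\asymp_{\F} t^r\norm[\vo/\idlJ]^{-1}$, and the paper's own final inequality $\Vol(\ag{R}^r/t\sigma(\idlJ^{-1}))\sum_\alpha f_c(t\sigma(\alpha))\ll(1+t\norm[\vo/\idlJ]^2/\sqrt{r})^{rc}$ therefore yields $\sum_\alpha f_c(t\sigma(\alpha))\ll t^{-r}\norm[\vo/\idlJ]^{+1}(1+t\norm[\vo/\idlJ]^2/\sqrt{r})^{rc}$, with $+1$ rather than $-1$. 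This is also what your coset-counting estimate $\ll\norm[\vo/\idlJ]\,t^{-r}$ would give in the small-$t$ regime, and it agrees with the lattice density heuristic; the bound with $\norm[\vo/\idlJ]^{-1}$ would in fact be false as $t\to 0$. So you need not worry that your method falls short of the stated bound; the stated bound overshoots what either proof delivers.
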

\begin{proof}
	(1) On $\ag{R}^{r_1} \times \ag{C}^{r_2}$ we have a usual norm $\Norm_2$ given by
	$$ \Norm[\vec{x}]_2 = \sqrt{\sum_{i=1}^{r_1+r_2} \norm[x_i]^2}, \vec{x}=(x_i)_{1 \leq i \leq r_1+r_2}. $$
	We then easily see, essentially by comparing $f_c$ with the infinity norm, that
	$$ \sup_{\vec{x}} f_c(\vec{x}) \Norm[\vec{x}]_2^c \leq (r_1+r_2)^{\frac{c}{2}}. $$
	If $\alpha_i \in \idlJ^{-1}$ is an integral basis such that $\idlJ^{-1}=\Sigma_{i=1}^r \ag{Z} \alpha_i$, then $\sigma(\alpha_i)$ is a basis of the lattice $\sigma(\idlJ^{-1})$. We can define another norm $\Norm_{\idlJ^{-1}}$ by
	$$ \Norm[\vec{x}]_{\idlJ^{-1}}=\sqrt{\sum_{i=1}^r n_i^2}, \vec{x} = \sum_{i=1}^r n_i \sigma(\alpha_i) \in \ag{R}^r. $$
	Or equivalently, if we write $A_{\idlJ^{-1}}=(\sigma(\alpha_1), \cdots, \sigma(\alpha_r)) \in \GL_r(\ag{R})$, we have
	$$ \Norm[\vec{x}]_{\idlJ^{-1}} = \Norm[A_{\idlJ^{-1}}^{-1}\vec{x}]_2. $$
	Fix a basis $\vec{e_i}$ of $\sigma(\vo)$ and write $A_{\vo}=(\vec{e_1},\cdots,\vec{e_r})$. By elementary divisor theorem, there are $\gamma,\gamma_{\idlJ} \in \SL_r(\ag{Z})$ and some $d_i \in \ag{Z}-\{0\}, d_i \mid d_{i+1}, \norm[d_1 \cdots d_r] = \norm[\vo/\idlJ] = \norm[\idlJ^{-1} / \vo]$, such that
	$$ A_{\idlJ^{-1}}^{-1} = \gamma \diag(d_1,\cdots,d_r) \gamma_{\idlJ} A_{\vo}^{-1}. $$
	Changing $\gamma$ is equivalent to changing the choice of basis $\alpha_i$ for $\idlJ^{-1}$. Applying Lemma \ref{CongGpLemma}, we can find $\gamma$ such that
	$$ \diag(d_1,\cdots,d_r)^{-1} \gamma \diag(d_1,\cdots,d_r) \gamma_{\idlJ} = N_+ N_- $$
	where $N_+$ resp. $N_-$ is upper resp. lower unipotent with entries lying in $[-\frac{d_r}{2d_1}, \frac{d_r}{2d_1}]$. We thus get a bound of the operator norm
	$$ \Norm[A_{\idlJ^{-1}}^{-1}]_2 = \Norm[\diag(d_1,\cdots,d_r) N_+N_- A_{\vo}^{-1}]_2 \leq \norm[d_r] \left( r+\frac{r(r-1)d_r^2}{8d_1^2} \right) \Norm[A_{\vo}^{-1}]_2. $$
	We finally estimate and conclude by
\begin{align*}
	\sum_{\alpha \in \idlJ^{-1} - \{0\}} f_c(t\sigma(\alpha)) &= \sum_{\alpha \in \idlJ^{-1} - \{0\}} \Norm[t\sigma(\alpha)]_{\idlJ^{-1}}^{-c} \frac{\Norm[t\sigma(\alpha)]_{\idlJ^{-1}}^c}{\Norm[t\sigma(\alpha)]_2^c} f_c(t\sigma(\alpha)) \Norm[t\sigma(\alpha)]_2^c \\
	&\leq t^{-c} \norm[d_r]^c \left( r+\frac{r(r-1)d_r^2}{8d_1^2} \right)^c \Norm[A_{\vo}^{-1}]_2^c (r_1+r_2)^{\frac{c}{2}} \sum_{\vec{n} \in \ag{Z}^r-\{0\}} \Norm[\vec{n}]_2^{-c}.
\end{align*}

\noindent (2) If $\latL$ is a lattice in $\ag{R}^r$ with a basis given by the column vectors in a matrix $A_{\latL} \in \GL_r(\ag{R})$, we define its diameter $d(\latL)$ associated to this basis as the diameter of the fundamental parallelogram spanned by this basis, i.e.,
	$$ d(\latL) = \max_{-1 \leq \lambda_i \leq 1, 1\leq i \leq r} \Norm[A_{\latL} \vec{\lambda}]_2. $$
	Thus we have obviously $d(\latL) \leq \sqrt{r} \Norm[A_{\latL}]_2$. The same argument in the last part of (1) gives, for some choice of basis of $\idlJ^{-1}$, that
	$$ d(\sigma(\idlJ^{-1})) \leq \sqrt{r} \Norm[A_{\vo}]_2 \norm[d_1]^{-1} \left( r+\frac{r(r-1)d_r^2}{8d_1^2} \right). $$
	It is also easy to see that for $\vec{x},\vec{y}$ in the same translate of a parallelogram $\vec{x_0}+\latPlg$ of $t\sigma(\idlJ^{-1})$, we have
	$$ \frac{f_c(\vec{x})}{f_c(\vec{y})} \leq 2^{2r_2c} \left( \frac{r_1+r_2+\sqrt{r}d(t\sigma(\idlJ^{-1}))}{r} \right)^{rc}. $$
	We thus conclude by
\begin{align*}
	\Vol(\ag{R}^r/t\sigma(\idlJ^{-1})) \sum_{\alpha \in \idlJ^{-1}} f_c(t\sigma(\alpha)) &\leq 2^{2r_2c} \left( \frac{r_1+r_2+t\sqrt{r}d(\sigma(\idlJ^{-1}))}{r} \right)^{rc} \int_{\ag{R}^r} f_c(\vec{x}) d\vec{x} \\
	&\ll_{\F,c} \left( 1+t\frac{\norm[\vo/\idlJ]^2}{\sqrt{r}} \right)^{rc}.
\end{align*}
\end{proof}
	We shall write, denoting by $\vp_v$ the prime ideal corresponding to $v<\infty$, $v(\idlJ) \in \ag{N}$ such that
	$$ \idlJ = \prod_{v<\infty} \vp_v^{v(\idlJ)}. $$
	A variant of the above estimations in the adelic language is the following lemma.
\begin{lemma}
	Let $\idlJ$ be an integral ideal. Let $c_1,c_2 \in \ag{R}, c_2-c_1 > r$, we have
\begin{align*}
	&\quad \sum_{\alpha \in \F^{\times}} \prod_{v\mid \infty} \min(|\alpha y_v|_v^{-c_1},|\alpha y_v|_v^{-c_2}) \prod_{v<\infty} |\alpha y_v|_v^{-c_1} 1_{v(\alpha y_v) \geq -\cond(\psi_v)-v(\idlJ)} \\
	&\ll_{\F,c_2-c_1} \min \left(|y|_{\ag{A}}^{-c_1-1} \norm[\vo/\idlJ]^{-1} \left( 1+|y|_{\ag{A}}^{\frac{1}{r}} \frac{\norm[\vo/\idlJ]^2}{\sqrt{r}} \right)^{r(c_2-c_1)}, |y|_{\ag{A}}^{-c_1-\frac{c_2-c_1}{r}} \norm[\vo/\idlJ]^{3(c_2-c_1)} \right)
\end{align*}
	where $y=(y_v)_v \in \ag{A}^{\times}$, $r=[\F:\ag{Q}]$.
\label{GTechEst}
\end{lemma}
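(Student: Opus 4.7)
The plan is to reduce this adelic sum to two applications of the classical lattice bounds in Lemma \ref{GTechEstClassical}. First I note that the sum is unchanged under the simultaneous substitution $\alpha \mapsto \alpha/\beta$, $y \mapsto \beta y$ for any $\beta \in \F^{\times}$, so it depends only on the class of $y$ in $\F^{\times} \backslash \ag{A}^{\times} \simeq \ag{R}_+ \times (\F^{\times} \backslash \ag{A}^{(1)})$. Writing $t = \norm[y]_{\ag{A}}$ and $y = s_{\F}(t) \cdot u$ with $u \in \ag{A}^{(1)}$, I would use the compactness of $\F^{\times} \backslash \ag{A}^{(1)}$ to pick a $\F^{\times}$-representative of $u$ in a fixed relatively compact fundamental domain. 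This yields uniform comparisons $\norm[u_v]_v \asymp_{\F} 1$ at archimedean places, hence $\norm[y_v]_v \asymp_{\F} t^{1/r}$ at real and $\norm[y_v]_v \asymp_{\F} t^{2/r}$ at complex places (in the normalized local absolute values), while $\mathfrak{a}(y) := \prod_{v<\infty} \vp_v^{v(y_v)}$ ranges over a finite set of integral ideal class representatives of uniformly bounded norm.

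Next, I would apply the product formula $\prod_v \norm[\alpha]_v = 1$ to combine the finite and archimedean factors into
\[
\prod_{v<\infty} \norm[\alpha y_v]_v^{-c_1} \cdot \prod_{v|\infty} \min\bigl(\norm[\alpha y_v]_v^{-c_1}, \norm[\alpha y_v]_v^{-c_2}\bigr) = \norm[y]_{\ag{A}}^{-c_1} \prod_{v|\infty} \min\bigl(1, (\norm[\alpha]_v \norm[y_v]_v)^{-c}\bigr),
\]
where $c := c_2 - c_1$. The finite-place indicators enforce $\alpha \in \idlJ_0 := (\mathfrak{a}(y) \Dif \idlJ)^{-1}$, which by the previous step has the form $\idlJ'^{-1}$ for an integral ideal $\idlJ' = \mathfrak{a}(y) \Dif \idlJ$ with $\norm[\vo/\idlJ'] \asymp_{\F} \norm[\vo/\idlJ]$. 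Moreover, the archimedean normalization gives
\[
\prod_{v|\infty} \min\bigl(1, (\norm[\alpha]_v \norm[y_v]_v)^{-c}\bigr) \asymp_{\F, c} f_c\bigl(t^{1/r} \sigma(\alpha)\bigr),
\]
by directly matching the real and complex factors against the definition of $f_c$ from Section 4.4.

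Finally, I would apply Lemma \ref{GTechEstClassical} to the lattice $\sigma(\idlJ'^{-1})$ with scalar $t^{1/r}$ and exponent $c$: part (1), whose hypothesis $c > r$ is our standing assumption $c_2 - c_1 > r$, gives $\ll_{\F, c} \norm[\vo/\idlJ]^{3c} \cdot t^{-c/r}$; part (2) gives $\ll_{\F, c} t^{-1} \norm[\vo/\idlJ]^{-1} \bigl(1 + t^{1/r} \norm[\vo/\idlJ]^2/\sqrt{r}\bigr)^{rc}$. Multiplying both by the common factor $\norm[y]_{\ag{A}}^{-c_1}$ and remembering $t = \norm[y]_{\ag{A}}$, the minimum of these two expressions is exactly the bound asserted. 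The main obstacle will be the first, normalization step: one must carry out the standard Fujisaki--Minkowski reduction for $\F^{\times} \backslash \ag{A}^{(1)}$ (combining Dirichlet's unit theorem at archimedean places with the finiteness of the class group at finite places) carefully enough that both the archimedean comparisons $\norm[y_v]_v \asymp_{\F} t^{1/r}$ and the finite list of values of $\mathfrak{a}(y)$ have constants depending only on $\F$; once this is in place, the remainder is direct substitution into Lemma \ref{GTechEstClassical}.
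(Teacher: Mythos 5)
Your proof is correct and follows essentially the same route as the paper's: reduce via $\F^{\times}$-invariance together with the compactness of $\F^{\times}\backslash\ag{A}^{(1)}$ (the paper unpacks this more explicitly through the finiteness of the class group and Dirichlet's unit theorem, while noting the compactness argument works as well), pull out $|y|_{\ag{A}}^{-c_1}$ by the product formula, rewrite the archimedean factor as $f_{c_2-c_1}(t^{1/r}\sigma(\alpha))$, and then invoke Lemma \ref{GTechEstClassical}. The one point worth spelling out, which you flag yourself, is choosing the fundamental domain so that $\mathfrak{a}(y)$ becomes a fixed integral ideal-class representative $\tau$, so that the ideal $\tau\Dif_{\F}\idlJ$ fed into Lemma \ref{GTechEstClassical} is genuinely integral with norm $\asymp_{\F}\norm[\vo/\idlJ]$; this is exactly what the paper's explicit choice of $\tau$ and $\beta$ accomplishes.
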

\begin{proof}
	The following exact sequence is split with a splitting section $s: \ag{R}_+ \to \ag{A}^{\times}$
	$$ 1 \to \ag{A}^{(1)} \to \ag{A}^{\times} \xrightarrow{|\cdot|_{\ag{A}}} \ag{R}_+ \to 1 $$
	such that the image of $s$ is contained in $\ag{A}_{\infty}^{\times} = \prod_{v \mid \infty} \F_v^{\times}$. For any $t \in \ag{R}_+$, we write $t^+=s(t)$ with $t_v^+=t^{1/r} \in \ag{R}_+ \subset \F_v$, such that
	$$ |t^+|_{\ag{A}} = \prod_{v \mid \infty} |t_v^+|_v = t. $$
	We may apply the compactness of $\F^{\times} \backslash \ag{A}^{(1)}$, or proceed alternatively in the following more classical way. Let $\gCl(\F)$ be the class group of $\F$ and choose an integral ideal $\tau$ in each class $[\tau] \in \gCl(\F)$. Let $\delta_{\tau} \in \ag{A}_{\fin}^{\times}$ be a representative of $\tau$ in the group of ideles. Since $\gCl(\F) \simeq \F^{\times} \backslash \ag{A}_{\fin}^{\times} / \widehat{\vo}^{\times}$, there is a unique $\tau$, and some $\beta=\beta_0 u \in \F^{\times}$ with freely chosen $u \in \vo^{\times}$ such that $ y_{\fin} \in \beta \delta_{\tau} \widehat{\vo}^{\times}$. Let $t=\norm[y]_{\ag{A}}$. We can write $y=y_{\infty} y_{\fin} = t^+ y_{\infty}' y_{\fin}$, and find
	$$ \prod_{v \mid \infty} \norm[\beta^{-1}y_v(t_v^+)^{-1}]_v = \prod_{v \mid \infty} \norm[u^{-1}\beta_0^{-1}y_v' ]_v = t^{-1}\norm[\beta^{-1}y]_{\ag{A}} \norm[\delta_{\tau}]_{\ag{A}}^{-1} = \norm[\vo/\tau]. $$
	By Dirichlet's unit theorem, we can adjust $u \in \vo^{\times}$ so that each $\norm[u^{-1}\beta_0^{-1}y_v' ]_v$ remains in a compact set in $\ag{R}_+$ depending only on $\F$. We thus find
\begin{align*}
	&\quad \sum_{\alpha \in \F^{\times}} \prod_{v\mid \infty} \min(|\alpha y_v|_v^{-c_1},|\alpha y_v|_v^{-c_2}) \prod_{v<\infty} |\alpha y_v|_v^{-c_1} 1_{v(\alpha y_v) \geq -\cond(\psi_v)-v(\idlJ)} \\
	&= \sum_{\alpha \in \F^{\times}} \prod_{v\mid \infty} \min(|\alpha \beta^{-1} y_v|_v^{-c_1},|\alpha \beta^{-1} y_v|_v^{-c_2}) \prod_{v<\infty} |\alpha \delta_{\tau}|_v^{-c_1} 1_{v(\alpha) \geq -\cond(\psi_v)-v(\idlJ \tau)} \\
	&= t^{-c_1} \sum_{\alpha \in (\Dif_{\F} \idlJ \tau)^{-1} - \{0\}} \prod_{v\mid \infty} \min(1,\norm[\alpha t_v^+]_v^{-(c_2-c_1)} \cdot \norm[u^{-1}\beta_0^{-1}y_v']_v^{c_1-c_2}) \\
	&\ll_{\F,c_2-c_1} t^{-c_1} \sum_{\alpha \in (\Dif_{\F} \idlJ \tau)^{-1} - \{0\}} f_{c_2-c_1}(t^{\frac{1}{r}} \sigma(\alpha)).
\end{align*}
	We then apply Lemma \ref{GTechEstClassical} to conclude.
\end{proof}

\section*{Acknowledgement}

	The preparation of the paper scattered during the stays of the author's in FIM at ETHZ, YMSC at Tsinghua University and Alfr\'ed Renyi Institute in Hungary supported by the MTA R\'enyi Int\`ezet Lend\"ulet Automorphic Research Group. The author would like to thank all three institutes for their hospitality.

\bibliographystyle{acm}

\bibliography{mathbib}

\begin{thebibliography}{1}

\bibitem{Bu98}
{\sc Bump, D.}
\newblock {\em Automorphic Forms and Representations}.
\newblock No.~55 in Cambridge Studies in Advanced Mathematics. Cambridge
  University Press, 1998.

\bibitem{GJ79}
{\sc Gelbart, S.~S., and Jacquet, H.}
\newblock Forms of {GL}(2) from the analytic point of view.
\newblock In {\em Proceedings of Symposia in Pure Mathematics\/} (1979),
  vol.~33, pp.~213--251.

\bibitem{J04}
{\sc Jacquet, H.}
\newblock Integral representation of {W}hittaker functions.
\newblock In {\em Contributions to Automorphic Forms, Geometry \& Number
  Theory}, H.~Hida, D.~Ramakrishnan, and F.~Shahidi, Eds. The Johns Hopkins
  University Press, 2004, ch.~15, pp.~373--419.

\bibitem{JS90}
{\sc Jacquet, H., and Shalika, J.}
\newblock {R}ankin-{S}elberg convolutions: {A}rchimedean theory.
\newblock In {\em Festschrift in honor of I. I. Piatetski-Shapiro on the
  occasion of his sixtieth birthday\/} (Jerusalem, 1990), vol.~Part I, Weizmann
  Science Press, pp.~125--207.

\bibitem{MV10}
{\sc Michel, P., and Venkatesh, A.}
\newblock The subconvexity problem for {$GL_2$}.
\newblock {\em Publications math\'{e}matiques de l'IH\'{E}S 111}, 1 (June
  2010), 171--271.

\bibitem{Wu5}
{\sc Wu, H.}
\newblock A note on spectral analysis in automorphic representation theory for
  {${\rm GL}_2$: \Rmnum{1}}.
\newblock {\em accepted by IJNT, arXiv: 1609.08742\/}.

\bibitem{Za82}
{\sc Zagier, D.}
\newblock The {R}ankin-{S}elberg method for automorphic functions which are not
  of rapid decay.
\newblock {\em Journal of the Faculty of Science 28\/} (1982), 415--438.

\end{thebibliography}

\address{\quad \\ Han WU \\ 302, Alfr\'ed-Renyi Institute \\ Re\'altanoda utca 13-15 \\ 1053, Budapest \\ Hungary \\ wuhan1121@yahoo.com}

\end{document}